\newtheorem{pro}{Proposition}[section]
\newtheorem{teo}[pro]{Theorem}
\newtheorem{defi}[pro]{Definition}
\newtheorem{lem}[pro]{Lemma}
\newtheorem{cor}[pro]{Corollary}
\newtheorem{rk}[pro]{Remark}
\newtheorem{ex}[pro]{Example}
\newcommand{\Ext}{\mathrm{Ext}}
\newcommand{\Hom}{\mathrm{Hom}}
\newcommand{\A}{\mathcal{A}}
\newcommand{\B}{\mathcal{B}}
\newcommand{\I}{\mathcal{I}}
\newcommand{\F}{\mathcal{F}}
\newcommand{\G}{\mathcal{G}}
\newcommand{\Q}{\mathcal{P}}
\newcommand{\X}{\mathcal{X}}
\newcommand{\Y}{\mathcal{Y}}
\newcommand{\W}{\mathcal{W}}
\newcommand{\pd}{\mathrm{pd}}
\newcommand{\Gpd}{\mathrm{Gpd}}
\newcommand{\WGpd}{\mathrm{WGpd}}
\newcommand{\WGid}{\mathrm{WGid}}
\newcommand{\Dpd}{\mathrm{Dpd}}
\newcommand{\Proj}{\mathrm{Proj}}
\newcommand{\proj}{\mathrm{proj}}
\newcommand{\Inj}{\mathrm{Inj}}
\newcommand{\inj}{\mathrm{inj}}
\newcommand{\id}{\mathrm{id}}
\newcommand{\Gid}{\mathrm{Gid}}
\newcommand{\resdim}{\mathrm{resdim}}
\newcommand{\coresdim}{\mathrm{coresdim}}
\newcommand{\add}{\mathrm{add}}
\newcommand{\Add}{\mathrm{Add}}
\newcommand{\Prod}{\mathrm{Prod}}
\newcommand{\Flat}{\mathrm{Flat}}
\newcommand{\FP}{\mathrm{FP}}
\newcommand{\modu}{\mathrm{mod}}
\newcommand{\Modu}{\mathrm{Mod}}
\newcommand{\Ker}{\mathrm{Ker}}
\newcommand{\Ima}{\mathrm{Im}}
\newcommand{\GP}{\mathcal{GP}}
\newcommand{\FGPD}{\mathrm{FGPD}}
\newcommand{\FGID}{\mathrm{FGID}}
\newcommand{\glPD}{\mathrm{gl.PD}}
\newcommand{\glGPD}{\mathrm{gl.GPD}}
\newcommand{\glGID}{\mathrm{gl.GID}}
\newcommand{\glDPD}{\mathrm{gl.DPD}}
\newcommand{\FPD}{\mathrm{FPD}}
\newcommand{\FID}{\mathrm{FID}}
\newcommand{\WFGPD}{\mathrm{WFGPD}}
\newcommand{\glWGID}{\mathrm{gl.WGID}}
\newcommand{\glWGPD}{\mathrm{gl.WGPD}}
\newcommand{\WFGID}{\mathrm{WFGID}}
\newcommand{\DP}{\mathcal{DP}}
\newcommand{\FDPD}{\mathrm{FDPD}}
\newcommand{\GI}{\mathcal{GI}}
\newcommand{\Coker}{\mathrm{CoKer}}
\newenvironment{dem}{\noindent\bf Proof. \rm }{$\ \Box$}
\begin{document}

\title[relative G-objects]{Relative Gorenstein  objects in abelian categories}
\author{Victor Becerril,  Octavio Mendoza and Valente Santiago}
\thanks{2010 {\it{Mathematics Subject Classification}}. Primary 18G10, 18G20, 18G25. Secondary 16E10.\\
The authors thanks the Project PAPIIT-Universidad Nacional Aut\'onoma de M\'exico IN103317.}

\begin{abstract} 
Let $\A$ be an abelian category. For a  pair  $(\X,\Y)$ of classes of objects in $\A,$ we define the weak and the $(\X,\Y)$-Gorenstein relative 
projective objects in $\A.$ We point out that such objects generalize the usual  Gorenstein projective objects and others generalizations appearing in the literature as Ding-projective, Ding-injective, $\X$-Gorenstein projective, Gorenstein AC-projective and $G_C$-projective modules  and Cohen-Macaulay objects in abelian categories. We show that the principal results on Gorenstein projective modules remains true for the weak and the  $(\X,\Y)$-Gorenstein relative objects. Furthermore, by using Auslander-Buchweitz approximation theory, a relative   
version of Gorenstein homological dimension is developed.  Finally, we introduce the notion of $\W$-cotilting pair in the abelian category 
$\A,$ which is very strong connected with the cotorsion pairs related with relative Gorenstein objects in $\A.$ It  is  worth mentioning that the 
$\W$-cotilting pairs generalize the notion of cotilting objects in the sense of  L. Angeleri H\"ugel and F. Coelho \cite{AC}.
\end{abstract}  
\maketitle

\setcounter{tocdepth}{1}
\tableofcontents

\section{Introduction.}

In homological algebra, the injective and projective objects play an impotant role. In 1969 M. Auslander and M. Bridger intoduced the $G$-dimension \cite{AuBri} for the category of finitely generated modules over a commutative noetherian ring $R$. They proved the inequality $\mathrm{G}$-$\mathrm{dim}\,M\leq \pd\, M$ for every finitely generated $R$-module $M$; and moreover it was also shown that the equality holds when $\pd \,M$ is finite. The previous inequality was used in order to characterize  Gorenstein local rings  \cite[Section 3.2]{Aus0} and to give a proof of a generalization of the Auslander-Buchsbaum formula for the case of the $G$-dimension. In the early 1990's the notion of G-dimension was extended beyond the world of finitely generated modules over a noetherian ring. For any ring $R$ (associative with unit), Enochs and Jenda defined in  \cite{EJ0} the \textit{Gorenstein projective dimension} $\Gpd \,M$ for an arbitrary module $M$ and not just for the finitely generated ones.
Later on, L. L. Avramov, R. O. Buchweitz, A. Martsinkovsky and I. Reiten proved, in the unpublished paper \cite{ABMR},  that a finitely generated module $M,$ over a noetherian ring, is Gorenstein projective if and only if  $\mathrm{G}$-$\mathrm{dim}\,M=0.$ A proof of this fact can be found in  \cite[Theorem 4.2.6]{Chris}.

Recently, H. Holm  showed in \cite{Holm} that the class of Gorenstein modules, studied by Enochs and Jenda in \cite{EJ0}, is a resolving class; and that fact allow us to use relative homological algebra in the category $\Modu\,(R)$ of left $R$-modules. Inspired by that,  D. Bravo, J. Guillespie and M. Hovey defined the AC-Gorenstein projectives (injectives) in \cite{BGH}, and N. Ding, Y. Li and L. Mao defined the now called  Ding-projective modules \cite{DingLi}. Various authors have generalized these kinds of Gorenstein projective objects: for example,  D. Bennis \cite{BO}, followed by  M. Tamekkante \cite{Ta},  and also by F. Meng and  Q. Pan \cite{MP}. Other variations were given by  Q. Pan and F. Cai \cite{PC} and independently by  Y. Geng and N. Ding \cite{GD} and D. Bennis, J. R. Garc\'ia Rozas and L. Oyonarte \cite{BGRO}.

The aim of this work is to unify all the notions of Gorenstein objects, that there exists in the previous literature, in a given one which replace  all of them. For this, we consider a pair of classes of objects $(\X,\Y),$ with certain conditions in an abelian category 
$\mathcal{A},$  and  define the \textit{$(\X,\Y)$-Gorenstein projective objects} in $\A.$  Throughout the paper,  we develop the properties of this objects by using the Auslander-Buchweitz approximation theory \cite{AuB, BMPS}. We show that our results have as a corollary the results that were obtained previously in the papers mentioned above. Furthermore, we obtain certain relative cotorsion pairs, in the sense of \cite{BMPS}, and prove that they are related with the notion of $\W$-cotilting, which is a generalization of the tilting objects in the sense of Angeleri-Coelho \cite{AC}. We also develop the properties of relative homological dimensions associated to the relative $(\X,\Y)$-Gorenstein projective objects  and its relationship with other dimensions. Many of  the results we get in this paper are a generalization of classical well-known results from \cite{Holm}. 

This paper is organized as follows. In Section 2, we recall fundamental results of the Auslander-Buchweitz theory  developed in the seminal paper \cite{AuB}. We also introduce the notation given in \cite{BMPS} that will be used throughout this paper.

In Section 3, we define the principal object of study of this paper, namely, the { $(\X,\Y)$-Gorenstein projective} objects in an abelian category $\A.$ The class of all these objects is denote by 
$\GP_{(\X,\Y)}$  (see Definition \ref{defiGP}). It is shown in Theorem \ref{GP6} that, under mild conditions on the pair $(\X,\Y),$ the class $\GP_{(\X,\Y)}$ is left thick (i.e. it is closed under direct summands, extensions and kernels of epimorphisms  between its objects). We also define the class of all the \textit{$(\X,\Y)$-weak Gorenstein projectives}, which is denoted by $W\GP_{(\X, \Y)}$ (see Definition \ref{debildefi2}). Let $\omega\subseteq \Y\subseteq \A$ be classes of objects in $\A.$ We prove, in Theorem \ref{thickcat}, that  if 
$\omega$ closed under direct summands, then $W\GP_{(\omega,\Y)}$ is left thick. It is also  proven that, under certain conditions on the pair $(\X,\Y),$  the following equalities hold  true (see Theorem \ref{iguales}) 
$$W\GP _{(\X,\Y)} = W\GP _{(\X \cap \Y, \Y)} = \GP  _{(\X,\Y)}  = \GP^{2}_{(\X,\Y)} = W\GP ^2 _{(\X,\Y)},$$ 
where $\GP^{2}_{(\X,\Y)}:=\GP_{( \GP  _{(\X,\Y)},\Y)}$ and $W\GP^{2}_{(\X,\Y)}:=W\GP_{( W\GP  _{(\X,\Y)},\Y)}.$ As particular cases of the above equalities,  we get  \cite[Theorem 2.8]{Xu} and \cite[Proposition 2.16 (2)]{BGRO}. Finally, we close this section by giving sufficient conditions in order to construct  strong left Frobenius pairs from GP-admissible pairs in abelian categories. The importance of the strong left Frobenius pairs relies on the fact that they give us exact model structures on exact categories \cite[Section 4]{BMPS}.

In Section 4, we develop, in an unified way, the theory of the relative Gorenstein homological dimensions. We stablish relationships 
between different kinds of relative homological dimension, namely: (weak) relative Gorenstein projective, relative projective, finitistic and 
resolution dimensions. By taking different pairs $(\X,\Y)$ of classes of objects in an abelian category $\A,$ as an application of 
the obtained results, we get as a corollary the well known results. For example, it is proved in Theorem \ref{ThickGP5.6}, that under 
certain conditions on the pair $(\X,\Y),$ the finitistic  $(\X,\Y)$-Gorenstein projective dimension of $\A$ and the finitistic projective dimension of $\A$ coincides, which is a generalization of \cite[Proposition 2.17]{Holm}.

In Section 5, we introduce the notion of  $\W$-tilting and $\W$-cotilting pairs in an abelian category $\A.$ We show that there is a 
strong relationship between the weak-Gorenstein projective objects $W\GP _{(\omega, \Y)},$ obtained from  a WGP-admisible pair 
$(\omega, \Y)$  (see Definition \ref{WGPadmi}), and relative cotorsion pairs in the sense of \cite{BMPS}. In more detail, given a WGP-admisible  pair $(\omega,\Y),$ with $\omega$ closed under direct summands, the pair $(W\GP _{(\omega, \Y)}, \omega^\wedge)$ turns out to be a $W\GP ^{\wedge} _{(\omega, \Y)}$-relative cotorsion pair in the abelian category $\A$ (see Proposition \ref{cotorpair} ). Furthermore, we show that the equality $W\GP _{(\omega,\Y)} = {^{\perp}}\Y$ holds if and only if the   WGP-admissible pair $(\omega, \Y)$ is  $\W$-cotilting, for some $\W\subseteq \A$ (see Corollary \ref{GdebilCotil}). If the abelian category $\A$ has enough projectives and for the $\W$-cotilting pair $(\omega, \Y)$ we have that $\omega$ is closed under direct summands and $\id\,(\Y)<\infty,$ we get from Theorem \ref{tiltcotor} that $(W\GP _{(\omega,\Y)},\omega^\wedge)$ is a hereditary complete cotorsion pair in $\A.$ Moreover, the weak global $(\omega,\Y)$-Gorenstein proyective dimension of $\A$ coincide with different kinds of relative finitistict projective and resolution dimensions (see the details in Theorem \ref{tiltcotor} (c) and (e)). The corresponding results for $(\X,\Y)$-Gorenstein projective also hold true as can be seen in this section. 

In Section 6, we introduce the notion of tilting and cotilting objects in abelian categories, which is an extension of the definition given by 
Angeleri-Coehlo \cite{AC} for the setting of the abelian category $\Modu\,(R),$ for any ring $R.$ This definition of tilting (cotilting) 
object will be used throughout this section to be compared with the notion of $\W$-tilting ($\W$-cotilting). For example, 
from Corollary \ref{Cotil3}, we get that the notion of $\W$-cotilting pair is a strict generalization of cotilting object. \\
In this section, we were able to apply the results obtained  in the preceding section and thus we get several nice results for tilting and cotilting objects. Namely, in Theorem \ref{Cotil2}, we prove that for an AB4*-abelian category $\A,$ with injective cogenerators and enough projectives, if $(\X,\Y)$ is 
a hereditary complete cotorsion pair in $\A$ such that $\id(\Y)<\infty$ and $\omega:=\X\cap\Y$ is closed under products, then there is some cotilting object $M\in\A$ such that $\omega=\Prod(M),$ $\id(M)=\id(\Y),$ $\Y=\omega^\wedge$ and $W\GP_\omega=\X={}^\perp M.$ Finally, in Theorem \ref{Cotil4},  we consider $\Modu(R),$ for a ring $R$ which is left perfect, left noetherian and right coherent. We characterize in this case, when $R$ is a cotilting module in $\Modu\,(R)$ and we also give several relations between the different homological dimensions introduced in this paper.

\section{Auslander-Buchweitz approximation theory}
\

We start this section by collecting all the background material that will be necessary in the sequel. First, we
introduce some general notation. Next, we recall the notion
of relative projective dimension and resolution dimension of a given
class of  objects in an abelian category $\A.$ Finally, we also recall  definitions and basic properties  we need of Auslander-Buchweitz 
approximation theory. In all that follows, we are taking as a main reference the papers \cite{AuB} and \cite{BMPS}.
\

We remark that M. Auslander and R. O. Buchweitz assumed in \cite{AuB} that a given class $\X\subseteq\A$  is a resolving  and 
an additively closed subcategory, which is also  closed under direct summands in $\A.$  In a very carefully revision of their proofs in \cite{AuB}, it can be seen that some of the assumed  hypothesis are not used. In order to give nice applications of AB-approximation theory to the relative Gorenstein  theory, we give a review of such theory by putting in each statement the minimum needed hypothesis. Of course, these results also have dual versions which we will freely use in the sequel. For more details, we recommend the reader to see \cite{BMPS}.
\

Throughout the paper, $\A$ will be an abelian category and $\X\subseteq\A$ a class of objects of $\A.$ 
We denote by $\pd\,X$ the  {\bf{projective dimension}} of $X\in\A.$ Similarly, $\id\,X$ denotes the {\bf{injective dimension}} of $X\in\A.$ 
For any non-negative integer $n,$ we set 
$$\Q_n(\A):=\{X\in\A\;:\;\pd X\leq n\}.$$ 
In particular $\Proj(\A):=\Q_0(\A)$ is the class of all the projective objects in $\A.$ The classes  $\I_n(\A)$  and $\Inj(\A)$ are defined dually. 
\

Let now  $\X$ be a subclass of objects in $\A$. We denote by $\add\,(\X)$   the class of all  objects isomorphic to direct summands of finite direct sums 
of objects in $\mathcal{X}.$ Moreover, for each positive integer $i,$ we consider the right  orthogonal classes
$$\X^{\perp_i}:=\{M\in\A\;:\;\Ext^i_\A(-,M)|_{\X}=0\}\quad {\rm and}\quad  \X^\perp:=\cap_{i>0}\,\X^{\perp_i}.$$ Dually, we have the left orthogonal classes ${}^{\perp_i}\X$ and ${}^{\perp}\X.$
\ 

By following \cite{BMPS}, we recall the notions to be considered in the paper.  Let $\X\subseteq\A$ be a subclass of objects in $\A.$ It  is said that $\X$ is a {\bf{pre-resolving}} class if it is closed under extensions 
and kernels of epimorphisms between its objects. A pre-resolving class 
is said to be  {\bf{resolving}} if it contains $\Proj(\A).$  If the dual properties hold true, then we get  {\bf{pre-coresolving}} and 
 {\bf{coresolving}} subclasses of $\A.$ A {\bf{left thick}} (respectively, {\bf{right thick}}) class is a pre-resolving (respectively, pre-coresolving) 
 class which is closed under direct summands in $\A.$ A {\bf{ thick}} class is  both a right and left thick class.
 A {\bf{left saturated}} (respectively, {\bf{right saturated}}) class is a resolving (respectively, coresolving) 
 class which is closed under direct summands in $\A.$ A {\bf{saturated}} class is  both a right saturated and left saturated class.
For example, $\Proj(\A)$  and ${}^{\perp}\X$ are left saturated subclasses of $\A$, while $\Inj(\A)$ and $\X^\perp$ are right saturated 
 subclasses of $\A$.
\

\bigskip

{\sc Relative homological dimensions.} Given a class $\X\subseteq\A$ and $M\in\A,$ the {\bf{relative projective dimension}} of $M,$ with 
 respect to $\X,$ is defined as $$\pd_{\X}\,(M):=\min\{n\in\mathbb{N}\,:\,\Ext_\A^j(M,-)|_{\X}=0  \text{ for any } j>n\}.$$
 We set by definition that $\min\emptyset:=\infty.$   Dually, we 
  denote by $\mathrm{id}_{\X}\,(M)$ the  {\bf{relative injective dimension}} of
  $M,$ with respect to $\X.$ Furthermore, for any class $\Y\subseteq\A,$ we set $$\pd_\X\,(\Y):=\mathrm{sup}\,\{\pd_\X (Y)\;:\; Y\in\Y\}\text{ and }\id_\X\,(\Y):=\mathrm{sup}\,\{\id_\X(Y)\;:\; Y\in\Y\}.$$ 
 It can be shown that $\pd_\X\,(\Y)=\id_\Y\,(\X).$ If $\X=\A,$ we just write $\pd\,(\Y)$ and   $\id\,(\Y)$.
\

\bigskip

{\sc Resolution and coresolution dimension.}
Let $M\in\A$ and $\X$ be a class of objects in $\A.$ The $\X$-{\bf{coresolution dimension}} $\coresdim_\X\,(M)$ of $M$ is the minimal 
non-negative integer $n$ such that there is an exact sequence $$0\to M\to X_0\to X_1\to\cdots\to X_n\to 0$$ with $X_i\in\X$ for 
$0\leq i\leq n.$ If such $n$ does not exist, we set $\coresdim_\X\,(M):=\infty.$ Also, we denote by $\mathcal{X}^{\vee}$ the  class of objects 
in $\A$ having finite $\mathcal{X}$-coresolution. 

Dually, we have the $\X$-{\bf{resolution dimension}} $\resdim_\X\,(M)$ of $M,$ and the class $\mathcal{X}^{\wedge}$ of objects in 
$\A$ having finite $\mathcal{X}$-resolution. 

Given a class $\Y\subseteq\A,$ we set  
$$\coresdim_\X\,(\Y):=\mathrm{sup}\,\{\coresdim_\X\,(Y)\;
:\; Y\in\Y\},$$ and  $\resdim_\X\,(\Y)$ is defined dually.
\

\bigskip

{\sc Approximations.}  Let  $\X$ be a class of objects in $\A.$ A morphism $f:X\rightarrow M$ is  an $\X$-{\bf{precover}} if $X\in\X$ and $\mathrm{Hom}_\A(Z,f):\mathrm{Hom}_\A(Z,X)\rightarrow\mathrm{Hom}_\A(Z,M)$
is surjective for any $Z\in\X.$ Furthermore, an $\X$-precover $f:X\rightarrow M$ is {\bf{special}} if $\Coker\,(f)=0$ and $\Ker\,(f)\in\X^{\perp_1}.$ 
We will  freely use  the dual notion of (special) $\X$-{\bf{preenvelope}}.
\

Finally, we recall  the notion of cotorsion pair which was introduced by L. Salce in \cite{S}. It is the analog of a torsion pair where the functor 
$\Hom_\A(-,-)$ is replaced by $\Ext^1_\A(-,-).$

\begin{defi}\cite{S} Let $\X$ and $\Y$ be classes of objects in the abelian category $\A.$ The pair $(\X,\Y)$ is  a {\bf left cotorsion pair} (respectively,  a {\bf right cotorsion pair}) if 
$\X={}^{\perp_1}\Y$ (respectively $\X^{\perp_1}=\Y$). We say that $(\X,\Y)$ is a {\bf cotorsion pair} if it is both a left and right cotorsion pair. 
\end{defi}

The notion of relative cotorsion pair, as was introduced in \cite{BMPS}, will play an important role in this paper. For more details in the study of these cotorsion pairs, we recommend to the reader to see in \cite{BMPS}.

\begin{defi}\cite{BMPS} A $\mathcal{Z}$-cotorsion pair, in an abelian category $\A,$ consists of the following data: 
\begin{itemize}
\item[(a)] a thick subclass  $\mathcal{Z}$ of $\A;$ 
\item[(b)] a pair of clases of objects $(\F,\G)$ in $\mathcal{Z}$ satisfying the following conditions
 \begin{itemize}
 \item[(b1)] $\F={}^{\perp_1}\G\cap\mathcal{Z}$ and $\G=\F^{\perp_1}\cap\mathcal{Z},$
 \item[(b2)]  for any $Z\in\mathcal{Z}$ there are exact sequences $0\to G\to F\to Z\to 0$ and $0\to Z\to G'\to F'\to 0$ with $F,F'\in\F$ and $G,G'\in\G.$
 \end{itemize}
\end{itemize}
\end{defi}

We will use several kinds of pairs $(\X,\Y)$ of classes of objects in $\A.$ A pair $(\X,\Y)\subseteq\A^2$ is {\bf left complete} (respectively, {\bf right complete}) if for any $A\in\A,$ 
there is an exact sequence $0\to Y\to X\to A\to 0$ (respectively, $0\to A\to Y\to X\to 0),$   where $X\in\X$ and $Y\in\Y.$ We say that the pair $(\X,\Y)$ 
is {\bf complete} if it is both a left and right complete. Finally, the pair $(\X,\Y)$ is 
{\bf hereditary} if $\id_\X\,(\Y)=0.$

\begin{lem} \cite[Corollary 2.4]{S} For a cotorsion pair $(\X,\Y)$ in the abelian category $\A,$ with enough projectives and injectives, the 
 following conditions are equivalent.
 \begin{itemize}
  \item[(a)] Every object in $\A$ has a special $\X$-precover.
\vspace{.2cm}
  \item[(b)] Every object in $\A$ has a special $\Y$-preenvelope.
 \end{itemize}
\end{lem}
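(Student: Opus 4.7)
The statement is the classical Salce's Lemma, and by symmetry it suffices to prove $(a) \Rightarrow (b)$; the converse is dual, using enough projectives in place of enough injectives.

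My plan is to construct, from a special $\X$-precover of an auxiliary object, a special $\Y$-preenvelope of the given object via a pullback diagram. Fix $A \in \A$. Since $\A$ has enough injectives, choose a short exact sequence
\begin{equation*}
0 \to A \to I \to C \to 0
\end{equation*}
with $I$ injective. By hypothesis $(a)$, the object $C$ admits a special $\X$-precover, that is, a short exact sequence
\begin{equation*}
0 \to Y' \to X \to C \to 0
\end{equation*}
with $X \in \X$ and $Y' \in \X^{\perp_1} = \Y$. Now form the pullback $E$ of $X \to C \leftarrow I$, which fits into a commutative diagram with exact rows and columns, whose middle row is $0 \to A \to E \to X \to 0$ and whose middle column is $0 \to Y' \to E \to I \to 0$.

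The next step is to show $E \in \Y$. Since $(\X,\Y)$ is a cotorsion pair and $\A$ has enough projectives, $\Y = \X^{\perp_1}$ is closed under extensions (a standard consequence obtained by applying $\Ext^1(X',-)$, for $X'\in\X$, to a short exact sequence with endpoints in $\Y$ and using that in a cotorsion pair with enough projectives one actually gets $\Ext^i(X',\Y)=0$ for all $i\geq 1$, via dimension shifting). Also $I \in \Inj(\A) \subseteq \Y$, because any injective has $\Ext^1_\A(-,I)=0$ and hence lies in $\X^{\perp_1}=\Y$. Thus, from $0 \to Y' \to E \to I \to 0$ with both endpoints in $\Y$, we conclude $E \in \Y$. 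Therefore the exact sequence $0 \to A \to E \to X \to 0$ exhibits the map $A \to E$ as a special $\Y$-preenvelope of $A$: the middle term lies in $\Y$ and the cokernel $X$ lies in $\X = {}^{\perp_1}\Y$, which is precisely the special condition.

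The main technical point, and the only place where the enough-projectives/injectives hypothesis enters non-trivially, is ensuring that the relevant orthogonal class is closed under extensions so that the middle term of the pullback column belongs to it; once that closure is in hand, the pullback (resp.\ pushout in the dual direction) mechanically produces the desired approximation. I would therefore spell out the extension-closure argument carefully and then observe that $(b)\Rightarrow(a)$ is obtained by reversing all arrows: start from a surjection $P \twoheadrightarrow A$ with $P$ projective, apply $(b)$ to the kernel to obtain a special $\Y$-preenvelope, and form the pushout to produce a special $\X$-precover of $A$.
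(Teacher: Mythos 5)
The paper does not actually prove this lemma; it is quoted from Salce \cite[Corollary 2.4]{S}, so the only comparison available is with the classical argument, and your proposal is exactly that classical pullback/pushout proof: it is the right strategy and the diagram chase is set up correctly. One assertion in your justification is wrong, however. You claim that in a cotorsion pair with enough projectives one gets $\Ext^i_\A(X',Y)=0$ for all $i\geq 1$, $X'\in\X$, $Y\in\Y$, ``via dimension shifting.'' That is false in general: dimension shifting replaces $X'$ by a syzygy $\Omega X'$, which need not lie in $\X$ unless $\X$ is resolving, i.e.\ unless the pair is hereditary --- and heredity is an extra hypothesis (this is precisely the content of the lemma from \cite{GR} stated right after the one you are proving). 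Fortunately you do not need that claim at all: to see that $E\in\Y=\X^{\perp_1}$ it suffices to apply $\Hom_\A(X',-)$ to the column $0\to Y'\to E\to I\to 0$ and use the exact fragment $\Ext^1_\A(X',Y')\to\Ext^1_\A(X',E)\to\Ext^1_\A(X',I)$, whose outer terms vanish because $Y'\in\X^{\perp_1}$ and $I$ is injective; no higher Ext groups enter. With that repair the argument is complete, modulo the routine remark (worth one line, since the paper defines ``special'' on top of the precover/preenvelope property) that $0\to A\to E\to X\to 0$ with $E\in\Y$ and $X\in{}^{\perp_1}\Y$ automatically makes $A\to E$ a $\Y$-preenvelope, because $\Hom_\A(E,Y)\to\Hom_\A(A,Y)\to\Ext^1_\A(X,Y)=0$ is exact for every $Y\in\Y$; the dual implication via a projective presentation and a pushout is fine as you sketched it.
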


\begin{lem} \cite{GR} For a cotorsion pair $(\X,\Y)$ in the abelian category $\A,$ with enough projectives and injectives, the following conditions are equivalent.
 \begin{itemize}
  \item[(a)] $\X$ is resolving.
\vspace{.2cm}
  \item[(b)] $\Y$ is coresolving.
\vspace{.2cm}
  \item[(c)] The pair $(\X,\Y)$ is  hereditary.
 \end{itemize}
\end{lem}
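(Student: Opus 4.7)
The plan is to prove the three equivalences by showing $(c) \Leftrightarrow (a)$ and $(c) \Leftrightarrow (b)$ separately, each using the cotorsion pair hypotheses together with the long exact sequence in $\Ext$ and a standard dimension-shifting argument. Because the setup is self-dual (enough projectives/injectives, and the dual roles of $\X$ and $\Y$ in the cotorsion pair), the two equivalences are completely parallel, so I will only write out one in detail and invoke duality for the other.

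For $(c) \Rightarrow (a)$: since $\X = {}^{\perp_1}\Y$, the inclusion $\Proj(\A) \subseteq \X$ is immediate (projectives kill every $\Ext^1$). Closure under extensions follows from the three-term piece of the long exact sequence of $\Ext^1_{\A}(-,Y)$ applied to a short exact sequence $0 \to X_1 \to X \to X_2 \to 0$ with $X_i \in \X$: the outer terms vanish already from the cotorsion-pair condition, forcing $\Ext^1_{\A}(X,Y)=0$ for every $Y\in\Y$. Closure under kernels of epimorphisms between objects of $\X$ uses the stronger hereditary input: given $0 \to K \to X_1 \to X_2 \to 0$ with $X_1,X_2 \in \X$, the long exact sequence gives $\Ext^1_{\A}(X_1,Y) \to \Ext^1_{\A}(K,Y) \to \Ext^2_{\A}(X_2,Y)$, and both outer groups vanish by (c), so $K \in {}^{\perp_1}\Y = \X$.

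For $(a) \Rightarrow (c)$: fix $X \in \X$ and $Y \in \Y$, and prove $\Ext^i_{\A}(X,Y)=0$ for every $i \geq 1$ by induction on $i$. The base case $i=1$ is the cotorsion-pair property. For the inductive step, use that $\A$ has enough projectives to get a short exact sequence $0 \to K \to P \to X \to 0$ with $P \in \Proj(\A)$. Since $\X$ is resolving, $P \in \X$, and then $K$ is the kernel of an epimorphism between objects of $\X$, hence $K \in \X$. Dimension shifting in the long exact sequence of $\Ext^{*}_{\A}(-,Y)$ yields $\Ext^i_{\A}(X,Y) \cong \Ext^{i-1}_{\A}(K,Y)$, which vanishes by the induction hypothesis applied to $K \in \X$.

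The equivalence $(c) \Leftrightarrow (b)$ is obtained by dualizing word-for-word: $\Inj(\A) \subseteq \Y$ follows from $\Y = \X^{\perp_1}$; closure of $\Y$ under cokernels of monomorphisms between its objects uses the hereditary vanishing of $\Ext^2$; and the implication $(b) \Rightarrow (c)$ is proved by dimension-shifting on the right, using enough injectives to build $0 \to Y \to I \to C \to 0$ with $I$ injective (hence in $\Y$) and $C \in \Y$ by coresolving. There is no real technical obstacle here, the only point to be careful about is to make sure the dimension-shifting in $(a) \Rightarrow (c)$ keeps the auxiliary object inside the resolving class $\X$, which is exactly where the closure under kernels of epimorphisms is used; without that hypothesis one only gets $\Ext^1$ vanishing, not the full hereditary condition.
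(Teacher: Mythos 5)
Your proposal is correct: the dimension-shifting induction for (a)$\Rightarrow$(c), the long-exact-sequence argument giving closure under kernels of epimorphisms for (c)$\Rightarrow$(a), and the dual treatment of (b) using enough injectives are all sound, and you correctly isolate where the hereditary hypothesis (vanishing of $\Ext^2$) versus the mere cotorsion-pair hypothesis is used. The paper itself gives no proof, citing \cite{GR}; your argument is precisely the standard one from that source, so there is no divergence of method to report.
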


Let $(\X,\Y)$ be a hereditary right cotorsion pair in an abelian category $\A.$ Note that, in this case,  the class $\Y$ is right saturated.

{\sc Abelian categories with aditional structure.}
In some places of the paper, we consider abelian categories with some additional conditions that were introduced by  A. Grothendieck 
 \cite{Gro}. We are particularly interested in the conditions  AB4* and AB4. An abelian category $\A$ is an {\bf AB4*-abelian} category if 
 $\A$ has products and the product of any non empty set of epimorphisms is also an epimorphism. Dually, an {\bf AB4-abelian} category 
 is an abelian category $\A$ which has coproducts and the coproduct of any non empty set of monomorphisms is also a monomorphism. 
  For a nice treatment of this kind of categories, we recommend the readers to see in \cite{P}.
\

Let $\X$ be a class of objects in an abelian category $\A.$ We denote by $\Prod(\X)$ (respectively, $\Add(\X)$) the class of objects in 
$\A$ which are direct summands of products (respectively, coproducts) of elements of $\X.$ In the case of a single object $\X=\{X\},$ 
 for simplicity we just write $\Prod(X)$ and $\Add(X).$
 \
 
  Let $\A$ be an abelian category with coproducts. An object $M\in\A$ is 
 {\bf $\Sigma$-orthogonal} if $\Ext_\A^i(M,M^{(I)})=0$ for all $i\geq 1$ and any set $I.$ Dually, in an  abelian category $\A$ with products, an object $M\in\A$ is 
 {\bf $\Pi$-orthogonal} if $\Ext_\A^i(M^{I},M)=0$ for all $i\geq 1$ and any set $I.$

\begin{rk}\label{B1} Let $\A$ be an AB4*-abelian category with enough injectives. In this case, the product of any set of exact sequences is an exact sequence \cite[Proposition 8.3]{P}. Then, it can be shown that 
$\Ext_\A^i(A,\prod_{\alpha\in\Lambda}B_\alpha)\simeq \prod_{\alpha\in\Lambda}\Ext_\A^i(A,B_\alpha)$ for any $i\geq 0.$ As a consequence of the above, 
we have that $\id(\prod_{\alpha\in\Lambda}B_\alpha)=\sup_{\alpha\in\Lambda}\,\id(B_\alpha).$ In particular, it follows that 
$$\id(\Prod(M))=\id(M)\quad\text{and}\quad {}^\perp\Prod(M)={}^\perp M,$$ for any object $M\in\A.$ If $M$ is $\Pi$-orthogonal, then $\Ext^i_\A(\Prod(M),\Prod(M))=0$ for any $i\geq 1.$
\end{rk}

{\sc Some fundamental results in AB-theory.} \\
 Let $(\X,\omega)$ be a pair of classes of objects in an abelian category $\A.$ The class $\omega$ 
is {\bf $\X$-injective} if $\id_\X(\omega)=0.$ It is said that $\omega$ is a {\bf relative quasi-cogenerator} in $\X$ if for 
any $X\in\X$ there is an exact sequence $0\to X\to W\to X'\to 0,$ with $W\in\omega$ and $X'\in\X.$ If in addition, the inclusion $\omega\subseteq \X$ holds true, the class $\omega$ is called  {\bf relative cogenerator} in $\X.$  
Dually, we have the notions of {\bf $\X$-projective} and  {\bf relative (quasi) generator} in $\X.$ 
\

Let $\X$ and $\omega$ be classes of objects in the abelian category $\A.$ We recall from \cite{BMPS}, that $(\X,\omega)$ is a {\bf left Frobenius pair} if $\X$ is left thick, 
$\omega$ is closed under direct summands in $\A$ and $\omega$ is an $\X$-injective relative cogenerator in $\X.$ If in addition, $\omega$ is also an $\X$-projective relative generator in $\X,$ then it is said that $(\X,\omega)$ is a {\bf strong left Frobenius pair}.

\begin{lem}\label{AB1}  Let $\X$ and $\Y$ be classes of objects in $\A.$ Then
$$\pd_{\Y}\,(\X^{\vee})=\pd_{\Y}\,(\X)$$
\end{lem}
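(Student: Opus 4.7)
The plan is to prove the two inequalities $\pd_\Y(\X)\le\pd_\Y(\X^\vee)$ and $\pd_\Y(\X^\vee)\le\pd_\Y(\X)$ separately. The first is immediate from the trivial coresolution $0\to X\to X\to 0$ for $X\in\X$, which shows $\coresdim_\X(X)=0$ and hence $\X\subseteq\X^\vee$; monotonicity of $\pd_\Y(-)$ on subclasses then yields the inequality.

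For the reverse inequality, set $d:=\pd_\Y(\X)$; if $d=\infty$ there is nothing to prove, so assume $d<\infty$. I would argue by induction on $n:=\coresdim_\X(M)$ that $\pd_\Y(M)\le d$ for every $M\in\X^\vee$. The case $n=0$ is trivial since $M\in\X$. For the inductive step, splice a finite $\X$-coresolution of $M$ into a short exact sequence $0\to M\to X_0\to M'\to 0$ with $X_0\in\X$ and $\coresdim_\X(M')\le n-1$, so that the inductive hypothesis gives $\pd_\Y(M')\le d$. Applying $\Hom_\A(-,Y)$ for an arbitrary $Y\in\Y$, one extracts from the induced long exact sequence the fragment
$$\Ext^j_\A(X_0,Y)\longrightarrow\Ext^j_\A(M,Y)\longrightarrow\Ext^{j+1}_\A(M',Y).$$
For $j>d$, the left term vanishes because $X_0\in\X$ and $j>d=\pd_\Y(\X)$, while the right term vanishes because $\pd_\Y(M')\le d$ and $j+1>d$. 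Thus $\Ext^j_\A(M,Y)=0$ for all $j>d$, that is $\pd_\Y(M)\le d$. Taking the supremum over $M\in\X^\vee$ and $Y\in\Y$ delivers $\pd_\Y(\X^\vee)\le d$.

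No significant obstacle is anticipated: the whole argument is a standard dimension-shifting induction along the $\X$-coresolution of $M$, and the only bookkeeping to watch is that the Ext-shift must occur in the first argument, which is exactly what applying the contravariant functor $\Hom_\A(-,Y)$ to the SES $0\to M\to X_0\to M'\to 0$ provides. The statement is formally symmetric in appearance to a result of Auslander--Buchweitz, but uses no resolving or closure hypothesis on $\X$ or $\Y$ because the induction runs entirely inside the given coresolution.
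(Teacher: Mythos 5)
Your proof is correct: the induction on $\coresdim_\X(M)$, splicing the coresolution into $0\to M\to X_0\to M'\to 0$ and shifting along the long exact sequence of $\Ext_\A(-,Y)$, is exactly the standard dimension-shifting argument, and the easy inclusion $\X\subseteq\X^\vee$ handles the other inequality. The paper gives no independent proof but simply invokes \cite[Lemma 2.13]{MS}, whose argument is the same one you wrote out, so your proposal matches the paper's approach.
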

\begin{dem} The proof given in  \cite[Lemma 2.13]{MS} can be carried up to the abelian categories.
\end{dem}

\begin{pro}\label{AB2} Let $(\X,\omega)$ be a pair of classes of objects in $\A$ such that $\omega$ is $\X$-injective.
Then, the following statements hold true.
\begin{itemize}
\item[(a)] $\omega^\wedge$ is $\X$-injective.
\item[(b)] If $\omega$ is closed under direct summands in $\A$ and it is a relative cogenerator in $\X,$ then
$$\omega=\{X\in \X\mid \id_\X(X)=0\}=\X\cap\omega^{\wedge},$$
$$\X\cap \omega^{\vee}=\{X\in \X\mid\id_\X(X)<\infty\}.$$
Furthermore, we have that $\id_\X(M)=\coresdim_\omega(M)$ for any $M\in\X\cap \omega^{\vee}.$
\end{itemize}
\end{pro}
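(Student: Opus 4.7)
The plan is to use the $\X$-injectivity hypothesis $\id_\X(\omega)=0$ together with the cogenerator condition, via standard dimension-shifting in $\Ext$; part (a) comes out as a direct induction, and the equalities in (b) then cascade from one another. For (a), I would induct on the length $n$ of a finite $\omega$-resolution $0\to W_n\to\cdots\to W_0\to M\to 0$ of an object $M\in\omega^\wedge$. The case $n=0$ is the hypothesis. For $n\geq 1$, split off the short exact sequence $0\to K_0\to W_0\to M\to 0$ with $K_0\in\omega^\wedge$ of resolution-length $n-1$, and apply $\Hom_\A(X,-)$ for $X\in\X$: the vanishing $\Ext^j_\A(X,W_0)=0$ for $j\geq 1$ produces $\Ext^i_\A(X,M)\simeq\Ext^{i+1}_\A(X,K_0)$ for $i\geq 1$, and the right-hand side vanishes by the inductive hypothesis.

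For the first equality in (b), the inclusion $\omega\subseteq\{X\in\X\mid\id_\X(X)=0\}$ is immediate from $\omega\subseteq\X$ and $\id_\X(\omega)=0$. For the reverse, any $X\in\X$ with $\id_\X(X)=0$ fits into a cogenerator sequence $0\to X\to W\to X'\to 0$ with $W\in\omega$ and $X'\in\X$, which splits because $\Ext^1_\A(X',X)=0$; closure of $\omega$ under direct summands then forces $X\in\omega$. The equality $\omega=\X\cap\omega^{\wedge}$ follows at once: the inclusion $\subseteq$ is trivial, and any $M\in\X\cap\omega^{\wedge}$ satisfies $\id_\X(M)=0$ by part (a), so the first equality places $M$ in $\omega$.

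For the remaining assertions, let $M\in\X\cap\omega^{\vee}$ and fix a finite $\omega$-coresolution $0\to M\to W_0\to\cdots\to W_n\to 0$ of minimal length $n=\coresdim_\omega(M)$; dimension-shifting using $\id_\X(\omega)=0$ immediately yields $\id_\X(M)\leq n$. Conversely, starting from $X\in\X$ with $\id_\X(X)=n<\infty$, iterate the cogenerator condition to produce short exact sequences $0\to X_i\to W_i\to X_{i+1}\to 0$ with $W_i\in\omega$ and $X_{i+1}\in\X$, beginning at $X_0=X$; a parallel dimension-shift gives $\id_\X(X_{i+1})\leq\max\{0,\id_\X(X_i)-1\}$, so $\id_\X(X_n)=0$ and hence $X_n\in\omega$ by the first equality in (b). Splicing the sequences produces an $\omega$-coresolution of $X$ of length at most $n$, so $\coresdim_\omega(X)\leq\id_\X(X)$; together with the reverse inequality this yields both $\X\cap\omega^{\vee}=\{X\in\X\mid\id_\X(X)<\infty\}$ and $\id_\X(M)=\coresdim_\omega(M)$. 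The only delicate point is checking that each iterated cogenerator step lowers $\id_\X$ by exactly one so that termination in $\omega$ occurs after precisely $\id_\X(X)$ steps; this is a short $\Ext$-shift computation that mirrors the one used in (a).
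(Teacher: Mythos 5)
Your proof is correct, and since the paper gives no internal argument here (it simply defers to \cite[Proposition 2.7]{BMPS}), your dimension-shifting proof is essentially the standard argument behind that cited result: induction on $\resdim_\omega$ for (a), splitting the cogenerator sequence via $\Ext^1_\A(X',X)=0$ plus closure under direct summands for the equalities $\omega=\{X\in\X\mid \id_\X(X)=0\}=\X\cap\omega^\wedge$, and the two opposite inequalities $\id_\X(M)\leq\coresdim_\omega(M)$ and $\coresdim_\omega(X)\leq\id_\X(X)$ for the rest. One small remark: your closing worry about each cogenerator step lowering $\id_\X$ by \emph{exactly} one is unnecessary — the inequality $\id_\X(X_{i+1})\leq\max\{0,\id_\X(X_i)-1\}$ (a drop of at least one) already forces $X_n\in\omega$ after at most $n$ steps, and the equality $\id_\X(M)=\coresdim_\omega(M)$ then follows from combining the two inequalities.
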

\begin{proof} See in \cite[Proposition  2.7]{BMPS}.
\end{proof}

In the following result, which goes back to M. Auslander and R. O. Buchweitz \cite{AuB},  the expression $\resdim_\omega(K)=-1$ just means that $K=0.$

\begin{teo}\label{AB4} Let $(\X,\omega)$ be a pair of classes of objects in $\A$ such that $\X$ is closed under extensions, $0\in\X$ and 
$\omega$ is a relative quasi-cogenerator in $\X.$ Then, the following statements hold true, for any  $C\in\A$ with $\resdim_\X(C)=n<\infty.$ 
\begin{itemize}
\item[(a)]There exist exact sequences in $\A$
\begin{center}
$0\to K\to X\stackrel{\varphi}\to C\to 0,$
\end{center}
with $\resdim_\omega(K)\leq n-1$ and $X\in\X,$ and
\begin{center}
$0\to C\stackrel{\varphi'}\to H\to X'\to 0,$
\end{center}
with $\resdim_\omega(H)\leq n$ and $X'\in\X.$ If $\omega\subseteq \X$ then $\resdim_\omega(K)= n-1.$
\item[(b)] If $\omega$ is $\X$-injective, then
  \begin{itemize}
  \item[(i)] $\varphi:X\to C$ is an $\X$-precover and $K\in\X^{\perp},$
  \item[(ii)] $\varphi':C\to H$ is an $\omega^{\wedge}$-preenvelope and $X'\in{}^\perp(\omega^{\wedge}).$
  \end{itemize}
  \end{itemize}
\end{teo}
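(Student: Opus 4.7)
The plan is to proceed by induction on $n=\resdim_\X(C),$ producing both short exact sequences simultaneously via two pushout constructions, and then to derive (b) from Proposition \ref{AB2}(a) together with the usual long-exact-sequence arguments. In the base case $n=0,$ one has $C\in\X,$ so the first sequence is tautological with $X:=C,$ $K:=0$ (and $\resdim_\omega(K)=-1$), while the second is furnished directly by the quasi-cogenerator hypothesis: there is $0\to C\to W\to X'\to 0$ with $W\in\omega$ and $X'\in\X,$ so set $H:=W.$

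For the inductive step, with $n\geq 1,$ pick an exact sequence $0\to C'\to X_0\to C\to 0$ with $X_0\in\X$ and $\resdim_\X(C')\leq n-1.$ Applying the inductive hypothesis to $C'$ yields $0\to C'\to H'\to X''\to 0$ with $\resdim_\omega(H')\leq n-1$ and $X''\in\X.$ Form the pushout of the monomorphisms $C'\to X_0$ and $C'\to H';$ the resulting object $P$ fits into two short exact sequences $0\to X_0\to P\to X''\to 0$ and $0\to H'\to P\to C\to 0.$ The first, together with closure of $\X$ under extensions, gives $P\in\X;$ the second is the first desired sequence for $C,$ with $K:=H'$ and $X:=P.$ Next, apply the quasi-cogenerator property to $X\in\X$ to get $0\to X\to W\to X'\to 0$ with $W\in\omega$ and $X'\in\X,$ and form a second pushout of the epimorphism $X\to C$ along the monomorphism $X\to W.$ The resulting $H$ sits in $0\to C\to H\to X'\to 0$ and also in $0\to K\to W\to H\to 0;$ splicing the $\omega$-resolution of $K$ of length $\leq n-1$ into the latter yields $\resdim_\omega(H)\leq n.$ The sharpening $\resdim_\omega(K)=n-1$ under $\omega\subseteq\X$ follows by contradiction: a shorter $\omega$-resolution of $K$ spliced into $0\to K\to X\to C\to 0$ would produce an $\X$-resolution of $C$ of length $\leq n-1,$ contradicting $\resdim_\X(C)=n.$

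For part (b), Proposition \ref{AB2}(a) gives that $\omega^\wedge$ is $\X$-injective. Since $K\in\omega^\wedge$ and $X'\in\X,$ this immediately yields $\Ext_\A^i(\X,K)=0$ and $\Ext_\A^i(X',\omega^\wedge)=0$ for all $i\geq 1,$ hence $K\in\X^\perp$ and $X'\in{}^\perp(\omega^\wedge).$ Applying $\Hom_\A(Z,-)$ for $Z\in\X$ to the first short exact sequence, and $\Hom_\A(-,W')$ for $W'\in\omega^\wedge$ to the second, the Ext-vanishing kills the relevant connecting map and shows that $\varphi$ is an $\X$-precover and $\varphi'$ an $\omega^\wedge$-preenvelope. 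The main technical point is the bookkeeping in the two pushouts, in particular ensuring that the dimension bound propagates correctly from $\resdim_\omega(K)\leq n-1$ to $\resdim_\omega(H)\leq n$ via the splice $0\to K\to W\to H\to 0.$
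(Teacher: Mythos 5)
Your argument is correct and is essentially the paper's own route: the paper simply defers to Auslander--Buchweitz \cite[Theorem 1.1]{AuB}, whose proof is exactly this induction on $\resdim_\X(C)$ with the two pushout constructions, and your part (b) via Proposition \ref{AB2}(a) and the long exact sequences is the standard completion of that argument. No gaps; the dimension bookkeeping through the splice $0\to K\to W\to H\to 0$ and the sharpening under $\omega\subseteq\X$ are handled correctly.
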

\begin{dem} The proof given in \cite[Theorem 1.1]{AuB} can be adapted to this statements.
\end{dem}

\begin{cor}\label{AB6} Let $\X\subseteq\A$ be a pre-resolving class  and let
$\omega$ be a relative cogenerator in $\X,$  closed under isomorphisms. Then, the followings statements are equivalent, for any  $C\in\A$ 
and $n\geq 0.$ 
\begin{itemize}
\item[(a)] $\resdim_\X(C)\leq n.$ 
\item[(b)]There is an exact sequence $0\to K\to X\stackrel{\varphi}\to C\to 0,$  with $\resdim_\omega(K)\leq n-1$ and $X\in\X.$
\item[(c)]There is an exact sequence $0\to C\stackrel{\varphi'}\to H\to X'\to 0,$ with $\resdim_\omega(H)\leq n$ and $X'\in\X.$
\end{itemize}
\end{cor}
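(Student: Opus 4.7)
The plan is to deduce $(a) \Rightarrow (b), (c)$ directly from Theorem \ref{AB4}, and to establish the two converses by splicing an $\omega$-resolution with the given short exact sequence, the only subtle point being a pullback construction for $(c) \Rightarrow (a)$.

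For $(a) \Rightarrow (b)$ and $(a) \Rightarrow (c)$, I would first verify the hypotheses of Theorem \ref{AB4}: being pre-resolving, $\X$ is closed under extensions, and (assuming $\X$ is nonempty) the kernel of $\mathrm{id}_{X}$ for any $X \in \X$ exhibits $0 \in \X$; moreover, every relative cogenerator is in particular a relative quasi-cogenerator. Setting $m := \resdim_{\X}(C) \leq n$, Theorem \ref{AB4}(a) yields exactly the short exact sequences demanded in (b) and (c), with $\resdim_{\omega}(K) \leq m-1 \leq n-1$ and $\resdim_{\omega}(H) \leq m \leq n$.

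For $(b) \Rightarrow (a)$, I would use $\omega \subseteq \X$ to regard an $\omega$-resolution $0 \to W_{n-1} \to \cdots \to W_{0} \to K \to 0$ of $K$ as an $\X$-resolution, and then splice it with $0 \to K \to X \to C \to 0$ to obtain an $\X$-resolution of $C$ of length at most $n$. The edge case $n=0$ forces $K=0$, so $C \cong X \in \X$.

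For $(c) \Rightarrow (a)$, which I expect to be the main obstacle, I would pick an $\omega$-resolution $0 \to W_{n} \to \cdots \to W_{0} \xrightarrow{p} H \to 0$ and form the pullback $W_{0}' := W_{0} \times_{H} C$ along the inclusion $C \hookrightarrow H$. This produces two short exact sequences, namely $0 \to W_{0}' \to W_{0} \to X' \to 0$ (using $H/C \cong X'$) and $0 \to \ker(p) \to W_{0}' \to C \to 0$. Since $W_{0}, X' \in \X$ and $\X$ is closed under kernels of epimorphisms between its objects, $W_{0}' \in \X$. Splicing the $\omega$-resolution of $\ker(p)$, which has length at most $n-1$ because $\ker(p)$ is the first syzygy in the resolution of $H$, with the second short exact sequence yields the desired $\X$-resolution of $C$ of length at most $n$. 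The crucial point here is that the pre-resolving property of $\X$ is precisely what allows the pullback $W_{0}'$ to remain in $\X$.
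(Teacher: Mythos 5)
Your proof is correct and follows essentially the route the paper takes: the implications (a)$\Rightarrow$(b),(c) come straight from Theorem \ref{AB4} (whose hypotheses you rightly verify, since a pre-resolving class is extension-closed and contains $0$, and a relative cogenerator is in particular a quasi-cogenerator with $\omega\subseteq\X$), while the converses are the standard splicing and pullback arguments behind the cited \cite[Proposition 1.5]{AuB}. In particular, your pullback step for (c)$\Rightarrow$(a), using that $W_0'$ is the kernel of the epimorphism $W_0\to X'$ between objects of $\X$, is exactly the intended use of the pre-resolving hypothesis, so nothing is missing.
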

\begin{dem} It follows from Theorem \ref{AB4}, see \cite[Proposition 1.5]{AuB}
\end{dem}

\begin{pro}\label{AB7}  Let $\X$ and $\Y$ be classes of objects  in $\A.$  Then, the following statements hold true.  
 \begin{itemize}
\vspace{.2cm}
  \item[(a)]  $\id_\X\,(L)\leq$ $\id_\X\,(\Y) + \coresdim_\Y\,(L)$  for every $L\in\A.$ 
\vspace{.2cm}
  \item[(b)] Assume that $\Y=\X^{\perp_1}$ or that $\Y$ is a subclass of $\X$ which is closed under direct summands in $\A.$ If 
  $\id_\X\,(\Y)=0$ then
$$\id_\X\,(L)= \coresdim_\Y\,(L)\quad\text{for any }\;L\in\Y^\vee.$$
  \item[(c)] Let $\A$ be with enough injectives, and let $\Y=\X^{\perp_1}$ or $\Y=\X^{\perp}.$
 Then,  for any $M\in\A,$  $$\coresdim_{\Y}\,(M)\leq\id_\X(M).$$ 
In particular, $\coresdim_{\Y}\,(\A)\leq\pd\,\X.$
 \end{itemize}
\end{pro}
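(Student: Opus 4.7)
For part (a), I proceed by induction on $n := \coresdim_{\Y}(L)$, with the interesting case $n < \infty$. The base $n=0$ gives $L \in \Y$, so $\id_{\X}(L) \leq \id_{\X}(\Y)$. For the inductive step, I pick a short exact sequence $0 \to L \to Y_0 \to L' \to 0$ with $Y_0 \in \Y$ and $\coresdim_{\Y}(L') \leq n-1$, extracted from a $\Y$-coresolution of $L$ of length $n$. Applying the inductive hypothesis to $L'$ and reading the long exact sequence $\Ext^{i-1}_{\A}(X, L') \to \Ext^i_{\A}(X, L) \to \Ext^i_{\A}(X, Y_0)$ for $X \in \X$, both outer terms vanish as soon as $i > \id_{\X}(\Y) + n$, yielding the desired bound.

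For part (b), I again induct on $n := \coresdim_{\Y}(L) < \infty$. Using (a) with $\id_{\X}(\Y)=0$, the inequality $\id_{\X}(L) \leq n$ is immediate, so the content is to show $\coresdim_{\Y}(L) \leq \id_{\X}(L)$. Choose a short exact sequence $0 \to L \to Y_0 \to K_1 \to 0$ as above, with $\coresdim_{\Y}(K_1) \leq n-1$. The hypothesis $\id_{\X}(\Y)=0$ together with dimension shifting yields $\Ext^i_{\A}(X, K_1) \cong \Ext^{i+1}_{\A}(X, L)$ for every $X \in \X$ and $i \geq 1$, so $\id_{\X}(K_1) \leq \max(\id_{\X}(L) - 1, 0)$. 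The inductive hypothesis on $K_1$ gives $\coresdim_{\Y}(K_1) = \id_{\X}(K_1)$; when $\id_{\X}(L) \geq 1$ this yields $\coresdim_{\Y}(L) \leq 1 + \coresdim_{\Y}(K_1) \leq \id_{\X}(L)$. The delicate case, which I view as the main obstacle, is $\id_{\X}(L) = 0$: then $K_1 \in \Y$, and one must upgrade this to $L \in \Y$, which is precisely where the two hypothesis branches are used. If $\Y = \X^{\perp_1}$, the vanishing $\Ext^1_{\A}(X, L) = 0$ for all $X \in \X$ places $L$ directly inside $\Y$. If instead $\Y \subseteq \X$ is closed under direct summands, then $K_1 \in \X$ and $\Ext^1_{\A}(K_1, L) = 0$ by $\id_{\X}(L) = 0$, so the sequence splits and $L$ is a direct summand of $Y_0 \in \Y$, hence $L \in \Y$. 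This closes the induction.

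For part (c), I use that $\A$ has enough injectives. Since $\Inj(\A) \subseteq \X^{\perp} \subseteq \X^{\perp_1}$, every injective lies in $\Y$ in each of the two cases $\Y = \X^{\perp_1}$ and $\Y = \X^{\perp}$. Starting from an injective coresolution of $M$, truncate at $m := \id_{\X}(M)$ to obtain $0 \to M \to I_0 \to \cdots \to I_{m-1} \to \Omega_m \to 0$ with each $I_j$ injective. A standard dimension-shifting argument along this resolution produces $\Ext^j_{\A}(X, \Omega_m) \cong \Ext^{j+m}_{\A}(X, M) = 0$ for all $j \geq 1$ and $X \in \X$, so $\Omega_m \in \X^{\perp} \subseteq \Y$; this exhibits a $\Y$-coresolution of length $m$ and thus $\coresdim_{\Y}(M) \leq \id_{\X}(M)$. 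The final assertion follows since $\id_{\X}(M) \leq \pd(\X)$ for every $M \in \A$, as $\Ext^j_{\A}(X, M) = 0$ whenever $j > \pd(X)$.
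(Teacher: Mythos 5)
Your proof is correct, and it is essentially the argument the paper delegates to the citation of \cite[Theorem 2.1 and Lemma 3.3]{MS}: induction on $\coresdim_\Y$ with dimension shifting for (a) and (b), the split/orthogonality dichotomy to settle the case $\id_\X(L)=0$ in (b), and truncation of an injective coresolution at $\id_\X(M)$ for (c). Nothing further is needed.
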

\begin{dem} The proof given in  \cite[Theorem 2.1 and Lemma 3.3]{MS} also works for abelian categories.
\end{dem}

The following two results will be very useful in this paper. 

\begin{lem}\label{Ldebildimfin} Let $\A$ be an abelian category with enough projectives and $\B\subseteq\A.$ Then 
$\resdim_{{}^\perp\B}(M)=\pd_\B(M)$ for any $M\in\A.$
\end{lem}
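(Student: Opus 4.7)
The plan is to prove the two inequalities $\resdim_{{}^\perp\B}(M)\leq \pd_\B(M)$ and $\pd_\B(M)\leq \resdim_{{}^\perp\B}(M)$ separately, both via dimension shifting along resolutions. The only subtle point is to also handle the infinite values, which will follow automatically once the finite cases are treated.

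For the inequality $\resdim_{{}^\perp\B}(M)\leq \pd_\B(M),$ assume $n:=\pd_\B(M)<\infty.$ Since $\A$ has enough projectives, I would pick a projective resolution of $M$ and let $K_n$ be its $n$-th syzygy, giving an exact sequence $0\to K_n\to P_{n-1}\to\cdots\to P_0\to M\to 0$ with each $P_i\in\Proj(\A)\subseteq{}^\perp\B.$ By dimension shifting, for every $B\in\B$ and $i\geq 1,$
$$\Ext^i_\A(K_n,B)\cong \Ext^{i+n}_\A(M,B)=0,$$
so $K_n\in{}^\perp\B,$ showing $\resdim_{{}^\perp\B}(M)\leq n.$

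For the reverse inequality, assume $n:=\resdim_{{}^\perp\B}(M)<\infty$ and fix a resolution $0\to X_n\to X_{n-1}\to\cdots\to X_0\to M\to 0$ with all $X_i\in{}^\perp\B.$ Setting $K_0:=M$ and $K_{i+1}:=\Ker(X_i\to K_i),$ I would use the short exact sequences $0\to K_{i+1}\to X_i\to K_i\to 0$ and the associated $\Ext^*_\A(-,B)$ long exact sequence. Since $X_i\in{}^\perp\B,$ the sandwiching terms vanish for $j\geq 1,$ yielding the shift $\Ext^{j+1}_\A(K_i,B)\cong\Ext^j_\A(K_{i+1},B).$ Iterating gives, for $B\in\B$ and $k\geq 1,$
$$\Ext^{n+k}_\A(M,B)\cong \Ext^k_\A(K_n,B)=\Ext^k_\A(X_n,B)=0,$$
hence $\pd_\B(M)\leq n.$

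Finally, the infinite cases are settled by contraposition: if $\pd_\B(M)=\infty$ were accompanied by finite $\resdim_{{}^\perp\B}(M),$ the second step would force $\pd_\B(M)<\infty,$ and similarly the first step handles the opposite situation. The main obstacle (and really the only thing to be careful with) is verifying that the dimension shifting closes up correctly at both ends; this is routine once one notes that every $X_i$ kills $\Ext^{\geq 1}_\A(-,B)$ for all $B\in\B$ and that enough projectives guarantees the existence of the auxiliary projective resolution used in the first inequality.
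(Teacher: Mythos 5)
Your proof is correct and takes essentially the same route as the paper: the paper splits the claim into the same two inequalities, getting $\resdim_{{}^\perp\B}(M)\leq \pd_\B(M)$ from the dual of Proposition \ref{AB7} (c) (where enough projectives is used) and $\pd_\B(M)\leq \resdim_{{}^\perp\B}(M)$ from the dual of Proposition \ref{AB7} (a) together with $\pd_\B({}^\perp\B)=0.$ Your two dimension-shifting arguments are precisely the proofs of those cited facts written out in full, and your treatment of the infinite values is fine.
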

\begin{dem} Let $M\in\A.$ By the dual of Proposition \ref{AB7} (c), we get that $\resdim_{{}^\perp\B}(M)\leq \pd_\B(M).$ On the other hand, 
  since $\pd_{\B}({}^\perp\B)=0,$ it follows from the dual of Proposition \ref{AB7} (a) that $\pd_\B(M)\leq \resdim_{{}^\perp\B}(M).$ 
  Thus $\pd_\B(M)= \resdim_{{}^\perp\B}(M).$
\end{dem}

\begin{rk}\label{resdimPD} Let $\A$ be an abelian category. By the dual of Proposition \ref{AB7} (b), $\pd(M)=\resdim_{\Proj(A)}(M)$ 
for any $M\in\Proj(\A)^\wedge.$ Moreover, in the case that $\A$ has enough projectives, it follows from Lemma \ref{Ldebildimfin} that 
$\pd(M)=\resdim_{\Proj(\A)}(M)$ for any $M\in\A.$
\end{rk}

The following result, whose proof can be found in \cite[Proposition 2.1]{AuB},  establishes a connection between resolutions and relative projective dimensions.

\begin{teo}\label{AB8}  Let $(\X,\omega)\subseteq\A^2$  be such that $\X$ is closed under extensions and direct summands in $\A;$ and let $\omega$ be an $\X$-injective relative cogenerator in $\X,$ which is closed under direct summands in $\A.$ Then
$$\pd_{\omega^{\wedge}}(C)=\pd_{\omega}\,(C)=\resdim_{\X}(C) \quad \forall\, C\in \X^{\wedge}\!.$$
\end{teo}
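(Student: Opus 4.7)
The plan is to prove the two equalities $\pd_{\omega^\wedge}(C)=\pd_\omega(C)$ and $\pd_\omega(C)=\resdim_\X(C)$ separately. The first holds for arbitrary $C\in\A$: the inequality $\pd_\omega(C)\leq\pd_{\omega^\wedge}(C)$ is immediate from $\omega\subseteq\omega^\wedge$, while the reverse follows by a standard dimension shift. Given $W\in\omega^\wedge$ with an $\omega$-resolution $0\to\omega_k\to\cdots\to\omega_0\to W\to 0$, split it into short exact sequences and iterate the long exact sequence of $\Ext^*_\A(C,-)$ to obtain $\Ext^j_\A(C,W)\cong\Ext^{j+k}_\A(C,\omega_k)$ for every $j>\pd_\omega(C)$; the right-hand side vanishes by the definition of $\pd_\omega(C)$.

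For $\pd_\omega(C)=\resdim_\X(C)$ with $C\in\X^\wedge$, set $m:=\resdim_\X(C)$. The easy inequality $\pd_\omega(C)\leq m$ is obtained by choosing an $\X$-resolution $0\to X_m\to\cdots\to X_0\to C\to 0$ and dimension-shifting against any $W\in\omega$: the $\X$-injectivity of $\omega$ (i.e. $\id_\X(\omega)=0$, meaning $\Ext^i_\A(X,W)=0$ for $i\geq 1$ and $X\in\X$) yields $\Ext^j_\A(C,W)\cong\Ext^{j-m}_\A(X_m,W)=0$ for all $j>m$.

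The reverse inequality $\resdim_\X(C)\leq\pd_\omega(C)$ is the delicate part, and I would prove it by induction on $m$. The base case $m=0$ is trivial. For $m\geq 1$, Theorem~\ref{AB4}(a) applies since $\X$ is closed under extensions and direct summands (so $0\in\X$) and $\omega\subseteq\X$ is a relative quasi-cogenerator in $\X$; it produces an exact sequence $0\to K\to X\to C\to 0$ with $X\in\X$ and $\resdim_\omega(K)=m-1$. A short bookkeeping check shows $\resdim_\X(K)=m-1$: if it were strictly smaller, gluing an $\X$-resolution of $K$ beneath $X$ would yield one of $C$ of length less than $m$, contradicting minimality. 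Applying $\Ext^*_\A(-,W)$ with $W\in\omega$, the vanishing $\Ext^i_\A(X,W)=0$ for $i\geq 1$ produces the isomorphism $\Ext^j_\A(C,W)\cong\Ext^{j-1}_\A(K,W)$ for $j\geq 2$. When $m\geq 2$ the induction hypothesis gives $\pd_\omega(K)=m-1\geq 1$, so some $W\in\omega$ satisfies $\Ext^{m-1}_\A(K,W)\neq 0$; consequently $\Ext^m_\A(C,W)\neq 0$ and $\pd_\omega(C)\geq m$.

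The main obstacle is the case $m=1$, where the dimension shift $\Ext^m_\A(C,W)\cong\Ext^{m-1}_\A(K,W)$ is unavailable in degree $1$. Here $K\in\omega$ and a different argument is required: suppose for contradiction that $\Ext^1_\A(C,W)=0$ for every $W\in\omega$. Specializing to $W=K$, the tail $\Hom_\A(X,K)\to\Hom_\A(K,K)\to\Ext^1_\A(C,K)=0$ of the long exact sequence shows that $\id_K$ lifts through $K\hookrightarrow X$ to a retraction, splitting $0\to K\to X\to C\to 0$. Then $C$ is a direct summand of $X\in\X$, so by closure under direct summands $C\in\X$, forcing $\resdim_\X(C)=0$ and contradicting $m=1$. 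Therefore some $W\in\omega$ has $\Ext^1_\A(C,W)\neq 0$, which yields $\pd_\omega(C)\geq 1$ and closes the induction.
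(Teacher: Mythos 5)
Your proof is correct. The paper itself offers no argument for this statement beyond the citation to \cite[Proposition 2.1]{AuB}, and what you have written is essentially that standard Auslander--Buchweitz proof reconstructed from the paper's own toolkit: the equality $\pd_{\omega^\wedge}(C)=\pd_\omega(C)$ by dimension shifting along a finite $\omega$-resolution, the inequality $\pd_\omega(C)\leq\resdim_\X(C)$ by shifting against $\id_\X(\omega)=0$, and the reverse inequality by induction using the approximation sequence $0\to K\to X\to C\to 0$ from Theorem \ref{AB4}, with the non-splitting argument (via closure of $\X$ under direct summands) handling the case $\resdim_\X(C)=1$ exactly where the degree shift is unavailable. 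All the delicate points — that $\resdim_\X(K)=m-1$, that $\pd_\omega(K)=m-1$ forces $\Ext^{m-1}_\A(K,W)\neq 0$ for some $W\in\omega$, and the splitting in the base case — are handled correctly, so there is nothing to add.
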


\begin{pro}\label{AB10} Let $(\X,\omega)$ be a left Frobenius pair in  $\A.$ Then, for any $C\in\X^\wedge$ 
and $n\geq 0,$ the following statements are equivalent.
\begin{itemize}
\item[(a)] $\resdim_\X(C)\leq n.$
\item[(b)] If $0\to K_n\to X_{n-1}\to\cdots \to X_1\to X_0\to C\to 0$ is an exact sequence, with $X_i\in\X$  $\forall\,i\in[0,n-1],$ then $K_n\in\X.$
\end{itemize}
\end{pro}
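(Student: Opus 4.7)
The approach is to combine Theorem \ref{AB8}, which provides the equality $\pd_\omega(M)=\resdim_\X(M)$ for every $M\in\X^\wedge,$ with a dimension-shifting argument enabled by the $\X$-injectivity of $\omega.$

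For $(b)\Rightarrow(a),$ since $C\in\X^\wedge$ I would pick any finite $\X$-resolution of $C,$ pad on the left by zero terms if its length is $\leq n,$ or truncate it at position $n$ if it is longer, so as to produce an exact sequence $0\to K_n\to Y_{n-1}\to\cdots\to Y_0\to C\to 0$ with every $Y_i\in\X.$ Hypothesis (b) then forces $K_n\in\X,$ and this is an $\X$-resolution of length at most $n,$ so $\resdim_\X(C)\leq n.$

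For $(a)\Rightarrow(b)$ I would proceed in three steps. Theorem \ref{AB8} first gives $\pd_\omega(C)=\resdim_\X(C)\leq n.$ Next, breaking the given long exact sequence into short exact sequences $0\to C_{i+1}\to X_i\to C_i\to 0$ with $C_0=C$ and $C_n=K_n,$ and using that $\omega$ is $\X$-injective (so $\Ext_\A^{j}(X_i,W)=0$ for every $j\geq 1$ and every $W\in\omega$), a standard dimension shift along these short exact sequences yields the isomorphism $\Ext^j_\A(K_n,W)\cong \Ext^{j+n}_\A(C,W)=0$ for every $j\geq 1$ and every $W\in\omega,$ that is, $\pd_\omega(K_n)=0.$ Finally, inductively applying to each of the short exact sequences $0\to C_{i+1}\to X_i\to C_i\to 0$ the fact that $\X^\wedge$ is closed under kernels of epimorphisms (a standard consequence of $\X$ being closed under extensions) gives $K_n\in\X^\wedge,$ and then Theorem \ref{AB8} applied to $K_n$ yields $\resdim_\X(K_n)=\pd_\omega(K_n)=0,$ i.e. $K_n\in\X.$

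The only delicate point is the closure of $\X^\wedge$ under kernels of epimorphisms in this abstract abelian setting; it is a well known consequence of $\X$ being pre-resolving but deserves an explicit verification. If one prefers to avoid invoking this closure, a generalized Schanuel-type comparison of the given partial resolution with an $\X$-resolution of length $n$ provided by (a) realizes $K_n$ as a direct summand of an object of $\X,$ and then the closure of $\X$ under direct summands (which is part of the left thick hypothesis) directly forces $K_n\in\X.$
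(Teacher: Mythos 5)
Your overall strategy for $(a)\Rightarrow(b)$ --- Theorem \ref{AB8} to get $\pd_\omega(C)=\resdim_\X(C)\leq n$, a dimension shift along the short exact sequences $0\to C_{i+1}\to X_i\to C_i\to 0$ using $\id_\X(\omega)=0$ to get $\pd_\omega(K_n)=0$, and Theorem \ref{AB8} again to convert this back into $\resdim_\X(K_n)=0$ --- is sound, and $(b)\Rightarrow(a)$ is fine. (The paper itself only cites \cite[Proposition 3.3]{AuB}, so you are reconstructing a proof rather than matching one.) The genuine gap is exactly the point you flag but then dismiss: to apply Theorem \ref{AB8} to $K_n$ you must first know $K_n\in\X^\wedge$, and your justification --- that $\X^\wedge$ is closed under kernels of epimorphisms ``as a standard consequence of $\X$ being pre-resolving'' --- is not available in this generality. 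In an arbitrary abelian category, with no projectives around, closure of $\X^\wedge$ under kernels of epimorphisms (and under extensions, to which the pullback reduction leads) is a substantive theorem whose known proofs use the $\X$-injective relative cogenerator $\omega$ via the approximations of Theorem \ref{AB4}; it is precisely \cite[Theorem 2.11]{BMPS} ($\X^\wedge$ is thick for a left Frobenius pair), a result of essentially the same depth as the proposition you are proving. Quoting that theorem would close the gap, but asserting the closure as a formal consequence of pre-resolving does not.

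Your proposed fallback does not repair this as stated. A Schanuel-type comparison between the given exact sequence and ``an $\X$-resolution of length $n$ provided by (a)'' requires a chain map over $\mathrm{id}_C$ (or, in the pullback version, splittings), and these need $\Ext^1_\A(X_i,-)$ to vanish on the syzygies of the other resolution; an arbitrary resolution witnessing (a) carries no such property, since $\id_\X(\X)=0$ is not assumed. The comparison does work if you first build, by iterating Theorem \ref{AB4}(a),(b)(i), a length-$n$ $\X$-resolution of $C$ all of whose syzygies lie in $\X^{\perp}$: then the lifts exist, the mapping cone is an acyclic complex whose terms other than $K_n$ lie in $\X$ (using that $\X$ is closed under finite coproducts), and peeling it off from the right with closure of $\X$ under kernels of epimorphisms, plus closure under direct summands to handle the final identification, yields $K_n\in\X$. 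So either cite \cite[Theorem 2.11]{BMPS} for $K_n\in\X^\wedge$ in your main route, or replace ``an $\X$-resolution provided by (a)'' by the special resolution coming from Theorem \ref{AB4}; as written, both variants have a hole at the same spot.
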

\begin{proof} \cite[Proposition 3.3]{AuB}.
\end{proof}

\begin{pro}\label{AB12} Let $(\X,\omega)$ be a left Frobenius pair in  $\A.$ Then, $\omega^\wedge$ is 
a right thick class   in $\A.$ 
\end{pro}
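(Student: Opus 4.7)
The plan is to verify the three closure properties that together characterize a right thick class in $\A$: closure under extensions, under cokernels of monomorphisms between its objects, and under direct summands. The uniform technical input I would establish first is the Ext-vanishing
\begin{equation*}
\Ext^i_\A(X,M)=0\quad\text{for all } X\in\X,\ M\in\omega^\wedge,\ i\geq 1,
\end{equation*}
proved by a straightforward induction on the length of an $\omega$-resolution of $M$: the base case uses that $\omega$ is $\X$-injective, and the inductive step is a dimension shift along the syzygy short exact sequence $0\to\Omega\to W_0\to M\to 0$. A companion observation is that $\omega$ is closed under finite direct sums: for $A,C\in\omega$ the object $A\oplus C$ lies in $\X$, hence embeds in some $W\in\omega$ with $\X$-quotient $X$, and the $\X$-injectivity of $\omega$ forces $\Ext^1_\A(X,A\oplus C)=0$, so $A\oplus C$ is a direct summand of $W\in\omega$.

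For closure under extensions, given $0\to A\to B\to C\to 0$ with $A,C\in\omega^\wedge$, I would run the horseshoe lemma on chosen $\omega$-resolutions $W^A_\bullet\to A$ and $W^C_\bullet\to C$. The lift at stage $j$ requires the vanishing of $\Ext^1_\A(W^C_j,\Omega^j A)$, and since $W^C_j\in\omega\subseteq\X$ while the syzygy $\Omega^j A$ still belongs to $\omega^\wedge$, the preliminary Ext-vanishing supplies it; the terms $W^A_j\oplus W^C_j$ lie in $\omega$ by the companion observation, and the resulting $\omega$-resolution of $B$ has length $\max(\resdim_\omega(A),\resdim_\omega(C))$. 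For closure under cokernels of monomorphisms, given $0\to A\to B\to C\to 0$ with $A,B\in\omega^\wedge$, I would pick an $\omega$-resolution $\cdots\to W_0\to B\to 0$ of $B$ and examine the composite $W_0\twoheadrightarrow B\twoheadrightarrow C$: its kernel $K$ fits into $0\to\Ker(W_0\to B)\to K\to A\to 0$ with both ends in $\omega^\wedge$, so the closure under extensions just established gives $K\in\omega^\wedge$, and concatenating an $\omega$-resolution of $K$ with $K\hookrightarrow W_0$ produces a finite $\omega$-resolution of $C$.

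The delicate part is closure under direct summands, which I would handle by induction on $n:=\resdim_\omega(M_1\oplus M_2)$. The case $n=0$ is immediate because $\omega$ is closed under summands. For $n\geq 1$, take the first syzygy $0\to L\to W\xrightarrow{\pi}M_1\oplus M_2\to 0$ of an $\omega$-resolution, with $W\in\omega$ and $\resdim_\omega(L)\leq n-1$, and set $K_i:=\pi^{-1}(M_i)\subseteq W$. The crucial device is the short exact sequence $0\to L\to K_1\oplus K_2\to W\to 0$ coming from $K_1\cap K_2=L$ and $K_1+K_2=W$: closure under extensions forces $K_1\oplus K_2\in\omega^\wedge$ with $\resdim_\omega\leq n-1$, so the induction hypothesis yields $K_1,K_2\in\omega^\wedge$, and then closure under cokernels of monomorphisms applied to $0\to K_i\to W\to M_{3-i}\to 0$ places each $M_i$ in $\omega^\wedge$. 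The main obstacle is precisely this summand step: the apparently circular dependence between $M_1$ and $M_2$ is dissolved by recognizing that $K_1\oplus K_2$ has strictly smaller $\omega$-resolution dimension than $M_1\oplus M_2$, which is what lets the induction on $n$ actually run.
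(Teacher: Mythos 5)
Your proof is correct. Note, however, that the paper itself gives no argument for this proposition: it simply cites \cite[Proposition 3.8]{AuB}, so your self-contained verification is doing work the paper delegates to Auslander--Buchweitz, and its three steps are essentially the classical line of reasoning. Your preliminary Ext-vanishing is exactly Proposition \ref{AB2}(a) of the paper ($\omega^\wedge$ is $\X$-injective), and your observation that $\omega$ is closed under finite coproducts (split embedding into a relative cogenerator plus closure of $\omega$ under summands) is the same device used in Proposition \ref{cogenera}(a); you could have quoted these instead of reproving them. The horseshoe argument for extensions and the splicing argument for cokernels of monomorphisms are standard and correctly executed, the only needed hypothesis at each lifting stage being $\Ext^1_\A(W^C_j,\Omega^jA)=0$, which your Step 1 supplies since $W^C_j\in\omega\subseteq\X$ and $\Omega^jA\in\omega^\wedge$. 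The genuinely delicate point, closure under direct summands, you handle by induction on $\resdim_\omega(M_1\oplus M_2)$ via the Mayer--Vietoris sequence $0\to L\to K_1\oplus K_2\to W\to 0$ with $K_i=\pi^{-1}(M_i)$, $K_1\cap K_2=L$ and $K_1+K_2=W$; this is valid in any abelian category (the map $K_1\oplus K_2\to W$ is the difference of the inclusions, its kernel is the pullback $K_1\cap K_2$ and its image is $K_1+K_2$), the extension step drops the resolution dimension to at most $n-1$ so the induction is not circular, and the cokernel step $0\to K_i\to W\to M_{3-i}\to 0$ then finishes. Only cosmetic points remain: the length produced by the horseshoe is $\leq\max(\resdim_\omega A,\resdim_\omega C)$ rather than equal to it, and when padding resolutions of different lengths one should note $0\in\omega$ (which follows since $\omega$ is nonempty and closed under direct summands); neither affects the argument.
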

\begin{dem}  \cite[Proposition 3.8]{AuB}
\end{dem}

\section{General properties of the relative Gorenstein objects}
Throughout this section, we assume that $\A$ is an abelian category. Consider a class $\X$ of objects in $\A.$ We say that $\X$ is {\bf $\X$-epic} in 
$\A$  if for  any 
$A\in\A$ there is an epimorphism $X\to A,$ with $X\in\X.$ Note that, if $\A$ has enough projectives, then  for the class $\Q_0(\A)$ of 
projective objects in $\A,$ we have that $\Q_0(\A)$  is $\Q_0(\A)$-epic in $\A$.
Dually, we have the notion saying when $\X$  is $\X$-monic in $\A.$ For example, the class $\I_0(\A),$ of 
 injective objects in $\A,$ is $\I_0(\A)$-monic in $\A$ if $\A$ has enough injectives.

\begin{defi} A pair $(\X,\Y)$ of classes of objects in $\A$ is {\bf weak GP-admissible} if $\pd_\Y(\X)=0$ and $\X$ is $\X$-epic in $\A.$ If in addition, the pair $(\X,\Y)$ satisfies the following two conditions
\begin{itemize}
 \item[\normalfont{(a)}]  $\X$ and  $\Y$ are closed under finite coproducts  in $\A$, and $\X$ is closed under extensions;
 \item[\normalfont{(b)}] $\omega := \X \cap \Y$ is a relative cogenerator in  $\X;$
\end{itemize}
 we say that  $(\X,\Y)$ is {\bf GP-admissible}.
\end{defi}

\begin{defi}\label{defiGP} Let $(\X,\Y)\subseteq \A^2.$  A left complete $(\X,\Y)$-resolution is an acyclic complex 
$$\eta:\quad \cdots \to X_1\to X_0\to X^0\to X^1\to\cdots$$
with $X_i,X^i\in\X$ and such that the complex $\Hom_\A(\eta,Y)$ is acyclic for any $Y\in\Y.$ The object 
$M:=\Ima(X_0\to X^0)$ is called {\bf  $(\X,\Y)$-Gorenstein  projective}   or $GP_{(\X,\Y)}$-object. The class of all 
$GP_{(\X,\Y)}$-objects is denoted by $\GP_{(\X,\Y)}(\A)$ or $\GP_{(\X,\Y)}.$ We also say that $\X$ is the {\bf approximation class} and $\Y$ is 
the {\bf testing class} in $\GP_{(\X,\Y)}.$
\end{defi}

Note that $\X\subseteq \GP_{(\X,\Y)}$ if $0\in\X.$ The notion of left complete $(\X,\Y)$-resolution was already considered in \cite[Definition 2.1]{PC}, but only for the case that $\Q_0(\A)\subseteq \X$ and $\A=\Modu(R),$ the category of left $R$-modules over some ring $R.$ 

\begin{defi} For any $(\X,\Y)\subseteq\A^2$ and $M\in\A,$ the  $(\X,\Y)$-Gorenstein  projective dimension of $M$ is
$$\Gpd_{(\X,\Y)}(M):=\resdim_{\GP_{(\X,\Y)}}(M).$$
For any class $\mathcal{Z}\subseteq\A,$ we set $\Gpd_{(\X,\Y)}(\mathcal{Z}):=\sup\{\Gpd_{(\X,\Y)}(Z)\;:\;Z\in\mathcal{Z}\}.$
If $\X=\Y,$ for simplicity, we set $\GP_\X:=\GP_{(\X,\X)}$ and $\Gpd_{\X}(M):=\Gpd_{(\X,\X)}(M).$
\end{defi}

\begin{ex} Let $(\X,\Y)\subseteq\A^2$.
\begin{itemize}
\item[(1)] If $\X=\Q_0(\A)=\Y,$  the relative $(\X,\Y)$-Gorenstein  projective objects are just the usual 
Gorenstein projective objects in $\A.$ In this case, we write
 \begin{center} $\GP(\A):=\GP_{\Q_0(\A)}$ and $\Gpd(M):=\Gpd_{\Q_0(\A)}(M).$\end{center}
\item[(2)] If $\Q_0(\A)\subseteq\X=\Y,$  the relative $(\X,\Y)$-Gorenstein  projective objects are just called 
$\X$-Gorenstein projective objects  \cite{BO, Ta}. 
 \item[(3)]  For $\A:=\Modu\,(R),$  $\X:=\Proj\,(R):=\Q_0(\A)$ and  the class $\Y:=\Flat\,(R)$ of
flat $R$-modules,  the  $(\X,\Y)$-Gorenstein  projective modules are just known as the {\bf Ding-projective} modules \cite{Gi}.
In this case, we write
 \begin{center} $\DP(R):=\GP_{(\Proj\,(R),\Flat\,(R))}$ and $\Dpd\,(M):=\Gpd_{(\Proj(R),\Flat\,(R))}(M).$\end{center}
\end{itemize}
  
\end{ex}
\begin{lem}\label{GP1} Let $(\X,\Y)\subseteq\A^2$ with $0\in\X.$ Then,  the class $\X$ is a relative  generator and cogenerator in $\GP_{(\X,\Y)}.$
\end{lem}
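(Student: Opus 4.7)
The plan is to unpack the definition of a left complete $(\X,\Y)$-resolution and exploit the fact that shifting such a resolution yields another left complete $(\X,\Y)$-resolution. This immediately produces the two short exact sequences needed for the generator/cogenerator conditions.

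First I would verify the inclusion $\X\subseteq\GP_{(\X,\Y)}$, which is a prerequisite for the relative (co)generator notions as they are defined earlier in the paper. Given $X\in\X$, since $0\in\X$, the acyclic complex
$$\eta_X:\quad\cdots\to 0\to X\stackrel{\mathrm{id}}{\to}X\to 0\to\cdots$$
is a left complete $(\X,\Y)$-resolution (it is acyclic, all objects lie in $\X$, and $\Hom_\A(\eta_X,Y)$ is trivially acyclic for every $Y\in\Y$); the image of the central map is $X$, so $X\in\GP_{(\X,\Y)}$.

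Next, given $M\in\GP_{(\X,\Y)}$, fix a left complete $(\X,\Y)$-resolution
$$\eta:\quad\cdots\to X_1\stackrel{d_1}{\to}X_0\stackrel{d_0}{\to}X^0\stackrel{d^0}{\to}X^1\stackrel{d^1}{\to}\cdots$$
with $M=\Ima(d_0)$. Setting $M':=\Ker(d_0)=\Ima(d_1)$ and $M'':=\Ima(d^0)=\Ker(d^1)$, the acyclicity of $\eta$ produces the two short exact sequences
$$0\to M'\to X_0\to M\to 0\qquad\text{and}\qquad 0\to M\to X^0\to M''\to 0,$$
with $X_0,X^0\in\X$. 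It remains to show $M',M''\in\GP_{(\X,\Y)}$.

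For $M''$, the one-step left shift
$$\eta^{[-1]}:\quad\cdots\to X_0\stackrel{d_0}{\to}X^0\stackrel{d^0}{\to}X^1\stackrel{d^1}{\to}X^2\to\cdots$$
is still acyclic and still $\Hom_\A(-,Y)$-acyclic for every $Y\in\Y$, since both properties are invariant under shifting; moreover all its terms are in $\X$, and the image of its central map $d^0$ is exactly $M''$, so $M''\in\GP_{(\X,\Y)}$. The analogous one-step right shift of $\eta$ shows $M'\in\GP_{(\X,\Y)}$. Combined with $\X\subseteq\GP_{(\X,\Y)}$, this yields that $\X$ is both a relative generator and a relative cogenerator in $\GP_{(\X,\Y)}$.

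There is no genuine obstacle here — the only thing to keep in mind is the bookkeeping between the indices $X_i$ and $X^i$ and the assertion that the $\Hom_\A(-,Y)$-acyclicity of $\eta$ passes to the shifted complex, which is immediate from the definition of acyclicity of a shifted chain complex.
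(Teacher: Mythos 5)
Your proposal is correct and follows essentially the same route as the paper: the paper's proof also reads off the two short exact sequences $0\to\Ima(d_1)\to X_0\to M\to 0$ and $0\to M\to X^0\to\Ima(d^0)\to 0$ from the given left complete $(\X,\Y)$-resolution, the images being $GP_{(\X,\Y)}$-objects precisely because the shifted complex is again a left complete $(\X,\Y)$-resolution (and the inclusion $\X\subseteq\GP_{(\X,\Y)}$ for $0\in\X$ is noted in the paper right after Definition \ref{defiGP}, exactly as you argue). Your write-up merely makes explicit the shift-invariance and the inclusion step that the paper leaves implicit.
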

\begin{dem}  Let $\cdots\to X_1\xrightarrow{d_1} X_0\xrightarrow{d_0} X^0\xrightarrow{d^0} X^1\xrightarrow{d^1}\cdots$ be a left complete $(\X,\Y)$-resolution 
such that $M=\Ima\,(d_0).$ Then, we have the exact sequences $0\to\Ima(d_1)\to X_0\to M\to 0$ and 
$0\to M\to X^0\to\Ima(d^0)\to 0,$ where $\Ima(d_1)$ and $\Ima(d^0)$ are $GP_{(\X,\Y)}$-objects.
\end{dem}

\begin{defi} A pair $(\X,\Y)$ of classes of objects in $\A$ is  {\bf weak GI-admissible} if $\id_\X(\Y)=0$ and $\Y$ is $\Y$-monic in 
$\A.$  If in addition, the pair $(\X,\Y)$ satisfies the following two conditions
\begin{itemize}
 \item[\normalfont{(a)}]  $\X$ and  $\Y$ are closed under finite coproducts  in  $\A$, and $\Y$ is closed under extensions;
 \item[\normalfont{(b)}] $\omega := \X \cap \Y$ is a relative generator in  $\Y;$
\end{itemize}
 we say that  $(\X,\Y)$ is {\bf GI-admissible}.
\end{defi}

\begin{defi} Let $(\X,\Y)\subseteq\A^2.$ A  right complete  $(\X,\Y)$-resolution is an acyclic complex 
$$\eta:\quad \cdots \to Y_1\to Y_0\to Y^0\to Y^1\to\cdots$$
with $Y_i,Y^i\in\Y$ and such that the complex $\Hom_\A(X,\eta)$ is acyclic for any $X\in\X.$ The object 
$M:=\Ima(Y_0\to Y^0)$ is called {\bf $(\X,\Y)$-Gorenstein  injective} or $GI_{(\X,\Y)}$-object. The class of all 
$GI_{(\X,\Y)}$-objects is denoted by $\GI_{(\X,\Y)}(\A)$ or $\GI_{(\X,\Y)}.$ We also say that $\Y$ is the approximation class and 
$\X$ is the testing class in $\GI_{(\X,\Y)}.$
\end{defi}

If $\X=\I_0(\A)=\Y,$  the relative $(\X,\Y)$-Gorenstein  injective objects are just the usual 
Gorenstein injective objects in $\A.$ In the case that $\A=\Modu\,(R),$  $\X$ is the class of the FP-injective  $R$-modules (i.e. those $E$ such that 
$\Ext^1(F,E)=0$ for any finitely presented $R$-module $F$) and $\Y:=\I_0(\A),$ we have that the  $(\X,\Y)$-Gorenstein  injective modules are just the {\bf Ding-injective} modules \cite{Gi}.

\begin{defi} For any $(\X,\Y)\subseteq\A^2$ and $M\in\A,$ the  $(\X,\Y)$-Gorenstein  injective dimension of $M$ is
$$\Gid_{(\X,\Y)}(M):=\coresdim_{\GI_{(\X,\Y)}}(M).$$
\end{defi}

\begin{rk} Let $(\X,\Y)\subseteq\A^2.$  It can be seen that $(\X,\Y)$ is $GI$-admissible in $\A$ if and only if 
$(\Y^{op},\X^{op})$ is $GP$-admissible in the opposite category $\A^{op}.$ Moreover, 
$\GI_{(\X,\Y)}(\A)=(\GP_{(\Y^{op},\X^{op})}(\A^{op}))^{op}.$ Therefore, any obtained result for $GP_{(\X,\Y)}$-objects can be translated 
into a result for $GI_{(\X,\Y)}$-objects. So, in what follows, we are dealing only with the relative Gorenstein projective objects.
\end{rk}

The following  result \cite[Lemma 4.1.1]{Chris} is an useful tool for studying the class of relative Gorenstein  projective objects. 

\begin{lem}\label{Chris} Let $(X^{\bullet},d_{X^{\bullet}})$ be an acyclic cochain complex of objects in $\A,$ and let $N\in\A$ be such that 
$X^i\in{}^{\perp}N$ for any $i\in\mathbb{Z}.$ Then, the following statements are equivalent, for 
${Z^i}_{\!\!X^{\bullet}}:=\Ker\,({d^i}_{\!\!X^{\bullet}}).$
\begin{itemize}
\item[(a)] The complex $\Hom_\A(X^{\bullet},N)$ is acyclic.
\item[(b)] ${Z^i}_{\!\!X^{\bullet}}\in{}^{\perp_1}N$ for any $i\in\mathbb{Z}.$
\item[(c)] ${Z^i}_{\!\!X^{\bullet}}\in{}^{\perp}N$ for any $i\in\mathbb{Z}.$
\end{itemize}
\end{lem}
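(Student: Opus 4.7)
The plan is to slice the acyclic complex $X^{\bullet}$ into short exact sequences, translate the three conditions into statements about these short exact sequences via $\mathrm{Ext}^{*}(-,N),$ and use the vanishing hypothesis $X^{i}\in{}^{\perp}N$ to collapse higher $\mathrm{Ext}$-groups onto $\mathrm{Ext}^{1}.$ Since $X^{\bullet}$ is acyclic, I would begin by writing, for each $i\in\mathbb{Z},$ the short exact sequence
$$0\to Z^{i}_{X^{\bullet}}\to X^{i}\to Z^{i+1}_{X^{\bullet}}\to 0,$$
noting that $d^{i-1}$ factors as an epi $X^{i-1}\twoheadrightarrow Z^{i}_{X^{\bullet}}$ followed by the inclusion into $X^{i}.$ The implication (c)$\Rightarrow$(b) is trivial from ${}^{\perp}N\subseteq{}^{\perp_{1}}N.$

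For (a)$\Leftrightarrow$(b), I would compute the cohomology of $\mathrm{Hom}_{\A}(X^{\bullet},N)$ spot by spot. A cocycle at position $i$ is a morphism $X^{i}\to N$ that vanishes on $Z^{i}_{X^{\bullet}},$ which by the universal property of the cokernel is the same as a morphism $Z^{i+1}_{X^{\bullet}}\to N.$ A coboundary at position $i$ is one obtained by pulling back a morphism $X^{i+1}\to N$ along $d^{i},$ i.e.\ it corresponds to a morphism $Z^{i+1}_{X^{\bullet}}\to N$ that extends to $X^{i+1}.$ Applying $\mathrm{Hom}_{\A}(-,N)$ to the short exact sequence $0\to Z^{i+1}_{X^{\bullet}}\to X^{i+1}\to Z^{i+2}_{X^{\bullet}}\to 0$ and using $\mathrm{Ext}^{1}_{\A}(X^{i+1},N)=0$ yields
$$\mathrm{Hom}_{\A}(X^{i+1},N)\to\mathrm{Hom}_{\A}(Z^{i+1}_{X^{\bullet}},N)\to\mathrm{Ext}^{1}_{\A}(Z^{i+2}_{X^{\bullet}},N)\to 0,$$
so vanishing of $H^{i}$ of $\mathrm{Hom}_{\A}(X^{\bullet},N)$ is equivalent to $\mathrm{Ext}^{1}_{\A}(Z^{i+2}_{X^{\bullet}},N)=0.$ Letting $i$ run over $\mathbb{Z}$ gives (a)$\Leftrightarrow$(b).

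For (b)$\Rightarrow$(c), I would use dimension shifting. Applying $\mathrm{Hom}_{\A}(-,N)$ to $0\to Z^{i}_{X^{\bullet}}\to X^{i}\to Z^{i+1}_{X^{\bullet}}\to 0$ and using $\mathrm{Ext}^{j}_{\A}(X^{i},N)=0$ for every $j\geq 1$ produces isomorphisms
$$\mathrm{Ext}^{j}_{\A}(Z^{i+1}_{X^{\bullet}},N)\cong\mathrm{Ext}^{j-1}_{\A}(Z^{i}_{X^{\bullet}},N)\quad(j\geq 2).$$
Iterating this $j-1$ times lands at $\mathrm{Ext}^{1}_{\A}(Z^{i-j+2}_{X^{\bullet}},N),$ which vanishes by hypothesis (b). Hence $\mathrm{Ext}^{j}_{\A}(Z^{i}_{X^{\bullet}},N)=0$ for every $j\geq 1$ and every $i,$ giving $Z^{i}_{X^{\bullet}}\in{}^{\perp}N.$

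I do not expect a serious obstacle here; the only subtlety is bookkeeping the factorization of the differential $d^{i-1}$ through $Z^{i}_{X^{\bullet}}$ carefully in order to correctly identify cocycles and coboundaries of $\mathrm{Hom}_{\A}(X^{\bullet},N)$ with morphisms out of $Z^{i+1}_{X^{\bullet}}$ and with those admitting an extension to $X^{i+1}.$ Once that identification is clean, both equivalences reduce to the long exact sequence of $\mathrm{Ext}$ applied to the slicing short exact sequences, combined with the hypothesis $X^{i}\in{}^{\perp}N.$
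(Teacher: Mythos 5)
Your proof is correct, and it is essentially the standard argument: the paper does not prove this lemma itself but cites \cite[Lemma 4.1.1]{Chris}, whose proof proceeds by the same splicing of the acyclic complex into short exact sequences $0\to Z^{i}\to X^{i}\to Z^{i+1}\to 0$, the identification of cocycles and coboundaries of $\Hom_\A(X^{\bullet},N)$ with morphisms out of the cycles, and dimension shifting using $\Ext^{j}_\A(X^{i},N)=0$ for $j\geq 1$. The only point worth making explicit is that the long exact $\Ext$-sequences you invoke are available in an arbitrary abelian category via Yoneda $\Ext$, which is exactly the convention the paper uses throughout.
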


\begin{defi}\label{debildefi2} Let $(\X, \Y) \subseteq \A^2.$ We introduce the subclass $W\GP _{(\X,\Y)}$ of $\A$ whose objects 
are the {\bf weak $(\X, \Y)$-Gorenstein projectives} or  $WGP_{(\X,\Y)}$-objects. For $M \in \A,$ we say that $M$ is a 
$WGP_{(\X,\Y)}$-object if  
$M \in{}^{\perp} \Y $ and there is an exact sequence $\xi : 0\to M \to X^0 \to X^1 \to \cdots,$ with $X^{i}\in \X$ and 
$\Ima\, (X^{i} \to X^{i+1}) \in{}^{\perp} \Y$ for any $i \in \mathbb{N}$. If $\X = \Y,$ by simplicity, we just write $W\GP _{\X}$ instead of 
$W\GP _{(\X, \X)}$. In this case, any object in $W\GP _{\X}$ is called {\bf weak $\X$-Gorenstein projective}.
\

The weak $(\X,\Y)$-Gorenstein  projective dimension of $M$ is
$$\WGpd_{(\X,\Y)}(M):=\resdim_{W\GP_{(\X,\Y)}}(M).$$
For any class $\mathcal{Z}\subseteq\A,$ we set $\WGpd_{(\X,\Y)}(\mathcal{Z}):=\sup\{\WGpd_{(\X,\Y)}(Z)\;:\;Z\in\mathcal{Z}\}.$
If $\X=\Y,$ we set  $\WGpd_{\X}(M):=\WGpd_{(\X,\X)}(M).$ Dually, we have the class $W\GI _{(\X,\Y)}$ of the  {\bf weak $(\X, \Y)$-Gorenstein injectives} or  $WGI_{(\X,\Y)}$-objects in $\A,$ and the  {\bf weak $(\X,\Y)$-Gorenstein  injective dimension} $\WGid_{(\X,\Y)}(M)$ of $M.$
\end{defi}

\begin{ex} (1) \cite{BGRO} \label{GCProj}  The class $G_CP(R)$ of the $G_C$-projective $R$-modules is introduced in \cite[Definition 2.2]{BGRO} and also it is studied its homological properties for the case that $C$ be a weakly-Wakamatsu tilting $R$-module (i.e $C$ is $\Sigma$-orthogonal and ${}_RR\in W\GP_{\Add(C)}).$
\

 Let $C$ be $\Sigma$-orthogonal. Then,   by \cite[Proposition 2.4]{BGRO} and Lemma \ref{Chris}, we get that 
$G_CP(R)=W\GP_{\Add(C)}.$ As we will see through out the paper, many of the results obtained in \cite{BGRO} are particular cases of the 
developed theory in this paper. 
\

(2) \cite{BR} Let $\A$ be an abelian category and $\omega\subseteq\A$ be such that $\id_\omega(\omega)=0.$ In this case, the objects in $W\GP_\omega$ are called Cohen-Macaulay objects in $\A$ and this class of objects is denoted in \cite{BR} by 
$\mathrm{CMC}(\omega).$
\end{ex}

\begin{rk}\label{WGorro} For any $(\X, \Y) \subseteq \A^2,$ we have that $W\GP _{(\X,\Y)}=W\GP _{(\X,\Y^\wedge)}.$ Indeed, by the dual of Lemma \ref{AB1}, we 
know that $\pd_\Y(M)=\pd_{\Y^\wedge}(M)$ for any $M\in \A,$ and thus $^{\perp} \Y={}^{\perp}(\Y^\wedge).$
\end{rk}

Note that, in general, for any arbitrary pair $(\X,\Y) \subseteq \A ^2$, the class $\GP_{(\X,\Y)}$ does not have to be equal  to $W\GP_{ (\X,\Y)}.$ However, we can stablish the following relationship between them.

\begin{pro}\label{GP-WGP} For any $(\X, \Y) \subseteq \A^2,$  the following statements hold true.
\begin{itemize}
\item[(a)] If $\pd _{\Y} (\X)=0$ then $\GP _{(\X,\Y)} \subseteq W \GP _{(\X,\Y)}.$
\item[(b)] If $0\in\X$ and $\GP _{(\X,\Y)} \subseteq W \GP _{(\X,\Y)},$ then $\pd _{\Y} (\X)=0.$
\end{itemize} 
\end{pro}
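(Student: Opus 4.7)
My plan is to handle the two parts separately, with Lemma \ref{Chris} being the key tool for part (a) and the easy observation $\X\subseteq\GP_{(\X,\Y)}$ (valid when $0\in\X$) being the key to part (b).

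For part (a), let $M\in\GP_{(\X,\Y)}$, and fix a left complete $(\X,\Y)$-resolution
$$\eta:\quad\cdots\to X_1\to X_0\xrightarrow{\,d_0\,} X^0\xrightarrow{\,d^0\,}X^1\to\cdots$$
with $M=\Ima(d_0)=\Ker(d^0)$. The hypothesis $\pd_\Y(\X)=0$ means $\X\subseteq{}^\perp Y$ for every $Y\in\Y$; in particular each term of $\eta$ lies in ${}^\perp Y$. Since $\eta$ is acyclic and $\Hom_\A(\eta,Y)$ is acyclic by the definition of a complete resolution, Lemma \ref{Chris} (applied to $\eta$ with $N:=Y$) forces every cycle of $\eta$ to lie in ${}^\perp Y$. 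In particular $M=\Ker(d^0)\in{}^\perp Y$ for every $Y\in\Y$, so $M\in{}^\perp\Y$. Moreover, the truncation $0\to M\to X^0\xrightarrow{d^0}X^1\to\cdots$ is exact and, for every $i\geq 0$, the object $\Ima(d^i)=\Ker(d^{i+1})$ also lies in ${}^\perp\Y$ by the same application of Lemma \ref{Chris}. This is exactly the data required by Definition \ref{debildefi2}, so $M\in W\GP_{(\X,\Y)}$.

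For part (b), assume $0\in\X$ and $\GP_{(\X,\Y)}\subseteq W\GP_{(\X,\Y)}$. As was noted immediately after Definition \ref{defiGP}, the hypothesis $0\in\X$ yields $\X\subseteq\GP_{(\X,\Y)}$: indeed, for any $X\in\X$ the complex
$$\cdots\to 0\to X\xrightarrow{\mathrm{id}_X}X\to 0\to\cdots$$
is an acyclic complex with terms in $\X$, and applying $\Hom_\A(-,Y)$ to it for any $Y\in\Y$ yields an acyclic complex, so it is a left complete $(\X,\Y)$-resolution with central image $X$. Consequently $\X\subseteq\GP_{(\X,\Y)}\subseteq W\GP_{(\X,\Y)}\subseteq{}^\perp\Y$, where the last inclusion is part of the very definition of $W\GP_{(\X,\Y)}$. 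Hence $\Ext^i_\A(X,Y)=0$ for all $i\geq 1$, $X\in\X$, $Y\in\Y$, which is exactly $\pd_\Y(\X)=0$.

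The main obstacle, if any, is really just ensuring one applies Lemma \ref{Chris} correctly in (a); once the acyclicity of $\Hom_\A(\eta,Y)$ and the orthogonality of each term of $\eta$ to $Y$ are recognized as the two hypotheses of that lemma, both conditions defining $W\GP_{(\X,\Y)}$ (membership in ${}^\perp\Y$ and the existence of a suitable coresolution with admissible images) drop out simultaneously. Part (b) is essentially a bookkeeping remark.
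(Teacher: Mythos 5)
Your proof is correct and follows essentially the same route as the paper: part (a) is the paper's argument of applying Lemma \ref{Chris} to the complete $(\X,\Y)$-resolution (using $\pd_\Y(\X)=0$ to place its terms in ${}^\perp Y$) to get all cycles, hence $M$ and the images of the coresolution, into ${}^\perp\Y$; part (b) is the paper's observation that $0\in\X$ gives $\X\subseteq\GP_{(\X,\Y)}\subseteq W\GP_{(\X,\Y)}\subseteq{}^\perp\Y$.
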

\begin{dem} (a)
Let $ G\in \GP _{(\X, \Y)}$. Then there is a  complete left $(\X,\Y)$-resolution 
 $$\eta : \cdots \to X_1 \to X_0 \to X^0 \to X^1 \to \cdots ,$$ 
such that $ G = \Ker ( X^0 \to X^1)$. In particular, for the exact sequence $$\eta ' : 0 \to G \to X^0 \to X^1 \to \cdots ,$$ we have that  the complex
$\Hom _{\A} (\eta ' ,Y )$ is acyclic for any $Y \in \Y$.  Therefore, by Lemma \ref{Chris} it follows that $\Ker (X^{i} \to X^{i+1}) \in{}^{\perp} \Y$ for any integer $i,$ and thus  $ G \in W \GP _{(\X , \Y)}.$
\

(b) Since $0\in\X,$ we have that $\X\subseteq \GP _{(\X,\Y)}.$ Thus, the result follows using that  
$ W \GP _{(\X,\Y)}\subseteq{}^\perp \Y.$
\end{dem}

\begin{cor}\label{GP3} Let $(\X, \Y) \subseteq \A^2$ be such that $\pd_\Y(\X)=0.$ Then
\begin{itemize}
\item[(a)] $\pd_{\Y^{\wedge}}(\GP_{(\X,\Y)})=\pd_{\Y^{\wedge}}(W\GP_{(\X,\Y)})=\pd_{\Y^{\wedge}}(\X)=0,$
\item[(b)] $\GP_{(\X,\Y)}\subseteq {}^\perp\Y\cap {}^\perp(\Y^\wedge)$ and $W\GP_{(\X,\Y)}\subseteq {}^\perp\Y\cap {}^\perp(\Y^\wedge).$
\end{itemize}
\end{cor}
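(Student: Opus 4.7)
The plan is to reduce everything to the single observation that $\pd_{\Y^\wedge}(M)=\pd_{\Y}(M)$ for every $M\in\A$, which is the dual of Lemma \ref{AB1} applied to the relation $\pd_{\Y}(M)=\id_{M}(\Y)$. This identity immediately collapses the two orthogonal classes ${}^{\perp}\Y$ and ${}^{\perp}(\Y^\wedge)$ and equates the hypothesis $\pd_{\Y}(\X)=0$ with $\pd_{\Y^\wedge}(\X)=0$, so essentially only the definition of $W\GP_{(\X,\Y)}$ and Proposition \ref{GP-WGP} need to be invoked.

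For part (a), I would first establish $\pd_{\Y^\wedge}(\X)=0$ by taking suprema over $X\in\X$ of the identity $\pd_{\Y^\wedge}(X)=\pd_{\Y}(X)$ and using the hypothesis. Next, for any $M\in W\GP_{(\X,\Y)}$, the definition yields $M\in{}^{\perp}\Y$, hence $\pd_{\Y}(M)=0$, and the same identity delivers $\pd_{\Y^\wedge}(M)=0$. Taking the supremum over $M$ gives $\pd_{\Y^\wedge}(W\GP_{(\X,\Y)})=0$. Since the hypothesis $\pd_{\Y}(\X)=0$ puts us in the situation of Proposition \ref{GP-WGP}(a), we have $\GP_{(\X,\Y)}\subseteq W\GP_{(\X,\Y)}$, and therefore also $\pd_{\Y^\wedge}(\GP_{(\X,\Y)})=0$. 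This finishes (a).

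For part (b), the inclusion $W\GP_{(\X,\Y)}\subseteq{}^{\perp}\Y$ is immediate from Definition \ref{debildefi2}, and $W\GP_{(\X,\Y)}\subseteq{}^{\perp}(\Y^\wedge)$ is a direct rephrasing of the vanishing $\pd_{\Y^\wedge}(W\GP_{(\X,\Y)})=0$ established in (a). Finally, using the containment $\GP_{(\X,\Y)}\subseteq W\GP_{(\X,\Y)}$ from Proposition \ref{GP-WGP}(a), both inclusions transfer to $\GP_{(\X,\Y)}$.

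I do not anticipate any real obstacle in this proof: the whole argument is essentially bookkeeping built on the dual of Lemma \ref{AB1} together with Proposition \ref{GP-WGP}(a), both of which are already in hand. The only subtlety to be careful about is to invoke the dual form of Lemma \ref{AB1} (comparing $\Y$ with $\Y^\wedge$, rather than $\X$ with $\X^\vee$) and to note that the hypothesis $\pd_{\Y}(\X)=0$ is precisely what is required to apply Proposition \ref{GP-WGP}(a).
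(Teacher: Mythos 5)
Your proposal is correct and follows essentially the same route as the paper: the paper likewise deduces $\pd_\Y(\GP_{(\X,\Y)})=0$ from Proposition \ref{GP-WGP}, notes $\pd_\Y(\X)=0=\pd_\Y(W\GP_{(\X,\Y)})$, and then passes from $\Y$ to $\Y^\wedge$ via the dual of Lemma \ref{AB1}, with (b) being a restatement of (a). No gaps.
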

\begin{dem} By Proposition \ref{GP-WGP}, we get that $\pd_\Y(\GP_{(\X,\Y)})=0.$ On the other hand, by hypothesis, we have that 
$\pd_\Y(\X)=0=\pd_\Y(W\GP_{(\X,\Y)}).$ Hence the item (a) follows from the dual of Lemma \ref{AB1}. Finally, (b) follows from (a) and the 
equality $\pd_\Y(\GP_{(\X,\Y)})=0.$
\end{dem}
\vspace{0.2cm}

The following result is a generalization of \cite[Proposition 2.4]{BGRO},  \cite[Proposition 2.3]{Holm}, \cite[Proposition 3.8]{Ta} and  
   \cite[Proposition 2.4]{PC}.  

\begin{pro}\label{GP2} Let $(\X,\Y)$ be a  weak $GP$-admissible pair in $\A.$ Then,  for $M\in\A,$ the following conditions are equivalent.
\begin{itemize}
\item[(a)] $M\in\GP_{(\X,\Y)}.$
\item[(b)] $M\in{}^\perp\Y$ and there is an exact sequence $\varepsilon:\quad 0\to M\to X^0\to X^1\to\cdots,$ with $X^i\in\X$ and such 
that the complex $\Hom_\A(\varepsilon,Y)$ is acyclic for any $Y\in\Y.$
\item[(c)] $M\in W\GP_{(\X,\Y)}.$
\end{itemize}
\end{pro}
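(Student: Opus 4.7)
The plan is to prove the cycle (a) $\Rightarrow$ (b) $\Rightarrow$ (c) $\Rightarrow$ (a), using Lemma~\ref{Chris} as the main technical tool. Observe first that the weak GP-admissibility gives $\X\subseteq{}^\perp\Y$, so every member of $\X$ (as well as $M$, once we know $M\in{}^\perp\Y$) sits in ${}^\perp\Y$, and thus Lemma~\ref{Chris} applies with $N$ any object of $\Y$ to any acyclic complex built out of these objects.

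For (a) $\Rightarrow$ (b), take a complete $(\X,\Y)$-resolution $\eta$ with $M=\Ima(X_0\to X^0)$. Corollary~\ref{GP3}(b) gives $M\in{}^\perp\Y$. The right half of $\eta$ is precisely the exact sequence $\varepsilon: 0\to M\to X^0\to X^1\to\cdots$. I would argue that $\Hom_\A(\varepsilon,Y)$ is acyclic by splicing: since $X_0\twoheadrightarrow M\hookrightarrow X^0$ factors $d_0$, the hypothesized acyclicity of $\Hom_\A(\eta,Y)$ transfers to $\Hom_\A(\varepsilon,Y)$; concretely, surjectivity of $\Hom(X^0,Y)\to\Hom(M,Y)$ follows because any $f\colon M\to Y$ lifts to $f\circ p\colon X_0\to Y$ (where $p\colon X_0\twoheadrightarrow M$), and $f\circ p$ vanishes on the kernel of $p=$ image of $X_1\to X_0$, hence lies in the image of $d_0^*$ by acyclicity of $\Hom_\A(\eta,Y)$; now use that $p^*$ is injective.

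For (b) $\Rightarrow$ (c), I would apply Lemma~\ref{Chris} directly to the acyclic complex $\varepsilon: 0\to M\to X^0\to X^1\to\cdots$, all of whose terms lie in ${}^\perp\Y$ (using $M\in{}^\perp\Y$ and $\X\subseteq{}^\perp\Y$). Acyclicity of $\Hom_\A(\varepsilon,Y)$ yields, via Lemma~\ref{Chris}, that all the cycles $\Ima(X^i\to X^{i+1})$ lie in ${}^\perp\Y$; combined with $M\in{}^\perp\Y$, this is exactly the definition of $W\GP_{(\X,\Y)}$.

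The main step, and the one I expect to be slightly more delicate, is (c) $\Rightarrow$ (a). Starting from an object $M\in W\GP_{(\X,\Y)}$ with its exact right half $\varepsilon$, I would use that $\X$ is $\X$-epic in $\A$ to construct a resolution $\cdots\to X_1\to X_0\twoheadrightarrow M$ by iterating $\X$-epimorphisms: pick $X_0\twoheadrightarrow M$ with $X_0\in\X$ and kernel $K_0$, then $X_1\twoheadrightarrow K_0$ with $X_1\in\X$, and so on. Splicing this left resolution with $\varepsilon$ at $M$ produces an acyclic two-sided complex
\[
\eta:\quad \cdots\to X_1\to X_0\to X^0\to X^1\to\cdots
\]
with $M=\Ima(X_0\to X^0)$ and all terms in $\X\subseteq{}^\perp\Y$. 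To conclude $M\in\GP_{(\X,\Y)}$, I need $\Hom_\A(\eta,Y)$ acyclic for every $Y\in\Y$, and by Lemma~\ref{Chris} this reduces to showing every cycle of $\eta$ lies in ${}^\perp\Y$. The right-hand cycles are $M$ and the $\Ima(X^i\to X^{i+1})$, all in ${}^\perp\Y$ by hypothesis (c). For the left-hand cycles $K_i$, I would proceed inductively: from the short exact sequence $0\to K_i\to X_i\to K_{i-1}\to 0$ (with $K_{-1}:=M$), since $X_i,K_{i-1}\in{}^\perp\Y$, the long exact $\Ext$-sequence applied to any $Y\in\Y$ forces $K_i\in{}^\perp\Y$. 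Lemma~\ref{Chris} then delivers the required acyclicity of $\Hom_\A(\eta,Y)$ for all $Y\in\Y$, completing the cycle.
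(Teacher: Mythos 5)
Your proposal is correct and follows essentially the same route as the paper: the key implication (weak Gorenstein projective implies Gorenstein projective) is done exactly as in the paper's proof, by building the left half via $\X$-epicity, pushing ${}^\perp\Y$-membership to the kernels $K_i$ with the long exact $\Ext$-sequence and $\pd_\Y(\X)=0$, and invoking Lemma~\ref{Chris}. The only differences are cosmetic: you run the cycle (a)$\Rightarrow$(b)$\Rightarrow$(c)$\Rightarrow$(a) instead of the paper's (a)$\Rightarrow$(c), (b)$\Leftrightarrow$(c), (b)$\Rightarrow$(a), and you spell out the splicing/surjectivity detail at the junction that the paper delegates to Proposition~\ref{GP-WGP}.
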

\begin{dem} (a) $\Rightarrow$ (c) It is Lemma \ref{GP-WGP} (a).
\

(b) $\Leftrightarrow$ (c) It is a direct consequence of Lemma \ref{Chris}.
\

(b) $\Rightarrow$ (a) Since $\X$  is $\X$-epic in $\A,$ we can construct an exact sequence as follows
$$\varepsilon':\quad\cdots\to X_2\stackrel{d_2}\to X_1\stackrel{d_1}\to X_0\stackrel{d_0}\to M\to 0,$$
where $X_i\in\X$ for any $i\in\mathbb{N}.$
Let $K_i:=\Ker\,(d_i).$ Then, by applying the functor $\Hom_\A(-,Y),$ for any $Y\in\Y,$ to the exact sequence $0\to K_0\to X_0\to M\to 0,$
we get the following exact sequence
$$\Ext^i_\A(X_0,Y)\to \Ext^i_\A(K_0,Y)\to\Ext^{i+1}_\A(M,Y).$$
But now, the facts that $\pd_\Y(\X)=0$ and $M\in{}^\perp\Y$ imply that $K_0\in{}^\perp\Y.$ So, by Lemma \ref{Chris}, we have that $K_0$ satisfies the same hypothesis as $M$ does in (b). Then, we can  replace $M$ 
by $K_0.$ Therefore, by repeating this procedure and using again Lemma \ref{Chris}, we get that the complex $\Hom_\A(\varepsilon',Y)$ 
is acyclic for any $Y\in\Y.$ Hence, by putting together $\varepsilon'$ and $\varepsilon,$ we obtain a left complete $(\X,\Y)$-resolution 
$\cdots\to X_1\stackrel{d_1}\to X_0\to X^0\to X^1\to\cdots$ such that $M=\Ima\,(X_0\to X^0).$
\end{dem}

\begin{cor}\label{TGP} Let $\A$ be an abelian category with enough injectives and let $(\X,\Y)$ be a hereditary cotorsion pair  in $\A$ which is right complete. Then, the following statements hold true.
\begin{itemize}
\item[(a)] $\X$ is left thick and $\Y$ is right thick. Moreover,  $\X\cap\Y$ is an $\X$-injective relative cogenerator in $\X.$
\item[(b)]  $W\GP_{(\X,\Y)}=\GP_{(\X,\Y)}=\X={}^\perp\Y.$ 
\end{itemize}
\end{cor}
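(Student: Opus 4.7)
My plan is to establish (a) by verifying the closure conditions directly, and then to prove (b) by establishing the chain of inclusions $\GP_{(\X,\Y)} \subseteq W\GP_{(\X,\Y)} \subseteq {}^\perp\Y = \X \subseteq \GP_{(\X,\Y)}$. For (a), the left thickness of $\X$ reduces to standard long exact sequence arguments exploiting the identity $\X = {}^{\perp_1}\Y$: closure under extensions and under direct summands is automatic from the orthogonal description, while closure under kernels of epimorphisms uses the heredity hypothesis to ensure $\Ext^2_\A(\X, \Y) = 0$. Dually, $\Y = \X^{\perp_1}$ is right thick. The $\X$-injectivity of $\X \cap \Y$ is immediate from $\X \cap \Y \subseteq \Y$ combined with heredity. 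For the relative cogenerator property: given $X \in \X$, right-completeness of the cotorsion pair provides $0 \to X \to Y \to X' \to 0$ with $Y \in \Y$ and $X' \in \X$, and the extension-closure of $\X$ just proved then forces $Y \in \X$, so $Y \in \X \cap \Y$ as desired.

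For (b), I would begin with $\X = {}^\perp\Y$: heredity yields $\X \subseteq {}^\perp\Y$, and the reverse inclusion is just ${}^\perp\Y \subseteq {}^{\perp_1}\Y = \X$. Consequently, $W\GP_{(\X,\Y)} \subseteq {}^\perp\Y = \X$ by the very definition of the weak class. For the reverse inclusion $\X \subseteq W\GP_{(\X,\Y)}$, I would iterate the cogenerator from (a) to produce, for each $X \in \X$, an exact coresolution $0 \to X \to W^0 \to W^1 \to \cdots$ with $W^i \in \X \cap \Y$ and successive cokernels (hence intermediate images) lying in $\X \subseteq {}^\perp\Y$, which exactly matches the definition of $W\GP_{(\X,\Y)}$. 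Finally, $\GP_{(\X,\Y)} \subseteq W\GP_{(\X,\Y)}$ follows from Proposition \ref{GP-WGP}(a), since heredity delivers $\pd_\Y(\X) = 0$.

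The remaining, and most delicate, step is $\X \subseteq \GP_{(\X,\Y)}$, which requires producing a bi-infinite $(\X,\Y)$-resolution whose splice image equals $X$. The main obstacle is that we do not have any direct control on $\X$-epic in $\A$, so $(\X,\Y)$ need not be weak $GP$-admissible and Proposition \ref{GP2} does not apply off the shelf. I would sidestep this by exploiting the trivial self-cover $\id_X \colon X \to X$ whenever the object to be resolved already belongs to $\X$: set $X_0 := X$ and $X_i := 0$ for $i \geq 1$, and splice with the coresolution produced in the previous paragraph to form
$$\eta \colon \ \cdots \to 0 \to 0 \to X \to W^0 \to W^1 \to \cdots,$$
with $\Ima(X_0 \to W^0) = X$. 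This $\eta$ is acyclic: the left half is trivially exact, and at $W^0$ the image of $X \hookrightarrow W^0$ equals $\Ker(W^0 \to W^1)$ by construction of the coresolution. Every kernel of $\eta$ lies in $\X \subseteq {}^\perp\Y$, so Lemma \ref{Chris} guarantees that $\Hom_\A(\eta, Y)$ is acyclic for each $Y \in \Y$. Hence $X \in \GP_{(\X,\Y)}$, which closes the circle of inclusions and yields $\GP_{(\X,\Y)} = W\GP_{(\X,\Y)} = \X = {}^\perp\Y$.
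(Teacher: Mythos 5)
Your proposal is correct and takes essentially the same route as the paper: part (a) is verified by the same orthogonality/heredity computations and the same use of right completeness plus extension-closure, and part (b) is the same sandwich of inclusions $\X\subseteq\GP_{(\X,\Y)}\subseteq W\GP_{(\X,\Y)}\subseteq{}^\perp\Y=\X.$ The only (cosmetic) differences are that you derive $\X={}^\perp\Y$ directly from heredity instead of via ``$\Y$ coresolving plus enough injectives'' as in the text, and you construct an explicit spliced complex for $\X\subseteq\GP_{(\X,\Y)},$ where the paper simply invokes the observation after Definition \ref{defiGP} that $0\in\X$ suffices (the bi-infinite complex $\cdots\to 0\to X\xrightarrow{\id}X\to 0\to\cdots$ already works).
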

\begin{dem} (a) Since $\X^{\perp_1}=\Y,$ ${}^{\perp_1}\Y=\X$ and $\id_\X(\Y)=0,$ it follows that $\X$ is left thick and $\Y$ is right thick. On the other hand, the fact that  $(\X,\Y)$ is right complete implies that $\X\cap\Y$ is a relative cogenerator in $\X.$ Finally, it is 
clear that $\id_{\X}(\X\cap\Y)=0.$
\

(b) Since $\A$ has enough injectives and  $\Y$ is coresolving, it follows that ${}^{\perp_1}\Y={}^\perp\Y$ and thus 
$\X={}^\perp\Y.$ Then, (b) follows from the inclusions $\X\subseteq \GP_{(\X,\Y)}\subseteq{}^\perp\Y$ and 
$\X\subseteq W\GP_{(\X,\Y)}\subseteq{}^\perp\Y.$
\end{dem}

As a consequence of the above corollary, it follows that complete hereditary cotorsion pairs can be seen as particular cases of the relative Gorenstein theory in abelian categories with enough injectives and projectives. More specifically, we have the following remark.

\begin{rk} Let $\A$ be an abelian category with enough injectives and projectives,  and let $(\X,\Y)$ be a complete hereditary cotorsion pair  in $\A.$ Then, the pair $(\X,\Y)$ is both $GP$-admissible and $GI$-admissible. Furthermore, $\GP_{(\X,\Y)}=\X$ and $\GI_{(\X,\Y)}=\Y.$
\end{rk}

\begin{pro}\label{GP4} For  $(\X,\Y)\subseteq\A^2, $  the following statements are equivalent.
\begin{itemize}
\item[(a)] $\X$ is $\GP_{(\X,\Y)}$-injective.
\item[(b)] $\pd_\X(\X)=0$ and $\GP_{(\X,\Y)}\subseteq\GP_\X.$
\end{itemize}
If one of the above conditions holds true, then  $\Gpd_\X(M)\leq\Gpd_{(\X,\Y)}(M),$ for any $M\in\A.$
\end{pro}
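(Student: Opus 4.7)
The plan is to unwind the hypothesis \emph{$\X$ is $\GP_{(\X,\Y)}$-injective} via the identity $\pd_\Y(\X)=\id_\X(\Y)$ into the explicit inclusion $\GP_{(\X,\Y)}\subseteq{}^\perp\X,$ and then to repeatedly apply Lemma \ref{Chris} together with the fact (an easy shifting argument, in the spirit of Lemma \ref{GP1}) that every cycle of a left complete $(\X,\Y)$-resolution is itself a $GP_{(\X,\Y)}$-object.

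For (a) $\Rightarrow$ (b), I would first use that $0\in\X$ forces $\X\subseteq\GP_{(\X,\Y)}$ (via the trivial resolution $\cdots\to 0\to X\xrightarrow{\mathrm{id}}X\to 0\to\cdots$), so (a) yields $\X\subseteq{}^\perp\X,$ that is, $\pd_\X(\X)=0.$ Next, for a given $G\in\GP_{(\X,\Y)}$ with left complete $(\X,\Y)$-resolution
$$\eta:\quad\cdots\to X_1\to X_0\to X^0\to X^1\to\cdots,\qquad G=\Ima(X_0\to X^0),$$
all cycles of $\eta$ lie in $\GP_{(\X,\Y)}\subseteq{}^\perp\X$ and all components lie in $\X\subseteq{}^\perp\X.$ Applying Lemma \ref{Chris} with $N=X$ for each $X\in\X$ gives that $\Hom_\A(\eta,X)$ is acyclic, so $\eta$ is in fact a left complete $(\X,\X)$-resolution of $G.$ Therefore $G\in\GP_\X,$ proving $\GP_{(\X,\Y)}\subseteq\GP_\X.$

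For (b) $\Rightarrow$ (a), pick $G\in\GP_{(\X,\Y)}\subseteq\GP_\X$ and a left complete $(\X,\X)$-resolution $\eta$ of $G.$ Its components sit in $\X\subseteq{}^\perp\X$ (by $\pd_\X(\X)=0$) and $\Hom_\A(\eta,X)$ is acyclic for every $X\in\X,$ so Lemma \ref{Chris} forces every cycle of $\eta,$ in particular $G,$ to be in ${}^\perp\X.$ Hence $\GP_{(\X,\Y)}\subseteq{}^\perp\X,$ i.e.\ (a). The final inequality is then immediate: under either equivalent condition we have $\GP_{(\X,\Y)}\subseteq\GP_\X,$ so any finite $\GP_{(\X,\Y)}$-resolution of $M$ is at once a $\GP_\X$-resolution, whence $\Gpd_\X(M)\leq\Gpd_{(\X,\Y)}(M).$

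The only delicate point is extracting $\pd_\X(\X)=0$ in the first half of (a) $\Rightarrow$ (b): without the inclusion $\X\subseteq\GP_{(\X,\Y)},$ the hypothesis \emph{$\X$ is $\GP_{(\X,\Y)}$-injective} would not be strong enough to force $\pd_\X(\X)=0,$ so one must read $0\in\X$ into the statement (as is customary in this section). Once that is in place, the entire argument reduces to routine bookkeeping with Lemma \ref{Chris}.
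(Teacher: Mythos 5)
Your proof is correct and follows essentially the same route as the paper: both directions rest on Lemma \ref{Chris} applied to the complete resolutions, with $\pd_\X(\X)=0$ extracted from the inclusion $\X\subseteq\GP_{(\X,\Y)}$ (an inclusion the paper's own proof also uses implicitly in the inequality $\pd_\X(\X)\leq\pd_\X(\GP_{(\X,\Y)})$), and the final inequality following at once from $\GP_{(\X,\Y)}\subseteq\GP_\X$. The only cosmetic difference is that for (b) $\Rightarrow$ (a) the paper cites Proposition \ref{GP-WGP} (a) to get $\GP_\X\subseteq{}^\perp\X$ rather than rerunning the Lemma \ref{Chris} argument directly, which is the same content.
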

\begin{dem} (a) $\Rightarrow$ (b) We start by proving that $\pd_\X(\X)=0.$ Indeed, by using that $\X$ is $\GP_{(\X,\Y)}$-injective , we obtain
$$\pd_\X(\X)\leq \pd_{\X}(\GP_{(\X,\Y)})=0.$$
Let $M\in\GP_{(\X,\Y)}.$ Then, there is  
a left complete $(\X,\Y)$-resolution
$$\eta:\quad \cdots \to X_1\to X_0\to X^0\to X^1\to\cdots$$
such that  $M=\Ima(X_0\to X^0).$  We assert that the complex 
$\Hom_\A(\eta,X)$ is acyclic for any $X\in\X.$ Indeed, since all the cycles of the complex $\eta$ are $GP_{(\X,\Y)}$-objects and 
$\pd_\X(\GP_{(\X,\Y)})=0,$ we obtain from Lemma \ref{Chris} that $\Hom_\A(\eta,X)$ is acyclic for any $X\in\X.$ Hence 
$M\in\GP_\X.$
\

(b) $\Rightarrow$ (a) Since $\pd_\X(\X)=0,$ it follows from Proposition \ref{GP-WGP} (a) that $\GP_\X\subseteq {}^\perp\X.$ Thus 
$\GP_{(\X,\Y)}\subseteq {}^\perp\X$ and then $\X$ is $\GP_{(\X,\Y)}$-injective.
\end{dem}

\begin{lem}\label{G4.5} Let  $(\X,\Y)\subseteq\A^2$ be such that $\pd_\Y(\X)=0.$ If $\X\subseteq\Y^\wedge$  then $\X$ is $\GP_{(\X,\Y)}$-injective.
\end{lem}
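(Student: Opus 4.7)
The plan is to derive the conclusion directly from Corollary \ref{GP3}. Recall that the definition of ``$\X$ is $\GP_{(\X,\Y)}$-injective'' unwinds to $\id_{\GP_{(\X,\Y)}}(\X)=0$, and by the symmetry identity $\pd_{\A'}(\B')=\id_{\B'}(\A')$ recorded in Section 2, this is the same as asking $\pd_{\X}(\GP_{(\X,\Y)})=0$. So the whole task reduces to verifying that every $G\in\GP_{(\X,\Y)}$ satisfies $\Ext^{i}_\A(G,X)=0$ for all $i\geq 1$ and all $X\in\X$.

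Here is where Corollary \ref{GP3} does the heavy lifting. The hypothesis $\pd_\Y(\X)=0$ is exactly what that corollary needs, and from item (a) of it we read off
$$\pd_{\Y^{\wedge}}(\GP_{(\X,\Y)})=0.$$
Now I would invoke the second hypothesis $\X\subseteq\Y^\wedge$: monotonicity of $\pd_{(-)}(\GP_{(\X,\Y)})$ under enlarging the subscript class then yields
$$\pd_{\X}(\GP_{(\X,\Y)})\leq\pd_{\Y^{\wedge}}(\GP_{(\X,\Y)})=0.$$
Translating back through the symmetry identity gives $\id_{\GP_{(\X,\Y)}}(\X)=0$, which is the claim.

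If one wanted to avoid citing Corollary \ref{GP3}, the backup argument is completely elementary: from $\pd_\Y(\X)=0$ and Proposition \ref{GP-WGP}(a) we get $\GP_{(\X,\Y)}\subseteq{}^{\perp}\Y$, i.e. $\Ext^{i}_\A(G,Y)=0$ for all $i\geq 1$, $Y\in\Y$ and $G\in\GP_{(\X,\Y)}$. Fixing $X\in\X\subseteq\Y^\wedge$, pick a finite $\Y$-resolution $0\to Y_n\to\cdots\to Y_0\to X\to 0$, break it into short exact sequences $0\to K_{j+1}\to Y_j\to K_j\to 0$ with $K_0=X$ and $K_n=Y_n$, and apply $\Hom_\A(G,-)$: iterating the resulting dimension-shifting isomorphisms $\Ext^{i}_\A(G,K_j)\cong\Ext^{i+1}_\A(G,K_{j+1})$ collapses $\Ext^{i}_\A(G,X)$ to $\Ext^{i+n}_\A(G,Y_n)=0$ for every $i\geq 1$. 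There is no real obstacle; the only thing to be careful about is confirming that the conventions $\pd_\X(\Y)=\id_\Y(\X)$ and the equality $\pd_\Y(-)=\pd_{\Y^\wedge}(-)$ (the dual of Lemma \ref{AB1}) are being used correctly so that Corollary \ref{GP3} really applies under the stated hypothesis.
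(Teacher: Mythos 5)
Your proposal is correct and is essentially the paper's own argument: the paper proves the lemma exactly by citing Corollary \ref{GP3} to get $\pd_{\X}(\GP_{(\X,\Y)})\leq\pd_{\Y^{\wedge}}(\GP_{(\X,\Y)})=0$ using $\X\subseteq\Y^{\wedge}$. Your elementary dimension-shifting backup is a valid unpacking of the same facts, so nothing further is needed.
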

\begin{dem} Let $\X\subseteq\Y^\wedge.$ Then, by  Corollary \ref{GP3}  
$\pd_\X(\GP_{(\X,\Y)})\leq \pd_{\Y^{\wedge}}(\GP_{(\X,\Y)})=0,$
proving that $\X$ is $\GP_{(\X,\Y)}$-injective.
\end{dem}
\vspace{0.2cm}

The following result is a generalization of \cite[Theorem 2.3]{BO},  \cite[Theorem 2.5]{Holm}, \cite[Theorem 2.5]{PC} and \cite[Theorem 3.11]{Ta} to the context of relative Gorenstein objects. Another possible generalization of it is given in  Corollary \ref{ThickGP}.

\begin{teo}\label{GP6}  Let $(\X,\Y)\subseteq\A^2$ be a weak $GP$-admissible pair in an abelian category $\A,$ with enough projectives,   such that $\X$ is $\GP_{(\X,\Y)}$-injective. Then, 
the following statements hold true.
\begin{itemize}
\item[(a)] If $\X$ is closed under finite coproducts, then $\GP_{(\X,\Y)}$ is a pre-resolving class.
\item[(b)] If $\A$ is AB4 and $\X$ is closed under coproducts, then $\GP_{(\X,\Y)}$ is  closed under coproducts and a left thick class in 
$\A.$
\end{itemize}
\end{teo}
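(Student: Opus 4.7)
The plan is to exploit Proposition \ref{GP2} to replace the defining complete resolution by the simpler one-sided characterization $\GP_{(\X,\Y)}=W\GP_{(\X,\Y)}$ (which is available since $(\X,\Y)$ is weak $GP$-admissible), and to exploit Proposition \ref{GP4} to upgrade the hypothesis that $\X$ is $\GP_{(\X,\Y)}$-injective to
$$\GP_{(\X,\Y)}\subseteq\GP_\X\subseteq{}^\perp\X,$$
where the last inclusion uses Proposition \ref{GP-WGP}(a) applied to the pair $(\X,\X)$, together with $\pd_\X(\X)=0$. These two facts are what makes the horseshoe lemma and a standard splicing argument available.

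For (a), first take an exact sequence $0\to A\to B\to C\to 0$ with $A,C\in\GP_{(\X,\Y)}$. The long exact $\Ext$-sequence and $A,C\in{}^\perp\Y$ immediately give $B\in{}^\perp\Y$. To produce the coresolution of $B$, I would proceed inductively: choose short exact sequences
$$0\to A^n\to X^n_A\to A^{n+1}\to 0,\qquad 0\to C^n\to X^n_C\to C^{n+1}\to 0$$
with $X^n_A,X^n_C\in\X$ and $A^{n+1},C^{n+1}\in\GP_{(\X,\Y)}$, starting from $A^0:=A$, $C^0:=C$. Since $C^n\in\GP_{(\X,\Y)}\subseteq{}^\perp\X$ and $X^n_A\in\X$, one has $\Ext_\A^1(C^n,X^n_A)=0$, so the horseshoe lemma delivers a short exact sequence $0\to B^n\to X^n_A\oplus X^n_C\to B^{n+1}\to 0$ with $X^n_A\oplus X^n_C\in\X$ (using that $\X$ is closed under finite coproducts) and $B^{n+1}$ an extension of $C^{n+1}$ by $A^{n+1}$; in particular $B^{n+1}\in{}^\perp\Y$. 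Splicing these sequences yields the desired coresolution showing $B\in W\GP_{(\X,\Y)}=\GP_{(\X,\Y)}$. For closure under kernels of epimorphisms, take $0\to A\to B\to C\to 0$ with $B,C\in\GP_{(\X,\Y)}$. The long exact $\Ext$-sequence again gives $A\in{}^\perp\Y$. Choose $0\to B\to X^0_B\to B^1\to 0$ with $X^0_B\in\X$ and $B^1\in\GP_{(\X,\Y)}$; the pushout/cokernel computation fits $D:=X^0_B/A$ into $0\to C\to D\to B^1\to 0$, so $D\in\GP_{(\X,\Y)}$ by the closure under extensions just proved. Splicing $0\to A\to X^0_B\to D\to 0$ with any coresolution of $D$ gives the required coresolution of $A$.

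For (b), closure under arbitrary coproducts first: let $\{M_i\}_{i\in I}\subseteq\GP_{(\X,\Y)}$. Because $\A$ is AB4 and has enough projectives, coproducts of projectives are projective and coproducts preserve exactness, so the coproduct of projective resolutions of the $M_i$ is a projective resolution of $\coprod_i M_i$. Since products are exact in $\mathrm{Ab}$, this yields
$$\Ext_\A^k\Bigl(\coprod_i M_i,Y\Bigr)\cong\prod_i\Ext_\A^k(M_i,Y)=0\quad\text{for every }Y\in\Y\text{ and }k\geq 1,$$
so $\coprod_i M_i\in{}^\perp\Y$ (and the analogous fact will hold for each cycle). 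Taking coresolutions of each $M_i$ and forming their coproduct (exact by AB4) produces a coresolution $0\to\coprod_i M_i\to\coprod_i X^0_i\to\coprod_i X^1_i\to\cdots$ whose terms lie in $\X$ (closure under coproducts) and whose cycles are coproducts of $\GP_{(\X,\Y)}$-objects, hence in ${}^\perp\Y$. Thus $\coprod_i M_i\in W\GP_{(\X,\Y)}=\GP_{(\X,\Y)}$.

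Finally, for closure under direct summands I would use the Eilenberg swindle: if $N:=M\oplus L\in\GP_{(\X,\Y)}$, then $T:=\coprod_{n\in\mathbb{N}} N\in\GP_{(\X,\Y)}$ by the previous paragraph, and rearranging gives $T\cong M\oplus T$. The split short exact sequence $0\to T\to M\oplus T\to M\to 0$ therefore becomes $0\to T\to T\to M\to 0$ with both middle and left terms in $\GP_{(\X,\Y)}$, so part (a)—closure under kernels of epimorphisms—yields $M\in\GP_{(\X,\Y)}$. Combined with closure under extensions and kernels of epis, this gives that $\GP_{(\X,\Y)}$ is left thick.

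The main obstacle is the horseshoe step in (a): one has to verify that the necessary $\Ext^1$-vanishing really holds at each induction stage, which is exactly why the input $\X\subseteq{}^\perp\GP_{(\X,\Y)}$ (equivalently, the stronger form given by Proposition \ref{GP4}) is needed. Once that is in place, the remaining arguments are standard splicing, AB4-coproduct manipulations, and the Eilenberg swindle.
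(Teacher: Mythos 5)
Your overall strategy coincides with the paper's: the published proof simply adapts \cite[Theorem 2.3]{BO} using Proposition \ref{GP2} (to trade the complete resolution for the one-sided characterization $\GP_{(\X,\Y)}=W\GP_{(\X,\Y)}$) and the dual of Remark \ref{B1} (the $\Ext$-coproduct formula in an AB4 category with enough projectives), and these are exactly the two ingredients you use. Your horseshoe/splicing treatment of extensions and kernels of epimorphisms in (a) is correct --- note only that the hypothesis ``$\X$ is $\GP_{(\X,\Y)}$-injective'' already says, by definition, that $\GP_{(\X,\Y)}\subseteq{}^{\perp}\X$, so the detour through Proposition \ref{GP4} and Proposition \ref{GP-WGP} is unnecessary --- and the closure under coproducts in (b) is also fine.

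The one step that fails as written is the Eilenberg swindle. From $T\cong M\oplus T$ you form the split exact sequence $0\to T\to M\oplus T\to M\to 0$, rewrite it as $0\to T\to T\to M\to 0$, and invoke closure under kernels of epimorphisms to conclude $M\in\GP_{(\X,\Y)}$; but in that sequence $M$ is the cokernel, not the kernel, so closure under kernels of epimorphisms gives nothing about $M$ (it would only re-prove that $T\in\GP_{(\X,\Y)}$), and no closure under cokernels of monomorphisms is available. The repair is immediate: use the complementary split sequence $0\to M\to M\oplus T\to T\to 0$, that is $0\to M\to T\to T\to 0$, which exhibits $M$ as the kernel of a (split) epimorphism between two objects of $\GP_{(\X,\Y)}$; part (a) then yields $M\in\GP_{(\X,\Y)}$. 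With this correction your argument is complete and matches the intended proof.
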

\begin{dem} By using Proposition \ref{GP2} and the dual of Remark \ref{B1}, the proof given in \cite[Theorem 2.3]{BO} can be adapted to our situation.
\end{dem}
\vspace{0.2cm}

The aim in what follows is to prove that the class $\GP _{(\X,\Y)}$ (respectively,  $W\GP _{(\omega,\Y)}$) is a  left thick class in $\A$ if the pair 
$(\X,\Y)$ is GP-admissible (respectively, $\omega\subseteq\Y$ and $\omega$ is closed under finite coproducts in $\A$). In order to 
do that, we start with a series of Lemmas,  propositions and theorems.

\begin{lem}\label{oobss2} Let $(\X, \Y)\subseteq\A^2$ be a weak GP-admissible pair and  $M \in \A$. Then, $M \in \GP _{(\X,\Y)}$ iff there is an exact sequence $ 0 \to M \to X \to G \to 0$  in $\A,$   with $X \in \X$ and $G \in \GP_{(\X,\Y)}.$
 \end{lem}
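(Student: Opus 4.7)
The plan is to reduce everything to the characterization of $\GP_{(\X,\Y)}$ given by Proposition \ref{GP2}, namely that under the weak GP-admissible hypothesis, an object $M$ lies in $\GP_{(\X,\Y)}$ if and only if $M\in{}^{\perp}\Y$ and there is an exact sequence $\varepsilon\colon 0\to M\to X^0\to X^1\to\cdots$ with all $X^i\in\X$ and with $\Hom_\A(\varepsilon,Y)$ acyclic for every $Y\in\Y$. Both directions of the equivalence will pivot on splicing/un-splicing the $0$-th step of such a coresolution.

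For the direction ($\Rightarrow$), I would take $M\in\GP_{(\X,\Y)}$, apply Proposition \ref{GP2} to obtain the coresolution $\varepsilon$ with the properties above, and set $X:=X^0$ and $G:=\Ima(X^0\to X^1)$. This produces the short exact sequence $0\to M\to X\to G\to 0$ with $X\in\X$. The non-trivial point is to verify that $G\in\GP_{(\X,\Y)}$: the tail $0\to G\to X^1\to X^2\to\cdots$ is again an exact sequence in $\X$ whose image under $\Hom_\A(-,Y)$ is acyclic for every $Y\in\Y$; and from the long exact sequence applied to $0\to M\to X\to G\to 0$, using $M\in{}^\perp\Y$ together with $X\in\X\subseteq{}^\perp\Y$ (which holds because $\pd_\Y(\X)=0$), we deduce $G\in{}^\perp\Y$. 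Hence $G$ satisfies condition (b) of Proposition \ref{GP2}, so $G\in\GP_{(\X,\Y)}$.

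For the direction ($\Leftarrow$), I start from a short exact sequence $0\to M\to X\to G\to 0$ with $X\in\X$ and $G\in\GP_{(\X,\Y)}$. By Corollary \ref{GP3}(b), $G\in{}^\perp\Y$, and since $X\in{}^\perp\Y$ as well, the long exact sequence $\Ext^i_\A(X,Y)\to\Ext^i_\A(M,Y)\to\Ext^{i+1}_\A(G,Y)$ forces $M\in{}^\perp\Y$. Applying Proposition \ref{GP2} to $G$ gives an exact sequence $0\to G\to X^0\to X^1\to\cdots$ in $\X$ with acyclic $\Hom_\A(-,Y)$-image; splicing this with $0\to M\to X\to G\to 0$ produces the candidate coresolution
\[
\varepsilon\colon\quad 0\to M\to X\to X^0\to X^1\to\cdots
\]
of $M$ in $\X$. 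I then invoke Proposition \ref{GP2} again (or equivalently Lemma \ref{Chris}, via Proposition \ref{GP-WGP}) to conclude $M\in\GP_{(\X,\Y)}$.

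The main technical point, and the only step that requires real care, is verifying that $\Hom_\A(\varepsilon,Y)$ is acyclic for every $Y\in\Y$. The short exact sequence $0\to M\to X\to G\to 0$ becomes short exact under $\Hom_\A(-,Y)$ precisely because $G\in{}^\perp\Y$ (so $\Ext^1_\A(G,Y)=0$), and the acyclicity of the remaining part follows from the assumption on $G$'s coresolution. Combining these two pieces at the splice point $G=\Ker(X^0\to X^1)=\Ima(X\to X^0)$ yields the desired acyclicity, completing the proof.
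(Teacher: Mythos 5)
Your ($\Leftarrow$) direction is essentially the paper's own argument: the same splicing of $0\to M\to X\to G\to 0$ with the coresolution of $G$ provided by Proposition \ref{GP2}, the same observation that $\Hom_\A(-,Y)$ keeps the short exact sequence exact because $\Ext^1_\A(G,Y)=0$, and the same use of $\pd_\Y(\X)=0$ in the long exact sequence to conclude $M\in{}^\perp\Y$ before invoking Proposition \ref{GP2} again. In the ($\Rightarrow$) direction you take a slightly different and more laborious route: the paper simply reads the short exact sequence off the left complete $(\X,\Y)$-resolution, taking $G:=\Ker(X^1\to X^2)$, which is a $GP_{(\X,\Y)}$-object immediately from Definition \ref{defiGP} (the same complete resolution, shifted one step, exhibits it), so nothing further needs checking. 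Your route, extracting a coresolution $\varepsilon$ via Proposition \ref{GP2} and re-verifying condition (b) of that proposition for $G=\Ima(X^0\to X^1)$, does work, but one justification is incomplete as written: from the long exact sequence of $0\to M\to X^0\to G\to 0$ and $M,X^0\in{}^\perp\Y$ alone you only get $\Ext^i_\A(G,Y)=0$ for $i\geq 2$, while in degree one you have $\Ext^1_\A(G,Y)\simeq \Coker\bigl(\Hom_\A(X^0,Y)\to\Hom_\A(M,Y)\bigr)$, so you must also use the surjectivity of $\Hom_\A(X^0,Y)\to\Hom_\A(M,Y)$ --- which is precisely the degree-zero exactness of $\Hom_\A(\varepsilon,Y)$ that you already carry (equivalently, apply Lemma \ref{Chris} to $\varepsilon$ to see that all of its cycles, in particular $G$, lie in ${}^\perp\Y$; the same lemma also covers your assertion that the truncated coresolution of $G$ stays $\Hom_\A(-,Y)$-acyclic). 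With that one-line repair, or by adopting the paper's shift-of-the-complete-resolution shortcut, your proof is correct.
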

 \begin{dem} $(\Rightarrow)$ Let $M \in \GP _{(\X ,\Y)}.$ Then, there is a left complete  $(\X,\Y)$-resolution 
  $$\cdots \to X_1 \to X_0 \to X ^0 \to X^1 \to \cdots$$ 
  such that  $M = \Ker (X^0 \to X^1)$ and moreover  $G:= \Ker (X^1 \to X^2) \in \GP_{(\X,\Y)}.$ Thus, we get an exact sequence
   $0\to M \to X^0 \to G \to 0$, with $X^0\in \X$ and $G \in \GP_{(\X,\Y)}.$
  \
  
  $(\Leftarrow)$ Let  $\eta : 0\to M \to X \to G \to 0$ be an exact sequence in $\A,$  with $X \in  \X$ and $G \in \GP _{(\X, \Y)}$. By 
  Proposition  \ref{GP2}, we get  $G\in{}^{\perp} \Y$ and an exact sequence  
  $$\xi :0 \to G \to X^0 \to X ^1 \to \cdots ,$$ 
  with  $X^{i} \in \X ,$ $\forall i \in \mathbb{N},$  such that the complex $\Hom _{\A} (\xi , Y)$ is acyclic for any $Y \in \Y$. By putting together 
  $\eta$ and $\xi ,$ we get the exact sequence 
   $$ \epsilon : 0\to  M \to X \to X^0 \to X^1 \to \cdots .$$ 
  In order to see that  the complex $\Hom_{\A} (\epsilon, Y)$ is acyclic $\forall \; Y \in \Y,$ it is enough to check the same for $\Hom _{\A} (\eta, Y ).$ 
 Let $Y \in \Y. $ By applying the functor $\Hom(-,Y)$ to  $\eta,$ we obtain the exact sequence  
 $$0 \to \Hom _{\A} (G,Y) \to \Hom _{\A} (X, Y) \to \Hom _{\A} (M,Y) \to \Ext ^1 (G , Y) .$$ 
 Note that $\Ext ^1 (G , Y) = 0,$ since $G \in \GP _{(\X ,\Y)} \subseteq{}^{\perp} \Y$. 
 \
 
 We assert that $M \in{}^{\perp} \Y $. Indeed, take 
 $Y \in \Y$ and apply $\Hom(-,Y)$ to $\eta.$ Then, we get the exact sequence  
 $$ \Ext ^{i}_{\A} (G, Y) \to \Ext ^{i}_{\A} (X,Y) \to \Ext ^{i}_{\A} (M,Y) \to \Ext ^{i+1}_{\A} (G, Y) ,$$ 
 where $ \Ext ^{i}_{\A} (G, Y)=0=\Ext ^{i+1}_{\A} (G, Y).$ Using the fact that  $\pd_{\Y} (\X) =0,$  it follows that 
 $M\in{}^{\perp}\Y.$ Finally, by  Proposition \ref{GP2}, we conclude that $M \in \GP_{(\X,\Y)}.$
 \end{dem}
\vspace{0.2cm}

The following Lemma is a generalization of \cite[Lemma 2.4]{Xu}.

\begin{lem}\label{CasiExten} 
Let $(\X,\Y) \subseteq \A^2$ be  such that  $\pd_\Y(\X)=0$ and $\X$ is closed under extensions, and let $0 \to A \to B\to C \to 0$ be an exact sequence in 
$\A$. If $A \in W\GP _{(\X , \Y)}$ and $C \in \X,$ then   $B \in W\GP _{(\X ,\Y)}.$ 
\end{lem}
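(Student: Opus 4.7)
The plan is to verify the two defining conditions of $W\GP_{(\X,\Y)}$ for $B$: first that $B\in{}^\perp\Y$, and second that $B$ admits a suitable right resolution by objects of $\X$ with all images lying in ${}^\perp\Y$.

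For the first condition, I apply the functor $\Hom_\A(-,Y)$ for an arbitrary $Y\in\Y$ to the short exact sequence $0\to A\to B\to C\to 0$. This gives a long exact sequence of Ext groups containing the piece
\[
\Ext^i_\A(C,Y)\to\Ext^i_\A(B,Y)\to\Ext^i_\A(A,Y)
\]
for each $i\geq 1$. The hypothesis $\pd_\Y(\X)=0$ together with $C\in\X$ kills the left-hand term, while $A\in W\GP_{(\X,\Y)}\subseteq{}^\perp\Y$ kills the right-hand one. Hence $\Ext^i_\A(B,Y)=0$ for all $i\geq 1$ and all $Y\in\Y$, which gives $B\in{}^\perp\Y$.

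For the second condition, I first extract from $A\in W\GP_{(\X,\Y)}$ a short exact sequence $0\to A\to X^0\to A^1\to 0$ with $X^0\in\X$, by taking $A^1:=\Ima(X^0\to X^1)$ inside the defining coresolution of $A$; by construction $A^1\in{}^\perp\Y$ and the tail $0\to A^1\to X^1\to X^2\to\cdots$ exhibits $A^1$ as an element of $W\GP_{(\X,\Y)}$. Next I form the pushout of $A\hookrightarrow B$ along $A\to X^0$, obtaining a commutative diagram with exact rows and columns whose new row and column are
\[
0\to X^0\to P\to C\to 0,\qquad 0\to B\to P\to A^1\to 0.
\]
Since $X^0,C\in\X$ and $\X$ is closed under extensions, the first of these sequences shows $P\in\X$.

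Finally, I splice $0\to B\to P\to A^1\to 0$ with the coresolution of $A^1$ furnished by $A^1\in W\GP_{(\X,\Y)}$ to obtain an exact sequence
\[
0\to B\to P\to X'^{1}\to X'^{2}\to\cdots
\]
with all $X'^{i}\in\X$ and all images in ${}^\perp\Y$ (the zeroth image being $A^1$, which is in ${}^\perp\Y$ by construction; the higher images coming from the resolution of $A^1$). Together with $B\in{}^\perp\Y$ from the first paragraph, this yields $B\in W\GP_{(\X,\Y)}$. The whole argument is routine once the pushout has been set up, so no step is a serious obstacle; the only mild subtlety is recognising that the cokernel $A^1$ inherits membership in $W\GP_{(\X,\Y)}$ from the defining coresolution of $A$, which then allows the splicing to go through.
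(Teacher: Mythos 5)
Your proof is correct and follows essentially the same route as the paper: extract $0\to A\to X^0\to A^1\to 0$ from the defining coresolution of $A$, push out along $A\to B$ to get $P\in\X$ (using closure of $\X$ under extensions), and splice $0\to B\to P\to A^1\to 0$ with the coresolution of $A^1$. The only cosmetic difference is that you verify $B\in{}^\perp\Y$ directly from the original sequence, whereas the paper deduces it from the pushout sequence $0\to B\to Q\to G\to 0$; both are immediate.
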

\begin{dem} Let $A \in W\GP _{(\X , \Y)}$ and $C \in \X.$ In particular, there is an exact sequence $0 \to A \to X \to G \to 0$, with $X \in \X$ and 
$G \in W\GP _{(\X, \Y)}.$ Thus, we get an exact and commutative diagram in $\A$
$$\xymatrix{& 0\ar[d] & 0\ar[d] &  &\\
0\ar[r]  & A\ar[r]\ar[d] & B \ar[r]\ar[d] & C\ar[r]\ar@{=}[d] & 0\\
0\ar[r] & X \ar[r]\ar[d] & Q \ar[r]\ar[d] & C\ar[r] & 0\\
& G \ar@{=}[r]\ar[d] & G \ar[d] & &\\
& 0 & \;0. & &}$$
Since $\X$ is closed under  extensions and $X ,C \in \X,$ we have that $Q \in \X .$ Therefore, we obtain an exact sequence  
$\varepsilon:\; 0 \to B \to Q \to G \to 0,$ where $Q \in \X$ and $ G \in W\GP_{(\X,\Y)}.$ By using the exact sequence $\varepsilon$ and  
the fact that $Q,G\in{}^\perp\Y,$ it can be shown that $B\in{}^\perp\Y.$ Thus  $B \in W\GP _{(\X,\Y)}.$
\end{dem}

\begin{pro}\label{cogenera}
Let $(\X, \Y)\subseteq \A^2$ be such that $\pd_\Y(\X)=0$ and $\X$ is closed under extensions, and let $\omega:= \X \cap \Y$ be  closed under finite coproducts in $\A$ and a relative cogenerator in $\X.$ Then, the following statements hold true.
\begin{itemize}
\item[(a)] The class $\omega$ is closed under extensions,  a $W\GP _{(\X ,\Y)}$-injective 
relative cogenerator in  $W\GP _{(\X ,\Y)}$ and $\id_\omega(\omega)=0.$
\item[(b)] If $\omega$ is closed under direct summands in $\A,$ then $\omega=\Y\cap W\GP_{(\omega,\Y)}.$
\end{itemize}
\end{pro}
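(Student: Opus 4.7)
The plan is to treat (a) and (b) separately; both arguments rest on the Ext-vanishing $\pd_\Y(\X)=0$ combined with the inclusions $\omega\subseteq\X\cap\Y$. For (a), I will verify the four subclaims in order: closure of $\omega$ under extensions, its $W\GP_{(\X,\Y)}$-injectivity, its relative cogenerator property in $W\GP_{(\X,\Y)}$, and the vanishing $\id_\omega(\omega)=0$.

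Closure under extensions is forced by Ext-vanishing: for any short exact sequence $0\to\omega_1\to E\to\omega_2\to 0$ with $\omega_i\in\omega$, the relations $\omega_2\in\X$ and $\omega_1\in\Y$ give $\Ext^1_\A(\omega_2,\omega_1)=0$, so the sequence splits and $E\simeq\omega_1\oplus\omega_2\in\omega$ by closure under finite coproducts. The $W\GP_{(\X,\Y)}$-injectivity of $\omega$ and $\id_\omega(\omega)=0$ are then immediate: the inclusions $W\GP_{(\X,\Y)}\subseteq{}^{\perp}\Y$ and $\omega\subseteq\Y$ yield the former, while $\omega\subseteq\X\cap\Y$ together with $\pd_\Y(\X)=0$ yield the latter.

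The substantive step is the cogenerator property. I would first establish $\omega\subseteq\X\subseteq W\GP_{(\X,\Y)}$ by splicing: for $X\in\X$, iterate the relative cogenerator property of $\omega$ in $\X$ to build short exact sequences $0\to X_i\to W_i\to X_{i+1}\to 0$ with $W_i\in\omega$, $X_0=X$, and each $X_{i+1}\in\X\subseteq{}^{\perp}\Y$; splicing yields a coresolution $0\to X\to W_0\to W_1\to\cdots$ whose syzygies lie in ${}^{\perp}\Y$, and Lemma \ref{Chris} delivers the required $\Hom_\A(-,Y)$-acyclicity. Next, given an arbitrary $M\in W\GP_{(\X,\Y)}$ with initial short exact sequence $0\to M\to X^0\to M^1\to 0$ where $M^1\in W\GP_{(\X,\Y)}$, I would apply the cogenerator property to $X^0$, obtaining $0\to X^0\to W\to X'\to 0$ with $W\in\omega$ and $X'\in\X$, and form the pushout of $X^0\to W$ along $X^0\to M^1$. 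This yields
\begin{equation*}
0\to M\to W\to N\to 0\quad\text{and}\quad 0\to M^1\to N\to X'\to 0.
\end{equation*}
Lemma \ref{CasiExten} applied to the second sequence gives $N\in W\GP_{(\X,\Y)}$, so the first sequence is the desired cogenerator step.

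For (b), the inclusion $\omega\subseteq\Y\cap W\GP_{(\omega,\Y)}$ follows from the splicing argument above, reinterpreted with approximation class $\omega$ in place of $\X$ (every $W_i$ already lies in $\omega$). Conversely, if $M\in\Y\cap W\GP_{(\omega,\Y)}$ with defining initial sequence $0\to M\to W^0\to M^1\to 0$ where $W^0\in\omega$ and $M^1\in{}^{\perp}\Y$, then $M\in\Y$ forces $\Ext^1_\A(M^1,M)=0$; the sequence splits, $M$ is a direct summand of $W^0\in\omega$, and closure of $\omega$ under direct summands gives $M\in\omega$. The main obstacle is the cogenerator step in (a): without the pushout device combined with Lemma \ref{CasiExten}, one cannot simultaneously upgrade an $\X$-step to an $\omega$-step while keeping the cokernel inside $W\GP_{(\X,\Y)}$; every other piece of the proposition reduces to Ext-vanishing or direct splitting.
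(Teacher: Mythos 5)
Your proposal is correct and follows essentially the same route as the paper's proof: the splitting argument for closure of $\omega$ under extensions, the pushout of the $\omega$-cogenerator step for $X^0$ along $X^0\to M^1$ combined with Lemma \ref{CasiExten} to produce $0\to M\to W\to N\to 0$, and the splitting via $\Ext^1_\A({}^\perp\Y,\Y)=0$ plus closure under direct summands for (b). The only difference is that you make explicit the splicing argument giving $\omega\subseteq\X\subseteq W\GP_{(\X,\Y)}$ (where the paper just cites the inclusions), and your appeal to Lemma \ref{Chris} there is harmless but unnecessary, since membership in $W\GP_{(\X,\Y)}$ only requires the images to lie in ${}^\perp\Y$.
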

\begin{dem} (a) Since $\pd_\Y(\X)=0,$ it follows that $\Ext^i_\A(\omega,\omega)=0$ for any $i\geq 1.$ Then, any exact sequence $0\to W\to E\to W'\to 0,$ with $W,W'\in\omega,$ splits and thus $E=W\oplus W'\in\omega$ since $\omega$ is closed under finite coproducts in $\A.$
\

We prove, now, that $\omega$ is a relative cogenerator in $W\GP _{(\X,\Y)}.$ Indeed, let $G \in W\GP _{(\X,\Y)}.$ In particular, there exists  an exact sequence $0 \to G \to X \to G' \to 0,$ with $X \in \X$ and $G' \in W\GP _{(\X,\Y)}.$ Since $\omega$ is a relative cogenerator 
in  $\X,$  there is an exact sequence $0 \to X \to W \to X' \to 0,$ with $X' \in \X$ and  $W \in \omega.$ Hence, we get the 
following exact and commutative diagram
$$\xymatrix{&  & 0\ar[d] & 0\ar[d] &\\
\; \;  0\ar[r]  & G\ar[r] \ar@{=}[d]  & X\ar[r]\ar[d] & G' \ar[r] \ar[d] & 0\\
\eta : 0\ar[r] & G\ar[r] & W \ar[r]\ar[d] & T \ar[r] \ar[d] & 0\\
& & X' \ar@{=}[r]\ar[d] & X' \ar[d]  &\\
&  & 0 & \;0.& }$$
Using that $G' \in W\GP _{(\X,\Y)},$  $X' \in \X$ and Lemma \ref{CasiExten}, we get that $T \in W\GP _{(\X,\Y)}.$ Moreover, from the inclusions
$\omega\subseteq\X\subseteq W\GP _{(\X,\Y)}$ and the exact sequence $\eta,$ it follows that $\omega$ is a relative cogenerator in 
$W\GP _{(\X,\Y)}.$
Finally, we have that $\id _{W\GP_{(\X,\Y)}} (\omega)=0,$ since $W\GP _{(\X,\Y)} \subseteq {}^{\perp} \Y \subseteq {}^{\perp} \omega.$ 
\

(b) Let $\omega$ be closed under direct summands in $\A.$ We prove that $\omega=\Y\cap W\GP_{(\omega,\Y)}.$ It is clear that 
$\omega\subseteq\Y\cap W\GP_{(\omega,\Y)}.$ On the other hand, 
let $M\in \Y\cap W\GP_{(\omega,\Y)}.$ Then, by (a), there is an exact sequence $\varepsilon:\; 0\to M\to W\to C\to 0,$ with 
$W\in\omega$ and $C\in W\GP_{(\omega,\Y)}.$ Since $\Ext_\A^1(W\GP_{(\omega,\Y)},\Y)=0,$ it follows that $\varepsilon$ splits and then $M\in\omega.$
\end{dem}

\begin{cor}\label{GPcogenera} Let $(\X ,\Y)$ be a GP-admisible pair in $\A$ and $\omega := \X \cap \Y$. Then, the following statements hold true.
\begin{itemize}
\item[(a)] The class $\omega$ is closed under extensions,  a $\GP _{(\X ,\Y)}$-injective 
relative cogenerator in  $\GP _{(\X ,\Y)}$ and $\id_\omega(\omega)=0.$
\item[(b)] If $\omega$ is closed under direct summands in $\A,$ then $\omega=\Y\cap W\GP_{(\omega,\Y)}.$
\end{itemize}
\end{cor}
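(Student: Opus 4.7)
The key observation is that the hypotheses of a GP-admissible pair are precisely engineered so that Proposition \ref{cogenera} applies to $(\X,\Y)$ itself, and that under these same hypotheses Proposition \ref{GP2} collapses the two notions of Gorenstein projectivity. My plan is thus to reduce both parts of the statement to these two previously established results.

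First I would verify the hypotheses of Proposition \ref{cogenera} for the pair $(\X,\Y)$: from the definition of GP-admissibility we get directly that $\pd_\Y(\X)=0$ and that $\X$ is closed under extensions; moreover, $\omega=\X\cap\Y$ is a relative cogenerator in $\X$ by assumption, and it is closed under finite coproducts because both $\X$ and $\Y$ are. This means Proposition \ref{cogenera}(a) yields that $\omega$ is closed under extensions, is a $W\GP_{(\X,\Y)}$-injective relative cogenerator in $W\GP_{(\X,\Y)}$, and satisfies $\id_\omega(\omega)=0$.

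To deduce (a), I then need to replace $W\GP_{(\X,\Y)}$ by $\GP_{(\X,\Y)}$. Since every GP-admissible pair is in particular weak GP-admissible, Proposition \ref{GP2} gives the equality $\GP_{(\X,\Y)}=W\GP_{(\X,\Y)}$. Substituting this into the conclusion of Proposition \ref{cogenera}(a) delivers exactly part (a) of the corollary.

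For part (b), I would simply apply Proposition \ref{cogenera}(b) to the pair $(\X,\Y)$: the hypotheses were already checked in the first step, and the extra assumption that $\omega$ is closed under direct summands in $\A$ is given, so the conclusion $\omega=\Y\cap W\GP_{(\omega,\Y)}$ follows immediately. I do not anticipate a genuine obstacle here; the substance of the argument lives in the two results being cited, and the only point requiring any care is the bookkeeping that a GP-admissible pair implies the four-fold hypothesis of Proposition \ref{cogenera} together with the weak GP-admissibility needed in Proposition \ref{GP2}.
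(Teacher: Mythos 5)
Your proposal is correct and follows essentially the same route as the paper: the paper's proof also invokes Proposition \ref{GP2} to identify $\GP_{(\X,\Y)}$ with $W\GP_{(\X,\Y)}$ and then cites Proposition \ref{cogenera} for both parts. Your explicit verification that GP-admissibility supplies all hypotheses of Proposition \ref{cogenera} (including closure of $\omega$ under finite coproducts) is accurate and just makes the paper's implicit bookkeeping explicit.
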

\begin{dem} Since $(\X ,\Y)$ is GP-admisible, we get from Proposition \ref{GP2} that $\GP _{(\X ,\Y)}=W\GP _{(\X ,\Y)}.$ Then, the 
result follows from Proposition \ref{cogenera}.
\end{dem} 

 \begin{pro}\label{GXY-GXW}
 Let $(\X ,\Y)$ be a GP-admisible pair in $\A$ and $\omega := \X \cap \Y$. Then, the pair $(\X , \omega)$ is GP-admisible and $\GP _{(\X ,\Y)} \subseteq \GP _{(\X ,\omega)}.$
 \end{pro}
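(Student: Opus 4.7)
The plan is to verify the three conditions of GP-admissibility for the pair $(\X,\omega)$ one by one (all essentially inherited from the GP-admissibility of $(\X,\Y)$), and then to observe that the inclusion $\GP_{(\X,\Y)} \subseteq \GP_{(\X,\omega)}$ follows immediately from the definition of left complete resolution, since $\omega \subseteq \Y$.

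For the weak GP-admissibility of $(\X,\omega)$: first, since $\omega \subseteq \Y$ we have $\pd_\omega(\X) \leq \pd_\Y(\X) = 0$; second, $\X$ is $\X$-epic in $\A$ by hypothesis. For the two additional conditions turning this into GP-admissibility: $\X$ is closed under extensions and finite coproducts by hypothesis, and $\omega = \X \cap \Y$ is closed under finite coproducts because both $\X$ and $\Y$ are. It remains to check that $\X \cap \omega$ is a relative cogenerator in $\X$; but since $\omega \subseteq \X$ we have $\X \cap \omega = \omega$, and by hypothesis $\omega$ is already a relative cogenerator in $\X.$

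For the inclusion $\GP_{(\X,\Y)} \subseteq \GP_{(\X,\omega)}$: take $M \in \GP_{(\X,\Y)}$ and fix a left complete $(\X,\Y)$-resolution
$$\eta:\quad \cdots \to X_1 \to X_0 \to X^0 \to X^1 \to \cdots$$
with $M = \Ima(X_0 \to X^0)$. By definition, $\eta$ is acyclic, each term lies in $\X$, and $\Hom_\A(\eta, Y)$ is acyclic for every $Y \in \Y$. Since $\omega \subseteq \Y$, the complex $\Hom_\A(\eta, W)$ is acyclic for every $W \in \omega$, so $\eta$ qualifies as a left complete $(\X,\omega)$-resolution, witnessing $M \in \GP_{(\X,\omega)}$.

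There is no real obstacle here: the statement is essentially a monotonicity observation. The only point deserving a sanity check is the conclusion $\X \cap \omega = \omega$ (which uses $\omega \subseteq \X$, built into the definition of a relative cogenerator) so that condition (b) of GP-admissibility transfers verbatim from $(\X,\Y)$ to $(\X,\omega)$.
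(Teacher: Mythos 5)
Your proof is correct. The admissibility part coincides with the paper's: $\pd_\omega(\X)\leq\pd_\Y(\X)=0$ since $\omega\subseteq\Y$, and the remaining conditions (closure under extensions and finite coproducts, $\X$-epicity, and $\X\cap\omega=\omega$ being a relative cogenerator in $\X$) transfer verbatim; you spell these out more explicitly than the paper, which simply notes $\pd_\omega(\X)=0$ and declares $(\X,\omega)$ GP-admissible. For the inclusion $\GP_{(\X,\Y)}\subseteq\GP_{(\X,\omega)}$ you take a more direct route than the paper: you observe that a left complete $(\X,\Y)$-resolution of $M$ is literally already a left complete $(\X,\omega)$-resolution because $\omega\subseteq\Y$, so membership follows straight from Definition \ref{defiGP}. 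The paper instead extracts only the right half of the resolution, notes $M\in{}^\perp\Y\subseteq{}^\perp\omega$, and invokes Lemma \ref{Chris} together with the characterization of Proposition \ref{GP2} for the weak GP-admissible pair $(\X,\omega)$ to rebuild a complete resolution. Your argument is more elementary and has the small added benefit of showing that the inclusion $\GP_{(\X,\Y)}\subseteq\GP_{(\X,\omega)}$ needs no admissibility hypotheses at all, only $\omega\subseteq\Y$; the paper's route is slightly heavier but stays inside the machinery it has already set up.
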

 \begin{proof} Since $\omega\subseteq\Y$ and $\pd _{\Y} (\X) =0,$ it follows that $\pd _{\omega} (\X)=0$ and hence  $(\X, \omega)$ is  GP-admisible. 
 \
 
 Let  $M \in \GP _{(\X,\Y)}.$ Then $M\in {}^{\perp}  \Y \subseteq {}^{\perp} \omega.$ Moreover, by Lema \ref{Chris} there exists an exact sequence  $ \xi ^{+} : 0 \to M \to X_0 \to X_1 \to \cdots ,$ with $X_i \in \X,$ such that the complex $\Hom (\xi ^{+} ,Y)$ is acyclic for any $Y \in \Y.$ In particular, $\Hom (\xi ^{+} ,W)$ is acyclic for any $W\in \omega.$ Therefore, from Lema \ref{Chris}, we get that 
 $M \in \GP _{(\X ,\omega)}.$
 \end{proof}

In what follows, we study the category $W\GP _{(\omega, \Y)}$ (see Definition \ref{debildefi2}), in the case that $\omega \subseteq \Y.$ A particular situation of that was firstly considered by  M. Auslander and I. Reiten  \cite{AuR}, for $\omega = \add (T) = \Y,$ where  $T$ is a self-ortogonal 
(i.e. $\Ext ^{i} _{\Lambda} (T,T) =0 \; \forall \; i \geq 1$) finitely generated left 
$\Lambda$-module and   $\Lambda$ is an Artin algebra.

\begin{rk} \label{cogenera3}
For any pair $(\omega,\Y) \subseteq \A ^2$ such that $\omega \subseteq \Y,$ the following statements hold true.
\begin{itemize}
  \item[(a)]  $\omega$ is $W\GP _{(\omega, \Y)}$-injective and a relative quasi-cogenerator in $W\GP _{(\omega, \Y)}.$
  \item[(b)]  If $\pd _{\Y} (\omega) =0$ and $ 0 \in \omega,$ then $\omega$ is a relative cogenerator in $W\GP_{(\omega, \Y)}.$
   \item[(c)] If $\omega$ is a relative cogenerator in $W\GP_{(\omega, \Y)},$ then $\pd _{\Y} (\omega) =0.$
\end{itemize}
\end{rk}

\begin{lem}\label{GXY-WGW}
For any GP-admissible pair $(\X , \Y)\subseteq \A^2$ and $\omega:=\X \cap \Y,$ it follows that    
$\GP _{(\X,\Y)} \subseteq W \GP _{\omega}.$
\end{lem}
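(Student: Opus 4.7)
The plan is to exploit Corollary \ref{GPcogenera}(a), which tells us that $\omega=\X\cap\Y$ is a relative cogenerator in $\GP_{(\X,\Y)}$, and then to splice short exact sequences into a coresolution by objects of $\omega$.

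First, I would observe that since $(\X,\Y)$ is GP-admissible, Proposition \ref{GP2} gives $\GP_{(\X,\Y)}=W\GP_{(\X,\Y)}$, so $M\in\GP_{(\X,\Y)}\subseteq{}^{\perp}\Y$. Because $\omega\subseteq\Y$, this immediately yields $M\in{}^{\perp}\omega$, giving the first requirement for membership in $W\GP_\omega$.

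Next, I would construct the required coresolution inductively. Set $M_0:=M$. Assuming $M_i\in\GP_{(\X,\Y)}$ is built, apply Corollary \ref{GPcogenera}(a) (which states that $\omega$ is a relative cogenerator in $\GP_{(\X,\Y)}$) to obtain a short exact sequence
$$0\to M_i\to W^i\to M_{i+1}\to 0$$
with $W^i\in\omega$ and $M_{i+1}\in\GP_{(\X,\Y)}$. Splicing these short exact sequences produces an exact sequence
$$0\to M\to W^0\to W^1\to W^2\to \cdots$$
with $W^i\in\omega$ for every $i\geq 0$ and with $\Ima(W^i\to W^{i+1})=M_{i+1}\in\GP_{(\X,\Y)}$.

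Finally, since every image $M_{i+1}$ lies in $\GP_{(\X,\Y)}\subseteq{}^{\perp}\Y\subseteq{}^{\perp}\omega$, all hypotheses in Definition \ref{debildefi2} (for the pair $(\omega,\omega)$) are satisfied, so $M\in W\GP_\omega$. There is no real obstacle here: the only substantive ingredient is the fact that $\omega$ is a relative cogenerator in $\GP_{(\X,\Y)}$ such that the cokernel at each step stays inside $\GP_{(\X,\Y)}$, and this is precisely the content of Corollary \ref{GPcogenera}(a), applied iteratively.
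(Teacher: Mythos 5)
Your proof is correct and follows essentially the same route as the paper: iterate Corollary \ref{GPcogenera}(a) to get short exact sequences $0\to M_i\to W^i\to M_{i+1}\to 0$ with $W^i\in\omega$ and $M_{i+1}\in\GP_{(\X,\Y)}$, splice them, and note that $M$ and all images lie in $\GP_{(\X,\Y)}\subseteq{}^{\perp}\Y\subseteq{}^{\perp}\omega$. No gaps.
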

\begin{dem}
Let $G \in \GP_{(\X,\Y)} .$ By Corollary \ref{GPcogenera} (a), there is an exact sequence $0\to G \to W_0 \to G_0 \to 0,$ with $W_0 \in \omega$ and 
 $G_0 \in \GP _{(\X, \Y)}.$ By doing the same with $G_0$ and repeating this procedure, we can construct an exact sequence 
 $0 \to G \to W_0 \to W_1 \to W_2 \to  \cdots,$ with $W_i \in \omega$ and 
 $\Ima (W_i \to W _{i+1}) \in \GP _{(\X, \Y)}\subseteq {}^\perp\Y\subseteq {}^\perp\omega,$
 for any non negative integer $i.$ 
\end{dem}
\vspace{0.2cm}

The following result generalizes \cite[Proposition 2.9 and Corollary 2.10]{BGRO}.
\begin{teo}\label{thickcat} 
Let $\omega \subseteq \Y \subseteq \A,$ where $\omega$ is closed under finite coproducts. Then, the class  $W \GP _{(\omega, \Y)}$ is left thick.
\end{teo}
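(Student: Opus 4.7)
The plan is to verify that $W\GP_{(\omega,\Y)}$ satisfies the three defining properties of left thickness: closure under extensions, under kernels of epimorphisms between its objects, and under direct summands. I would prove extension closure first, since it feeds the other two arguments; the direct summand closure will be the main obstacle.

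\textbf{Extensions.} I would take an exact sequence $0 \to A \to B \to C \to 0$ with $A, C \in W\GP_{(\omega,\Y)}$. That $B \in {}^{\perp}\Y$ is immediate from the long exact $\Ext$ sequence. To construct the coresolution of $B$, I iterate the Horseshoe lemma on the coresolutions of $A$ and $C$: at each step, the cycle $C_i$ lies in ${}^{\perp}\Y$ while $W_A^i \in \omega \subseteq \Y$, so $\Ext^1_\A(C_i, W_A^i) = 0$ and the standard diagram chase produces $0 \to B_i \to W_A^i \oplus W_C^i \to B_{i+1} \to 0$ with $B_{i+1}$ an extension of $C_{i+1}$ by $A_{i+1}$. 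Finite coproduct closure of $\omega$ places the middle term in $\omega$, and the long exact sequence keeps each $B_{i+1}$ in ${}^{\perp}\Y$, giving the required coresolution.

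\textbf{Kernels of epimorphisms.} Given $0 \to A \to B \to C \to 0$ with $B, C \in W\GP_{(\omega,\Y)}$, the long exact sequence forces $A \in {}^{\perp}\Y$. Taking the first step $0 \to B \to W \to B_1 \to 0$ of $B$'s coresolution, I would form the composite $A \hookrightarrow B \hookrightarrow W$ with cokernel $Q$; applying the snake lemma to the resulting $3\times 3$ diagram yields an exact sequence $0 \to C \to Q \to B_1 \to 0$. Both $C$ and $B_1$ lie in $W\GP_{(\omega,\Y)}$, so extension closure (already proved) gives $Q \in W\GP_{(\omega,\Y)}$. Splicing $0 \to A \to W \to Q \to 0$ with any coresolution of $Q$ then delivers the coresolution required for $A$.

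\textbf{Direct summands.} This is the hardest step: the naive approach of extracting a coresolution of one summand from a coresolution of the whole fails because we cannot assume the complementary summand is in $W\GP_{(\omega,\Y)}$. My strategy is a synchronized induction on both summands. Write $L = M \oplus N \in W\GP_{(\omega,\Y)}$ and set $M_0 := M$, $N_0 := N$, $\tilde L_0 := L$. Inductively, given $\tilde L_i = M_i \oplus N_i \in W\GP_{(\omega,\Y)}$, I would take a coresolution step $0 \to \tilde L_i \to W^i \to K_i \to 0$ (with $W^i \in \omega$ and $K_i \in W\GP_{(\omega,\Y)}$) and define $M_{i+1} := W^i/M_i$ and $N_{i+1} := W^i/N_i$ via the summand inclusions $M_i, N_i \hookrightarrow \tilde L_i \hookrightarrow W^i$. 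Since $\tilde L_i/M_i \cong N_i$ and $W^i/\tilde L_i = K_i$, one reads off $0 \to N_i \to M_{i+1} \to K_i \to 0$ and, symmetrically, $0 \to M_i \to N_{i+1} \to K_i \to 0$; summing these gives $0 \to \tilde L_i \to \tilde L_{i+1} \to K_i \oplus K_i \to 0$. Extension closure (which implies finite coproduct closure via the split sequence $0 \to K_i \to K_i \oplus K_i \to K_i \to 0$) then forces $\tilde L_{i+1} := M_{i+1} \oplus N_{i+1} \in W\GP_{(\omega,\Y)}$, carrying the induction forward. Splicing the short exact sequences $0 \to M_i \to W^i \to M_{i+1} \to 0$ produces an $\omega$-coresolution of $M$ whose cycles $M_i$ are summands of $\tilde L_i \in W\GP_{(\omega,\Y)} \subseteq {}^{\perp}\Y$, hence lie in ${}^{\perp}\Y$. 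Together with $M \in {}^{\perp}\Y$, this shows $M \in W\GP_{(\omega,\Y)}$.
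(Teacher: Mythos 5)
Your proof is correct, and its first two steps coincide with the paper's: the horseshoe construction you use for extensions is the same device as the paper's pushout-plus-splitting argument (both rest on $\Ext^1_\A(C_i,W)=0$ for $W\in\omega\subseteq\Y$ and cosyzygies $C_i\in{}^\perp\Y$, and both take $W_A^i\oplus W_C^i\in\omega$ as the new term), and your kernel-of-epimorphisms step is exactly the paper's snake-lemma splice through $0\to C\to Q\to B_1\to 0$. Where you genuinely diverge is the direct-summand step. The paper argues asymmetrically: from $B=A\oplus C$ it first gets $A\oplus K_0\in W\GP_{(\omega,\Y)}$ via $0\to A\oplus C\to A\oplus K_0\to E_0\to 0$, and then iterates with the drifting pairs $K_1\oplus K_0,\ K_2\oplus K_1,\dots$ (consecutive cosyzygies of $A$ alone), each time applying extension closure to a sequence of the form $0\to K_i\oplus K_{i-1}\xrightarrow{\mathrm{diag}(t,1)}K_{i+1}\oplus K_i\to E_{i+1}\to 0$. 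You instead run a synchronized induction on both summands, using the third isomorphism theorem to produce $0\to N_i\to M_{i+1}\to K_i\to 0$ and its mirror, so that $\tilde L_{i+1}=M_{i+1}\oplus N_{i+1}$ is an extension of $K_i\oplus K_i$ by $\tilde L_i$ and extension closure keeps the whole sum in $W\GP_{(\omega,\Y)}$. Both arguments exploit the same mechanism -- maintain an auxiliary direct sum inside $W\GP_{(\omega,\Y)}$ so that the new cosyzygy of the chosen summand, being a direct summand of it, lands in ${}^\perp\Y$ -- but your version is symmetric and delivers $\omega$-coresolutions of both summands at once, at the mild cost of doubling the quotient term, whereas the paper's auxiliary object is tailored to the single summand $A$; the verification burden is comparable, and your bookkeeping is arguably cleaner.
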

\begin{dem} We will carry out the proof of the theorem by following several steps.
\

(i) $W \GP _{(\omega,\Y)}$ is closed under extensions.

Let  $0\to A \to B \to C \to 0$ be an exact sequence, with $A ,C \in W \GP _{(\omega, \Y)} \subseteq {}^{\perp}\Y$. Then $B \in{}^{\perp} \Y $ and there 
exist the following exact sequences
\begin{eqnarray*}
\eta : 0 \rightarrow A \stackrel{\epsilon }{\longrightarrow}  W_0   \longrightarrow L \to 0,\;\;\;\\
\;\;\; \eta ' :0 \to C \stackrel{\xi }{\longrightarrow}  W_0 '  \longrightarrow K \to 0,
\end{eqnarray*}
with $W_0 , W_0 ' \in{\omega} $ and $L,K \in W \GP _{(\omega,\Y)} \subseteq {}^{\perp} \Y.$ 
Consider the following exact and commutative diagram
$$\xymatrix{&0\ar[d]  & 0\ar[d] &  &\\
  0\ar[r]  & A \ar[r]^{\alpha} \ar[d]^{\epsilon}  & B \ar[r]^{\beta} \ar[d]^{\epsilon '} & C \ar[r] \ar@{=}[d] & 0\\
\gamma : 0\ar[r] & W_0 \ar[d] \ar[r]^{\alpha '} & U \ar[r]\ar[d] & C \ar[r]  & 0\\
& L \ar[d] \ar@{=}[r]& L \ar[d] &   &\\
& 0 & \; 0. & & }$$
Since $C \in W \GP _{(\omega,\Y )} \subseteq {}^{\perp} \Y \subseteq{} ^{\perp} \omega,$ it follows that $\Ext _\A^1 (C, W_0 ) = 0.$ 
Thus  $\gamma$ splits and so $U = W_0 \oplus C.$ By using Snake's Lemma and the exact sequence $  0 \to W_0 \oplus C \stackrel{\delta }{\longrightarrow}  W_0 \oplus W_0 '  \longrightarrow K \to 0,$ where $\delta := 1_{W_0} \oplus \xi $ and $W_0 \oplus W_0 ' \in \omega,$ we get the 
following exact and commutative diagram
$$\xymatrix{&  & 0\ar[d] &  0\ar[d]&\\
  0\ar[r]  & B \ar[r]^{\epsilon '\;\;\;\;\;\;}  \ar@{=}[d]  & W_0 \oplus C \ar[d]^{\delta} \ar[r] & L \ar[r] \ar[d] & 0\\
0\ar[r] & B  \ar[r]^{\delta \epsilon'\;\;\;\;\;\;\;\;} & W_0 \oplus W_0 ' \ar[r]\ar[d] & V \ar[r] \ar[d] & 0\\
&  & K \ar[d] \ar@{=}[r]&  K \ar[d] &\\
&  & 0 & \; 0.& }$$
Furthermore, we have that $V \in{}^{\perp} \Y,$ since $L, K \in{}^{\perp} \Y.$ Then, the exact sequence $0 \to B \to W_0 \oplus W_0 ' \to V \to 0$ 
satisfies that $W_0 \oplus W_0 ' \in \omega $ and  $B, V \in{}^{\perp} \Y $. Moreover, in the exact sequence $0 \to L \to V\to K\to 0,$ we have that 
 $L,K \in W \GP _{(\omega,\Y)}$. We can repeat  this  procedure  with  $V,$ and so we get the desired exact sequence for $B$.
\

(ii) $W \GP _{(\omega,\Y)}$ is closed under direct summands and kernels of epimorphisms between its objects.

Let $\eta: 0 \to A \to B \to C \to 0$ be an exact sequence, with $B \in W \GP _{(\omega,\Y) }.$   Since  $B \in W\GP _{(\omega, \Y)},$ there is an exact sequence $\eta ' : 0 \to B \to W_0 \to E_0 \to 0,$ with $W_0 \in \omega$ and 
 $E_0 \in W \GP _{(\omega,\Y)}.$ Therefore, we get the following exact and commutative diagram
$$\xymatrix{&  &   & 0 \ar[d]&\\
& 0 \ar[d] &  &  C' \ar[d]^{t}&\\
  0\ar[r]  & A \ar[r]  \ar[d]^{f}  & W_0  \ar@{=}[d] \ar[r] & K_0 \ar[r] \ar[d]^{r} & 0\\
0\ar[r] & B  \ar[r] \ar[d]^{g} & W_0 \ar[r] & E_0 \ar[r] \ar[d] & 0\\
& C \ar[d] &  &  0& \\
& \;0, &  &  & }$$
and thus, by Snake's Lemma  $C \simeq C'.$ 
\

Let $C\in W\GP _{(\omega,\Y) }.$  We prove now that $A \in W\GP _{(\omega, \Y)}.$ Indeed,  firstly we have that $A\in{}^\perp\Y.$ Since   
$ C' \simeq C,E_0 \in W \GP _{(\omega,\Y)} ,$ by  (i) and the third 
row of the diagram above, we get that $K_0 \in W \GP _{(\omega,\Y)} .$ Then, the exact sequence $0\to A \to W_0 \to K_0 \to 0$  and  $W_0 \in \omega ,$ imply that $A \in W \GP _{(\omega,\Y)} $.
\

Now, assume that the exact sequence  $\eta$ splits. By the third 
row of the diagram above, we get the following exact sequence 
 \[ 0  \longrightarrow A \oplus C \stackrel{\left(\begin{array}{cc} 1_A & 0 \\0 & t \end{array}\right) } {\longrightarrow} A\oplus K_0  \stackrel{\left(\begin{array}{c} 0  \\ r \end{array}\right) }{\longrightarrow} E_0 \longrightarrow 0.\]
Since $A \oplus C = B \in W \GP _{(\omega,\Y)} $ and $E_0 \in W \GP _{(\omega,\Y)},$ we conclude from (i) that 
$A \oplus K_0 \in W \GP _{(\omega,\Y)} \subseteq {}^{\perp} \Y $ and thus  $K_0 \in{}^{\perp} \Y.$ Using that $A \oplus K_0 \in W \GP _{(\omega,\Y)},$ we obtain an exact and commutative diagram 

$$\xymatrix{&  &   & 0 \ar[d]&\\
& 0 \ar[d] &  &  \overline{C} \ar[d]^{t_1}&\\
  0\ar[r]  & K_0 \ar[r]  \ar[d]  & W_1  \ar@{=}[d] \ar[r] & K_1 \ar[r] \ar[d]^{r_1} & 0\\
0\ar[r] & A \oplus K_0  \ar[r] \ar[d] & W_1 \ar[r] & E_1 \ar[r] \ar[d] & 0\\
& A \ar[d] &  &  0& \\
& \;0, &  &  & }$$
where $E_1 \in W \GP _{(\omega,\Y)} $, $W_1 \in \omega$ and $A\simeq \overline{C}.$ Consider the exact sequence
\[ 0  \longrightarrow A \oplus K_0\stackrel{\left(\begin{array}{cc} t_1 & 0 \\0 & 1_{K_0} \end{array}\right) } {\longrightarrow} K_1\oplus K_0  \stackrel{\left(\begin{array}{c} r_1 \\ 0 \end{array}\right) }{\longrightarrow} E_1 \longrightarrow 0.\]
Since $E_1, A \oplus K_0 \in W \GP _{(\omega,\Y)}$, we get that $K_1 \oplus K_0 \in W \GP _{(\omega,\Y)} \subseteq{}^{\perp} \Y $ and thus  $K_1 \in{}^{\perp} \Y.$ Therefore we can do, with $K_1 \oplus K_0$, the same procedure done with $A\oplus K_0$ in order to get the desired exact 
sequence  $0 \to A \to W_0 \to W_1 \to W_2 \to \cdots .$ 
\end{dem}
\vspace{0.2cm}

Let $(\X,\Y) \subseteq \A ^2.$  It is quite natural to ask what can be obtained, in terms of relative Gorenstein projective objects, if we consider the pairs 
$(\GP _{(\X,\Y)}, \Y)$ and $(W\GP _{(\X,\Y)}, \Y)$ That is, to study the classes $\GP ^2 _{(\X,\Y)} := \GP _{(\GP _{(\X,\Y)} , \Y)}$ and 
$W\GP ^2 _{(\X,\Y)} := W\GP _{(W\GP _{(\X,\Y)} , \Y)}.$ In the following result, we consider  weakly Gorenstein projective objects.

\begin{teo}\label{Wiguales} Let $(\X, \Y)\subseteq \A^2$ be such that $\pd_\Y(\X)=0$ and $\X$ be closed under extensions, and let 
$\omega: = \X \cap \Y$ be  closed under finite coproducts in $\A$ and a relative cogenerator in $\X.$ Then, the class $W\GP_{(\X,\Y)}$ 
is left thick and
$$W \GP _{(\X, \Y)}=W \GP _{(\omega, \Y)} =W\GP ^{2} _{(\X,\Y)}.$$
\end{teo}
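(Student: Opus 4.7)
The plan is to establish the two equalities in order, deducing left thickness along the way. For $W\GP_{(\X,\Y)} = W\GP_{(\omega,\Y)}$, the inclusion $\supseteq$ is immediate from $\omega \subseteq \X$. For $\subseteq$, the hypotheses match Proposition \ref{cogenera}, which gives that $\omega$ is a relative cogenerator in $W\GP_{(\X,\Y)}$. So for $M \in W\GP_{(\X,\Y)}$, iterating this cogenerator property yields short exact sequences $0 \to M_i \to W^i \to M_{i+1} \to 0$ with $M_0 = M$, $W^i \in \omega$, and $M_{i+1} \in W\GP_{(\X,\Y)}$; splicing them produces an $\omega$-coresolution of $M$ with intermediate kernels in $W\GP_{(\X,\Y)} \subseteq {}^\perp\Y$, so $M \in W\GP_{(\omega,\Y)}$.

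Left thickness now follows: since $\omega \subseteq \Y$ and $\omega$ is closed under finite coproducts, Theorem \ref{thickcat} yields that $W\GP_{(\omega,\Y)}$ is left thick, hence so is $W\GP_{(\X,\Y)}$. I also observe the inclusion $\X \subseteq W\GP_{(\X,\Y)}$: for $X \in \X$, iterating the cogenerator property of $\omega$ in $\X$ produces an $\omega$-coresolution of $X$ whose intermediate kernels lie in $\X \subseteq {}^\perp\Y$ (using $\pd_\Y(\X)=0$), so $X \in W\GP_{(\omega,\Y)} = W\GP_{(\X,\Y)}$.

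For $W\GP_{(\X,\Y)} = W\GP^2_{(\X,\Y)}$, the inclusion $\subseteq$ is immediate from $\X \subseteq W\GP_{(\X,\Y)}$. The reverse inclusion is the main obstacle; my plan is to prove the sub-claim that any $N \in W\GP^2_{(\X,\Y)}$ admits a short exact sequence $0 \to N \to W \to N' \to 0$ with $W \in \omega$ and $N' \in W\GP^2_{(\X,\Y)}$. Iterating this yields an $\omega$-coresolution of any starting $M \in W\GP^2_{(\X,\Y)}$, with intermediate kernels in $W\GP^2_{(\X,\Y)} \subseteq {}^\perp\Y$, so $M \in W\GP_{(\omega,\Y)}$. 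To prove the sub-claim, the defining coresolution of $N$ gives $0 \to N \to G^0 \to K_1 \to 0$ with $G^0 \in W\GP_{(\X,\Y)}$ and $K_1 \in W\GP^2_{(\X,\Y)}$ (by shifting the coresolution). Since $G^0 \in W\GP_{(\omega,\Y)}$ by the first equality, Proposition \ref{cogenera} provides $0 \to G^0 \to W \to G^0_1 \to 0$ with $W \in \omega$ and $G^0_1 \in W\GP_{(\X,\Y)}$. The pushout along $G^0 \hookrightarrow W$ then produces $0 \to N \to W \to N' \to 0$ together with $0 \to K_1 \to N' \to G^0_1 \to 0$, and the latter forces $N' \in {}^\perp\Y$.

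The chief technical step, and the main obstacle I anticipate, is verifying $N' \in W\GP^2_{(\X,\Y)}$ rather than merely $N' \in {}^\perp\Y$. Here I push out $0 \to K_1 \to N' \to G^0_1 \to 0$ along the inclusion $K_1 \hookrightarrow G^1$ coming from $K_1$'s own coresolution, obtaining $0 \to N' \to P \to K_2 \to 0$ with $P$ an extension of $G^0_1$ by $G^1$. Left thickness (just proven) gives $P \in W\GP_{(\X,\Y)}$, while $K_2 \in W\GP^2_{(\X,\Y)}$ via a further shift. Iterating this inner pushout builds the required $W\GP_{(\X,\Y)}$-coresolution of $N'$, completing the sub-claim and hence the theorem.
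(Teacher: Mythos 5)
Your proposal is correct and follows essentially the same route as the paper: the equality $W\GP_{(\X,\Y)}=W\GP_{(\omega,\Y)}$ via Proposition \ref{cogenera}, left thickness via Theorem \ref{thickcat}, and, for the hard inclusion $W\GP^{2}_{(\X,\Y)}\subseteq W\GP_{(\X,\Y)}$, the same two-pushout construction (your $P$ is the paper's $U^1$, your $N'$ its $T^0$, your $K_i$ its $L^{i-1}$). The only cosmetic difference is that you embed $N$ into some $W\in\omega$ (via Proposition \ref{cogenera}) rather than into an object of $\X$ coming from the defining coresolution of $G^0$, so your iteration yields an $\omega$-coresolution and concludes through the already-established equality $W\GP_{(\omega,\Y)}=W\GP_{(\X,\Y)}$, whereas the paper builds an $\X$-coresolution of $M$ directly.
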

\begin{dem}   Let $M \in W\GP_{(\X,\Y)}$. By Proposition \ref{cogenera} (a), there is an exact sequence $0 \to M \to W \to G \to 0,$ with $W \in \omega$ and 
$G \in W\GP_{(\X,\Y)} \subseteq{}^{\perp}\Y. $ By repeating the above with $G$ and so on, it can be seen that $M \in W\GP_{(\omega,\Y)}.$ Now, the inclusion  $\omega \subseteq \X$ give us that  $W\GP _{(\omega, \Y)} \subseteq W\GP _{(\X,\Y)};$ proving that 
$W\GP _{(\omega, \Y)} = W\GP _{(\X,\Y)}.$ In particular, by Theorem \ref{thickcat} we get that the class $W\GP_{(\X,\Y)}$ 
is left thick.
\

 Let $G \in W\GP _{(\X,\Y)}$. By considering the exact sequence $ 0 \to G \xrightarrow{1_G} G \to 0 \to \cdots ,$ we have that 
 $G \in W\GP ^{2} _{(\X,\Y)}.$
 \
 
 Let $M \in W\GP ^2 _{(\X,\Y)}.$ Then,  $M\in{}^\perp\Y$ and  there is an exact sequence  
 $\eta : 0 \to M \xrightarrow{f^0} G^0 \xrightarrow{f^1} G^1 \to\cdots , $ with 
 $ G^{i} \in W\GP _{(\X,\Y)}$ and  $L^{i} : = \Coker (f^i) \in{^{\perp} \Y},$ for any $i \in \mathbb{N}$. Since $G^{0} \in W\GP _{(\X,\Y)},$ there is an exact sequence 
 $0 \to G^{0} \to F^{0} \to K^1 \to 0,$ with $F^{0} \in \X$ and $K^1 \in W\GP_{(\X,\Y)}.$ By the Snake's Lemma, we get the following exact and commutative diagram in   $\A$
$$\xymatrix{&  & 0\ar[d] & 0\ar[d] &\\
 0\ar[r]  & M\ar[r] \ar@{=}[d]  & G^0 \ar[r]\ar[d] & L^0 \ar[r] \ar[d] & 0\\
0\ar[r] & M \ar[r] & F^0 \ar[r]\ar[d] & T^0 \ar[r] \ar[d] & 0\\
& & K^1 \ar@{=}[r]\ar[d] & K^1 \ar[d]  &\\
&  & 0 & \;0.& }$$
Since $K^1 \in W\GP_{(\X,\Y)} \subseteq \; ^{\perp } \Y$ and $L^{0} \in {^{\perp} \Y},$ we obtain $T^{0} \in {^{\perp} \Y}.$ Now, we consider the following  exact and commutative diagram in $\A$
$$\xymatrix{&0\ar[d]  & 0\ar[d] &  &\\
  0\ar[r]  & L^0 \ar[r] \ar[d]  & G^1 \ar[r]\ar[d] & L^1 \ar[r] \ar@{=}[d] & 0\\
0\ar[r] & T^0 \ar[d] \ar[r] & U^1 \ar[r]\ar[d] & L^1 \ar[r]  & 0\\
& K^1 \ar[d] \ar@{=}[r]& K^1 \ar[d] &   &\\
& 0 & \; 0. & & }$$
Since $K^1 , G^1 \in W\GP _{(\X,\Y)}$ and the class $ W\GP _{(\X,\Y)}$ is closed under extensions, it follows that $U^1 \in W\GP_{(\X,\Y)}.$ 
Thus, we can construct  a complex $\xi ^{+}$ as follows 
$$\xymatrix{\xi ^{+} : 0\ar[r]  & T^0 \ar[r]  & U^1 \ar[r]\ar[dr] & G^2 \ar[r]  & G^3 \ar[r]& \cdots\\
0 \ar[r]  & L^0  \ar[r] & G^1 \ar[r] & L^1 \ar[u] \ar[dr]  \ar[r] & 0\\
&  & &0 \ar[u] &\;0,  & }$$
where  $T^0 \in W\GP ^2 _{(\X,\Y)}.$ Thus, we have the exact sequence $0\to M \to F^0 \to T^0 \to 0$ with $F^0 \in \X,$  
$M \in {}^{\perp} \Y$ and $T^0 \in W\GP ^2_{(\X,\Y)}.$ Therefore, we can repeat the above procedure for $T^0$ to obtain an exact sequence  $0 \to M \to F^0 \to F^1 \to F^2 \to \cdots ,$ having cokernels in $^{\perp} \Y$ and $F^i \in \X.$ 
\end{dem}
\vspace{0.2cm}

The following corollary is a generalization of \cite[Proposition 2.16]{BGRO}.

\begin{cor}\label{C1Wiguales} Let $\A$ be an AB4-abelian category, with enough projectives, and let $M\in\A$ be a $\Sigma$-orthogonal object. Then
$W\GP_{\Add(M)}=W\GP^2_{(\Add(M),\Add(M))}.$
\end{cor}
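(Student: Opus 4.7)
The plan is to apply Theorem \ref{Wiguales} to the pair $(\X,\Y):=(\Add(M),\Add(M))$, so that $\omega:=\X\cap\Y=\Add(M)$ as well. Once the hypotheses of that theorem are checked for this specific pair, its chain of equalities delivers $W\GP_{(\Add(M),\Add(M))}=W\GP^2_{(\Add(M),\Add(M))}$, which is exactly $W\GP_{\Add(M)}=W\GP^2_{(\Add(M),\Add(M))}$. Thus the entire argument reduces to a hypothesis check.

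The decisive step is verifying $\pd_{\Add(M)}(\Add(M))=0$. Since $\A$ is AB4 with enough projectives, coproducts of projective objects are projective and coproducts of exact sequences are exact, so for any set $J$ the coproduct of projective resolutions of $M$ is a projective resolution of $M^{(J)}$. Combining this with the natural isomorphism $\Hom_\A(\coprod_J-,-)\cong\prod_J\Hom_\A(-,-)$ and the exactness of products of abelian groups, we get
$$\Ext^i_\A(M^{(J)},M^{(I)})\cong\prod_J\Ext^i_\A(M,M^{(I)})=0$$
for every $i\geq 1$ and all sets $I,J$, the last equality by the $\Sigma$-orthogonality of $M$. Passing to direct summands yields $\Ext^i_\A(A,B)=0$ for all $A,B\in\Add(M)$ and all $i\geq 1$.

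The remaining hypotheses of Theorem \ref{Wiguales} then follow immediately from this Ext-vanishing. First, $\Add(M)$ is closed under extensions: any short exact sequence $0\to A\to E\to B\to 0$ with $A,B\in\Add(M)$ splits because $\Ext^1_\A(B,A)=0$, so $E\cong A\oplus B\in\Add(M)$. Second, $\omega=\Add(M)$ is closed under arbitrary (hence finite) coproducts, and it is tautologically a relative cogenerator in $\X=\Add(M)$ via the sequence $0\to X\xrightarrow{1_X}X\to 0\to 0$, using that $0\in\Add(M)$ as the empty coproduct.

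There is no substantive obstacle in this derivation; the only point worth flagging is that the AB4 assumption together with enough projectives is precisely what lets us upgrade the pointwise $\Sigma$-orthogonality of $M$ to the global Ext-vanishing between arbitrary objects of $\Add(M)$, and once that Ext-vanishing is in hand the hypotheses of Theorem \ref{Wiguales} are all immediate and the corollary falls out at once.
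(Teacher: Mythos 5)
Your proof is correct and follows essentially the same route as the paper: the paper's proof simply cites the dual of Remark \ref{B1} (whose content is exactly your AB4-plus-enough-projectives computation giving $\Ext^i_\A(\Add(M),\Add(M))=0$) together with Theorem \ref{Wiguales} applied to the pair $(\Add(M),\Add(M))$. Your explicit verification of the remaining hypotheses (closure under extensions via splitting, finite coproducts, and $\Add(M)$ being a relative cogenerator in itself using $0\in\Add(M)$) is accurate and matches what the paper leaves implicit.
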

\begin{dem} It follows from the dual of Remark \ref{B1} and Theorem \ref{Wiguales}.
\end{dem}
\vspace{0.2cm}

The following result is a generalization of \cite[Theorem 3.8]{Xu}. 

\begin{teo}\label{iguales}
Let $(\X,\Y)$  be a GP-admissible pair in $\A$ and  $\omega := \X \cap \Y.$ Then, the pair $(\GP _{(\X,\Y)}, \Y)$ is GP-admissible and
 $$W \GP _{(\X, \Y)}=W \GP _{(\omega, \Y)} = \GP_{(\X,\Y)} = \GP ^{2} _{(\X,\Y)}=W\GP ^{2} _{(\X,\Y)}.$$
\end{teo}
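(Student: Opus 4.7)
The plan is to derive the five equalities by combining two tools from the preceding development: Proposition \ref{GP2} (which collapses $\GP$ onto $W\GP$ for weak GP-admissible pairs) and Theorem \ref{Wiguales} (which handles the three $W\GP$-classes together with left thickness of the approximation class). Since $(\X,\Y)$ is GP-admissible it is in particular weak GP-admissible, so Proposition \ref{GP2} gives $\GP_{(\X,\Y)}=W\GP_{(\X,\Y)}$. All hypotheses of Theorem \ref{Wiguales} are built into GP-admissibility ($\pd_\Y(\X)=0$, $\X$ closed under extensions, and $\omega=\X\cap\Y$ closed under finite coproducts and a relative cogenerator in $\X$), so applying it yields $W\GP_{(\X,\Y)}=W\GP_{(\omega,\Y)}=W\GP^2_{(\X,\Y)}$ together with the left thickness of the class $W\GP_{(\X,\Y)}=\GP_{(\X,\Y)}$. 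Four of the five equalities in the statement are thus already settled.

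To deal with $\GP^2_{(\X,\Y)}$, I first verify that the iterated pair $(\GP_{(\X,\Y)},\Y)$ is itself GP-admissible. The vanishing $\pd_\Y(\GP_{(\X,\Y)})=0$ comes from Corollary \ref{GP3}; the class $\GP_{(\X,\Y)}$ is $\GP_{(\X,\Y)}$-epic because $0\in\X\subseteq\GP_{(\X,\Y)}$ and $\X$ is $\X$-epic; closure of $\GP_{(\X,\Y)}$ under finite coproducts and extensions is immediate from its left thickness, and $\Y$ is closed under finite coproducts by hypothesis. Finally, Corollary \ref{GPcogenera}(a) gives that $\omega$ is a relative cogenerator in $\GP_{(\X,\Y)}$, so the inclusion $\omega\subseteq \GP_{(\X,\Y)}\cap\Y$ shows that $\GP_{(\X,\Y)}\cap\Y$ is a relative cogenerator there as well.

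With this in hand, the inclusion $\GP_{(\X,\Y)}\subseteq\GP^2_{(\X,\Y)}$ is immediate: any left complete $(\X,\Y)$-resolution of $M\in\GP_{(\X,\Y)}$ is automatically a left complete $(\GP_{(\X,\Y)},\Y)$-resolution, since $\X\subseteq\GP_{(\X,\Y)}$ and the Hom-test against $\Y$ is unchanged. For the reverse inclusion, I apply Proposition \ref{GP-WGP}(a) to the weak GP-admissible pair $(\GP_{(\X,\Y)},\Y)$ to obtain $\GP^2_{(\X,\Y)}\subseteq W\GP_{(\GP_{(\X,\Y)},\Y)}$; rewriting $\GP_{(\X,\Y)}=W\GP_{(\X,\Y)}$ on the right turns this into $\GP^2_{(\X,\Y)}\subseteq W\GP^2_{(\X,\Y)}=\GP_{(\X,\Y)}$, closing the chain. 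I do not anticipate a genuine obstacle: the substantive content is already packaged in Theorem \ref{Wiguales} and Corollary \ref{GPcogenera}, and the main point worth checking is that the cogenerator $\omega$ of $\X$ continues to cogenerate the larger class $\GP_{(\X,\Y)}$, which Corollary \ref{GPcogenera}(a) delivers directly.
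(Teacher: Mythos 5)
Your proof is correct and follows essentially the same route as the paper: reduce everything to Proposition \ref{GP2} and Theorem \ref{Wiguales}, then verify that $(\GP_{(\X,\Y)},\Y)$ is GP-admissible. The only cosmetic difference is that you certify the relative cogenerator condition by citing Corollary \ref{GPcogenera}(a) together with $\omega\subseteq\GP_{(\X,\Y)}\cap\Y$ (a legitimate, non-circular shortcut), whereas the paper reproves it by an explicit pull-back/pushout construction via Lemma \ref{oobss2}; likewise you close the chain with Proposition \ref{GP-WGP}(a) plus the obvious inclusion $\GP_{(\X,\Y)}\subseteq\GP^2_{(\X,\Y)}$ instead of reapplying Proposition \ref{GP2} to the new pair.
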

\begin{dem}  By Proposition \ref{GP2}, we know that $W \GP _{(\X, \Y)}=\GP _{(\X, \Y)}.$  Furthermore, from Theorem \ref{Wiguales}, 
we have that $W \GP _{(\X, \Y)}=W \GP _{(\omega, \Y)} =W\GP ^{2} _{(\X,\Y)}.$ Then, by Proposition  \ref{GP2}, in order to prove that 
$W\GP ^{2} _{(\X,\Y)}=\GP ^{2} _{(\X,\Y)},$ it is enough to show that the pair $(\GP _{(\X,\Y)}, \Y)$ is GP-admissible.
\

Let us prove that  $(\GP _{(\X,\Y)}, \Y)$ is GP-admissible. Indeed,   by Theorem \ref{thickcat}, it follows that $\GP _{(\X,\Y)}$ is closed under extensions. On the other hand, the fact that $\X$ is $\X$-epic  in $\A$ and 
$\X \subseteq \GP _{(\X,\Y)}$ give us that $\GP_{(\X,\Y)}$ is $\GP_{(\X,\Y)}$-epic in $\A$. We also know that 
$\GP_{(\X,\Y)} \subseteq {}^{\perp} \Y$ and so $\pd _{\Y} (\GP_{(\X,\Y)})=0.$ 
\

Now, we show  that $\GP_{(\X,\Y)} \cap \Y$ is a relative cogenerator in $\GP_{(\X,\Y)}.$ Indeed, let $G \in \GP_{(\X,\Y)}.$ By Lemma \ref{oobss2}, there is an exact sequence $0\to G \to X \to G' \to 0,$ with $X \in \X $ and $G' \in \GP_{(\X,\Y)}.$ Since $\X \cap \Y$ is a relative cogenerator in $\X,$ 
there is an exact sequence $0\to X \to E \to X' \to 0,$ with $X' \in \X$ and $E \in \X \cap \Y.$ Then, we have the following exact and commutative diagram 
$$\xymatrix{&  & 0\ar[d] & 0\ar[d] &\\
 0\ar[r]  & G\ar[r] \ar@{=}[d]  & X \ar[r]\ar[d] & G' \ar[r] \ar[d] & 0\\
0\ar[r] & G \ar[r] & E \ar[r]\ar[d] & Q \ar[r] \ar[d] & 0\\
& & X' \ar@{=}[r]\ar[d] & X' \ar[d]  &\\
&  & 0 & \;0.& }$$
Since $G' \in \GP_{(\X,\Y)},$  $X' \in \X \subseteq \GP_{(\X,\Y)}$ and $\GP_{(\X,\Y)}$ is closed under extensions, it follows that $Q \in \GP_{(\X,\Y)}.$ 
Therefore, for the exact sequence  $0 \to G \to E \to Q \to 0,$ we have that $Q \in \GP_{(\X,\Y)}$ and $E \in \X \cap \Y \subseteq \GP_{(\X,\Y)} \cap \Y;$ proving that $(\GP _{(\X,\Y)}, \Y)$ is GP-admissible.
\end{dem}

The following result generalizes \cite[Proposition 2.7]{Xu}.

\begin{cor}\label{ThickGP} If $(\X,\Y)$  is a GP-admissible pair in $\A,$ then $\GP_{(\X,\Y)}$ is left thick.
\end{cor}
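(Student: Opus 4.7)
The plan is to deduce this immediately from the chain of equalities established in Theorem \ref{iguales} together with the left thickness already shown for the weak Gorenstein projective classes. First, I would note that for a GP-admissible pair $(\X,\Y)$, the hypotheses of Theorem \ref{Wiguales} are satisfied: by definition $\pd_\Y(\X)=0$, the class $\X$ is closed under extensions, and $\omega:=\X\cap\Y$ is closed under finite coproducts in $\A$ (as an intersection of two such classes) and is a relative cogenerator in $\X$. Therefore Theorem \ref{Wiguales} applies and yields that $W\GP_{(\X,\Y)}$ is left thick.

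Next, since $(\X,\Y)$ is in particular weak GP-admissible, Proposition \ref{GP2} gives the identification $\GP_{(\X,\Y)}=W\GP_{(\X,\Y)}$; this equality is also recorded as part of Theorem \ref{iguales}. Transporting the left thickness of $W\GP_{(\X,\Y)}$ across this equality immediately shows that $\GP_{(\X,\Y)}$ is closed under direct summands, extensions, and kernels of epimorphisms between its objects, i.e.\ it is left thick.

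There is no real obstacle here: the entire content has been set up in the preceding three results. If one preferred a slightly different route, one could instead invoke Theorem \ref{thickcat} directly applied to the pair $(\omega,\Y)$ (noting $\omega\subseteq\Y$ and $\omega$ closed under finite coproducts), concluding that $W\GP_{(\omega,\Y)}$ is left thick, and then use the equality $W\GP_{(\omega,\Y)}=\GP_{(\X,\Y)}$ from Theorem \ref{iguales} to transfer the conclusion. Either way the proof is a one-line bookkeeping argument packaging together Theorem \ref{iguales} and the already established left thickness of the weak Gorenstein projective class.
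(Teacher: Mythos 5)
Your proposal is correct and follows essentially the same route as the paper, which simply cites Theorem \ref{thickcat} together with Theorem \ref{iguales} (i.e.\ the equality $\GP_{(\X,\Y)}=W\GP_{(\omega,\Y)}$ plus the left thickness of the weak class); your primary argument via Theorem \ref{Wiguales} and Proposition \ref{GP2} is just an unpacked version of the same content, and your stated alternative is literally the paper's proof. The hypothesis checks (closure of $\omega=\X\cap\Y$ under finite coproducts, $\omega$ a relative cogenerator in $\X$, $\pd_\Y(\X)=0$) are all verified correctly.
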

\begin{dem} It follows from Theorem \ref{thickcat} and Theorem \ref{iguales}.
\end{dem}

\begin{teo}\label{CThickGP} For a GP-admissible pair $(\X,\Y)$  in $\A$ and  $\omega := \X \cap \Y,$ the following statements hold true. 
\begin{itemize}
\item[(a)] The pairs  $(\GP _{(\X,\Y)}, \Y),$ $(\GP_{(\X,\Y)}, \omega )$ and $(\X,\Y^\wedge)$  are  GP-admissible. 
\item[(b)] Let $\Y$ be closed under direct summands in $\A$ and $\Y\subseteq \GP _{(\X,\Y)}.$ Then, the pair $(\GP _{(\X,\Y)}, \Y)$ is left Frobenius, the elements of $\Y$ are direct summands of objects in $\X$ and $\Y^\wedge$ is right thick.
\item[(c)] If $\omega$ is closed under direct summands in $\A,$ then $\GP _{(\X,\Y)} \cap \Y = \omega.$
\item[(d)] If $\Y^\wedge,$ $\omega$ and $\X\cap\Y^\wedge$ are closed under direct summands in $\A,$ then 
$$\GP_{(\X,\Y)}=\GP_{(\X,\Y^\wedge)}\quad\text{and}\quad\GP _{(\X,\Y)} \cap \Y^\wedge = \omega=\X\cap\Y^\wedge.$$
\end{itemize}
\end{teo}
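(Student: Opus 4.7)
The four items are proved in order, since each leans on the earlier ones. Items (a)--(c) are a matter of assembling results already established in the paper (Theorems \ref{iguales}, \ref{thickcat}, \ref{ThickGP}, Corollaries \ref{GP3}, \ref{GPcogenera}, Proposition \ref{AB12} and Lemma \ref{oobss2}). The main obstacle is the second equality in (d), where $\omega$ must be identified with $\X\cap\Y^\wedge$; I intend to resolve it by applying the Auslander--Buchweitz characterization of $\omega$ (Proposition \ref{AB2}(b)) to two different relative cogenerators of $\X.$

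For (a), the pair $(\GP_{(\X,\Y)},\Y)$ is GP-admissible by Theorem \ref{iguales}. For $(\GP_{(\X,\Y)},\omega),$ note that $\omega\subseteq\Y$ yields $\pd_\omega(\GP_{(\X,\Y)})\le\pd_\Y(\GP_{(\X,\Y)})=0$; the closedness conditions on $\GP_{(\X,\Y)}$ come from Corollary \ref{ThickGP}, while $\omega$ inherits closure under finite coproducts from $\X$ and $\Y$; and $\omega=\GP_{(\X,\Y)}\cap\omega$ is a relative cogenerator in $\GP_{(\X,\Y)}$ by Corollary \ref{GPcogenera}(a). For $(\X,\Y^\wedge),$ $\X$ is $\X$-epic and closed under extensions and finite coproducts by assumption; $\pd_{\Y^\wedge}(\X)=0$ is Corollary \ref{GP3}(a); $\Y^\wedge$ is closed under finite coproducts by direct-summing $\Y$-resolutions (padding shorter ones with zeros); and $\omega\subseteq\X\cap\Y^\wedge$ makes $\X\cap\Y^\wedge$ a relative cogenerator in $\X.$

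For (b), $\GP_{(\X,\Y)}$ is left thick by Corollary \ref{ThickGP}, $\Y$ is closed under direct summands by hypothesis, $\Y$ is $\GP_{(\X,\Y)}$-injective since $\GP_{(\X,\Y)}\subseteq{}^\perp\Y,$ and $\Y$ is a relative cogenerator in $\GP_{(\X,\Y)}$ because $\omega\subseteq\Y\subseteq\GP_{(\X,\Y)}$ and $\omega$ already is one (Corollary \ref{GPcogenera}(a)); thus $(\GP_{(\X,\Y)},\Y)$ is left Frobenius. For any $Y\in\Y\subseteq\GP_{(\X,\Y)},$ Lemma \ref{oobss2} gives an exact sequence $0\to Y\to X\to G\to 0$ with $X\in\X$ and $G\in\GP_{(\X,\Y)}\subseteq{}^\perp\Y$; it splits, exhibiting $Y$ as a direct summand of $X.$ The right thickness of $\Y^\wedge$ is then Proposition \ref{AB12} applied to this Frobenius pair. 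For (c), the inclusion $\omega\subseteq\GP_{(\X,\Y)}\cap\Y$ is clear; the reverse follows from $\GP_{(\X,\Y)}=W\GP_{(\omega,\Y)}$ (Theorem \ref{iguales}) combined with the identity $\omega=\Y\cap W\GP_{(\omega,\Y)}$ of Corollary \ref{GPcogenera}(b).

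Item (d) is the delicate part. The equality $\GP_{(\X,\Y)}=\GP_{(\X,\Y^\wedge)}$ follows from the chain $\GP_{(\X,\Y)}=W\GP_{(\X,\Y)}=W\GP_{(\X,\Y^\wedge)}=\GP_{(\X,\Y^\wedge)},$ using Proposition \ref{GP2} applied to the weak GP-admissible pairs $(\X,\Y)$ and $(\X,\Y^\wedge)$ together with Remark \ref{WGorro}. Applying (c) to the GP-admissible pair $(\X,\Y^\wedge)$ of (a), whose intersection class $\X\cap\Y^\wedge$ is closed under direct summands by hypothesis, yields $\GP_{(\X,\Y)}\cap\Y^\wedge=\GP_{(\X,\Y^\wedge)}\cap\Y^\wedge=\X\cap\Y^\wedge.$ The remaining equality $\omega=\X\cap\Y^\wedge$ is the heart of the matter, and I would deduce it by invoking Proposition \ref{AB2}(b) twice. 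First, to the pair $(\X,\omega)$: $\omega$ is $\X$-injective (since $\omega\subseteq\Y$ and $\id_\X(\Y)=\pd_\Y(\X)=0$), a relative cogenerator in $\X,$ and closed under direct summands by hypothesis, so $\omega=\{X\in\X:\id_\X(X)=0\}.$ Second, to the pair $(\X,\X\cap\Y^\wedge)$: $\X\cap\Y^\wedge$ is closed under direct summands by hypothesis, contains $\omega$ and is therefore a relative cogenerator in $\X,$ and is $\X$-injective because $\id_\X(\Y^\wedge)=\pd_{\Y^\wedge}(\X)=0$ by Corollary \ref{GP3}(a); this gives $\X\cap\Y^\wedge=\{X\in\X:\id_\X(X)=0\}.$ Comparing the two characterizations yields $\omega=\X\cap\Y^\wedge,$ completing the proof.
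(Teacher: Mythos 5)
Your proposal is correct, and for items (a), (b), (c) and the first two equalities of (d) it follows the paper's argument essentially verbatim (Theorem \ref{iguales}, Corollary \ref{ThickGP}, Corollary \ref{GPcogenera}, Lemma \ref{oobss2}, Proposition \ref{AB12}, Remark \ref{WGorro} and Proposition \ref{GP2} are exactly the ingredients used there). The only place where you diverge is the final equality $\omega=\X\cap\Y^\wedge$: you apply the uniqueness statement of Proposition \ref{AB2}(b) twice \emph{inside} $\X$, checking that both $\omega$ and $\X\cap\Y^\wedge$ are $\X$-injective relative cogenerators in $\X$ closed under direct summands (the $\X$-injectivity of $\X\cap\Y^\wedge$ coming from $\pd_{\Y^\wedge}(\X)=0$), so that both coincide with $\{X\in\X:\id_\X(X)=0\}$. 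The paper reaches the same conclusion by the same uniqueness principle (\cite[Proposition 2.7]{BMPS}) but applied inside $\GP_{(\X,\Y)}$: it first shows, via GP-admissibility of $(\GP_{(\X,\Y^\wedge)},\Y^\wedge)$, Corollary \ref{GPcogenera}(a) and a second pass through Theorem \ref{iguales}, that $\GP_{(\X,\Y)}\cap\Y^\wedge$ is a $\GP_{(\X,\Y)}$-injective relative cogenerator in $\GP_{(\X,\Y)}$, and then compares it with $\omega=\GP_{(\X,\Y)}\cap\Y$. Your route is shorter and more self-contained, avoiding the $\GP^2$ detour; it also reveals that the hypothesis that $\Y^\wedge$ be closed under direct summands is not actually needed for (d) (only $\omega$ and $\X\cap\Y^\wedge$ enter), whereas the paper's route uses it to know that $\GP_{(\X,\Y)}\cap\Y^\wedge$ is closed under direct summands before invoking the uniqueness statement. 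What the paper's detour buys is the explicit intermediate fact that $\GP_{(\X,\Y)}\cap\Y^\wedge$ is a $\GP_{(\X,\Y)}$-injective relative cogenerator in $\GP_{(\X,\Y)}$, which is in the spirit of the cotorsion-pair applications that follow, but it is not required for the stated equalities.
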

\begin{dem} (a) The fact that $(\GP _{(\X,\Y)}, \Y)$ is GP-admissible was shown in Theorem \ref{iguales}.
\

Let us prove that $(\GP_{(\X,\Y)}, \omega )$ is  GP-admissible. Since $(\X,\Y)$ is GP-admissible, we get that $\omega := \X \cap \Y$ is closed 
under finite coproducts in $\A.$ Moreover, since $(\GP _{(\X,\Y)}, \Y)$ is GP-admissible,  we have in particular that $\GP_{(\X,\Y)}$ is closed 
under extensions and finite coproducts in 
$\A,$  and it is 
$\GP_{(\X,\Y)}$-epic in $\A.$ By Corollary \ref{GPcogenera} (a) and the equality $\GP_{(\X,\Y)} \cap \omega = \omega,$ we get that 
$\GP_{(\X,\Y)} \cap \omega$ is a relative cogenerator in $\GP_{(\X,\Y)}.$ On the other hand,  
$\GP_{(\X,\Y)} \subseteq {}^{\perp} \Y \subseteq {} ^{\perp} \omega$ implies that $\pd _{\omega} (\GP_{(\X,\Y)}) = 0.$ Thus, it follows 
that $(\GP_{(\X,\Y)}, \omega )$ is  GP-admissible.
\

Let us show that $(\X,\Y^\wedge)$  is  GP-admissible. Indeed, by the dual of Lemma \ref{AB1} we get that $\pd_{\Y^\wedge}(\X)=0.$ Furthermore, 
$\X\cap\Y^\wedge$ is a relative cogenerator in $\X,$ since $\omega\subseteq \X\cap\Y^\wedge$ and $\omega$ is a relative cogenerator in $\X.$ Finally,  $\Y^\wedge$ is closed under finite coproducts in $\A,$ since $\Y$ has this property.
\

(b) Assume that $\Y$ is closed under direct summands in $\A$ and $\Y\subseteq \GP _{(\X,\Y)}.$ Let us prove that $(\GP _{(\X,\Y)}, \Y)$ is left Frobenius. By Corollary \ref{ThickGP} we have 
that $\GP _{(\X,\Y)}$ is left thick, and by Proposition \ref{GP2} we get that $\Y$ is $\GP _{(\X,\Y)}$-injective. Moreover,  Corollary  \ref{GPcogenera} (a) and 
the inclusion  $\omega\subseteq \Y$ imply that  $\Y$ is a relative cogenerator in $\GP _{(\X,\Y)}.$ Once we have that $(\GP _{(\X,\Y)}, \Y)$ is left Frobenius, we conclude from Proposition \ref{AB12} that $\Y^\wedge$ is right thick. Finally, let $Y\in\Y.$ Then there is an exact sequence 
$\varepsilon:\;0\to Y\to X\to G\to 0,$ where $X\in\X$ and $G\in\GP _{(\X,\Y)}.$ Since $\pd_\Y(\GP _{(\X,\Y)})=0,$ by Proposition \ref{GP-WGP} (b), we 
have that $\varepsilon$ splits and so $Y$ is a direct summand of $X.$
\

(c) Let $\omega$ be closed under direct  summands in $\A.$ Then, by Corollary \ref{GPcogenera} (b) $\omega=\Y\cap W\GP_{(\omega,\Y)}.$ Moreover, from Theorem \ref{iguales}, we know that $W\GP_{(\omega,\Y)}=\GP_{(\X,\Y)}$ and thus (c) follows.
 \
 
 (d) Let $\Y^\wedge,$ $\omega$ and $\X\cap\Y^\wedge$ be closed under direct summands in $\A.$ By (a), we know that $(\X,\Y^\wedge)$  is  
 GP-admissible. By applying the item (c) to the pair $(\X,\Y^\wedge)$ and by Remark \ref{WGorro} and 
 Proposition \ref{GP2}, we get that $\GP_{(\X,\Y)}=\GP_{(\X,\Y^\wedge)}$ and $\GP_{(\X,\Y)}\cap\Y^\wedge=\X\cap\Y^\wedge.$ 
 \
 
 Since $(\X,\Y^\wedge)$  is  GP-admissible, it follows from (a) that the pair $(\GP_{(\X,\Y^\wedge)}, \Y^\wedge)$ is GP-admissible. Then, by Corollary \ref{GPcogenera} (a),  we get  that $\GP_{(\X,\Y^\wedge)} \cap \Y^\wedge$  is a relative cogenerator in  $\GP _{(\GP_{(\X,\Y^\wedge)}, \Y^\wedge)}$ which is $\GP _{(\GP_{(\X,\Y^\wedge)}, \Y^\wedge)}$-injective . But, from Teorem \ref{iguales}, we know that 
$$\GP _{(\GP_{(\X,\Y^\wedge)}, \Y^\wedge)}  = \GP _{(\X, \Y^\wedge)}.$$
Then, by the equality $\GP_{(\X,\Y)}=\GP_{(\X,\Y^\wedge)},$ we obtain that $\GP_{(\X,\Y)} \cap \Y^\wedge$  is a relative cogenerator in $\GP_{(\X,\Y)}$  which is $\GP_{(\X,\Y)}$-injective. Thus, from
 \cite[Proposition 2.7]{BMPS} it follows that $\GP_{(\X,\Y)} \cap \Y^\wedge=\omega = \GP_{(\X,\Y)} \cap \Y.$
\end{dem}

\begin{cor}\label{CThickGP1} Let $(\X,\Y)$ be a GP-admissible pair in $\A$ such that $\X$ and $\Y$ are closed under direct summands in 
$\A$ and $\Y\subseteq \GP _{(\X,\Y)}.$ Then, for $\omega := \X \cap \Y,$ the following statements hold true.
\begin{itemize}
\item[(a)] The pair  $(\GP _{(\X,\Y)}, \Y^\wedge)$  is a  $\GP_{(\X,\Y)}^\wedge$-cotorsion pair in $\A$ and $\Y\subseteq \X.$ 
\item[(b)] $\GP _{(\X,\Y)} \cap \Y=\GP _{(\X,\Y)} \cap \Y^\wedge = \omega=\X\cap\Y^\wedge.$
\item[(c)]  $\GP _{(\X,\Y)}=\GP _{(\X,\Y)}^\wedge\cap {}^\perp(\Y^\wedge)$ and $\Y^\wedge=\GP _{(\X,\Y)}^\wedge\cap \GP _{(\X,\Y)}^\perp.$ 
\end{itemize}
\end{cor}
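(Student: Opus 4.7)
The strategy is to first exploit the hypothesis $\Y\subseteq \GP_{(\X,\Y)}$ to upgrade the GP-admissibility to a left Frobenius structure, then obtain the three items by combining Theorem \ref{CThickGP} with Auslander-Buchweitz approximations coming from Theorem \ref{AB4}.

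\textbf{Step 1: Preliminary observations.} Since $\Y\subseteq\GP_{(\X,\Y)}$ and $\Y$ is closed under direct summands, Theorem \ref{CThickGP}(b) gives that $(\GP_{(\X,\Y)},\Y)$ is a left Frobenius pair and every $Y\in\Y$ is a direct summand of some object of $\X.$ Because $\X$ is closed under direct summands, this yields $\Y\subseteq\X,$ so in particular $\omega=\X\cap\Y=\Y$ is closed under direct summands. Moreover, Proposition \ref{AB12} applied to this left Frobenius pair gives that $\Y^\wedge$ is right thick, hence closed under direct summands in $\A.$

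\textbf{Step 2: Item (b).} Since $\Y^\wedge,$ $\omega,$ and $\X\cap\Y^\wedge$ are now all closed under direct summands in $\A,$ parts (c) and (d) of Theorem \ref{CThickGP} apply directly to deliver the chain of equalities $\GP_{(\X,\Y)}\cap\Y=\GP_{(\X,\Y)}\cap\Y^\wedge=\omega=\X\cap\Y^\wedge.$

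\textbf{Step 3: Item (c).} Apply Theorem \ref{AB4} to the left Frobenius pair $(\GP_{(\X,\Y)},\Y)$: for each $C\in\GP_{(\X,\Y)}^\wedge$ there are exact sequences
\begin{equation*}
0\to K\to X\to C\to 0\quad\text{and}\quad 0\to C\to H\to X'\to 0,
\end{equation*}
with $X,X'\in\GP_{(\X,\Y)},$ $K,H\in\Y^\wedge,$ and additionally $K\in\GP_{(\X,\Y)}^\perp,$ $X'\in{}^\perp(\Y^\wedge).$ For the first equality in (c), one inclusion is immediate from Corollary \ref{GP3}(b); conversely, if $C\in\GP_{(\X,\Y)}^\wedge\cap{}^\perp(\Y^\wedge),$ then $\Ext^1_\A(C,K)=0$ splits the first sequence, making $C$ a direct summand of $X\in\GP_{(\X,\Y)},$ and the class $\GP_{(\X,\Y)}$ is closed under direct summands by Corollary \ref{ThickGP}. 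For the second equality, the inclusion $\Y^\wedge\subseteq \GP_{(\X,\Y)}^\wedge\cap\GP_{(\X,\Y)}^\perp$ comes from $\Y\subseteq\GP_{(\X,\Y)}$ and Proposition \ref{AB2}(a); conversely, if $C$ lies in the right-hand side, then $\Ext^1_\A(X',C)=0$ splits the second sequence and $C$ becomes a direct summand of $H\in\Y^\wedge,$ which is closed under direct summands by Step 1.

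\textbf{Step 4: Item (a).} We have already proved $\Y\subseteq\X.$ It remains to check the $\GP_{(\X,\Y)}^\wedge$-cotorsion pair axioms. Condition (b2) is exactly the content of Theorem \ref{AB4} recalled in Step 3, with $\F=\GP_{(\X,\Y)}$ and $\G=\Y^\wedge$ (both contained in $\GP_{(\X,\Y)}^\wedge$ since $\Y\subseteq\GP_{(\X,\Y)}$). Condition (b1) follows from the same splitting arguments as in Step 3: the containments $\GP_{(\X,\Y)}\subseteq{}^{\perp_1}(\Y^\wedge)\cap\GP_{(\X,\Y)}^\wedge$ and $\Y^\wedge\subseteq\GP_{(\X,\Y)}^{\perp_1}\cap\GP_{(\X,\Y)}^\wedge$ were noted above, and the reverse containments follow by splitting the appropriate AB approximation and using the direct-summand closures of $\GP_{(\X,\Y)}$ and $\Y^\wedge.$ Finally one must verify that $\GP_{(\X,\Y)}^\wedge$ is thick: left thickness is inherited from left thickness of $\GP_{(\X,\Y)}$ (Corollary \ref{ThickGP}) by standard resolution-dimension arguments, while right thickness (closure under cokernels of monomorphisms and direct summands) is obtained from the AB approximations of Theorem \ref{AB4}, via a horseshoe-type comparison.

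The main obstacle is the thickness of $\GP_{(\X,\Y)}^\wedge$ in Step 4; everything else reduces to bookkeeping with the two AB approximation sequences and the direct-summand closures supplied by Step 1.
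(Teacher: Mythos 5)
Your Steps 1--3 are correct, and they in effect re-derive from Theorem \ref{AB4} exactly what the paper obtains by citation: the paper's own proof is a one-liner, combining Theorem \ref{CThickGP} (parts (b)--(d), which you use in Steps 1--2 in the same way) with \cite[Proposition 2.14 and Theorem 3.6]{BMPS} applied to the left Frobenius pair $(\GP_{(\X,\Y)},\Y)$; those cited results contain precisely the splitting arguments you carry out by hand in Step 3 and in the verification of (b1)--(b2) in Step 4. So up to that point your route is sound, just more explicit than the paper's.

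The genuine gap is the one you yourself flag in Step 4: the thickness of $\GP_{(\X,\Y)}^\wedge$, which is part of the definition of a $\GP_{(\X,\Y)}^\wedge$-cotorsion pair, is asserted but not proved. Closure of $\GP_{(\X,\Y)}^\wedge$ under extensions, kernels of epimorphisms and cokernels of monomorphisms can indeed be handled by horseshoe-type comparisons of finite resolutions, but closure under \emph{direct summands} cannot: if $C\oplus D$ has a finite $\GP_{(\X,\Y)}$-resolution there is no a priori resolution of $C$ to compare with, and the standard inductive tricks (Proposition \ref{AB10}, Theorem \ref{AB8}) all presuppose that $C$ already lies in $\GP_{(\X,\Y)}^\wedge.$ This direct-summand closure is exactly the nontrivial Auslander--Buchweitz-type statement that, for a left Frobenius pair $(\X,\omega),$ the class $\X^\wedge$ is thick, i.e. \cite[Theorem 2.11]{BMPS} (used elsewhere in the paper, e.g. in the proof of Proposition \ref{WThickGP5.5}). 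Since your Step 1 already establishes that $(\GP_{(\X,\Y)},\Y)$ is a left Frobenius pair, the fix is simply to invoke that result (or \cite[Theorem 3.6]{BMPS} directly, as the paper does); without it, or without a genuine proof of direct-summand closure, item (a) is not established.
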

\begin{dem} It follows from Theorem \ref{CThickGP} and  \cite[Proposition 2.14 and Theorem 3.6]{BMPS}.
\end{dem}

\begin{cor}\label{CThickGP2} Let $\A$ be an abelian category with enough projectives, and let $\Proj(\A)\subseteq\omega\subseteq\A$ 
 be such that $\add(\omega)=\omega$ and $\id_\omega(\omega)=0.$ Then, the following statements hold true.
\begin{itemize}
\item[(a)] The pair  $(\GP _\omega, \omega^\wedge)$  is a  $\GP_\omega^\wedge$-cotorsion pair in $\A.$  
\item[(b)] $\GP _\omega \cap \omega^\wedge=\omega.$
\item[(c)]  $\GP _\omega=\GP _\omega^\wedge\cap {}^\perp(\omega^\wedge)$ and $\omega^\wedge=\GP _\omega^\wedge\cap \GP _\omega^\perp.$ 
\end{itemize}
\end{cor}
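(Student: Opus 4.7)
The plan is to deduce this from Corollary \ref{CThickGP1} by applying it to the pair $(\X,\Y) = (\omega, \omega).$ Since $\X \cap \Y = \omega \cap \omega = \omega,$ and $\X, \Y$ are both closed under direct summands by the assumption $\add(\omega)=\omega,$ and $\Y = \omega \subseteq \GP_\omega$ (using that $0 \in \Proj(\A) \subseteq \omega$ together with the remark $\X \subseteq \GP_{(\X,\Y)}$ when $0\in \X$), the conclusions of Corollary \ref{CThickGP1} read exactly as (a), (b), (c) of the present statement. Hence the whole task reduces to verifying that $(\omega,\omega)$ is a GP-admissible pair in $\A.$

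I would verify the five defining conditions of GP-admissibility in order. First, $\pd_\omega(\omega)=0$ is immediate from the hypothesis $\id_\omega(\omega)=0$ via the identity $\pd_\X(\Y) = \id_\Y(\X).$ Second, $\omega$ is $\omega$-epic in $\A$ because $\A$ has enough projectives and $\Proj(\A) \subseteq \omega.$ Third, $\omega$ is closed under finite coproducts because $\add(\omega) = \omega.$ Fourth, $\omega$ is closed under extensions: given an exact sequence $0 \to A \to B \to C \to 0$ with $A,C \in \omega,$ we have $\Ext^1_\A(C,A) = 0$ from $\id_\omega(\omega)=0,$ so the sequence splits and $B \simeq A \oplus C \in \add(\omega) = \omega.$ Fifth, $\omega \cap \omega = \omega$ is trivially a relative cogenerator in $\omega$: the inclusion $\omega \subseteq \omega$ is clear, and for any $X \in \omega$ the exact sequence $0 \to X \xrightarrow{1_X} X \to 0 \to 0$ does the job, since $0 \in \Proj(\A) \subseteq \omega.$

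Once GP-admissibility is established, Corollary \ref{CThickGP1} applies directly with $\X = \Y = \omega$ and yields the three statements. Specifically, its item (a) gives (a) here (the condition $\Y \subseteq \X$ is trivially fulfilled); its item (b) specializes to $\GP_\omega \cap \omega = \GP_\omega \cap \omega^\wedge = \omega,$ which contains the needed equality (b); and its item (c) gives (c) verbatim.

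There is no real obstacle here, since every hypothesis of Corollary \ref{CThickGP1} is either explicitly given or immediately follows from $\id_\omega(\omega)=0$ and $\add(\omega)=\omega.$ The only point deserving care is verifying that $\omega$ is closed under extensions, which rests on the splitting argument using $\id_\omega(\omega)=0$ together with closure under direct summands provided by $\add(\omega)=\omega.$
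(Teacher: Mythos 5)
Your proposal is correct and follows essentially the same route as the paper, which also deduces the corollary by applying Corollary \ref{CThickGP1} to the pair $(\omega,\omega)$; the paper merely states that the hypotheses are satisfied, whereas you verify them explicitly (GP-admissibility via the splitting argument for extensions, $\omega$-epicness from $\Proj(\A)\subseteq\omega$, and $\omega\subseteq\GP_\omega$ from $0\in\omega$), all of which is accurate.
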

\begin{dem} By the given hypothesis, we have that the pair $(\omega,\omega)$ satisfies the needed conditions to apply Corollary \ref{CThickGP1}.
\end{dem}
\vspace{0.5cm}

Finally, we close this section by giving sufficient conditions in order to construct  strong left Frobenius pairs from GP-admissible pairs in abelian categories. The importance of the strong left Frobenius pairs relies on the fact that they give us exact model structures on exact categories \cite[Section 4]{BMPS}.

\begin{lem}\label{SLF1} For a  GP-admissible pair $(\X,\Y)$ in an abelian category $\A,$ $\B:=\GP_{(\X,\Y)}\cap\Y^\perp$ and $\omega:=\X\cap\Y,$ the 
following statements hold true.
\begin{itemize}
\item[(a)] If $\omega$ is a relative generator in $\X,$ then $\omega$ is a relative generator in $\GP_{(\X,\Y)}.$
\item[(b)] If $\id_\Y(\omega)=0,$ then $\omega$ is a relative cogenerator in $\B.$
\item[(c)] If $\GP_{(\X,\Y)}\subseteq \Y^{\perp_1},$ then $\B$ is closed under kernels of epimorphisms between its objects.
\item[(d)]  If $\id_\Y(\omega)=0$ and  $\omega$ is a relative generator in $\X,$ then $\omega$ is a relative generator in $\B.$ 
\end{itemize}
\end{lem}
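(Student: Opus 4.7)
The plan is to dispatch each part by a long exact $\Ext$ argument, leveraging the left thickness of $\GP_{(\X,\Y)}$ from Corollary~\ref{ThickGP} and the fact (Corollary~\ref{GPcogenera}) that $\omega$ is a $\GP_{(\X,\Y)}$-injective relative cogenerator in $\GP_{(\X,\Y)}$.

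For part (a), given $G\in\GP_{(\X,\Y)}$, I would first invoke Lemma~\ref{GP1} to obtain $0\to K\to X\to G\to 0$ with $X\in\X$ and $K\in\GP_{(\X,\Y)}$, then apply the hypothesis that $\omega$ is a relative generator in $\X$ to produce $0\to X''\to W\to X\to 0$ with $W\in\omega$ and $X''\in\X\subseteq\GP_{(\X,\Y)}$. The composite epimorphism $W\to X\to G$ has kernel $L$ fitting into $0\to X''\to L\to K\to 0$, and left thickness places $L$ in $\GP_{(\X,\Y)}$, yielding the required $0\to L\to W\to G\to 0$.

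Parts (b) and (c) reduce to $\Ext$-sandwich computations. In (b), the cogenerator sequence $0\to B\to W\to B'\to 0$ from Corollary~\ref{GPcogenera} produces, for $Y\in\Y$ and $i\geq 1$, the fragment
\[
\Ext^i_\A(Y,W)\to\Ext^i_\A(Y,B')\to\Ext^{i+1}_\A(Y,B)
\]
whose outer terms vanish by $\id_\Y(\omega)=0$ and $B\in\Y^\perp$ respectively, so $\Ext^i_\A(Y,B')=0$ and hence $B'\in\Y^\perp$. In (c), the kernel $A$ of $B_1\to B_2$ lies in $\GP_{(\X,\Y)}$ by thickness; the fragment $\Ext^{i-1}_\A(Y,B_2)\to\Ext^i_\A(Y,A)\to\Ext^i_\A(Y,B_1)$ kills $\Ext^i_\A(Y,A)$ for $i\geq 2$, and the hypothesis $\GP_{(\X,\Y)}\subseteq\Y^{\perp_1}$ directly kills $\Ext^1_\A(Y,A)$, so $A\in\B$.

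The main obstacle is (d). First observe $\omega\subseteq\B$, since $\omega\subseteq\X\subseteq\GP_{(\X,\Y)}$ and $\id_\Y(\omega)=0$ forces $\omega\subseteq\Y^\perp$. For $B\in\B$, applying (a) gives $0\to B'\to W\to B\to 0$ with $W\in\omega$ and $B'\in\GP_{(\X,\Y)}$; the fragment $\Ext^{i-1}_\A(Y,B)\to\Ext^i_\A(Y,B')\to\Ext^i_\A(Y,W)$ kills $\Ext^i_\A(Y,B')$ for all $i\geq 2$. The delicate case is $i=1$: here $\Ext^1_\A(Y,B')$ coincides with the cokernel of $\Hom_\A(Y,W)\to\Hom_\A(Y,B)$, and its vanishing amounts to every morphism $Y\to B$ lifting along $W\to B$. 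My plan is to verify that under the hypotheses of (d) the pair $(\omega,\Y)$ is itself GP-admissible, so that Theorem~\ref{iguales} gives $\GP_{(\X,\Y)}=\GP_{(\omega,\Y)}$ and one may replace the sequence from (a) by the first syzygy $0\to B'\to W_0\to B\to 0$ of a left complete $(\omega,\Y)$-resolution of $B$, then extract the required lift from the bilateral acyclicity of that resolution via a dimension-shifting argument. This lifting step is the principal technical hurdle.
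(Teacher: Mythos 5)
Your parts (a), (b) and (c) are correct and follow essentially the same lines as the paper: (a) is the paper's pull-back/composition of epimorphisms using Lemma \ref{GP1}, the relative generator property of $\omega$ in $\X$, and left thickness of $\GP_{(\X,\Y)}$ from Corollary \ref{ThickGP}; (b) and (c) are exactly the paper's long exact sequence arguments (using the cogenerator sequence from Corollary \ref{GPcogenera} in (b), and thickness plus $\GP_{(\X,\Y)}\subseteq\Y^{\perp_1}$ in (c)).

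Part (d), however, contains a genuine gap, and you have correctly located it but not closed it. After producing $0\to G\to W\to B\to 0$ from (a), the sandwich $\Ext^{i-1}_\A(Y,B)\to\Ext^i_\A(Y,G)\to\Ext^i_\A(Y,W)$ only kills $\Ext^i_\A(Y,G)$ for $i\geq 2$; the case $i=1$ needs every morphism $Y\to B$ to lift along $W\to B$, equivalently $\Ext^1_\A(Y,G)=0$. Your proposed repair does not work: the preliminary reduction is fine (under the hypotheses of (d) the pair $(\omega,\Y)$ is indeed GP-admissible, since $\omega$ being a relative generator in the $\X$-epic class $\X$ makes $\omega$ $\omega$-epic, and Theorem \ref{iguales} then gives $\GP_{(\omega,\Y)}=\GP_{(\X,\Y)}$), but the "bilateral acyclicity" of a left complete $(\omega,\Y)$-resolution $\eta$ is acyclicity of the complex $\Hom_\A(\eta,Y)$, i.e.\ of the \emph{contravariant} functor $\Hom_\A(-,Y)$ applied to $\eta$. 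This controls $\Ext^*_\A(\text{cycles},Y)$ and says nothing about $\Hom_\A(Y,-)$ or $\Ext^1_\A(Y,\text{cycles})$, so no dimension-shifting along that resolution can produce the required lift; the syzygy sequence of the resolution runs into exactly the same $i=1$ obstruction as the sequence from (a). The paper closes this step differently: its proof of (d) invokes the inclusion $\GP_{(\X,\Y)}\subseteq\Y^{\perp_1}$ (the hypothesis of item (c), which is a standing assumption in Proposition \ref{SLF2}, where (d) is applied), so that $\Ext^1_\A(Y,G)=0$ holds at once because $G\in\GP_{(\X,\Y)}$. In other words, the vanishing you flag as "the principal technical hurdle" is not extractable from the data you propose to use; it is an extra hypothesis, and without it neither your argument nor the paper's goes through.
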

\begin{dem} (a) Let $\omega$ be a relative generator in $\X.$ Consider $G\in \GP_{(\X,\Y)}.$ Then, there is an exact sequence $0\to G'\to X\to G\to 0,$ where $X\in\X$  and $G'\in \GP_{(\X,\Y)}.$ Using now that $\omega$ is a relative generator in $\X,$ there is an exact sequence 
$0\to X'\to W\to X\to 0,$ where $X'\in\X$ and $W\in\omega.$ From the preceding two exact sequences and the pull-back construction, we get the following exact and commutative diagram
$$\xymatrix{&  0\ar[d] & 0\ar[d] &  &\\
   & X'  \ar[d]\ar@{=}[r]  & X ' \ar[d] & & \\
0\ar[r] & L \ar[r] \ar[d] & W \ar[r]\ar[d] & G \ar[r] \ar@{=}[d] & 0\\
 0 \ar[r] &G'  \ar[r] \ar[d]& X \ar[r] \ar[d] & G  \ar[r] & 0\\
& 0 & 0. & & }$$
Since $X',G'\in \GP_{(\X,\Y)},$ it follows from Corollary \ref{ThickGP} that $L\in \GP_{(\X,\Y)}.$ Thus, we have an exact sequence $0\to L\to W\to G\to 0,$ with $L\in \GP_{(\X,\Y)}$ and $W\in\omega,$ proving (a).
\

(b) Let $\id_\Y(\omega)=0.$ Consider $B\in\B.$ Then, by Corollary \ref{GPcogenera} (a),  there is an exact sequence $0\to B\to W\to G\to 0,$ with 
 $W\in\omega$ and $G\in\GP_{(\X,\Y)}.$ Note that $B,W\in\Y^\perp$ and thus $G\in\B,$ proving (b).
 \
 
 (c) Let $\GP_{(\X,\Y)}\subseteq \Y^{\perp_1}.$  Let us show that $\B$ is closed under kernels of epimorphisms between its objects. Consider an exact sequence $0\to K\to B'\to B\to 0,$ where 
 $B,B'\in\B;$ and hence by Corollary \ref{ThickGP} $K\in \GP_{(\X,\Y)}.$ Moreover, for any $Y\in\Y,$ we get the exact sequence 
 $$\Ext^i_\A(Y,B)\to \Ext^{i+1}_\A(Y,K)\to \Ext^{i+1}_\A(Y,B').$$
 From the preceding exact sequence and the facts that $B,B'\in\Y^\perp$ and $\GP_{(\X,\Y)}\subseteq \Y^{\perp_1},$ it follows that 
 $K\in\Y^\perp;$ proving that $K\in\B.$
 \
 
 (d) Let $\id_\Y(\omega)=0$ and $\omega$ be a relative generator in $\X.$  We show that $\omega$ is a relative generator 
 in $\B.$ Indeed, let $B\in\B.$ In particular, $B\in \GP_{(\X,\Y)}$ and thus by (a), there is an exact sequence $0\to G\to W\to B\to 0,$ where 
 $W\in\omega$ and $G\in \GP_{(\X,\Y)}.$ Then, for any $Y\in\Y,$ we get the exact sequence 
 $$\Ext^i_\A(Y,B)\to \Ext^{i+1}_\A(Y,G)\to \Ext^{i+1}_\A(Y,W).$$
 From the above exact sequence and since  $B\in\Y^\perp,$ $\id_\Y(\omega)=0$ and $\GP_{(\X,\Y)}\subseteq \Y^{\perp_1},$ we get that 
 $G\in\Y^\perp;$ proving that $\omega$ is a relative generator in $\B.$ 

\end{dem}

\begin{pro}\label{SLF2} For a  GP-admissible pair $(\X,\Y)$ in an abelian category $\A,$  such that $\GP_{(\X,\Y)}\subseteq \Y^{\perp_1},$ $\B:=\GP_{(\X,\Y)}\cap\Y^\perp$ and a relative generator $\omega:=\X\cap\Y$ in $\X,$  the 
following statements hold true.
\begin{itemize}
\item[(a)] If $\id_\Y(\omega)=0$ and $\omega$ is closed under direct summands in $\A,$ then $(\B,\omega)$ is a strong left Frobenius pair.
\item[(b)] If $\Y$ is closed under direct summands in $\A$ and $\Y\subseteq \GP_{(\X,\Y)},$  then $(\B,\Y)$ is a strong left Frobenius pair and 
$\id_\Y(\omega)=0.$
\end{itemize}
\end{pro}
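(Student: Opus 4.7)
The plan is to verify, for each part, the four defining properties of a strong left Frobenius pair $(\B,\mathcal{C})$: namely that $\B$ is left thick, that $\mathcal{C}$ is closed under direct summands, and that $\mathcal{C}$ is both a $\B$-injective relative cogenerator and a $\B$-projective relative generator in $\B$. Lemma \ref{SLF1} already provides the nontrivial ingredients, so the work consists of assembling them and checking the relevant inclusions.

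First I would check that $\B$ is left thick in both parts. Since $\GP_{(\X,\Y)}$ is left thick by Corollary \ref{ThickGP}, and $\Y^\perp$ is closed under extensions and direct summands by standard long exact sequence arguments, the intersection $\B=\GP_{(\X,\Y)}\cap\Y^\perp$ is closed under extensions and direct summands. Closure of $\B$ under kernels of epimorphisms between its objects is precisely Lemma \ref{SLF1}(c), where the hypothesis $\GP_{(\X,\Y)}\subseteq\Y^{\perp_1}$ is used.

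For part (a), the class $\omega$ is assumed closed under direct summands in $\A$, which is the second Frobenius axiom. The inclusion $\omega\subseteq\B$ holds because $\omega\subseteq\X\subseteq\GP_{(\X,\Y)}$ (as $0\in\X$) and $\id_\Y(\omega)=0$ gives $\omega\subseteq\Y^\perp$. Then Lemma \ref{SLF1}(b) says $\omega$ is a relative cogenerator in $\B$, and Lemma \ref{SLF1}(d), applicable since $\omega$ is by hypothesis a relative generator in $\X$, says $\omega$ is a relative generator in $\B$. Finally, $\omega$ is $\B$-injective since $\B\subseteq\GP_{(\X,\Y)}\subseteq{}^\perp\Y\subseteq{}^\perp\omega$, and $\omega$ is $\B$-projective since $\omega\subseteq\Y$ combined with $\B\subseteq\Y^\perp$ gives $\Ext^i_\A(\omega,\B)=0$ for $i\geq 1$.

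For part (b), I would first derive $\id_\Y(\omega)=0$ from the chain $\Y\subseteq\GP_{(\X,\Y)}\subseteq{}^\perp\Y$, which yields $\Ext^i_\A(\Y,\Y)=0$ for $i\geq 1$ and hence $\Ext^i_\A(\Y,\omega)=0$ via $\omega\subseteq\Y$. This same chain shows $\Y\subseteq\Y^\perp$, so $\Y\subseteq\B$. With $\id_\Y(\omega)=0$ in hand, Lemma \ref{SLF1}(b) and (d) produce $\omega$ as both a relative cogenerator and relative generator in $\B$; since $\omega\subseteq\Y\subseteq\B$, the larger class $\Y$ inherits both roles. The $\B$-injectivity of $\Y$ is $\B\subseteq{}^\perp\Y$, the $\B$-projectivity is $\B\subseteq\Y^\perp$, and closure of $\Y$ under direct summands is given. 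The main obstacle is just the bookkeeping: ensuring at each step that the correct orthogonality condition is used and the relevant chain of inclusions among $\omega$, $\Y$, $\B$, $\GP_{(\X,\Y)}$, ${}^\perp\Y$ and $\Y^\perp$ is available. Once these are lined up, Lemma \ref{SLF1} does essentially all the real work.
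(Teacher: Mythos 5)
Your proposal is correct and follows essentially the same route as the paper: it establishes that $\B$ is left thick via Corollary \ref{ThickGP} (plus the fact that $\Y^\perp$ is closed under extensions and summands) and Lemma \ref{SLF1}(c), and then assembles Lemma \ref{SLF1}(b),(d) with the same orthogonality inclusions ($\B\subseteq{}^\perp\Y\subseteq{}^\perp\omega$, $\B\subseteq\Y^\perp\subseteq\omega^\perp$, and in (b) the chain $\Y\subseteq\GP_{(\X,\Y)}\subseteq{}^\perp\Y$ giving $\Y\subseteq\B$ and $\id_\Y(\omega)=0$) to verify the strong left Frobenius axioms, exactly as the paper does.
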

\begin{dem} By Corollary \ref{ThickGP}, it is clear that $\B$ is closed under extensions and direct summands in $\A.$
\

 (a) Let $\omega$ be closed under direct summands in $\A.$ Since $\omega\subseteq\X\subseteq \GP_{(\X,\Y)}$ and $\id_\Y(\omega)=0,$ we get 
 that $\omega\subseteq\B.$ Moreover $\B\subseteq \GP_{(\X,\Y)}\subseteq{}^\perp\Y\subseteq{}^\perp\omega$ and thus, by Lemma \ref{SLF1} (b), 
 $\omega$ is a $\B$-injective relative cogenerator in $\B.$ Furthermore, by Lemma \ref{SLF1} (c) and (d), $\B$ is closed under kernels of epimorphisms between its objects and $\omega$ is a relative generator in $\B.$ Finally, $\pd_\B(\omega)=0$ since $\B\subseteq\Y^\perp\subseteq\omega^\perp.$
 \
 
 (b) Since $\Y\subseteq \GP_{(\X,\Y)}\subseteq{}^\perp\Y,$ it follows that $\Y\subseteq \GP_{(\X,\Y)}^\perp\subseteq \Y^\perp.$ Therefore 
 $\Y\subseteq\B.$ Moreover, using that $\omega\subseteq\Y\subseteq\Y^\perp,$ we get  $\id_\Y(\omega)=0.$
 \
 
 By Lemma \ref{SLF1} (b) and (d), and since $\omega\subseteq\Y,$ we get that $\Y$ is a relative generator and 
 cogenerator in $\B.$ Moreover, by  Lemma \ref{SLF1} (c), it follows that $\B$ is closed under kernels of epimorphisms between its objects. Note that $\Y$ is $\B$-injective, since 
 $\id_\B(\Y)=\pd_\Y(\B)\leq\pd_\Y(\GP_{(\X,\Y)})=0.$ Finally, $\Y$ is $\B$-projective since $\B\subseteq\Y^\perp.$
\end{dem}

As we have seen in Proposition \ref{SLF2}, the inclusion   $\GP_{(\X,\Y)}\subseteq \Y^{\perp_1}$ plays an important role in the proof of such result.  In what follows, we give sufficient conditions on a GP-admissible pair $(\X,\Y)$ to get that $\GP_{(\X,\Y)}\subseteq \Y^{\perp}.$ Note firstly that, by Lemma \ref{SLF1} (c), we should have at least that $\GP_{(\X,\Y)}\cap\Y^\perp$ be closed under kernels of epimorphism between its objects.

\begin{pro} Let $(\X,\Y)$ be a GP-admissible pair in an abelian category $\A,$ and let $\omega:=\X\cap\Y$ satisfying the following conditions:
\begin{itemize}
\item[(a)] $\B:=\GP_{(\X,\Y)}\cap\Y^\perp$ is closed under kernels of epimorphism between its objects;
\item[(b)] $\id_\Y(\omega)=0$ and $\omega$ is closed under direct summands in $\A;$
\item[(c)] $\id_{\GP_{(\X,\Y)}}(G)<\infty$ $\forall\,G\in\GP_{(\X,\Y)}.$
\end{itemize}
Then $\GP_{(\X,\Y)}\subseteq\omega^\vee\subseteq \Y^{\perp}.$ 
\end{pro}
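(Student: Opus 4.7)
The plan is to establish the two inclusions $\GP_{(\X,\Y)}\subseteq\omega^\vee$ and $\omega^\vee\subseteq\Y^\perp$ separately, using hypothesis (c) for the first and hypothesis (a) for the second.

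For $\GP_{(\X,\Y)}\subseteq\omega^\vee,$ I take $M\in\GP_{(\X,\Y)}$ and apply Corollary \ref{GPcogenera}(a) to build an exact sequence $0\to M\to W^0\to W^1\to\cdots$ with $W^k\in\omega$ and syzygies $C_k:=\Ker(W^k\to W^{k+1})\in\GP_{(\X,\Y)}$ for $k\geq 1$ (and $C_0=M$). Since $\omega\subseteq\Y$ and $\GP_{(\X,\Y)}\subseteq{}^\perp\Y,$ one has $\Ext^i_\A(H,W^k)=0$ for every $H\in\GP_{(\X,\Y)}$ and $i\geq 1.$ Iterated dimension shifting along the short exact sequences $0\to C_k\to W^k\to C_{k+1}\to 0$ then yields $\Ext^{n+1}_\A(H,M)\cong\Ext^1_\A(H,C_n)$ for every $n\geq 0$ and every $H\in\GP_{(\X,\Y)}.$ Setting $n:=\id_{\GP_{(\X,\Y)}}(M)<\infty$ by (c), the left-hand side vanishes for all $H\in\GP_{(\X,\Y)};$ specializing to $H:=C_{n+1}\in\GP_{(\X,\Y)}$ forces the sequence $0\to C_n\to W^n\to C_{n+1}\to 0$ to split. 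Hence $C_n$ is a direct summand of $W^n\in\omega$ and, by the closure property in (b), $C_n\in\omega.$ Truncating produces the finite $\omega$-coresolution $0\to M\to W^0\to\cdots\to W^{n-1}\to C_n\to 0,$ so $M\in\omega^\vee.$

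For $\omega^\vee\subseteq\Y^\perp,$ I first observe that $\omega\subseteq\B$ with $\B:=\GP_{(\X,\Y)}\cap\Y^\perp.$ Indeed, $\omega\subseteq\X\subseteq\GP_{(\X,\Y)}$ (GP-admissibility gives $0\in\X,$ so every $X\in\X$ admits a trivial complete $(\X,\Y)$-resolution), and $\omega\subseteq\Y^\perp$ follows from (b). Given $M\in\omega^\vee,$ I choose a coresolution $0\to M\to W^0\to\cdots\to W^n\to 0$ with $W^k\in\omega$ and let $C_k$ denote the associated syzygies, so $C_0=M$ and $C_n=W^n\in\B.$ Proceeding by descending induction on $k,$ at each stage the short exact sequence $0\to C_k\to W^k\to C_{k+1}\to 0$ is an epimorphism in $\B$ (both $W^k\in\omega\subseteq\B$ and $C_{k+1}\in\B$ by induction), so hypothesis (a) puts its kernel $C_k$ in $\B.$ Iterating down to $k=0$ gives $M=C_0\in\B\subseteq\Y^\perp.$

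The main technical obstacle is the dimension-shift bookkeeping in the first step: one has to verify that the chain $\Ext^{n+1}_\A(H,C_0)\cong\Ext^n_\A(H,C_1)\cong\cdots\cong\Ext^1_\A(H,C_n)$ is valid at every stage, and then translate the vanishing of the top Ext into the splitting of the final extension, so that closure of $\omega$ under direct summands can be invoked to truncate. Once this delicate step is secured, the backward induction in the second half is routine given hypothesis (a), and assembling the two halves produces the desired chain $\GP_{(\X,\Y)}\subseteq\omega^\vee\subseteq\Y^\perp.$
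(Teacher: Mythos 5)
Your proposal is correct and follows essentially the same route as the paper: first $\GP_{(\X,\Y)}\subseteq\omega^\vee$ from hypothesis (c) and the fact that $\omega$ is a $\GP_{(\X,\Y)}$-injective relative cogenerator in $\GP_{(\X,\Y)}$ (Corollary \ref{GPcogenera}), then $\omega^\vee\subseteq\Y^\perp$ by descending induction along a finite $\omega$-coresolution using (a) and $\omega\subseteq\B$. The only difference is that where the paper invokes \cite[Lemma 4.3]{AuB} to identify $\GP_{(\X,\Y)}\cap\omega^\vee$ with the objects of finite $\id_{\GP_{(\X,\Y)}}$, you reprove that direction explicitly via dimension shifting and the splitting of $0\to C_n\to W^n\to C_{n+1}\to 0$, which is just an unwinding of the cited lemma.
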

\begin{dem} By Corollary \ref{GPcogenera} (a), we know that $\omega$ is a $\GP_{(\X,\Y)}$-injective relative cogenerator in $\GP_{(\X,\Y)}.$ Then, by 
\cite[Lemma 4.3]{AuB}, we get that 
$\GP_{(\X,\Y)}\cap\omega^\vee=\{G\in\GP_{(\X,\Y)}\;:\;\id_{\GP_{(\X,\Y)}}(G)<\infty\}$ and $\id_{\GP_{(\X,\Y)}}(G)=\coresdim_\omega(G),$ for any 
$G\in\GP_{(\X,\Y)}\cap\omega^\vee.$ Then, by (c), we conclude that $\GP_{(\X,\Y)}\cap\omega^\vee=\GP_{(\X,\Y)}.$ Thus, $\GP_{(\X,\Y)}\subseteq\omega^\vee.$
\

Let $M\in\omega^\vee.$ Then, $n:=\coresdim_\omega(M)$ is finite. In particular, there is an exact sequence $0\to M\to W_0\to W_1\to\cdots\to W_n\to 0,$ where $W_i\in\omega$ for each $i.$ Since $\omega\subseteq \GP_{(\X,\Y)}$ and by (b) we know that $\omega\subseteq \Y^\perp,$ we get that 
$W_i\in\B$ for each $i.$ Thus, by condition (a), we obtain that $M\in\B\subseteq \Y^\perp.$
\end{dem}

\section{Relative Gorenstein homological dimensions}

In this section we develop, in an unified way, the theory of the relative Gorenstein homological dimensions. For each pair of classes of objects in an abelian category $\A,$ satisfying certain natural conditions, we stablish relationships between different kinds of relative homological dimensions, namely: (weak) relative Gorenstein projective, relative projective and resolution dimensions. By taking different pairs of classes of objects in $\A,$ as an application, we obtain the well known results which hold true in each particular classical case. 
\

In the following result, the equality $\resdim_\omega(K)=-1$ just means that $K=0.$ This theorem generalizes 
\cite[Proposition 3.9]{ChZ}, \cite[Theorem 2.10]{Holm}, \cite[Theorem 3.11 and  Propositon 3.16]{MP}, 

\begin{teo}\label{GPAB4} Let $(\X,\Y)$ be a GP-admissible pair in abelian category $\A,$  and let
$\omega:=\X\cap\Y.$ Then,  for any  $C\in\A$ with $\Gpd_{(\X,\Y)}(C)=n<\infty,$ the following statements hold true. 
\begin{itemize}
\item[(a)] There exist exact sequences in $\A,$ with $G,G'\in \GP_{(\X,\Y)}$
\begin{center}
$0\to K\to G\xrightarrow{\varphi}C\to 0\;$ and $\;0\to K'\to G'\xrightarrow{\varphi'}C\to 0,$
\end{center}
where $\varphi:G\to C$ is an $\GP_{(\X,\Y)}$-precover, $\resdim_\omega(K)= n-1=\resdim_\X(K')$ and $K\in\GP_{(\X,\Y)}^{\perp}.$
\item[(b)] There exist exact sequences in $\A,$ with $\overline{G},\overline{G}'\in \GP_{(\X,\Y)}$
\begin{center}
$0\to C\xrightarrow{\psi} H\to \overline{G}\to 0\;$ and $\;0\to C\xrightarrow{\psi'} H'\to \overline{G}'\to 0,$
\end{center}
where $\psi:C\to H$ is an $\omega^{\wedge}$-preenvelope, $\max(\resdim_\omega(H),\resdim_\X(H'))\leq n$ and $\overline{G}\in{}^\perp(\omega^{\wedge}).$
\item[(c)] Let $\X$ be $\GP_{(\X,\Y)}$-injective. Then, $\varphi':G'\to C$ is an $\GP_{(\X,\Y)}$-precover and $K'\in\GP_{(\X,\Y)}^{\perp}.$ Moreover, $\psi':C\to H'$ is an $\X^{\wedge}$-preenvelope and $\overline{G}'\in{}^\perp(\X^{\wedge}).$ 
\end{itemize}
\end{teo}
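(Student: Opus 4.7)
The plan is to deduce this statement as two applications of the Auslander--Buchweitz machinery (Theorem \ref{AB4}), one with the pair $(\GP_{(\X,\Y)},\omega)$ and a second one with the pair $(\GP_{(\X,\Y)},\X)$. The hypothesis $\Gpd_{(\X,\Y)}(C)=n<\infty$ is precisely $\resdim_{\GP_{(\X,\Y)}}(C)=n<\infty$, which is the input condition of Theorem \ref{AB4}, so the main task is to check that both of these auxiliary pairs satisfy the hypotheses of that theorem.

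First I would verify the hypotheses. By Corollary \ref{ThickGP} the class $\GP_{(\X,\Y)}$ is left thick, so in particular it is closed under extensions and contains $0$ (since $0\in\X\subseteq\GP_{(\X,\Y)}$). Corollary \ref{GPcogenera}(a) gives that $\omega$ is a $\GP_{(\X,\Y)}$-injective relative cogenerator in $\GP_{(\X,\Y)}$; hence $(\GP_{(\X,\Y)},\omega)$ fits into Theorem \ref{AB4} and, since $\omega\subseteq\GP_{(\X,\Y)}$, the refined equality $\resdim_\omega(K)=n-1$ is available. For the second pair, since $\omega\subseteq\X\subseteq\GP_{(\X,\Y)}$ and $\omega$ is already a relative (quasi-)cogenerator in $\GP_{(\X,\Y)}$, the bigger class $\X$ is also a relative quasi-cogenerator in $\GP_{(\X,\Y)}$; together with $\X\subseteq\GP_{(\X,\Y)}$ this licences a second application of Theorem \ref{AB4}, now yielding $\resdim_\X(K')=n-1$.

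Next I would extract the four exact sequences. Applying Theorem \ref{AB4}(a) to $(\GP_{(\X,\Y)},\omega)$ produces
$$0\to K\to G\xrightarrow{\varphi}C\to 0\quad\text{and}\quad 0\to C\xrightarrow{\psi}H\to\overline{G}\to 0$$
with $G,\overline{G}\in\GP_{(\X,\Y)}$, $\resdim_\omega(K)=n-1$ and $\resdim_\omega(H)\leq n$. Theorem \ref{AB4}(b), which applies because $\omega$ is $\GP_{(\X,\Y)}$-injective, upgrades $\varphi$ to a $\GP_{(\X,\Y)}$-precover with $K\in\GP_{(\X,\Y)}^\perp$ and $\psi$ to an $\omega^\wedge$-preenvelope with $\overline{G}\in{}^\perp(\omega^\wedge)$. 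This proves (a) (first sequence) and all of (b) (first sequence).  Applying Theorem \ref{AB4}(a) to $(\GP_{(\X,\Y)},\X)$ gives in turn
$$0\to K'\to G'\xrightarrow{\varphi'}C\to 0\quad\text{and}\quad 0\to C\xrightarrow{\psi'}H'\to\overline{G}'\to 0,$$
with $G',\overline{G}'\in\GP_{(\X,\Y)}$, $\resdim_\X(K')=n-1$ and $\resdim_\X(H')\leq n$; completing (a) and (b).

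Finally, for part (c), the extra hypothesis that $\X$ is $\GP_{(\X,\Y)}$-injective is precisely what is needed to invoke Theorem \ref{AB4}(b) for the second pair $(\GP_{(\X,\Y)},\X)$: it makes $\varphi'$ a $\GP_{(\X,\Y)}$-precover with $K'\in\GP_{(\X,\Y)}^\perp$ and $\psi'$ an $\X^\wedge$-preenvelope with $\overline{G}'\in{}^\perp(\X^\wedge)$. The only place one needs to be a bit careful is the sharp equality versus the inequality in the $\resdim$ statements, and the translation of ``$\omega$ relative cogenerator in $\X$'' upward to ``$\omega$ (resp.\ $\X$) relative quasi-cogenerator in $\GP_{(\X,\Y)}$''; once these are in hand, the whole theorem is a direct consequence of the AB-theorem applied twice.
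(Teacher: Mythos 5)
Your proposal is correct and follows essentially the same route as the paper: both proofs apply Theorem \ref{AB4} twice, to the pairs $(\GP_{(\X,\Y)},\omega)$ and $(\GP_{(\X,\Y)},\X)$, after noting via Corollary \ref{ThickGP} that $\GP_{(\X,\Y)}$ is left thick and via Corollary \ref{GPcogenera}(a) that $\omega$ is a $\GP_{(\X,\Y)}$-injective relative cogenerator in $\GP_{(\X,\Y)}$. The only cosmetic difference is that you deduce that $\X$ is a relative (quasi-)cogenerator in $\GP_{(\X,\Y)}$ from $\omega\subseteq\X$, while the paper cites Lemma \ref{GP1} directly.
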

\begin{dem} By Corollary \ref{ThickGP} we know that $\GP_{(\X,\Y)}$ is left thick. Moreover, from Corollary \ref{GPcogenera} (a),  we have that $\omega$ is 
$\GP_{(\X,\Y)}$-injective and a relative cogenerator in $\GP_{(\X,\Y)}.$ On the other hand, by Lemma \ref{GP1}, we have that $\X$ is a relative cogenerator in $\GP_{(\X,\Y)}.$ Then the result follows now by applying twice the Theorem \ref{AB4}.
\end{dem}
\vspace{0.2cm}

In case of the weak Gorenstein projective objects, we have the following result, which is a generalization of \cite[Theorem 3.5]{BGRO}.

\begin{teo}\label{WGPAB4} Let $(\omega,\Y)$ be a  pair in $\A,$ where $\omega\subseteq\Y$ and $\omega$ is closed under finite coproducts in 
$\A.$  Then,  for any  $C\in\A$ with $\WGpd_{(\omega,\Y)}(C)=n<\infty,$ the following statements hold true. 
\begin{itemize}
\item[(a)] There exist an exact sequence in $\A,$ with $G\in W\GP_{(\omega,\Y)}$
\begin{center}
$0\to K\to G\xrightarrow{\varphi}C\to 0\,$ 
\end{center}
where $\varphi:G\to C$ is an $W\GP_{(\omega,\Y)}$-precover, $\resdim_\omega(K)\leq n-1$ and $K\in W\GP_{(\omega,\Y)}^{\perp}.$
\item[(b)] There exist an exact sequence in $\A,$ with $G'\in \GP_{(\X,\Y)}$
\begin{center}
$\;0\to C\xrightarrow{\psi} H\to G'\to 0,$
\end{center}
where $\psi:C\to H$ is an $\omega^{\wedge}$-preenvelope, $\resdim_\omega(H)\leq n$ and 
$G'\in{}^\perp(\omega^{\wedge}).$
\item[(c)] If $\pd_\Y(\omega)=0$ then $\resdim_\omega(K)= n-1,$ $\WGpd_{(\omega,\Y)}(K)\leq n-1$ and $\WGpd_{(\omega,\Y)}(H)\leq n.$
\end{itemize}
\end{teo}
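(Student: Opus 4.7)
The plan is to deduce Theorem \ref{WGPAB4} by applying Theorem \ref{AB4} to the pair $(W\GP_{(\omega,\Y)},\omega),$ using the fact already established in this section that this pair fits into the Auslander--Buchweitz framework. First I would verify the hypotheses of Theorem \ref{AB4} for $(\X',\omega'):=(W\GP_{(\omega,\Y)},\omega).$ By Theorem \ref{thickcat}, since $\omega\subseteq\Y$ and $\omega$ is closed under finite coproducts, the class $W\GP_{(\omega,\Y)}$ is left thick, so it is closed under extensions and contains $0.$ By Remark \ref{cogenera3}(a), $\omega$ is $W\GP_{(\omega,\Y)}$-injective and a relative quasi-cogenerator in $W\GP_{(\omega,\Y)}.$ The hypothesis $\WGpd_{(\omega,\Y)}(C)=\resdim_{W\GP_{(\omega,\Y)}}(C)=n<\infty$ is exactly the finiteness condition required to invoke Theorem \ref{AB4} on $C.$

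Applying Theorem \ref{AB4}(a) to $(W\GP_{(\omega,\Y)},\omega)$ and $C$ produces exact sequences
$$0\to K\to G\xrightarrow{\varphi} C\to 0\quad\text{and}\quad 0\to C\xrightarrow{\psi} H\to G'\to 0,$$
with $G,G'\in W\GP_{(\omega,\Y)},$ $\resdim_\omega(K)\leq n-1$ and $\resdim_\omega(H)\leq n.$ Then Theorem \ref{AB4}(b) (applicable because $\omega$ is $W\GP_{(\omega,\Y)}$-injective) says that $\varphi$ is a $W\GP_{(\omega,\Y)}$-precover with $K\in W\GP_{(\omega,\Y)}^{\perp},$ and that $\psi$ is an $\omega^{\wedge}$-preenvelope with $G'\in {}^\perp(\omega^{\wedge}).$ This establishes parts (a) and (b) at once.

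For part (c), assume further that $\pd_\Y(\omega)=0.$ Combined with $\omega\subseteq\Y,$ this gives $\omega\subseteq{}^\perp\Y;$ hence for every $W\in\omega$ the trivial exact sequence $0\to W\xrightarrow{1_W}W\to 0\to 0\to\cdots$ witnesses $W\in W\GP_{(\omega,\Y)},$ so $\omega\subseteq W\GP_{(\omega,\Y)}.$ The last clause of Theorem \ref{AB4}(a) then upgrades the bound on $K$ to the equality $\resdim_\omega(K)=n-1.$ Moreover, the inclusion $\omega\subseteq W\GP_{(\omega,\Y)}$ gives $\WGpd_{(\omega,\Y)}(M)\leq \resdim_\omega(M)$ for any $M\in\A,$ and applying this to $K$ and $H$ yields $\WGpd_{(\omega,\Y)}(K)\leq n-1$ and $\WGpd_{(\omega,\Y)}(H)\leq n.$

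There is no real obstacle here: the argument is a direct translation of the Auslander--Buchweitz theorem through the dictionary provided by Remark \ref{cogenera3} and Theorem \ref{thickcat}. The only subtlety worth flagging is that the inclusion $\omega\subseteq W\GP_{(\omega,\Y)}$ does not hold for free from $\omega\subseteq\Y$ alone — it requires the $\mathrm{Ext}$-vanishing condition $\pd_\Y(\omega)=0$ of part (c) — and it is precisely this inclusion that promotes the inequality $\resdim_\omega(K)\leq n-1$ from Theorem \ref{AB4}(a) into the sharp equality, and that makes the resolution dimensions of $K$ and $H$ upper bounds for their weak Gorenstein projective dimensions.
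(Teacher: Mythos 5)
Your proposal is correct and follows essentially the same route as the paper: apply Theorem \ref{AB4} to the pair $(W\GP_{(\omega,\Y)},\omega)$, with Theorem \ref{thickcat} and Remark \ref{cogenera3} supplying the hypotheses, and deduce (c) from the last clause of Theorem \ref{AB4}(a) together with $\WGpd_{(\omega,\Y)}\leq\resdim_\omega$. Your explicit observation that the inclusion $\omega\subseteq W\GP_{(\omega,\Y)}$ requires $\pd_\Y(\omega)=0$ is a careful (and accurate) refinement of a point the paper's own proof states without comment.
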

\begin{dem} By Theorem \ref{thickcat} we know that $W\GP_{(\omega,\Y)}$ is left thick. Moreover, from Remark \ref{cogenera3} we have that 
$\omega$ is 
$W\GP_{(\omega,\Y)}$-injective and a relative quasi-cogenerator in $W\GP_{(\omega,\Y)}.$ Then the result follows now by applying Theorem \ref{AB4} 
and the fact that $\WGpd_{(\omega,\Y)}(A)\leq\resdim_\omega(A),$ for any $A\in\A,$ since $\omega\subseteq W\GP_{(\omega,\Y)}.$
\end{dem}

\begin{cor}\label{ThickGP1} Let $(\X,\Y)$  be a GP-admissible pair in $\A,$ and let $\omega:=\X\cap\Y.$ Then, the followings statements are equivalent, for any  $C\in\A$ and $n\geq 0.$ 
\begin{itemize}
\item[(a)] $\Gpd_{(\X,\Y)}(C)\leq n.$ 
\item[(b)]There is an exact sequence $0\to K\to G\to C\to 0,$  with $\resdim_\omega(K)\leq n-1$ and $G\in\GP_{(\X,\Y)}.$
\item[(c)]There is an exact sequence $0\to C\to H\to \overline{G}\to 0,$ with $\resdim_\omega(H)\leq n$ and $\overline{G}\in\GP_{(\X,\Y)}.$
\item[(d)]There is an exact sequence $0\to K'\to G'\to C\to 0,$  with $\resdim_\X(K')\leq n-1$ and $G'\in\GP_{(\X,\Y)}.$
\item[(e)]There is an exact sequence $0\to C\to H'\to \overline{G}'\to 0,$ with $\resdim_\X(H')\leq n$ and $\overline{G}'\in\GP_{(\X,\Y)}.$
\end{itemize}
\end{cor}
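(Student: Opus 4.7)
The plan is to reduce the corollary to two applications of Corollary \ref{AB6} taken with the class $\GP_{(\X,\Y)}$ in place of the abstract $\X$ appearing there. So the first step is to verify the hypotheses of Corollary \ref{AB6} for $\GP_{(\X,\Y)}$. By Corollary \ref{ThickGP} the class $\GP_{(\X,\Y)}$ is left thick, hence in particular pre-resolving. By Corollary \ref{GPcogenera}(a) the class $\omega = \X\cap \Y$ is a $\GP_{(\X,\Y)}$-injective relative cogenerator in $\GP_{(\X,\Y)}$, and by Lemma \ref{GP1} the class $\X$ is a relative cogenerator in $\GP_{(\X,\Y)}$ as well (and both $\omega$ and $\X$ are clearly closed under isomorphisms).

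Now I would apply Corollary \ref{AB6} to the pair $(\GP_{(\X,\Y)},\omega)$: since $\Gpd_{(\X,\Y)}(C) = \resdim_{\GP_{(\X,\Y)}}(C)$ by definition, this yields at once the equivalences (a) $\Leftrightarrow$ (b) $\Leftrightarrow$ (c). Applying Corollary \ref{AB6} a second time to the pair $(\GP_{(\X,\Y)},\X)$ (here one needs that $\X$ is a relative cogenerator in $\GP_{(\X,\Y)}$, which is Lemma \ref{GP1}, together with $0\in\X\subseteq \GP_{(\X,\Y)}$) yields the equivalences (a) $\Leftrightarrow$ (d) $\Leftrightarrow$ (e).

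Alternatively, and more concretely, the implications (a) $\Rightarrow$ (b), (c), (d), (e) are exactly the content of Theorem \ref{GPAB4}(a) and (b), so one could just cite that. For the converses (b) $\Rightarrow$ (a), (c) $\Rightarrow$ (a), (d) $\Rightarrow$ (a), (e) $\Rightarrow$ (a), the key observation is that $\omega\subseteq \X\subseteq \GP_{(\X,\Y)}$ and $\GP_{(\X,\Y)}$ is left thick; hence any object with an $\omega$-resolution or $\X$-resolution of length $\leq n$ has $\GP_{(\X,\Y)}$-resolution dimension $\leq n$, and from the short exact sequences in (b)--(e) one deduces $\resdim_{\GP_{(\X,\Y)}}(C)\leq n$ using closure of $\GP_{(\X,\Y)}$ under extensions and under kernels of epimorphisms between its objects.

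The step that requires a little care is the passage from (b) or (d) back to (a): one must be sure that the short exact sequence $0\to K\to G\to C\to 0$ with $\resdim_\omega(K)\leq n-1$ (or $\resdim_\X(K')\leq n-1$) actually forces $\Gpd_{(\X,\Y)}(C)\leq n$, which is a standard shifting argument using left thickness of $\GP_{(\X,\Y)}$. No serious obstacle is anticipated, since all the heavy lifting has already been done in Theorem \ref{GPAB4} and in the Auslander--Buchweitz framework recalled as Corollary \ref{AB6}.
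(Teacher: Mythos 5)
Your proposal is correct and follows essentially the same route as the paper: verify that $\GP_{(\X,\Y)}$ is left thick (Corollary \ref{ThickGP}), that $\omega$ and $\X$ are relative cogenerators in $\GP_{(\X,\Y)}$ (Corollary \ref{GPcogenera} (a) and Lemma \ref{GP1}), and then apply Corollary \ref{AB6} twice, once for each pair. The alternative argument via Theorem \ref{GPAB4} that you sketch is fine but redundant given the first part.
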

\begin{dem} By Corollary \ref{ThickGP} we know that $\GP_{(\X,\Y)}$ is left thick. Moreover, from Corollary \ref{GPcogenera} (a), we have that $\omega$ is a  relative cogenerator in $\GP_{(\X,\Y)}.$ Note that $\omega$ is closed under isomorphisms in $\A,$ since it is closed under finite coproducts in $\A.$  On the other hand, by Lemma \ref{GP1}, we have that $\X$ is a relative cogenerator in $\GP_{(\X,\Y)}.$ Then the result follows now by applying twice the Corollary \ref{AB6}.
\end{dem}

\begin{rk} Let $R$ be a ring and $\A:=\Modu\,(R).$ If $\X=\Proj\,(R)\subseteq\Y,$ then 
\cite[Proposition 3.11]{MP} is a particular case of Corollary \ref{ThickGP1}.
\end{rk}

\begin{defi}\label{WGPadmi}We say that the pair $(\omega, \Y)\subseteq \A ^2$ is  WGP-\textbf{admissible} if $\omega \subseteq \Y,$ $\pd _{\Y} (\omega) =0$ and $\omega$ is closed under finite co-products in $\A.$ Dually, a pair $(\X,\nu)$ is WGI-\textbf{admissible} if $\nu \subseteq \X,$ $\id _{\X} (\nu) =0$ and $\nu$ is closed under finite co-products in $\A.$
\end{defi}

\begin{ex} (1) Let $R$ be a ring and $M,N\in\Modu(R)$ be such that $M$ is $\Sigma$-orthogonal and $N$ is $\Pi$-orthogonal. Then, by Remark 
\ref{B1} and its dual, we have that the pairs $(\Add(M),\Add(M))$ and $(\Prod(N),\Prod(N))$ are both WGP-admissible and WGI-admissible. Note that $\Add(M)$ is a precovering class; and therefore $\Add(M)$ is $\Add(M)$-epic in $\A$ if and only if $\Proj(R)\subseteq\Add(M).$ In particular, if ${}_RR\not\in\Add(M),$ then $(\Add(M),\Add(M))$ is not GP-admissible.
\

(2) Let $(\X,\Y)$ be a hereditary pair of classes of objects in an abelian category $\A$ such that $\X$ and $\Y$ are closed  under finite co-products in $\A.$ Then, for $\omega:=\X\cap\Y,$ we have that $(\omega,\Y)$ is WGP-admissible and $(\X,\omega)$ is WGI-admissible.  
\end{ex}

Note that a pair $(\X,\nu)\subseteq\A^2$ is WGI-admissible in $\A$ if, and only if, the pair $(\nu^{op},\X^{op})$ is  WGP-admissible in $\A^{op}.$ Therefore, any result or notion related with WGP-admissible pairs can be translated in terms of WGI-admissible pairs. These pairs are related with the GP-admissible pairs, as can be seen below.

\begin{rk}\label{debiladmisiblegrueso} If $(\X,\Y)\subseteq  \A^2$ is a GP-admisible pair, then  $(\X \cap \Y, \Y)$ is a WGP-admissible pair and $(\X,\X \cap \Y)$ is a WGI-admissible pair.
 \end{rk}

\begin{cor}\label{WThickGP1} Let $(\omega,\Y)$ be a WGP-admissible pair in $\A.$ Then, the following statements are equivalent, for any  $C\in\A$ and $n\geq 0.$ 
\begin{itemize}
\item[(a)] $\WGpd_{(\omega,\Y)}(C)\leq n.$ 
\item[(b)]There is an exact sequence $0\to K\to G\to C\to 0,$  with $\resdim_\omega(K)\leq n-1$ and $G\in W\GP_{(\omega,\Y)}.$
\item[(c)]There is an exact sequence $0\to C\to H\to \overline{G}\to 0,$ with $\resdim_\omega(H)\leq n$ and 
$\overline{G}\in W\GP_{(\omega,\Y)}.$
\end{itemize}
\end{cor}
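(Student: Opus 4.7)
The plan is to deduce this corollary as a direct application of Corollary \ref{AB6} to the pair $(\X,\omega)$ with $\X:=W\GP_{(\omega,\Y)}$. Once the two needed inputs of that corollary are verified, the equivalences fall out immediately, since by definition $\resdim_{W\GP_{(\omega,\Y)}}(C)=\WGpd_{(\omega,\Y)}(C)$.

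First, I would check that $W\GP_{(\omega,\Y)}$ is pre-resolving. The WGP-admissibility of $(\omega,\Y)$ requires $\omega\subseteq\Y$ and $\omega$ closed under finite coproducts in $\A$, which are exactly the hypotheses of Theorem \ref{thickcat}. That theorem gives the stronger conclusion that $W\GP_{(\omega,\Y)}$ is left thick, hence in particular closed under extensions and kernels of epimorphisms between its objects, i.e. pre-resolving.

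Second, I would check that $\omega$ is a relative cogenerator in $W\GP_{(\omega,\Y)}$ and is closed under isomorphisms. The closure of $\omega$ under finite coproducts (including the empty one) gives that $0\in\omega$ and that $\omega$ is closed under isomorphisms. Combined with $\pd_\Y(\omega)=0$ from the WGP-admissibility hypothesis, Remark \ref{cogenera3}(b) then yields that $\omega$ is a relative cogenerator in $W\GP_{(\omega,\Y)}$.

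With those two ingredients in hand, the triple equivalence of Corollary \ref{AB6}, read off with $\X:=W\GP_{(\omega,\Y)}$, is exactly the statement (a) $\Leftrightarrow$ (b) $\Leftrightarrow$ (c) in our corollary. I do not expect any real obstacle: the non-trivial direction (a)$\Rightarrow$(b),(c) has already been laid down in Theorem \ref{WGPAB4}, and what Corollary \ref{AB6} provides cleanly here is the converse, for which the only subtlety is making sure the ambient class is pre-resolving with $\omega$ as a relative cogenerator — both of which reduce to facts recorded earlier. The only minor point to double-check is that the definition of closure under finite coproducts adopted in the paper includes the empty coproduct, so that $0\in\omega$ and Remark \ref{cogenera3}(b) genuinely applies; this is standard and implicit throughout the preceding sections.
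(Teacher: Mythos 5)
Your proposal is correct and follows essentially the same route as the paper: Theorem \ref{thickcat} gives that $W\GP_{(\omega,\Y)}$ is left thick, Remark \ref{cogenera3} gives that $\omega$ is a relative cogenerator in $W\GP_{(\omega,\Y)}$, and then Corollary \ref{AB6} yields the equivalences. Your extra care about closure under isomorphisms and $0\in\omega$ matches what the paper handles implicitly (and explicitly in the analogous Corollary \ref{ThickGP1}).
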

\begin{dem} From Theorem \ref{thickcat} we know that $W\GP_{(\omega,\Y)}$ is left thick. By Remark \ref{cogenera3}, we have that 
$\omega$ is 
$W\GP_{(\omega,\Y)}$-injective and a relative cogenerator in $W\GP_{(\omega,\Y)}.$ Then, the result follows now by applying  Corollary \ref{AB6}.
\end{dem}
\vspace{0.2cm}

\begin{cor}\label{WThickGP2}  Let $(\omega, \Y) \subseteq \A ^2$ be a WGP-admissible pair, with $\omega$ closed under direct summands in $\A.$ Then, 
the pair $(W\GP _{(\omega, \Y)}, \omega)$ is left Frobenius and the followings statements are equivalent, for any  $C\in W\GP_{(\omega,\Y)}^{\wedge}$ and $n\geq 0.$ 
\begin{itemize}
\item[(a)] $\WGpd_{(\omega,\Y)}(C)\leq n.$
\item[(b)] If $0\to K_n\to G_{n-1}\to\cdots \to G_1\to G_0\to C\to 0$ is an exact sequence, with $G_i\in W\GP_{(\omega,\Y)},$ then 
$K_n\in W\GP_{(\omega,\Y)}.$
\end{itemize}
\end{cor}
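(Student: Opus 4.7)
The plan is to reduce the corollary to a direct application of Proposition \ref{AB10} once we have verified that $(W\GP_{(\omega,\Y)},\omega)$ is a left Frobenius pair in $\A$. Since the definition of the weak Gorenstein projectives and their dimension gives $\WGpd_{(\omega,\Y)}(C)=\resdim_{W\GP_{(\omega,\Y)}}(C),$ the equivalence between (a) and (b) will then be immediate: it is exactly Proposition \ref{AB10} applied to the Frobenius pair $(\X,\omega):=(W\GP_{(\omega,\Y)},\omega).$

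The bulk of the work is therefore verifying the four conditions in the definition of a left Frobenius pair. First, $W\GP_{(\omega,\Y)}$ is left thick by Theorem \ref{thickcat}, since $(\omega,\Y)$ is WGP-admissible and hence $\omega\subseteq \Y$ with $\omega$ closed under finite coproducts. Second, $\omega$ is closed under direct summands by hypothesis. Third, $\omega$ is $W\GP_{(\omega,\Y)}$-injective: indeed, by definition of $W\GP_{(\omega,\Y)}$ we have $W\GP_{(\omega,\Y)}\subseteq {}^\perp\Y,$ and since $\omega\subseteq\Y,$ this yields $\id_{W\GP_{(\omega,\Y)}}(\omega)=0$ (and is also recorded in Remark \ref{cogenera3}(a)). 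Fourth, one must show that $\omega$ is a relative cogenerator in $W\GP_{(\omega,\Y)}$: here one uses Remark \ref{cogenera3}(b), which applies because $(\omega,\Y)$ is WGP-admissible (so $\pd_\Y(\omega)=0$) and $0\in\omega$ (because $\omega$ is closed under finite, and in particular empty, coproducts). Note also that $\omega\subseteq W\GP_{(\omega,\Y)},$ since any $W\in\omega$ sits in the trivial exact sequence $0\to W\xrightarrow{1_W} W\to 0\to\cdots$ with all cokernels in $\omega\subseteq {}^\perp\Y.$

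With the Frobenius-pair property established, invoking Proposition \ref{AB10} for $C\in W\GP_{(\omega,\Y)}^\wedge$ and the chosen identification $\X=W\GP_{(\omega,\Y)}$ yields the equivalence of (a) and (b) verbatim.

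There is no genuine obstacle here; the only mild subtlety is noticing that the hypothesis ``$\omega$ closed under finite coproducts'' of WGP-admissibility supplies the zero object, which is what unlocks Remark \ref{cogenera3}(b) and upgrades the relative quasi-cogenerator of Remark \ref{cogenera3}(a) to a genuine relative cogenerator inside $W\GP_{(\omega,\Y)}.$ Everything else is a matter of assembling the already-established Theorem \ref{thickcat}, Remark \ref{cogenera3}, and Proposition \ref{AB10}.
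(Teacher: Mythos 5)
Your proposal is correct and follows essentially the same route as the paper: Theorem \ref{thickcat} for left thickness, Remark \ref{cogenera3} (using $\pd_\Y(\omega)=0$ and $0\in\omega$) to upgrade $\omega$ to a $W\GP_{(\omega,\Y)}$-injective relative cogenerator so that $(W\GP_{(\omega,\Y)},\omega)$ is left Frobenius, and then Proposition \ref{AB10}. The only difference is that you spell out the verification of the Frobenius conditions, including the convention that closure under finite coproducts yields $0\in\omega$, which the paper leaves implicit.
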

\begin{proof} By Theorem \ref{thickcat}, we know that  $W\GP _{(\omega, \Y)}$ is left thick.  Moreover, from Remark \ref{cogenera3}, we get that $(W\GP _{(\omega, \Y)}, \omega)$ is a left Frobenius pair in $\A.$ Then, the 
equivalence between (a) and (b) follows from Proposition \ref{AB10}.
\end{proof}

\begin{cor}\label{ThickGP2} Let $(\X,\Y)$  be a GP-admissible pair in $\A,$ and let $\omega:=\X\cap\Y$ be closed under direct summands in $\A.$ Then,  the pair $(\GP_{(\X,\Y)},\omega)$  is left Frobenius and the followings statements are equivalent, for any  $C\in\GP_{(\X,\Y)}^{\wedge}$ and $n\geq 0.$ 
\begin{itemize}
\item[(a)] $\Gpd_{(\X,\Y)}(C)\leq n.$
\item[(b)] If $0\to K_n\to G_{n-1}\to\cdots \to G_1\to G_0\to C\to 0$ is an exact sequence, with $G_i\in\GP_{(\X,\Y)},$ then $K_n\in\GP_{(\X,\Y)}.$
\end{itemize}
\end{cor}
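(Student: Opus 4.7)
The strategy is to mimic the proof of the preceding Corollary \ref{WThickGP2}, but now applied to the (non-weak) Gorenstein setting. The key observation is that, under the stated hypotheses, the pair $(\GP_{(\X,\Y)},\omega)$ is a left Frobenius pair in $\A$, which then allows a direct application of Proposition \ref{AB10}.

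First I would verify that $(\GP_{(\X,\Y)},\omega)$ is left Frobenius. There are three ingredients to check. The left thickness of $\GP_{(\X,\Y)}$ is already established in Corollary \ref{ThickGP}. The closure of $\omega$ under direct summands is part of our hypothesis. Finally, Corollary \ref{GPcogenera}(a) tells us that $\omega$ is a $\GP_{(\X,\Y)}$-injective relative cogenerator in $\GP_{(\X,\Y)}$. Together, these three facts match the definition of a left Frobenius pair.

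With that structural fact in hand, the equivalence (a) $\Leftrightarrow$ (b) is immediate. Indeed, since $\Gpd_{(\X,\Y)}(C) = \resdim_{\GP_{(\X,\Y)}}(C)$ by definition, Proposition \ref{AB10} applied to the left Frobenius pair $(\GP_{(\X,\Y)},\omega)$ gives exactly the stated equivalence for any $C \in \GP_{(\X,\Y)}^\wedge$.

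The argument is essentially a transfer result: all the real work has been done in building the theory of left Frobenius pairs in Section 2 and in establishing the cogenerator/thickness properties of $\GP_{(\X,\Y)}$ earlier in Section 3. There is no significant obstacle; the only thing requiring care is to confirm that each hypothesis of Proposition \ref{AB10} is available in our setting, but each piece has been explicitly proved in the excerpt.
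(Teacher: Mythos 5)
Your proof is correct and follows essentially the same route as the paper: both establish that $(\GP_{(\X,\Y)},\omega)$ is a left Frobenius pair and then invoke Proposition \ref{AB10}. The only cosmetic difference is that you verify the Frobenius conditions directly from Corollary \ref{ThickGP} and Corollary \ref{GPcogenera}(a), whereas the paper identifies $\GP_{(\X,\Y)}=W\GP_{(\omega,\Y)}$ via Theorem \ref{iguales} and quotes Corollary \ref{WThickGP2}; since those corollaries themselves rest on Theorem \ref{iguales}, the underlying argument is the same.
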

\begin{dem} From Remark \ref{debiladmisiblegrueso} and Theorem \ref{iguales}, we get that the pair $(\omega, \Y) $ is  WGP-admissible and 
$W\GP _{(\omega, \Y)}=\GP_{(\X,\Y)}.$  Thus, the result follows now from Corollary \ref{WThickGP2}.
\end{dem}

\begin{cor}\label{ThickGP3} Let $(\X,\Y)$  be a GP-admissible pair in $\A$ and  $\omega:=\X\cap\Y.$ 
\begin{itemize}
\item[(a)] Let $\omega$ be closed under direct summands in $\A.$ Then
$$\pd_{\omega^{\wedge}}(C)=\pd_{\omega}\,(C)=\Gpd_{(\X,\Y)}(C) \quad \forall\, C\in \GP_{(\X,\Y)}^{\wedge}\!.$$
\item[(b)] Let $\X$ be $\GP_{(\X,\Y)}$-injective and  closed under direct summands in $\A.$ Then
$$\pd_{\X^{\wedge}}(C)=\pd_{\X}\,(C)=\Gpd_{(\X,\Y)}(C) \quad \forall\, C\in \GP_{(\X,\Y)}^{\wedge}\!.$$
\end{itemize}
\end{cor}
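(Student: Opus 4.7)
The plan is to recognize that both (a) and (b) are direct applications of Theorem \ref{AB8}, with the class $\X$ in that theorem replaced by $\GP_{(\X,\Y)}$, and the role of the ``$\omega$'' played either by $\omega=\X\cap\Y$ in case (a) or by $\X$ in case (b). So the main task is to verify that the hypotheses of Theorem \ref{AB8} hold in each case, after which the conclusion of Theorem \ref{AB8} reads $\pd_{(\cdot)^\wedge}(C)=\pd_{(\cdot)}(C)=\resdim_{\GP_{(\X,\Y)}}(C)$, and the rightmost term is by definition $\Gpd_{(\X,\Y)}(C)$.

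For the common ingredient of both parts, I first note that $\GP_{(\X,\Y)}$ is closed under extensions and direct summands in $\A$: by Corollary \ref{ThickGP} the class $\GP_{(\X,\Y)}$ is left thick, which gives both properties.

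For part (a), I would invoke Corollary \ref{GPcogenera}(a), which says that $\omega=\X\cap\Y$ is a $\GP_{(\X,\Y)}$-injective relative cogenerator in $\GP_{(\X,\Y)}$. Combined with the standing hypothesis that $\omega$ is closed under direct summands in $\A$, this is exactly what Theorem \ref{AB8} requires with $(\GP_{(\X,\Y)},\omega)$ in place of $(\X,\omega)$. Applying Theorem \ref{AB8} to any $C\in\GP_{(\X,\Y)}^\wedge$ yields the desired chain of equalities.

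For part (b), I would use Lemma \ref{GP1}, which asserts that $\X$ is a relative cogenerator in $\GP_{(\X,\Y)}$ (since $0\in\X$, a consequence of $\X$ being closed under finite coproducts as part of the $GP$-admissibility of $(\X,\Y)$). The hypothesis that $\X$ is $\GP_{(\X,\Y)}$-injective and closed under direct summands in $\A$ then provides the remaining data needed to invoke Theorem \ref{AB8} with $(\GP_{(\X,\Y)},\X)$ in place of $(\X,\omega)$. The conclusion again follows immediately. There is no real obstacle: the entire proof is a matter of matching the hypotheses of Theorem \ref{AB8} to the data supplied by Corollary \ref{ThickGP}, Corollary \ref{GPcogenera}(a), and Lemma \ref{GP1}, so the work already done in the paper makes this corollary essentially a bookkeeping statement.
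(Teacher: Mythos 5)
Your proposal is correct and follows essentially the same route as the paper: both parts are obtained by applying Theorem \ref{AB8} to the pairs $(\GP_{(\X,\Y)},\omega)$ and $(\GP_{(\X,\Y)},\X)$, using left thickness of $\GP_{(\X,\Y)}$ (the paper gets it via Theorems \ref{thickcat} and \ref{iguales}, you via Corollary \ref{ThickGP}, which is the same fact), Corollary \ref{GPcogenera}(a) for (a), and Lemma \ref{GP1} for (b).
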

\begin{dem} (a) By Theorem \ref{thickcat} and Theorem \ref{iguales}, we know that $W\GP_{(\omega,\Y)}=\GP_{(\X,\Y)}$ is left thick. 
By Corollary \ref{GPcogenera} (a), we have that $\omega$ is a $\GP_{(\X,\Y)}$-injective  relative cogenerator in $\GP_{(\X,\Y)}.$ Then, the result follows now by applying the Theorem \ref{AB8} to the pair $(\GP_{(\X,\Y)},\omega).$
\

(b) Since $\GP_{(\X,\Y)}$ is left thick and, from  Lemma \ref{GP1}, we have that $\X$ is a relative cogenerator in $\GP_{(\X,\Y)},$ we get (b) by applying  Theorem \ref{AB8} to the pair $(\GP_{(\X,\Y)},\X).$
\end{dem}

\begin{rk} Let $R$ be a ring and $\A:=\Modu\,(R).$ The following results are particular cases of Corollaries \ref{ThickGP2} and \ref{ThickGP3}:
\begin{itemize}
\item[(1)]  \cite[Theorem 2.20]{Holm} by taking $\X=\Proj\,(R)=\Y,$
\item[(2)]  \cite[Proposition 2.8]{Yang} by taking $\X=\Proj\,(R)$ and $\Y=\Flat\,(R),$
\item[(3)]  \cite[Lemma 3.13 and Proposition 3.14]{MP} and \cite[Theorem 3.10]{ChZ}  by taking $\X=\Proj\,(R)\subseteq\Y.$
\end{itemize}
\end{rk} 

\begin{pro}\label{WThickGP9}  Let $(\omega,\Y)$  be a WGP-admissible pair in the abelian category $\A.$  Then, the following statements hold true.
\begin{itemize}
\item[(a)] If $\Y$ is closed under direct summands in $\A,$ then
 \begin{itemize}
 \item[(a1)] $\WGpd_{(\omega,\Y)}(M)=\pd_\Y(M)=\pd_{\Y^\wedge}(M)$ for any $M\in W\GP_{(\omega,\Y)}^\wedge,$
 \item[(a2)] $W\GP_{(\omega,\Y)}^\wedge\cap {}^\perp\Y=W\GP_{(\omega,\Y)}=W\GP_{(\omega,\Y)}^\wedge\cap {}^\perp(\Y^\wedge).$
 \end{itemize}
\item[(b)] If $\omega$ is closed under direct summands in $\A,$ then
 \begin{itemize}
 \item[(b1)] $\WGpd_{(\omega,\Y)}(M)=\pd_{\omega}\,(M)=\pd_{\omega^{\wedge}}(M) \quad \forall\, M\in W\GP_{(\omega,\Y)}^{\wedge},$
 \item[(b2)] $\WGpd_{(\omega,\Y)}(M)=\resdim_\omega(M)=\pd_\omega(M)=\pd_{\omega^\wedge}(M)$ for any $M\in\omega^\wedge,$
  \item[(b3)] $W\GP_{(\omega,\Y)}\cap\omega^\wedge=\omega$ and $W\GP_{(\omega,\Y)}^\wedge\cap {}^\perp\omega=W\GP_{(\omega,\Y)}=W\GP_{(\omega,\Y)}^\wedge\cap {}^\perp(\omega^\wedge).$
 \end{itemize}
\end{itemize}
\end{pro}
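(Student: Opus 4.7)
The common strategy is to recognize both (a) and (b) as applications of Theorem \ref{AB8} to the pair $(W\GP_{(\omega,\Y)},\mathcal{C})$, where $\mathcal{C}=\Y$ in part (a) and $\mathcal{C}=\omega$ in part (b). First I would set up the common preliminaries. By Theorem \ref{thickcat}, $W\GP_{(\omega,\Y)}$ is left thick, so in particular closed under extensions and direct summands. By the very definition of $W\GP_{(\omega,\Y)}$, every $M\in W\GP_{(\omega,\Y)}$ sits in an exact sequence $0\to M\to X^0\to M'\to 0$ with $X^0\in\omega$ and $M'\in W\GP_{(\omega,\Y)}$, so both $\omega$ and $\Y$ are relative cogenerators in $W\GP_{(\omega,\Y)}$; the chain of inclusions $W\GP_{(\omega,\Y)}\subseteq{}^\perp\Y\subseteq{}^\perp\omega$ (the second since $\omega\subseteq\Y$) shows that both $\omega$ and $\Y$ are $W\GP_{(\omega,\Y)}$-injective.

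For part (a), the closure of $\Y$ under direct summands is the only remaining hypothesis needed to apply Theorem \ref{AB8} to $(W\GP_{(\omega,\Y)},\Y)$, which immediately yields
$\pd_{\Y^\wedge}(M)=\pd_\Y(M)=\resdim_{W\GP_{(\omega,\Y)}}(M)=\WGpd_{(\omega,\Y)}(M)$ for every $M\in W\GP_{(\omega,\Y)}^\wedge$; this is (a1). Statement (a2) reduces to (a1): the inclusion $W\GP_{(\omega,\Y)}\subseteq W\GP_{(\omega,\Y)}^\wedge\cap{}^\perp\Y$ is immediate, and if $M$ belongs to the right-hand side then $\pd_\Y(M)=0$, hence (a1) forces $\WGpd_{(\omega,\Y)}(M)=0$, i.e., $M\in W\GP_{(\omega,\Y)}$. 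The second equality then follows from the identity ${}^\perp\Y={}^\perp(\Y^\wedge)$ noted in Remark \ref{WGorro}.

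Part (b) is parallel with $\omega$ in place of $\Y$. Applying Theorem \ref{AB8} to $(W\GP_{(\omega,\Y)},\omega)$ yields (b1). For (b2), one needs the additional identity $\pd_\omega(M)=\resdim_\omega(M)$ on $M\in\omega^\wedge$; for this I would give a second application of Theorem \ref{AB8}, now to the pair $(\omega,\omega)$, after observing that $\omega$ is closed under extensions (any short exact sequence $0\to W\to E\to W'\to 0$ with $W,W'\in\omega$ splits because $\Ext^1_\A(W',W)=0$ from $\pd_\Y(\omega)=0$ and $\omega\subseteq\Y$, so $E\cong W\oplus W'\in\omega$ by closure under finite coproducts and direct summands) and that $\id_\omega(\omega)=0$. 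Finally, for (b3), the inclusion $\omega\subseteq W\GP_{(\omega,\Y)}\cap\omega^\wedge$ is clear, and if $M\in W\GP_{(\omega,\Y)}\cap\omega^\wedge$ then (b1) gives $\pd_\omega(M)=0$, so (b2) forces $\resdim_\omega(M)=0$, i.e., $M\in\omega$; the other two orthogonality descriptions follow exactly as in (a2), this time using ${}^\perp(\omega^\wedge)={}^\perp\omega$ from the dual of Lemma \ref{AB1}.

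The argument is essentially mechanical once Theorem \ref{AB8} is identified as the right lever, but the step I expect to require the most care is the verification that $\omega$ is simultaneously a relative cogenerator in $W\GP_{(\omega,\Y)}$ and closed under extensions; the first uses only the defining exact sequence of $W\GP_{(\omega,\Y)}$, while the second is the small calculation that makes the second invocation of Theorem \ref{AB8} on the pair $(\omega,\omega)$ legitimate, and without it the chain $\WGpd_{(\omega,\Y)}=\pd_\omega=\resdim_\omega$ in (b2) (and hence (b3)) cannot be closed.
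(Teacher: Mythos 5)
Your proposal is correct and follows essentially the same route as the paper: both (a) and (b) are obtained by applying Theorem \ref{AB8} to the pairs $(W\GP_{(\omega,\Y)},\Y)$ and $(W\GP_{(\omega,\Y)},\omega)$, after noting that $W\GP_{(\omega,\Y)}$ is left thick (Theorem \ref{thickcat}) and that $\omega$ is a $W\GP_{(\omega,\Y)}$-injective relative cogenerator, with (b2) coming from a second use of Theorem \ref{AB8} on $(\omega,\omega)$ via the splitting argument for closure under extensions, and the orthogonality statements from ${}^\perp\Y={}^\perp(\Y^\wedge)$ and ${}^\perp\omega={}^\perp(\omega^\wedge)$. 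The only place you are slightly loose is in calling $\Y$ a relative cogenerator in $W\GP_{(\omega,\Y)}$ (strictly it is only a relative quasi-cogenerator, since $\Y\not\subseteq W\GP_{(\omega,\Y)}$ in general), but this is exactly the step as the paper itself states it for (a1).
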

\begin{dem} (a)  Let $\Y$ be closed under direct summands in $\A.$ 
\

(a1) By Theorem \ref{thickcat} we know that $W\GP_{(\omega,\Y)}$ is left thick. In order to get (a1), it is enough to check the conditions in 
Theorem \ref{AB8} for the pair $(W\GP_{(\omega,\Y)},\Y).$ It is clear that $\Y$ is $W\GP_{(\omega,\Y)}$-injective. Moreover, $\Y$ is a relative cogenerator in $W\GP_{(\omega,\Y)},$ since $\omega$ is so (see Remark \ref{cogenera3}). Finally, by hypothesis we know that $\Y$ is closed under direct summands in $\A.$
\

(a2) By Corollary \ref{GP3} (b), we have that $W\GP_{(\omega,\Y)}\subseteq {}^\perp\Y\cap {}^\perp(\Y^\wedge).$ Then, (a2) follows from (a1).
\

(b) Let $\omega$ be closed under direct summands in $\A.$ 
\

(b1) By Corollary \ref{WThickGP2}, we know that  the pair $(W\GP_{(\omega,\Y)},\omega)$  is left Frobenius. Then, the item (b1) follows  by applying  Theorem \ref{AB8} to this left Frobenius pair.
\

(b2) Since $\pd_\Y(\omega)=0,$  we get that $\omega$ is $\omega$-injective and thus it is closed under extensions, since $\omega$ is closed under finite coproducts in $\A.$ Then, from Theorem \ref{AB8}, it follows that $\resdim_\omega(M)=\pd_\omega(M)$ for any $M\in\omega^\wedge.$ Therefore, (b2) follows from  (b1). 
\

(b3) The equality $W\GP_{(\omega,\Y)}\cap\omega^\wedge=\omega$ follows from (b2). On the other hand, from Corollary \ref{GP3} (b), we 
get that  $W\GP_{(\omega,\Y)}\subseteq {}^\perp\omega\cap {}^\perp(\omega^\wedge),$ since ${}^\perp\Y\cap {}^\perp(\Y^\wedge)\subseteq{}^\perp\omega\cap {}^\perp(\omega^\wedge).$ Thus, the equalities $W\GP_{(\omega,\Y)}^\wedge\cap {}^\perp\omega=W\GP_{(\omega,\Y)}=W\GP_{(\omega,\Y)}^\wedge\cap {}^\perp(\omega^\wedge)$ follow from (b1).
\end{dem}

\begin{rk} Corollary \ref{WThickGP2} and Proposition \ref{WThickGP9} generalize \cite[Theorem 3.8]{BGRO} which is given in the context of the  weakly Wakamatsu tilting $R$-modules.
\end{rk}

\begin{cor}\label{ThickGP9}  Let $(\X,\Y)$  be a GP-admissible pair in the abelian category $\A.$  Then, the following statements hold true
\begin{itemize}
\item[(a)] If $\Y$ is closed under direct summands in $\A,$ then
 \begin{itemize}
 \item[(a1)] $\Gpd_{(\X,\Y)}(M)=\pd_\Y(M)=\pd_{\Y^\wedge}(M)$ for any $M\in\GP_{(\X,\Y)}^\wedge,$
  \item[(a2)] $\GP_{(\X,\Y)}^\wedge\cap {}^\perp\Y=\GP_{(\X,\Y)}=\GP_{(\X,\Y)}^\wedge\cap {}^\perp(\Y^\wedge).$
 \end{itemize}
\item[(b)] If $\omega:=\X\cap\Y$ is closed under direct summands in $\A,$ then 
 \begin{itemize}
 \item[(b1)] $\Gpd_{(\X,\Y)}(M)=\pd_{\omega}\,(M)=\pd_{\omega^{\wedge}}(M) \quad \forall\, M\in \GP_{(\X,\Y)}^{\wedge},$
 \item[(b2)] $\Gpd_{(\X,\Y)}(M)=\resdim_\omega(M)=\pd_\omega(M)=\pd_{\omega^\wedge}(M)$ for any $M\in\omega^\wedge,$
  \item[(b3)] $\GP_{(\X,\Y)}\cap\omega^\wedge=\omega$ and $\GP_{(\X,\Y)}^\wedge\cap {}^\perp\omega=\GP_{(\X,\Y)}=\GP_{(\X,\Y)}^\wedge\cap {}^\perp(\omega^\wedge).$
 \end{itemize}
\item[(c)] If $\X$ is $\GP_{(\X,\Y)}$-injective and  closed under direct summands in $\A,$ then
\begin{itemize}
\item[(c1)] $\Gpd_{(\X,\Y)}(M)=\resdim_\X(M)=\pd_\X(M)=\pd_{\X^\wedge}(M)$ for any $M\in\X^\wedge,$
\item[(c2)] $\GP_{(\X,\Y)}\cap\X^\wedge=\X.$
\end{itemize}
\end{itemize}
\end{cor}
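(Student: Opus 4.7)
The plan is to view this Corollary as a translation of Proposition \ref{WThickGP9} through the bridge provided by Theorem \ref{iguales}, supplemented by a direct Frobenius-pair argument for part (c). The key observation is that, by Remark \ref{debiladmisiblegrueso}, the pair $(\omega,\Y)$ is WGP-admissible whenever $(\X,\Y)$ is GP-admissible, and Theorem \ref{iguales} identifies $W\GP_{(\omega,\Y)}=\GP_{(\X,\Y)}$. This identification turns statements about $W\GP_{(\omega,\Y)}$ into statements about $\GP_{(\X,\Y)}$ verbatim.

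For part (a), with $\Y$ closed under direct summands, I would simply apply Proposition \ref{WThickGP9}(a) to $(\omega,\Y)$ and rewrite $W\GP_{(\omega,\Y)}$ as $\GP_{(\X,\Y)}$, obtaining (a1) and (a2) immediately. The same routine translation of Proposition \ref{WThickGP9}(b) to $(\omega,\Y)$ gives (b1), (b2) and (b3), under the hypothesis that $\omega$ be closed under direct summands.

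Part (c) does not use $\Y$ as testing class but rather $\X$ itself, so I would argue directly. First I would verify that $(\GP_{(\X,\Y)},\X)$ is a left Frobenius pair: $\GP_{(\X,\Y)}$ is left thick by Corollary \ref{ThickGP}, $\X$ is closed under direct summands by hypothesis, $\X$ is $\GP_{(\X,\Y)}$-injective by hypothesis, and $\X$ is a relative cogenerator in $\GP_{(\X,\Y)}$ by Lemma \ref{GP1}. Applying Theorem \ref{AB8} to this pair yields the equalities $\pd_{\X^\wedge}(M)=\pd_\X(M)=\resdim_{\GP_{(\X,\Y)}}(M)=\Gpd_{(\X,\Y)}(M)$ for every $M\in\GP_{(\X,\Y)}^\wedge$, in particular for $M\in\X^\wedge\subseteq\GP_{(\X,\Y)}^\wedge$. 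To graft $\resdim_\X(M)$ onto this chain, I would apply Theorem \ref{AB8} a second time to the pair $(\X,\X)$: here $\X$ is closed under extensions and direct summands, $0\in\X$ (by GP-admissibility) makes $\X$ a trivial relative cogenerator in itself, and $\X$ is $\X$-injective because $\X\subseteq\GP_{(\X,\Y)}$ and $\X$ is $\GP_{(\X,\Y)}$-injective. This produces $\pd_\X(M)=\resdim_\X(M)$ for $M\in\X^\wedge$, completing (c1). For (c2), the inclusion $\X\subseteq\GP_{(\X,\Y)}\cap\X^\wedge$ is trivial, while conversely any $M\in\GP_{(\X,\Y)}\cap\X^\wedge$ satisfies $\resdim_\X(M)=\Gpd_{(\X,\Y)}(M)=0$ by (c1), forcing $M\in\X$.

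There is no serious obstacle; the only delicate point is to confirm that both invocations of Theorem \ref{AB8} in part (c) meet their hypotheses, which reduces to checking the mild relative-cogenerator condition for the pair $(\X,\X)$, and this is automatic from $0\in\X$.
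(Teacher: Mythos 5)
Your proposal is correct and follows essentially the same route as the paper: parts (a) and (b) are obtained exactly as in the text, by combining Remark \ref{debiladmisiblegrueso} and Theorem \ref{iguales} with Proposition \ref{WThickGP9}, and your treatment of (c) amounts to applying Theorem \ref{AB8} to the pairs $(\GP_{(\X,\Y)},\X)$ and $(\X,\X)$, which is precisely what the paper does (the first application being the content of Corollary \ref{ThickGP3}(b), which you simply inline, and the $\X$-injectivity of $\X$ being the paper's appeal to Proposition \ref{GP4}(b)). Your deduction of (c2) from (c1) also matches the paper's.
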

\begin{dem} From Remark \ref{debiladmisiblegrueso} and Theorem \ref{iguales}, we get that the pair $(\omega, \Y) $ is  WGP-admissible and 
$W\GP _{(\omega, \Y)}=\GP_{(\X,\Y)}.$  Thus, the  items (a) and (b) follow from Proposition \ref{WThickGP9}.
\

Let us prove the item (c). Assume that $\X$ is $\GP_{(\X,\Y)}$-injective and  closed under direct summands in $\A.$ By Proposition \ref{GP4} (b), we have that  $\X$ is $\X$-injective. Moreover, $\X$ is closed under extensions, since  $(\X,\Y)$  is a GP-admissible. Then, from Theorem \ref{AB8}, it follows that $\resdim_\X(M)=\pd_\X(M)$ for any $M\in\X^\wedge.$ Therefore, Corollary \ref{ThickGP3} (b) gives us (c1). Finally, the item (c2) follows directly from (c1).
\end{dem}
\vspace{0.2cm}

\begin{defi}  For any pair $(\X,\Y)$ of classes of objects in an abelian category $\A,$ we consider the following FINITISTIC homological dimensions.
\begin{itemize}
\item[(1)] The {\bf finitistic  $(\X,\Y)$-Gorenstein projective dimension} of $\A$ 
\begin{center} $\FGPD_{(\X,\Y)}(\A):=\Gpd_{(\X,\Y)}(\GP_{(\X,\Y)}^\wedge).$\end{center}
 In particular, the {\bf finitistic Gorenstein  projective dimension} 
of the abelian category $\A$ is $\FGPD(\A):= \FGPD_{(\Proj(\A),\Proj(\A))}(\A).$
\item[(2)] The {\bf finitistic projective dimension} of $\A$ is $\FPD(\A):=\pd(\Proj(\A)^\wedge).$
\item[(3)] The {\bf weak finitistic  $(\X,\Y)$-Gorenstein projective dimension} of $\A$  
\begin{center}$\WFGPD_{(\X,\Y)}(\A):=\WGpd_{(\X,\Y)}(W\GP_{(\X,\Y)}^\wedge).$\end{center}
\end{itemize}
Similarly, we have  
$\FGID_{(\X,\Y)}(\A)$ which is the {\bf finitistic  $(\X,\Y)$-Gorenstein injective dimension}  of $\A,$  and 
$\WFGID_{(\X,\Y)}(\A)$ which is the {\bf weak finitistic  $(\X,\Y)$-Gorenstein injective dimension} of $\A.$  
\end{defi}

\begin{defi}  For any pair $(\X,\Y)$ of classes of objects in an abelian category $\A,$ we consider the following GLOBAL homological dimensions.
\begin{itemize}
\item[(1)] The {\bf global  $(\X,\Y)$-Gorenstein projective dimension} of $\A$ 
\begin{center} $\glGPD_{(\X,\Y)}(\A):=\Gpd_{(\X,\Y)}(\A).$\end{center}
 In particular, the {\bf global Gorenstein  projective dimension} 
of the abelian category $\A$ is $\glGPD(\A):= \glGPD_{(\Proj(\A),\Proj(\A))}(\A).$
\item[(2)] The {\bf global projective dimension} of $\A$ is $\glPD(\A):=\pd(\A).$
\item[(3)] The {\bf weak global  $(\X,\Y)$-Gorenstein projective dimension} of $\A$  
\begin{center}$\glWGPD_{(\X,\Y)}(\A):=\WGpd_{(\X,\Y)}(\A).$\end{center}
\end{itemize}
Similarly, we have  
$\glGID_{(\X,\Y)}(\A)$ which is the {\bf global  $(\X,\Y)$-Gorenstein injective dimension}  of $\A,$  and 
$\glWGID_{(\X,\Y)}(\A)$ which is the {\bf weak global  $(\X,\Y)$-Gorenstein injective dimension} of $\A.$  
\end{defi}

In case of some ring $R,$ the finitistic  $(\X,\Y)$-Gorenstein 
projective dimension of $R$ is $\FGPD_{(\X,\Y)}(R):=\FGPD_{(\X,\Y)}(\Modu\,R),$  the weak finitistic  $(\X,\Y)$-Gorenstein 
projective dimension of $R$ is $\WFGPD_{(\X,\Y)}(R):=\WFGPD_{(\X,\Y)}(\Modu\,R)$ and the finitistic projective dimension of $R$ is 
$\FPD(R):=\FPD(\Modu\,R).$ We also have the following homological dimensions of the ring $R.$ Namely,  the  {\bf finitistic Ding projective} dimension  $\FDPD(R):= \FGPD_{(\Proj(R),\Flat(R))}(\Modu\,R)$ and the  finitistic Gorenstein  projective dimension $\FGPD(R):= \FGPD(\Modu\,R).$ 
\

We also have the so called relative global dimensions of the ring $R,$ namely, 
\begin{center}$\glGPD_{(\X,\Y)}(R):=\glGPD_{(\X,\Y)}(\Modu\,R)$\end{center}
 which is the global $(\X,\Y)$-Gorenstein 
projective dimension of $R,$ and 
\begin{center}$\glGID_{(\X,\Y)}(R):=\glGID_{(\X,\Y)}(\Modu\,R)$\end{center} which is the global $(\X,\Y)$-Gorenstein 
injective dimension of $R,$ and so on. 

\begin{cor}\label{ThickGP5} Let $(\X,\Y)$  be a GP-admissible pair in $\A,$  such that $\omega:=\X\cap\Y=\Proj\,(\A).$ Then, the following statements hold true.
\begin{itemize}
\item[(a)] $\Gpd_{(\X,\Y)}(M)=\pd(M)$ for any $M\in\Proj\,(\A)^\wedge.$
\item[(b)] $\Gpd_{(\X,\Y)}(M)=\pd_{\omega}\,(M)=\pd_{\omega^{\wedge}}(M) \quad \forall\, M\in \GP_{(\X,\Y)}^{\wedge}.$
\item[(c)] $\GP_{(\X,\Y)}\cap\omega^\wedge=\omega$ and $\GP_{(\X,\Y)}^\wedge\cap {}^\perp\omega=\GP_{(\X,\Y)}=\GP_{(\X,\Y)}^\wedge\cap {}^\perp(\omega^\wedge).$
\item[(d)] $\FPD(\A)\leq\FGPD_{(\X,\Y)}(\A)\leq \glGPD_{(\X,\Y)}(\A)\leq \glPD(\A).$
\end{itemize}
\end{cor}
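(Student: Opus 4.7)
The plan is to observe that the given hypothesis places us in the scope of Corollary \ref{ThickGP9}(b). Indeed, $\omega=\Proj(\A)$ is automatically closed under direct summands in $\A,$ since any direct summand of a projective object is projective. Once this is noted, items (b) and (c) of the present statement are immediate restatements of Corollary \ref{ThickGP9}(b1) and (b3) respectively, so no further work is needed for them.

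For (a), I would combine two earlier facts. Corollary \ref{ThickGP9}(b2), applied to $\omega=\Proj(\A),$ gives
$$\Gpd_{(\X,\Y)}(M)=\resdim_\omega(M)=\resdim_{\Proj(\A)}(M)$$
for every $M\in\omega^\wedge=\Proj(\A)^\wedge.$ Then Remark \ref{resdimPD} identifies $\resdim_{\Proj(\A)}(M)$ with $\pd(M)$ for any $M$ lying in $\Proj(\A)^\wedge,$ yielding the desired equality $\Gpd_{(\X,\Y)}(M)=\pd(M).$

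For (d), I would break the chain of inequalities into three independent steps. The rightmost inequality $\glGPD_{(\X,\Y)}(\A)\leq \glPD(\A)$ comes directly from the inclusion $\Proj(\A)=\omega\subseteq \X\subseteq \GP_{(\X,\Y)}$: any projective resolution of $M\in\A$ is in particular a $\GP_{(\X,\Y)}$-resolution, so $\Gpd_{(\X,\Y)}(M)\leq \pd(M)$ for every $M\in\A,$ and taking suprema gives the estimate. The middle inequality $\FGPD_{(\X,\Y)}(\A)\leq \glGPD_{(\X,\Y)}(\A)$ is the trivial one, arising from $\GP_{(\X,\Y)}^\wedge\subseteq\A.$ For the leftmost inequality $\FPD(\A)\leq \FGPD_{(\X,\Y)}(\A),$ the inclusion $\Proj(\A)\subseteq\GP_{(\X,\Y)}$ gives $\Proj(\A)^\wedge\subseteq\GP_{(\X,\Y)}^\wedge,$ and then part (a) allows us to replace $\pd(M)$ by $\Gpd_{(\X,\Y)}(M)$ for any $M\in\Proj(\A)^\wedge,$ whence
$$\FPD(\A)=\pd(\Proj(\A)^\wedge)=\Gpd_{(\X,\Y)}(\Proj(\A)^\wedge)\leq \Gpd_{(\X,\Y)}(\GP_{(\X,\Y)}^\wedge)=\FGPD_{(\X,\Y)}(\A).$$

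I do not foresee any genuine obstacle: the statement is essentially a packaging of Corollary \ref{ThickGP9}(b) and Remark \ref{resdimPD} into the classical language of projective dimensions, together with the trivial inclusions among the relevant classes. The only point requiring a small care is making sure that Remark \ref{resdimPD} is invoked in the form valid \emph{without} assuming enough projectives in $\A,$ namely for objects already in $\Proj(\A)^\wedge,$ which is exactly the restriction under which we apply it.
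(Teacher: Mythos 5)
Your proposal follows essentially the same route as the paper: the paper likewise notes $\Proj(\A)\subseteq\X\subseteq\GP_{(\X,\Y)}$, uses Remark \ref{resdimPD} to get $\Gpd_{(\X,\Y)}(M)\le\pd(M)$ on $\Proj(\A)^\wedge$ (hence $\Proj(\A)^\wedge\subseteq\GP_{(\X,\Y)}^\wedge$), and then reads off all four items from Corollary \ref{ThickGP9}(b). The only caveat concerns your last inequality in (d): the claim $\Gpd_{(\X,\Y)}(M)\le\pd(M)$ for \emph{arbitrary} $M\in\A$ tacitly identifies $\pd(M)$ with $\resdim_{\Proj(\A)}(M)$, which by Remark \ref{resdimPD} requires enough projectives or $M\in\Proj(\A)^\wedge$ (the case $\pd(M)=\infty$ being vacuous) — a point you yourself flag for (a) but then do not observe here, though the paper's own proof is equally silent about it.
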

\begin{dem} Since $\Proj\,(\A)\subseteq\X\subseteq\GP_{(\X,\Y)},$ it follows from Remark \ref{resdimPD}  that 
$\Gpd_{(\X,\Y)}(M)\leq\pd(M),$ for any $M\in\Proj(\A)^\wedge.$ In particular $\Proj\,(\A)^\wedge\subseteq\GP_{(\X,\Y)}^\wedge.$
Thus, the result follows directly from Corollary \ref{ThickGP9} (b).
\end{dem}

\begin{pro}\label{ThickGP5.5} Let $(\X,\Y)$  be a GP-admissible pair in $\A$  such that $\omega:=\X\cap\Y$ is closed under direct summands in $\A$ and $\X$ is left thick. Then 
$$\FGPD_{(\X,\Y)}(\A)\leq\resdim_\X(\X^\wedge).$$
\end{pro}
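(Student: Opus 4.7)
The plan is to translate the $\GP_{(\X,\Y)}$-resolution dimension of $M$ into its $\omega$-projective dimension, and then transport that computation onto an auxiliary object whose $\X$-resolution dimension is bounded by $\resdim_\X(\X^\wedge)$. Set $m:=\resdim_\X(\X^\wedge);$ the case $m=\infty$ is vacuous, so assume $m<\infty.$ Fix $M\in\GP_{(\X,\Y)}^{\wedge}$ and let $n:=\Gpd_{(\X,\Y)}(M);$ the objective is $n\leq m.$

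The first step is to verify that the pair $(\X,\omega)$ meets the hypotheses of Theorem \ref{AB8}: being left thick, $\X$ is closed under extensions and direct summands; $\omega$ is closed under direct summands by hypothesis; the GP-admissibility of $(\X,\Y)$ makes $\omega$ a relative cogenerator in $\X;$ and the chain $\id_\X(\omega)=\pd_\omega(\X)\leq\pd_\Y(\X)=0$ makes $\omega$ an $\X$-injective class. Consequently Theorem \ref{AB8} yields $\pd_\omega(C)=\resdim_\X(C)$ for every $C\in\X^{\wedge}.$ In parallel, since $\omega$ is closed under direct summands, Corollary \ref{ThickGP9}(b1) gives $n=\pd_\omega(M),$ reducing the goal to showing $\pd_\omega(M)\leq m.$

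Next, I would produce an auxiliary object via Theorem \ref{GPAB4}(b): there is an exact sequence $0\to M\to H'\to\overline{G}'\to 0$ with $\overline{G}'\in\GP_{(\X,\Y)}$ and $\resdim_\X(H')\leq n;$ in particular $H'\in\X^{\wedge}.$ Applying $\Hom_\A(-,W)$ for $W\in\omega$ and exploiting $\overline{G}'\in\GP_{(\X,\Y)}\subseteq{}^{\perp}\Y\subseteq{}^{\perp}\omega$ (which kills all $\Ext^i_\A(\overline{G}',W)$ for $i\geq 1$) gives natural isomorphisms $\Ext^i_\A(H',W)\cong\Ext^i_\A(M,W)$ for every $i\geq 1.$ Hence $\pd_\omega(H')=\pd_\omega(M)=n.$ Stringing the identities together produces $n=\pd_\omega(H')=\resdim_\X(H')\leq m,$ as required. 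Since $M$ was arbitrary, the desired inequality on the finitistic dimension follows.

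The main subtlety to be avoided is the temptation to use the ``precover'' part of Theorem \ref{GPAB4}(a): its auxiliary kernel $K'$ only satisfies $\resdim_\X(K')=n-1,$ so that route would give the weaker, off-by-one bound $n\leq m+1.$ Using the ``preenvelope'' side instead is exactly what calibrates $\pd_\omega(H')$ with $\pd_\omega(M);$ the Ext-vanishing of $\overline{G}'$ against $\omega$ upgrades the inequality $\resdim_\X(H')\leq n$ from Theorem \ref{GPAB4}(b) to the equality $\resdim_\X(H')=n,$ and this is what allows the sharp bound from Theorem \ref{AB8} to close the argument.
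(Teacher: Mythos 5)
Your proof is correct, and it takes a genuinely different route from the paper's. The paper argues extremally: it first uses Theorem \ref{GPAB4} to see that $\beta:=\FGPD_{(\X,\Y)}(\A)\leq\resdim_\X(\X^\wedge)+1$ is finite, then fixes an object $M$ attaining $\beta$, builds (via a pushout of the $\GP_{(\X,\Y)}$-precover sequence $0\to K\to G\to M\to 0$ against $0\to G\to Q\to G'\to 0$ with $Q\in\X$) an exact sequence $0\to M\to L\to G'\to 0$, rules out $L\in\GP_{(\X,\Y)}$ by the maximality of $\beta$ and left thickness, and finally invokes Proposition \ref{AB10} on $0\to K\to Q\to L\to 0$ to get $\resdim_\X(L)=\beta$, whence $\beta\leq\resdim_\X(\X^\wedge)$. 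You instead argue pointwise: you identify $\Gpd_{(\X,\Y)}(M)=\pd_\omega(M)$ via Corollary \ref{ThickGP9}(b1), use only the coresolution-type sequence $0\to M\to H'\to\overline{G}'\to 0$ of Theorem \ref{GPAB4}(b) together with $\Ext^{\geq 1}_\A(\overline{G}',\omega)=0$ to transfer $\pd_\omega$ from $M$ to $H'\in\X^\wedge$, and close with Theorem \ref{AB8} applied to the pair $(\X,\omega)$ (whose hypotheses you correctly verify). Your route avoids the finiteness-first step, the pushout diagram, the contradiction with maximality, and Proposition \ref{AB10}, at the modest cost of invoking Corollary \ref{ThickGP9} and Theorem \ref{AB8}; it also yields slightly more, namely the pointwise equality $\Gpd_{(\X,\Y)}(M)=\resdim_\X(H')$ for each $M\in\GP_{(\X,\Y)}^\wedge$, from which the finitistic bound follows by taking suprema. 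There is no circularity, since Theorem \ref{GPAB4}, Corollary \ref{ThickGP9} and Theorem \ref{AB8} all precede this proposition and do not depend on it.
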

\begin{dem} Let $\alpha := \resdim _{\X} (\X ^\wedge)$ and $\beta := \mbox{FGPD} _{(\X,\Y )} (\A)$. We may assume that  $\alpha$ is finite.
\

We assert that  $\beta$ is finite.  Indeed, by  Theorem \ref{GPAB4},  for any $M \in \GP _{(\X,\Y)} ^\wedge,$ there exists $H \in \A$ with $\resdim _{\X} (H) = \Gpd _{(\X,\Y)} (M) -1.$ Therefore 
$\Gpd _{(\X,\Y)} (M) \leq \alpha+1$ and thus $\beta$ is finite. Since $\alpha\geq 0,$ in order to prove that $\beta\leq\alpha,$ we may assume that 
$\beta>0.$
\

 Fix some $M  \in \GP _{(\X,\Y)} ^\wedge$ such that $\Gpd _{(\X,\Y)} (M) = \beta.$ By  
Theorem \ref{GPAB4} there is an exact sequence $0 \to K \to G \to M \to 0,$ with $G \in \GP _{(\X,\Y)}$ and $\resdim _{\X} ( K ) = \beta-1.$ Since 
$G \in \GP _{(\X, \Y)},$ there is an exact sequence $0 \to G \to Q \to G' \to 0$ with $Q \in \X$ and $G' \in \GP _{(\X,\Y)}.$ So we get a monomorphism 
 $K \to Q$ and then an exact sequence $0 \to K \to Q \to L \to 0.$  Therefore, we have the following exact and 
 commutative diagram
$$\xymatrix{&0\ar[d]  & 0 \ar[d] & &\\
 & K \ar@{=}[r] \ar[d] & K \ar[d] &  & \\
0\ar[r] & G \ar[r]\ar[d] & Q \ar[r]\ar[d] & G' \ar@{=}[d] \ar[r] & 0\\
   0\ar[r] &  M  \ar[r] \ar[d] & L \ar[d] \ar[r] & G' \ar[r] & 0\\
& 0 & \;0. & &}$$
From the above diagramm, we have the exact sequence $ 0\to M \to L \to G' \to 0,$ where $G' \in \GP _{(\X,\Y)}.$ If  $L \in  \GP _{(\X,\Y)}$ then 
$M \in  \GP _{(\X,\Y)}$ (see Corollary \ref{ThickGP}), contradicting that $ \Gpd _{(\X,\Y)} (M) = \beta > 0.$ Then $L \not \in  \GP _{(\X,\Y)}$ and so 
$L \not \in \X$. Finally, from Proposition \ref{AB10} and the exact sequence  $0 \to K \to Q \to L \to 0,$ we conclude that 
$\resdim _{\X} (L ) = \resdim_{\X} (K) +1=\beta;$ proving that $\beta\leq\alpha.$ 
\end{dem}
\vspace{0.2cm}

In the case of the abelian category $\A:=\Modu(R)$ of left $R$-modules, for some ring $R,$ the following result is a generalization of  \cite[Proposition 2.27, Theorem 2.28]{Holm}.

\begin{cor}\label{ThickGP5.55} Let $\A$ be an abelian category with enough projectives. Then, for $\omega:=\Proj(\A),$ the following statements hold true. 
\begin{itemize}
\item[(a)] $\Gpd (M)=\pd(M)$ for any $M\in\Proj\,(\A)^\wedge.$
\item[(b)] $\Gpd(M)=\pd_{\omega}\,(M)=\pd_{\omega^{\wedge}}(M) \quad \forall\, M\in \GP(\A)^{\wedge}.$
\item[(c)] $\GP(\A)\cap \omega^\wedge=\omega$ and  $\GP(\A)^\wedge\cap {}^\perp\omega=\GP(\A)=\GP(\A)^\wedge\cap {}^\perp(\omega^\wedge).$
\item[(d)]  $\FPD(\A)=\FGPD(\A).$
\item[(e)] If $\glGPD(\A)<\infty$ then $(\GP(\A),\omega^\wedge)$ is a hereditary complete cotorsion pair in $\A.$
\end{itemize}
\end{cor}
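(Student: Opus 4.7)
The plan is to recognise the pair $(\X,\Y):=(\Proj(\A),\Proj(\A))$ as a GP-admissible pair in $\A$ with $\omega:=\X\cap\Y=\Proj(\A)$, and then to extract each of the five items from results already proved in the excerpt. GP-admissibility is immediate: $\Proj(\A)$ is closed under finite coproducts and extensions, it is $\Proj(\A)$-epic in $\A$ because $\A$ has enough projectives, we clearly have $\pd_{\Proj(\A)}(\Proj(\A))=0$, and $\Proj(\A)$ is trivially a relative cogenerator in itself.

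Items (a), (b) and (c) then follow directly from Corollary~\ref{ThickGP5} applied to this pair, since $\omega=\Proj(\A)$ is closed under direct summands in $\A$; note that this already gives all three equalities claimed in (c).

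For (d), the inequality $\FPD(\A)\le\FGPD(\A)$ is supplied by Corollary~\ref{ThickGP5}(d). For the reverse direction I would apply Proposition~\ref{ThickGP5.5}. Its hypotheses are met: $\omega$ is closed under direct summands, and the class $\X=\Proj(\A)$ is left thick because any exact sequence $0\to K\to P_1\to P_0\to 0$ with $P_0,P_1$ projective splits, so $K$ is projective. The proposition then gives $\FGPD(\A)\le\resdim_{\Proj(\A)}(\Proj(\A)^\wedge)$, and by Remark~\ref{resdimPD} this equals $\pd(\Proj(\A)^\wedge)=\FPD(\A)$.

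For (e), I would invoke Corollary~\ref{CThickGP2} with $\omega=\Proj(\A)$, whose hypotheses $\add(\omega)=\omega$ and $\id_\omega(\omega)=0$ hold automatically for the projectives. This yields a $\GP(\A)^\wedge$-cotorsion pair $(\GP(\A),\omega^\wedge)$. The assumption $\glGPD(\A)<\infty$ forces $\GP(\A)^\wedge=\A$, so the defining conditions of a $\mathcal{Z}$-cotorsion pair with $\mathcal{Z}=\A$ give literally $\GP(\A)={}^{\perp_1}\omega^\wedge$ and $\omega^\wedge=\GP(\A)^{\perp_1}$ together with the two-sided approximation sequences for every object of $\A$; that is, a complete cotorsion pair. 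Heredity follows from the inclusion $\GP(\A)\subseteq{}^{\perp}(\omega^\wedge)$ provided by Corollary~\ref{CThickGP2}(c), which is exactly $\id_{\GP(\A)}(\omega^\wedge)=0$.

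I do not anticipate any real obstacle: everything unfolds from Corollaries~\ref{ThickGP5}, \ref{CThickGP2} and Proposition~\ref{ThickGP5.5} once the pair $(\Proj(\A),\Proj(\A))$ has been checked to be GP-admissible. The single point that needs a moment of care is the left thickness of $\Proj(\A)$ used in step (d).
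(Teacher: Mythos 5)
Your proposal is correct and follows the paper's own route: the paper likewise applies Corollary~\ref{ThickGP5} and Proposition~\ref{ThickGP5.5} to the GP-admissible pair $(\Proj(\A),\Proj(\A))$ for items (a)--(d), and Corollary~\ref{CThickGP2} for item (e). Your added verifications (left thickness of $\Proj(\A)$, the use of Remark~\ref{resdimPD}, and heredity via $\GP(\A)\subseteq{}^{\perp}(\omega^\wedge)$) are exactly the details the paper leaves implicit.
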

\begin{dem} Consider the GP-admissible pair $(\Proj\,(\A),\Proj\,(\A)).$ Then, the items from (a) to (d) follow from Corollary \ref{ThickGP5} and Proposition \ref{ThickGP5.5}. Finally, the item (e) follows from Corollary \ref{CThickGP2}.
\end{dem}

\begin{pro}\label{WThickGP5.5} Let $(\omega,\Y)$  be a WGP-admissible pair in $\A,$  such that $\omega$ is closed under direct summands and kernels of epimorphisms between its objects. Then, the pair $(\omega, \omega)$ is left Frobenius,  $\omega^\wedge$ is a  thick class in $\A$ and 
$$\WFGPD_{(\omega,\Y)}(\A)\leq\resdim_\omega(\omega^\wedge).$$
\end{pro}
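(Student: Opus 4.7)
The approach splits into three stages.

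First, I would verify that $(\omega,\omega)$ is a left Frobenius pair. WGP-admissibility supplies $\pd_\Y(\omega)=0,$ and with $\omega\subseteq\Y$ this gives $\Ext_\A^i(W,W')=0$ for all $W,W'\in\omega$ and $i\geq 1.$ Any short exact sequence in $\A$ with outer terms in $\omega$ therefore splits, and its middle term lies in $\omega$ by closure under finite coproducts and direct summands; so $\omega$ is closed under extensions, and combined with the given closures under kernels of epimorphisms and under direct summands, $\omega$ is left thick. The self-Ext vanishing yields $\id_\omega(\omega)=0,$ and $\omega$ is trivially a relative cogenerator in itself, so $(\omega,\omega)$ is left Frobenius.

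Second, I would show $\omega^\wedge$ is thick. Proposition \ref{AB12} furnishes right-thickness, which already covers closure of $\omega^\wedge$ under direct summands and extensions, so only closure under kernels of epimorphisms remains. For an exact sequence $0\to K\to A\to B\to 0$ with $A,B\in\omega^\wedge,$ I induct on $b:=\resdim_\omega(B).$ In the base case $b=0,$ I take an $\omega$-resolution of $A$ with rightmost surjection $W_0\twoheadrightarrow A;$ then the composite $W_0\twoheadrightarrow B$ is a surjection between objects of $\omega$ whose kernel $K_0$ lies in $\omega,$ and the snake lemma gives $0\to\ker(W_0\to A)\to K_0\to K\to 0$ with outer terms in $\omega^\wedge,$ so right-thickness forces $K\in\omega^\wedge.$ For $b\geq 1,$ I pick $0\to B'\to W\to B\to 0$ with $W\in\omega$ and $\resdim_\omega(B')=b-1,$ pull back along $A\to B$ to produce $P\in\omega^\wedge$ (via extension closure of $\omega^\wedge$) fitting in $0\to K\to P\to W\to 0,$ and invoke the base case.

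Third, I would prove $\beta:=\WFGPD_{(\omega,\Y)}(\A)\leq\alpha:=\resdim_\omega(\omega^\wedge),$ assuming $\alpha<\infty.$ Suppose for contradiction some $M\in W\GP_{(\omega,\Y)}^\wedge$ attains $\WGpd_{(\omega,\Y)}(M)=\beta>\alpha.$ Theorem \ref{WGPAB4}(a),(c) supplies $0\to K\to G\to M\to 0$ with $G\in W\GP_{(\omega,\Y)}$ and $\resdim_\omega(K)=\beta-1,$ and Remark \ref{cogenera3}(b) (applicable since $\pd_\Y(\omega)=0$) makes $\omega$ a relative cogenerator in $W\GP_{(\omega,\Y)},$ giving $0\to G\to W\to G'\to 0$ with $W\in\omega$ and $G'\in W\GP_{(\omega,\Y)}.$ A $3\times 3$ diagram produces $L:=W/K$ fitting in $0\to K\to W\to L\to 0$ and $0\to M\to L\to G'\to 0.$ Left-thickness (Theorem \ref{thickcat}) of $W\GP_{(\omega,\Y)}$ and the condition $\beta\geq 1$ rule out $L\in W\GP_{(\omega,\Y)},$ whence $L\notin\omega,$ while $L\in\omega^\wedge$ by right-thickness of $\omega^\wedge.$ The long $\Ext$-sequence of $0\to K\to W\to L\to 0,$ together with $\Ext_\A^i(W,-)|_\omega=0$ for $i\geq 1,$ gives $\Ext_\A^i(L,N)\cong\Ext_\A^{i-1}(K,N)$ for $i\geq 2$ and $N\in\omega,$ so $\pd_\omega(L)=\pd_\omega(K)+1=\beta;$ then Theorem \ref{AB8} applied to the Frobenius pair $(\omega,\omega)$ upgrades this to $\resdim_\omega(L)=\beta,$ contradicting $\alpha=\resdim_\omega(\omega^\wedge).$ The main obstacle will be this last identity $\resdim_\omega(L)=\beta:$ I must exploit the Frobenius structure via Theorem \ref{AB8} to equate $\resdim_\omega$ with $\pd_\omega$ on $\omega^\wedge,$ use $\pd_\omega(\omega)=0$ to propagate through the LES, and separately handle the edge case $\beta=1$ by the direct observation that $L\in\omega^\wedge\setminus\omega$ already forces $\resdim_\omega(L)\geq 1.$
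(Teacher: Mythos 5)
Your proposal is correct, and its overall skeleton is exactly the adaptation of Proposition \ref{ThickGP5.5} that the paper has in mind: same Frobenius verification for $(\omega,\omega)$, same construction of $L=W/K$ from the two exact sequences supplied by Theorem \ref{WGPAB4} and the relative cogenerator property of Remark \ref{cogenera3}, same use of left-thickness (Theorem \ref{thickcat}) to rule out $L\in W\GP_{(\omega,\Y)}$. You diverge from the paper in two sub-steps. First, for the thickness of $\omega^\wedge$ the paper simply cites an external result of \cite{BMPS} (Theorem 2.11 there), whereas you derive closure under kernels of epimorphisms directly from right-thickness (Proposition \ref{AB12}) by a pullback reduction to the case where the quotient lies in $\omega$; this is a clean, self-contained substitute, and your base case works because $\omega$ is assumed closed under kernels of epimorphisms between its objects (only note that in the sequence $0\to\ker(W_0\to A)\to K_0\to K\to 0$ it is the first two terms, not the ``outer'' ones, that lie in $\omega^\wedge$). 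Second, for the final dimension shift the paper's recipe is Proposition \ref{AB10} applied to $0\to K\to W\to L\to 0$, exactly as in the proof of Proposition \ref{ThickGP5.5}; you instead compute $\pd_\omega(L)=\pd_\omega(K)+1$ by the long exact sequence and convert between $\pd_\omega$ and $\resdim_\omega$ on $\omega^\wedge$ via Theorem \ref{AB8}, handling $\beta=1$ separately -- this is precisely the technique the paper itself uses later in Theorem \ref{WThickGP6}, so both routes are equally available; yours trades the syzygy characterization for an $\Ext$ computation. One small point to tidy: you begin by assuming the supremum $\beta$ is attained by some $M$, which is not automatic if $\beta=\infty$; either first bound $\beta\leq\alpha+1<\infty$ using Theorem \ref{WGPAB4} (as the paper does for Proposition \ref{ThickGP5.5}), or run your contradiction with an arbitrary $M$ satisfying $\WGpd_{(\omega,\Y)}(M)>\alpha$ and $n:=\WGpd_{(\omega,\Y)}(M)$ in place of $\beta$ -- the argument is unchanged.
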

\begin{dem} Since $\pd_\Y(\omega)=0$ and $\omega$ is closed under finite coproducts in $\A,$  it follows that $\omega$ is closed under 
extensions. Therefore, $(\omega, \omega)$ is a left Frobenius pair in $\A.$ Moreover, from \cite[Theorem 2.11]{BMPS}, we have that 
$\omega^\wedge$ is a thick class in $\A$. By using  Theorem \ref{WGPAB4}, Theorem \ref{thickcat} and Proposition \ref{AB10}, we can adapt the proof given in Proposition \ref{ThickGP5.5}  to obtain a proof of this result.
\end{dem}
\vspace{0.2cm}

The following result generalizes \cite[Theorem 3.10, Proposition 3.11 and Lemma 5.1]{ChZ}.

\begin{teo}\label{WThickGP5} Let $\Y$ be a class of objects in $\A$  such that 
$\omega:=\Proj\,(\A)\subseteq \Y.$ Then,  the following statements hold true.
\begin{itemize}
\item[(a)] $W\Gpd_{(\omega,\Y)}(M)=\pd(M)$ for any $M\in\Proj\,(\A)^\wedge.$
\item[(b)] $W\Gpd_{(\omega,\Y)}(M)=\pd_{\omega}\,(M)=\pd_{\omega^{\wedge}}(M) \quad \forall\, M\in W\GP_{(\omega,\Y)}^{\wedge}.$
\item[(c)] $W\GP_{(\omega,\Y)}\cap\omega^\wedge=\omega$ and $W\GP_{(\omega,\Y)}^\wedge\cap {}^\perp\omega=W\GP_{(\omega,\Y)}=W\GP_{(\omega,\Y)}^\wedge\cap {}^\perp(\omega^\wedge).$
\item[(d)] $\FPD(\A)=\WFGPD_{(\omega,\Y)}(\A).$
\item[(e)] Let $\Y$ be closed under direct summands in $\A.$ Then 
 \begin{itemize}
  \item[(e1)] $\WGpd_{(\omega,\Y)}(M)=\pd_\Y(M)=\pd_{\Y^\wedge}(M)$ for any $M\in W\GP_{(\omega,\Y)}^\wedge;$
 \item[(e2)] $W\GP_{(\omega,\Y)}^\wedge\cap {}^\perp\Y=W\GP_{(\omega,\Y)}=W\GP_{(\omega,\Y)}^\wedge\cap {}^\perp(\Y^\wedge);$ 
 \item[(e3)]  if $\A$ has enough projectives, then 
 \begin{center} $W\GP_{(\omega,\Y)}=\GP_{(\omega,\Y)},$  $\omega=\Y\cap \GP_{(\omega,\Y)}$ and\end{center} 
 $$\FGPD_{(\omega,\Y)}(\A)=\FPD(\A)=\WFGPD_{(\omega,\Y)}(\A).$$
 \end{itemize}
  \item[(f)] If $\glWGPD_{(\omega,\Y)}(\A)<\infty$ then $(W\GP_{(\omega,\Y)},\omega^\wedge)$ is a hereditary complete cotorsion pair in $\A.$
\end{itemize}
\end{teo}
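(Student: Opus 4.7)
The first task is to check that $(\omega,\Y)$ is WGP-admissible and that $\omega=\Proj(\A)$ is closed under direct summands and under kernels of epimorphisms between its objects (the latter because any short exact sequence ending in a projective splits). With these closure properties in hand, Proposition \ref{WThickGP9} becomes fully available: parts (b) and (c) of the theorem are then Proposition \ref{WThickGP9}(b1) and (b3) verbatim, while (e1) and (e2) are (a1) and (a2) of the same proposition (which is precisely why the hypothesis ``$\Y$ closed under direct summands'' is added in (e)). For (a), the inclusion $\omega\subseteq W\GP_{(\omega,\Y)}$ together with Proposition \ref{WThickGP9}(b2) give $\WGpd_{(\omega,\Y)}(M)=\resdim_\omega(M)$ on $\omega^\wedge=\Proj(\A)^\wedge$, and Remark \ref{resdimPD} identifies $\resdim_\omega(M)$ with $\pd(M)$.

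For (d), I would sandwich the two finitistic dimensions. Combining $\Proj(\A)^\wedge\subseteq W\GP_{(\omega,\Y)}^\wedge$ with (a) gives $\FPD(\A)\leq \WFGPD_{(\omega,\Y)}(\A)$. The reverse inequality follows from Proposition \ref{WThickGP5.5} (whose hypotheses hold for $\omega=\Proj(\A)$), yielding $\WFGPD_{(\omega,\Y)}(\A)\leq\resdim_\omega(\omega^\wedge)$, and Remark \ref{resdimPD} rewrites the right-hand side as $\FPD(\A)$. For (e3), once $\A$ has enough projectives, $\omega$ is $\omega$-epic in $\A$, so $(\omega,\Y)$ is weak GP-admissible and Proposition \ref{GP2} delivers $W\GP_{(\omega,\Y)}=\GP_{(\omega,\Y)}$. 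The equality $\omega=\Y\cap \GP_{(\omega,\Y)}$ then comes from Proposition \ref{cogenera}(b) applied with $\X=\omega$ (using that $\omega$ is closed under extensions and is a relative cogenerator in itself since $0\in\omega$), and the triple identity $\FGPD_{(\omega,\Y)}(\A)=\WFGPD_{(\omega,\Y)}(\A)=\FPD(\A)$ is then immediate from the definitions and (d).

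The main obstacle is (f). The hypothesis $\glWGPD_{(\omega,\Y)}(\A)<\infty$ is equivalent to $W\GP_{(\omega,\Y)}^\wedge=\A$, and under this (c) collapses to $W\GP_{(\omega,\Y)}={}^\perp\omega={}^\perp(\omega^\wedge)$, which in particular gives the inclusion $W\GP_{(\omega,\Y)}\subseteq {}^{\perp_1}(\omega^\wedge)$. The subtle step is to upgrade this to an equality. Given $M\in{}^{\perp_1}(\omega^\wedge)$, Theorem \ref{WGPAB4}(a) furnishes an exact sequence $0\to K\to G\to M\to 0$ with $G\in W\GP_{(\omega,\Y)}$ and $K\in\omega^\wedge$; the vanishing $\Ext^1_\A(M,K)=0$ splits this sequence, so $M$ is a direct summand of $G$ and thus belongs to $W\GP_{(\omega,\Y)}$ by Theorem \ref{thickcat}. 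A symmetric argument using Theorem \ref{WGPAB4}(b) together with the thickness of $\omega^\wedge$ granted by Proposition \ref{WThickGP5.5} yields the dual equality $(W\GP_{(\omega,\Y)})^{\perp_1}=\omega^\wedge$. The same two sequences are, by construction, special $W\GP_{(\omega,\Y)}$-precovers and special $\omega^\wedge$-preenvelopes, so the cotorsion pair is complete; and Corollary \ref{GP3}(a) together with $\omega^\wedge\subseteq\Y^\wedge$ forces $\id_{W\GP_{(\omega,\Y)}}(\omega^\wedge)=0$, so it is hereditary.
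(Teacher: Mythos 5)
Your proposal is correct, and for items (a)--(e) it follows essentially the same route as the paper: both reduce (a)--(c) and (e1)--(e2) to Proposition \ref{WThickGP9} via the observation that $(\Proj(\A),\Y)$ is WGP-admissible with $\Proj(\A)^\wedge\subseteq W\GP_{(\omega,\Y)}^\wedge$, and both prove (d) by the same sandwich $\FPD(\A)\leq \WFGPD_{(\omega,\Y)}(\A)\leq\resdim_\omega(\omega^\wedge)=\FPD(\A)$ using Proposition \ref{WThickGP5.5} and Remark \ref{resdimPD}.

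Where you genuinely diverge is in the tail of (e3) and in (f). For $\omega=\Y\cap\GP_{(\omega,\Y)}$ the paper invokes Theorem \ref{CThickGP} (c), whereas you apply Proposition \ref{cogenera} (b) with $\X=\omega$; your route is arguably tidier, since Theorem \ref{CThickGP} formally presupposes a GP-admissible pair (hence $\Y$ closed under finite coproducts, which is not among the hypotheses of the theorem), while Proposition \ref{cogenera} only needs the properties of $\Proj(\A)$ you checked. For (f) the paper is short: it notes $(W\GP_{(\omega,\Y)},\omega)$ is a left Frobenius pair (Corollary \ref{WThickGP2}) and then cites \cite[Proposition 2.14 and Theorem 3.6]{BMPS} (equivalently, the argument of Proposition \ref{cotorpair} (c)) to get the hereditary complete cotorsion pair once $W\GP_{(\omega,\Y)}^\wedge=\A$. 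You instead verify the cotorsion-pair axioms by hand: the splitting trick with the approximation sequences of Theorem \ref{WGPAB4} upgrades the inclusions coming from (c) to the equalities $W\GP_{(\omega,\Y)}={}^{\perp_1}(\omega^\wedge)$ and $(W\GP_{(\omega,\Y)})^{\perp_1}=\omega^\wedge$ (the latter using that $\omega^\wedge$ is thick by Proposition \ref{WThickGP5.5}), the same sequences give completeness, and heredity follows from $\pd_{\omega^\wedge}(W\GP_{(\omega,\Y)})=0$ as in Corollary \ref{GP3} (a). This is exactly the content of the external Frobenius-pair machinery, so what your argument buys is self-containedness within the present setting, at the cost of some length; the paper's citation is the quicker route. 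All the auxiliary hypotheses you use (e.g. $\Proj(\A)$ closed under direct summands and kernels of epimorphisms, $\WGpd_{(\omega,\Y)}$ finite everywhere when $\glWGPD_{(\omega,\Y)}(\A)<\infty$) are checked correctly, so I see no gap.
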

\begin{dem}  Note that the pair $(\Proj\,(\A),\Y)$ is  WGP-admissible and $\Proj(\A)$ is a left thick class in $\A.$
Since $\Proj\,(\A)\subseteq W\GP_{(\Proj\,(\A),\Y)},$  it follows that 
$$\WGpd_{(\Proj\,(\A),\Y)}(M)\leq\resdim_{\Proj\,(\A)}(M),$$ for any $M\in\A.$ In particular $\Proj\,(\A)^\wedge\subseteq W\GP_{(\Proj\,(\A),\Y)}^\wedge.$
Thus, by Remark \ref{resdimPD} and Proposition \ref{WThickGP9} (b), we obtain (a), (b), (c)  and 
{\small $\FPD(\A)\leq\WFGPD_{(\Proj\,(\A),\Y)}(\A).$}  Moreover, by Proposition \ref{WThickGP5.5}  we have
\begin{center} $\WFGPD_{(\Proj\,(\A),\Y)}(\A)\leq \FPD(\A)$\end{center}
 and thus (d)  holds true. 
 \
 
 Let $\Y$ be closed under  direct summands in $\A.$  Then, by Proposition \ref{WThickGP9} (a), we get (e1) and (e2). Assume now that 
 $\A$ has enough projectives. Then, the pair $(\Proj\,(\A),\Y)$ is  weak GP-admissible, and then by Proposition \ref{GP2} we conclude 
 that $W\GP_{(\omega,\Y)}=\GP_{(\omega,\Y)}.$ Hence by (d), the equalities $\FGPD_{(\omega,\Y)}(\A)=\FPD(\A)=\WFGPD_{(\omega,\Y)}(\A)$ hold true. Finally, from Theorem \ref{CThickGP} (c), we have $\omega=\Y\cap \GP_{(\omega,\Y)};$ proving 
 (e3).
 \
 
 Let $\glWGPD_{(\omega,\Y)}(\A)<\infty.$ In particular, it follows that $W\GP_{(\omega,\Y)}^\wedge=\A.$ On the other hand, from 
 Corollary \ref{WThickGP2},  we get that $(\GP_{(\omega,\Y)},\omega)$ is left Frobenius. Then, the item (f) follows from 
 \cite[Proposition 2.14 and Theorem 3.6]{BMPS}.
\end{dem}

\begin{teo}\label{ThickGP5.6} Let $(\X,\Y)$  be a GP-admissible pair in $\A$  such that $\X\cap\Y=\Proj(\A),$  $\X$ be left thick and 
$\X\subseteq\Proj(\A)^\wedge.$ Then, the following statements hold true.
\begin{itemize}
\item[(a)] $\FGPD_{(\X,\Y)}(\A)=\resdim_\X(\X^\wedge)=\resdim_\X(\Proj(\A)^\wedge)=\FPD(\A).$
\item[(b)] If $\A$ has enough projectives, then $\FGPD_{(\Proj(\A),\Y)}(\A)=\FGPD_{(\X,\Y)}(\A).$
\end{itemize}
\end{teo}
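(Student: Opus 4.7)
The plan for part (a) is to establish the circular chain of inequalities
\[\FPD(\A)\leq\FGPD_{(\X,\Y)}(\A)\leq\resdim_\X(\X^\wedge)\leq\resdim_\X(\Proj(\A)^\wedge)\leq\FPD(\A),\]
which forces all four quantities to coincide. The first inequality is exactly Corollary \ref{ThickGP5} (d). For the second, I would invoke Proposition \ref{ThickGP5.5}: its hypotheses are satisfied because $\omega=\Proj(\A)$ is closed under direct summands in $\A$ and $\X$ is left thick by assumption. For the fourth, I would use the inclusion $\Proj(\A)\subseteq\X$ (which is immediate from $\Proj(\A)=\X\cap\Y\subseteq\X$) together with Remark \ref{resdimPD} to deduce, for any $M\in\Proj(\A)^\wedge$, the estimate $\resdim_\X(M)\leq\resdim_{\Proj(\A)}(M)=\pd(M)\leq\FPD(\A)$.

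The delicate step, and the main obstacle I anticipate, is the third inequality $\resdim_\X(\X^\wedge)\leq\resdim_\X(\Proj(\A)^\wedge)$. My plan is to prove the stronger containment $\X^\wedge\subseteq\Proj(\A)^\wedge$ and deduce the inequality from it. Since $\X\subseteq\Proj(\A)^\wedge$ by hypothesis, given $M\in\X^\wedge$ with an $\X$-resolution $0\to X_n\to\cdots\to X_0\to M\to 0$, every $X_i$ has finite projective dimension; breaking this long exact sequence into its short exact pieces and iterating the standard behaviour of projective dimension on short exact sequences (each time increasing the bound by at most one) yields $\pd(M)<\infty$, i.e. $M\in\Proj(\A)^\wedge$. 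Combining this with the other three links closes the cycle and produces the four equalities in (a).

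For part (b), under the additional hypothesis that $\A$ has enough projectives, the strategy is to apply (a) to the pair $(\Proj(\A),\Y)$ after verifying that all its hypotheses hold. The pair is GP-admissible because $\pd_\Y(\Proj(\A))=0$ (from $\Proj(\A)\subseteq\X$ and $\pd_\Y(\X)=0$), enough projectives makes $\Proj(\A)$ into a $\Proj(\A)$-epic class, $\Proj(\A)$ is trivially closed under finite coproducts and under extensions (the latter splitting), and $\Proj(\A)\cap\Y=\Proj(\A)$ (using $\Proj(\A)\subseteq\Y$, which follows from $\Proj(\A)=\X\cap\Y$) is tautologically a relative cogenerator in itself. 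Moreover $\Proj(\A)$ is left thick and trivially contained in $\Proj(\A)^\wedge$. Thus (a) applied to $(\Proj(\A),\Y)$ delivers $\FGPD_{(\Proj(\A),\Y)}(\A)=\FPD(\A)$, while (a) applied to $(\X,\Y)$ yields $\FPD(\A)=\FGPD_{(\X,\Y)}(\A)$; chaining these equalities gives (b).
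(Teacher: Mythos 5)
Your argument is correct and is essentially the paper's proof: the same circular chain
$\FPD(\A)\leq\FGPD_{(\X,\Y)}(\A)\leq\resdim_\X(\X^\wedge)\leq\resdim_\X(\Proj(\A)^\wedge)\leq\FPD(\A)$,
obtained from Corollary \ref{ThickGP5} (d), Proposition \ref{ThickGP5.5}, the containment $\X^\wedge\subseteq\Proj(\A)^\wedge$ together with Remark \ref{resdimPD}, and in (b) the same verification that $(\Proj(\A),\Y)$ satisfies the hypotheses of (a). The one step to phrase carefully is your proof of $\X^\wedge\subseteq\Proj(\A)^\wedge$: passing from ``$\pd(M)<\infty$'' to ``$M\in\Proj(\A)^\wedge$'' is not automatic in an arbitrary abelian category; here it is harmless because the hypotheses of (a) already force enough projectives (any $A\in\A$ admits an epimorphism from some $X\in\X\subseteq\Proj(\A)^\wedge$, which in turn admits an epimorphism from a projective), or one can bypass $\pd$ entirely by noting that $\Proj(\A)^\wedge$ is right thick, e.g. by Proposition \ref{AB12} applied to the left Frobenius pair $(\Proj(\A),\Proj(\A))$, so the cokernels in your short exact pieces stay in $\Proj(\A)^\wedge$.
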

\begin{dem} (a)  Note that $\X^\wedge\subseteq\Proj(\A)^\wedge,$ since $\X\subseteq\Proj(\A)^\wedge.$ Hence, by Proposition \ref{ThickGP5.5} it follows that
$$\FGPD_{(\X,\Y)}(\A)\leq\resdim_\X(\X^\wedge)\leq\resdim_\X(\Proj(\A)^\wedge).$$
On the other hand, the inclusion  $\Proj(\A)\subseteq\X$ and Remark \ref{resdimPD} imply that 
$\resdim_\X(\Proj(\A)^\wedge)\leq\resdim_{\Proj(\A)}(\Proj(\A)^\wedge)=\FPD(\A).$ Moreover, by Corollary \ref{ThickGP5} (d)  $\FPD(\A)\leq \FGPD_{(\X,\Y)}(\A)$ and thus 
$\FGPD_{(\X,\Y)}(\A)=\resdim_\X(\X^\wedge)=\resdim_\X(\Proj(\A)^\wedge)=\FPD(\A).$
\

(b) Since $(\X,\Y)$  is GP-admissible pair in $\A,$  $\X\cap\Y=\Proj(\A)$ and $\A$ has enough projectives, it follows that 
$(\Proj(\A),\Y)$  is GP-admissible and satisfies the same hypothesis as the pair $(\X,\Y)$ does. Then, by (a) we get that  
$\FGPD_{(\Proj(\A),\Y)}(\A)=\FPD(\A).$ 
\end{dem}

\begin{cor}\label{ThickGP8}  For any ring $R$ and $\omega:=\Proj\,(R),$ the following statements hold true.
\begin{itemize}
\item[(a)] $\DP(R)=W\GP_{(\omega,\Flat\,(R))}(\Modu\,R)=\GP_{(\omega,\Flat\,(R)^\wedge)}(\Modu\,R).$
\item[(b)] $\Gpd (M)=\pd(M)=\Dpd(M)$ for any $M\in\Proj\,(R)^\wedge.$
\item[(c)]  For any $M\in \DP(R)^{\wedge},$ we have 
\begin{center}$\Dpd(M)=\pd_{\omega}\,(M)=\pd_{\omega^{\wedge}}(M) =\pd_{\Flat(R)}(M)=\pd_{\Flat(R)^\wedge}(M).$\end{center}
\item[(d)] $\DP(R)\cap\omega^\wedge=\omega=\DP(R)\cap\Flat(R)=\DP(R)\cap\Flat(R)^\wedge.$
\item[(e)] $\DP(R)^\wedge\cap {}^\perp\omega=\DP(R)=\DP(R)^\wedge\cap {}^\perp(\omega^\wedge).$
\item[(f)] $\DP(R)^\wedge\cap {}^\perp\Flat(R)=\DP(R)=\DP(R)^\wedge\cap {}^\perp(\Flat(R)^\wedge);$
\item[(g)]  $\FDPD(R)=\FPD(R)=\FGPD(R)=\WFGPD_{(\Proj\,(R),\Flat\,(R))}(R).$
\end{itemize}
\end{cor}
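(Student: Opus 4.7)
My plan is to view the entire corollary as a specialization of the general $(\X,\Y)$-Gorenstein machinery to the pair $(\omega,\Flat(R))$ in $\A:=\Modu(R)$. First I would verify the structural facts about this pair: it is GP-admissible (projectives are flat, so $\pd_{\Flat(R)}(\omega)=0$; $\omega$ is $\omega$-epic because $R$ has enough projectives; both $\omega$ and $\Flat(R)$ are closed under finite coproducts; $\omega$ is closed under extensions; and $\omega\cap\Flat(R)=\omega$ is trivially a relative cogenerator in $\omega$). Moreover $\omega$ is left thick and closed under direct summands, $\Flat(R)$ is closed under direct summands, $\Flat(R)^\wedge$ is closed under direct summands (finite flat dimension is inherited by summands), and $\omega\cap\Flat(R)^\wedge=\omega$.

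For (a), the definition gives $\DP(R)=\GP_{(\omega,\Flat(R))}$. Since the pair is weak GP-admissible, Proposition \ref{GP2} identifies this with $W\GP_{(\omega,\Flat(R))}$. Remark \ref{WGorro} then yields $W\GP_{(\omega,\Flat(R))}=W\GP_{(\omega,\Flat(R)^\wedge)}$, and since $(\omega,\Flat(R)^\wedge)$ is again weak GP-admissible (using the dual of Lemma \ref{AB1} to conclude $\pd_{\Flat(R)^\wedge}(\omega)=0$), a second application of Proposition \ref{GP2} collapses the right-hand side to $\GP_{(\omega,\Flat(R)^\wedge)}$.

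Items (b)--(f) I would obtain by reading off specializations of previous results. Specifically, (b) combines $\Gpd(M)=\pd(M)$ from Corollary \ref{ThickGP5.55}(a) with $\Dpd(M)=\pd(M)$ from Corollary \ref{ThickGP5}(a), both of which apply since $\omega=\Proj(R)$. Item (c) follows from Corollary \ref{ThickGP9}(a1) and (b1) applied to $(\omega,\Flat(R))$, using that both $\Flat(R)$ and $\omega$ are closed under direct summands. For (d), the first equality is Corollary \ref{ThickGP9}(b3), the second is Theorem \ref{CThickGP}(c), and the third is obtained from Theorem \ref{CThickGP}(d) after the three direct-summand hypotheses on $\Flat(R)^\wedge$, $\omega$ and $\omega\cap\Flat(R)^\wedge$ have been verified in paragraph one. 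Item (e) is Corollary \ref{ThickGP9}(b3) verbatim, and (f) is Corollary \ref{ThickGP9}(a2) with $\Y=\Flat(R)$.

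For (g), the equality $\FPD(R)=\FGPD(R)$ is Corollary \ref{ThickGP5.55}(d). Since $\omega=\Proj(R)$ is left thick and obviously contained in $\Proj(R)^\wedge$, Theorem \ref{ThickGP5.6}(a) gives $\FDPD(R)=\FGPD_{(\omega,\Flat(R))}(R)=\FPD(R)$, while the remaining equality $\WFGPD_{(\omega,\Flat(R))}(R)=\FPD(R)$ is Theorem \ref{WThickGP5}(d), applicable because $\omega\subseteq\Flat(R)$. I expect the principal obstacle to be bookkeeping rather than mathematics: each invocation of an earlier result requires a slightly different subset of closure conditions on $\omega$, $\Flat(R)$, $\Flat(R)^\wedge$ and $\omega\cap\Flat(R)^\wedge$, and one must keep straight which statements demand $\omega$ versus $\Y$ closed under summands. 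The only substantive step is the initial verification that $(\omega,\Flat(R))$ is indeed GP-admissible with $\omega$ left thick; once this is in hand the rest is a careful matching of hypotheses.
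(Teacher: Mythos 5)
Your proposal is correct and follows essentially the same route as the paper: both obtain the corollary by specializing the general machinery (Proposition \ref{GP2}, Theorem \ref{CThickGP}, Corollaries \ref{ThickGP9} and \ref{ThickGP5.55}, Theorems \ref{WThickGP5} and \ref{ThickGP5.6}) to the pair $(\Proj(R),\Flat(R))$ in $\Modu(R)$. The only cosmetic differences are that the paper cites \cite[Proposition 8.4.19]{EJ} for the closure of $\Flat(R)^\wedge$ under direct summands where you argue it directly via flat dimension, and that for part (a) you go through Proposition \ref{GP2} and Remark \ref{WGorro} instead of Theorem \ref{CThickGP}(d); both variants are valid.
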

\begin{dem} By \cite[Proposition 8.4.19]{EJ},  we get that the class $\Flat(R)^\wedge$ is closed under direct summands in $\Modu\,R.$ Then, the result  follows directly from Theorem \ref{WThickGP5}, Theorem \ref{CThickGP} and Corollary \ref{ThickGP5.55}, by taking $\Y:=\Flat\,(R)$ and $\X:=\Proj\,(R)$  in the abelian category 
$\Modu\,R.$ 
\end{dem}

\begin{teo}\label{WThickGP6}  Let $(\omega,\Y)$  be a WGP-admissible pair in $\A,$ with $\omega$ closed under direct summands in 
$\A.$ Then,
$$\WFGPD_{(\omega,\Y)}(\A)=\resdim_\omega(\omega^\wedge)=\pd_\omega(\omega^\wedge)=\pd_{\omega^\wedge}(\omega^\wedge).$$
\end{teo}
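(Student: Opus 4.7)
The plan is to bootstrap from Proposition \ref{WThickGP9}(b), which already identifies all four dimensions being compared when the argument is restricted to $\omega^\wedge$ rather than $W\GP_{(\omega,\Y)}^\wedge$. Specifically, taking supremum of the identity $\WGpd_{(\omega,\Y)}(M)=\resdim_\omega(M)=\pd_\omega(M)=\pd_{\omega^\wedge}(M)$ over $M\in\omega^\wedge$, one obtains at once
$$\WGpd_{(\omega,\Y)}(\omega^\wedge)=\resdim_\omega(\omega^\wedge)=\pd_\omega(\omega^\wedge)=\pd_{\omega^\wedge}(\omega^\wedge).$$
Since $\omega\subseteq W\GP_{(\omega,\Y)}$ gives $\omega^\wedge\subseteq W\GP_{(\omega,\Y)}^\wedge,$ the inequality $\WGpd_{(\omega,\Y)}(\omega^\wedge)\leq\WFGPD_{(\omega,\Y)}(\A)$ is immediate from the definition of the finitistic dimension. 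So the only thing left to prove is the reverse inequality $\WFGPD_{(\omega,\Y)}(\A)\leq\pd_\omega(\omega^\wedge).$

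The key step is to control an arbitrary $M\in W\GP_{(\omega,\Y)}^\wedge$ by an object of $\omega^\wedge$ via an $\omega^\wedge$-preenvelope. Concretely, Theorem \ref{WGPAB4}(b), applied to $M$ with finite weak Gorenstein projective dimension, produces an exact sequence $0\to M\to H\to G\to 0$ with $H\in\omega^\wedge$ and $G\in W\GP_{(\omega,\Y)}.$ Since $G\in W\GP_{(\omega,\Y)}\subseteq{}^\perp\Y\subseteq{}^\perp\omega,$ applying $\Hom_\A(-,W)$ for any $W\in\omega$ to this exact sequence yields isomorphisms $\Ext^i_\A(H,W)\cong\Ext^i_\A(M,W)$ for all $i\geq 1,$ so $\pd_\omega(M)\leq\pd_\omega(H)\leq\pd_\omega(\omega^\wedge).$ Now invoking Proposition \ref{WThickGP9}(b1), which says $\WGpd_{(\omega,\Y)}(M)=\pd_\omega(M)$ for every $M\in W\GP_{(\omega,\Y)}^\wedge,$ gives $\WGpd_{(\omega,\Y)}(M)\leq\pd_\omega(\omega^\wedge),$ and taking supremum over $M$ finishes the reverse inequality.

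The main obstacle, which is already resolved by the preceding machinery, is knowing that the weak Gorenstein projective dimension agrees with $\pd_\omega$ on the whole class $W\GP_{(\omega,\Y)}^\wedge$; this is exactly Proposition \ref{WThickGP9}(b1), itself resting on the fact (from Corollary \ref{WThickGP2}) that under the given hypothesis the pair $(W\GP_{(\omega,\Y)},\omega)$ is left Frobenius, so Theorem \ref{AB8} applies. Once those pieces are in place, the argument is really just a short orthogonality computation combined with the $\omega^\wedge$-preenvelope of Theorem \ref{WGPAB4}(b). Chaining the equalities and inequalities proved above delivers
$$\WFGPD_{(\omega,\Y)}(\A)=\WGpd_{(\omega,\Y)}(\omega^\wedge)=\resdim_\omega(\omega^\wedge)=\pd_\omega(\omega^\wedge)=\pd_{\omega^\wedge}(\omega^\wedge),$$
as required.
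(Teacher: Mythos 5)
Your proposal is correct: every step checks out under the stated hypotheses (WGP-admissibility gives the hypotheses of Theorem \ref{WGPAB4}, and $\omega$ closed under direct summands gives Proposition \ref{WThickGP9}(b)), and the orthogonality computation is sound since $G\in{}^{\perp}(\omega^{\wedge})\subseteq{}^{\perp}\omega$ kills the outer Ext terms, so $\pd_\omega(M)=\pd_\omega(H)\leq\pd_\omega(\omega^{\wedge})$. However, your route for the key inequality differs from the paper's. The paper also gets $\resdim_\omega(\omega^{\wedge})\leq\WFGPD_{(\omega,\Y)}(\A)$ from Proposition \ref{WThickGP9}(b2), but for the reverse direction it argues by \emph{realizing} the finitistic dimension inside $\omega^{\wedge}$: it first bootstraps finiteness of $\WFGPD_{(\omega,\Y)}(\A)$ from the precover sequence of Theorem \ref{WGPAB4}(a), picks $M$ attaining the supremum, takes the precover sequence $0\to K\to G\to M\to 0$ with $\resdim_\omega(K)=\beta-1$, embeds $G$ into some $W\in\omega$ via the relative cogenerator property, and then shows by an explicit Ext dimension shift (using $\id_\omega(\omega)=0$) that the resulting $L\in\omega^{\wedge}$ has $\resdim_\omega(L)=\beta$. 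You instead bound pointwise: for each $M\in W\GP_{(\omega,\Y)}^{\wedge}$ you use the $\omega^{\wedge}$-preenvelope sequence of Theorem \ref{WGPAB4}(b) together with the identification $\WGpd_{(\omega,\Y)}=\pd_\omega$ on $W\GP_{(\omega,\Y)}^{\wedge}$ from Proposition \ref{WThickGP9}(b1). This is shorter and avoids both the finiteness case analysis and the dimension-shifting computation, at the price of leaning more heavily on Proposition \ref{WThickGP9}(b1) (hence on the left Frobenius pair $(W\GP_{(\omega,\Y)},\omega)$ and Theorem \ref{AB8}); the paper's construction has the small extra payoff of exhibiting an object of $\omega^{\wedge}$ whose $\omega$-resolution dimension actually attains the finitistic value.
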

\begin{dem} Let $\alpha:=\resdim_\omega(\omega^\wedge)$ and $\beta:=\WFGPD_{(\omega,\Y)}(\A).$ Since $\omega^\wedge\subseteq W\GP_{(\omega,\Y)}^\wedge,$ it follows from Proposition \ref{WThickGP9} (b2) that $\alpha\leq\beta$ and $\alpha=\pd_\omega(\omega^\wedge).$
\

We prove that $\beta\leq\alpha.$ In order to do that, we can assume that $\alpha<\infty.$ We assert that $\beta<\infty.$ Indeed, let 
$C\in W\GP_{(\omega,\Y)}^\wedge$ and $n:=\WGpd_{(\omega,\Y)}(C).$ Then, by Theorem \ref{WGPAB4} there is an exact sequence $0\to H \to T\to C\to 0,$ where 
$\resdim_\omega(H)=n-1.$ Hence  $\WGpd_{(\omega, \Y)}(C)=\resdim_\omega(H)+1\leq\alpha+1<\infty,$ proving that $\beta<\infty.$
\

To conclude the proof of $\beta\leq\alpha,$ it is enough to see the existence of some $L\in\omega^\wedge$ such that $\resdim_\omega(L)=\beta.$ Indeed, since $\beta<\infty,$ there is some $M\in W\GP_{(\omega,\Y)}^\wedge$ with $\WGpd_{(\omega,\Y)}(M)=\beta.$ Then, by Theorem \ref{WGPAB4} there is an exact sequence 
$0\to K\to G\to C\to 0,$ where $G\in W\GP_{(\omega,\Y)}$ and $\resdim_\omega(K)=\beta-1.$ Furthermore, since $\omega$ is a relative cogenerator in $W\GP_{(\omega,\Y)},$  there is an exact 
sequence $0\to G\to W\to G'\to 0$ with $G'\in W\GP_{(\omega,\Y)},$ $W\in\omega$ and $\id_\omega(\omega)=0.$ Then, we get an exact sequence 
$\eta:\;0\to K\to W\to L\to 0.$ Since $\resdim_\omega(K)=\beta-1,$ by $\eta$ we conclude that $\resdim_\omega(L)\leq\beta<\infty.$ Hence, Corollary \ref{ThickGP9} (b2) give us that
\begin{center}
$\pd_\omega(K)=\resdim_\omega(K)=\beta-1\quad\text{and}\quad
\pd_\omega(L)=\resdim_\omega(L).$ 
\end{center}
Applying the functor $\Hom_\A(-,W')$ to the exact sequence $\eta,$ with $W'\in\omega,$ we get the exact sequence 
$$\Ext^i_\A(W,W')\to\Ext^i_\A(K,W')\to\Ext^{i+1}_\A(L,W')\to\Ext^{i+1}_\A(W,W').$$
Since  $\id_\omega(\omega)=0,$ it follows that $\Ext^i_\A(K,W')\simeq\Ext^{i+1}_\A(L,W')$  for any $W'\in\omega$ and $i\geq 1.$ Therefore 
$\resdim_\omega(L)=\pd_\omega(L)=\pd_\omega(K)+1=\beta.$
\end{dem}

\begin{cor}\label{ThickGP6}  Let $(\X,\Y)$  be a GP-admissible pair in $\A,$ and let $\omega:=\X\cap\Y$ be closed under direct summands in 
$\A.$ Then
\begin{center}$\WFGPD_{(\omega,\Y)}(\A)=\FGPD_{(\X,\Y)}(\A)=\resdim_\omega(\omega^\wedge)=\pd_\omega(\omega^\wedge)=\pd_{\omega^\wedge}(\omega^\wedge),$\end{center}
\begin{center}$\WFGID_{(\X,\omega)}(\A)=\coresdim_\omega(\omega^\vee)=\id_\omega(\omega^\vee)=\id_{\omega^\vee}(\omega^\vee).$\end{center}
\end{cor}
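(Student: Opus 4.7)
The plan is to reduce the statement directly to Theorem \ref{WThickGP6} and its dual, exploiting the identification $\GP_{(\X,\Y)} = W\GP_{(\omega,\Y)}$ proven in Theorem \ref{iguales}. Since Theorem \ref{WThickGP6} already delivers the three right-most equalities of the projective chain for any WGP-admissible pair with $\omega$ closed under direct summands, the only new content here is to bridge $\FGPD_{(\X,\Y)}(\A)$ and $\WFGPD_{(\omega,\Y)}(\A).$

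First I would observe, via Remark \ref{debiladmisiblegrueso}, that the GP-admissibility of $(\X,\Y)$ forces $(\omega,\Y)$ to be WGP-admissible, so the hypotheses of Theorem \ref{WThickGP6} are met with $\omega$ closed under direct summands. This immediately yields
\[
\WFGPD_{(\omega,\Y)}(\A) = \resdim_\omega(\omega^\wedge) = \pd_\omega(\omega^\wedge) = \pd_{\omega^\wedge}(\omega^\wedge).
\]
Next I would invoke Theorem \ref{iguales} to identify $\GP_{(\X,\Y)} = W\GP_{(\omega,\Y)};$ taking resolution dimensions on both sides gives $\Gpd_{(\X,\Y)}(M) = \WGpd_{(\omega,\Y)}(M)$ for every $M \in \A,$ and in particular $\GP_{(\X,\Y)}^\wedge = W\GP_{(\omega,\Y)}^\wedge.$ Taking the supremum over this common class produces $\FGPD_{(\X,\Y)}(\A) = \WFGPD_{(\omega,\Y)}(\A),$ and the first chain of equalities is complete.

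For the second chain, I would pass to the opposite category. The pair $(\X,\omega)$ is WGI-admissible by the dual of Remark \ref{debiladmisiblegrueso} (equivalently, $(\omega^{op},\X^{op})$ is WGP-admissible in $\A^{op}$), and $\omega$ is still closed under direct summands. Applying the dual of Theorem \ref{WThickGP6} to $(\X,\omega)$ directly gives
\[
\WFGID_{(\X,\omega)}(\A) = \coresdim_\omega(\omega^\vee) = \id_\omega(\omega^\vee) = \id_{\omega^\vee}(\omega^\vee),
\]
finishing the corollary. Nothing in the argument is hard; the only point requiring a moment of care is noticing that the equalities of Theorem \ref{iguales} transfer to the finitistic setting because they hold object-wise, so the main ``obstacle'' is simply bookkeeping the identification of the relevant classes before the dimensional supremum is taken.
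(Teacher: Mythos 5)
Your proposal is correct and follows essentially the same route as the paper: Remark \ref{debiladmisiblegrueso} and Theorem \ref{iguales} give that $(\omega,\Y)$ is WGP-admissible with $W\GP_{(\omega,\Y)}=\GP_{(\X,\Y)}$, so the first chain is Theorem \ref{WThickGP6} (your explicit bookkeeping that the identification of classes transfers to the finitistic dimensions is exactly what the paper leaves implicit), and the second chain is the dual of Theorem \ref{WThickGP6} applied to the WGI-admissible pair $(\X,\omega)$.
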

\begin{dem} From Remark \ref{debiladmisiblegrueso} and Theorem \ref{iguales}, we get that the pair $(\omega, \Y) $ is  WGP-admissible and $W\GP _{(\omega, \Y)}=\GP_{(\X,\Y)}.$  Thus, the first list of equalities follows from Theorem \ref{WThickGP6}.
\

On the other hand, by Remark \ref{debiladmisiblegrueso}, we have that $(\X,\omega)$ is WGI-admissible and then, by dual of Theorem \ref{WThickGP6}, we get the second list of equalities.
\end{dem}

Note that the following result generalizes \cite[Proposition 2.28]{Holm}.

\begin{cor}\label{ThickGP7}   Let $(\X,\Y)$  be a GP-admissible pair in an abelian category $\A,$  such that $\omega:=\X\cap\Y=\Proj\,(\A).$ Then,
$$\FGPD_{(\X,\Y)}(\A)=\FPD(\A)=\WFGPD_{(\omega,\Y)}(\A)=\pd_\omega(\omega^\wedge)=\pd_{\omega^\wedge}(\omega^\wedge).$$
\end{cor}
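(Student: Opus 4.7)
The plan is to derive this chain of equalities by combining two already established results: Corollary \ref{ThickGP6} applied to the pair $(\X,\Y)$, and Theorem \ref{WThickGP5}(d) applied to the pair $(\omega,\Y)$. Essentially every equality in the statement is already present in one of these two results, so the task reduces to a hypothesis-check and a final splicing.

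First, I would verify the hypotheses of Corollary \ref{ThickGP6}. By assumption, $(\X,\Y)$ is GP-admissible, and $\omega=\X\cap\Y=\Proj(\A)$ is closed under direct summands in $\A$ (the class of projectives always is). Therefore Corollary \ref{ThickGP6} applies and yields
$$\WFGPD_{(\omega,\Y)}(\A)=\FGPD_{(\X,\Y)}(\A)=\resdim_\omega(\omega^\wedge)=\pd_\omega(\omega^\wedge)=\pd_{\omega^\wedge}(\omega^\wedge).$$
This already accounts for four of the five terms in the desired equality chain.

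Next, I would obtain $\FPD(\A)=\WFGPD_{(\omega,\Y)}(\A)$ by applying Theorem \ref{WThickGP5}(d). Its hypothesis is that $\Proj(\A)\subseteq\Y$, which holds here since $\omega=\Proj(\A)=\X\cap\Y\subseteq\Y$. (It is also worth noting in passing that, under this hypothesis, $(\omega,\Y)$ is WGP-admissible: we have $\omega\subseteq\Y$, $\pd_\Y(\omega)\leq \pd_\Y(\X)=0$, and $\Proj(\A)$ is closed under finite coproducts.) This gives the missing equality
$$\FPD(\A)=\WFGPD_{(\omega,\Y)}(\A),$$
and splicing the two chains produces the full statement.

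There is no real obstacle here; the only thing to be careful about is making sure that we are allowed to invoke the GP-admissibility of $(\X,\Y)$ when applying Corollary \ref{ThickGP6} (which needs it), while simultaneously invoking the weaker WGP-admissible setup of $(\omega,\Y)$ for Theorem \ref{WThickGP5}(d). Both live comfortably inside our hypotheses, so the proof is essentially a two-line citation.
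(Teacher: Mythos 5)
Your argument is correct and is essentially the paper's own route: the paper also deduces the whole chain from Corollary \ref{ThickGP6}, the only (harmless) difference being that the paper brings $\FPD(\A)$ into the chain via the identification $\FPD(\A)=\pd(\Proj(\A)^\wedge)=\resdim_{\omega}(\omega^\wedge)$ (Remark \ref{resdimPD}, using $\omega=\Proj(\A)$), whereas you import it through Theorem \ref{WThickGP5}(d), whose hypothesis $\Proj(\A)\subseteq\Y$ you correctly verify from $\omega=\X\cap\Y$. Both citations are legitimate, so nothing is missing.
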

\begin{dem} It follows directly from Corollary \ref{ThickGP6}.
\end{dem} 

\begin{defi} Let $\A$ be an abelian category. For any class $\Y\subseteq\A,$  the {\bf $\Y$-finitistic projective dimension} of $\A$ is 
$\FPD_\Y(\A):=\pd_\Y(\Q_\Y^{<\infty}),$ where $\Q_\Y^{<\infty}:=\{M\in\A\;:\;\pd_\Y(M)<\infty\}.$ For a ring $R$ and  $\Y\subseteq\Modu\,R,$ 
the $\Y$-finitistic projective dimension of $R$ is $\FPD_\Y(R):=\FPD_\Y(\Modu\,R).$ Dually, we have the class $\I_\Y^{<\infty},$ 
the {\bf $\Y$-finitistic injective dimension} $\FID_\Y(\A)$ of $\A,$ and the $\Y$-finitistic injective dimension $\FID_\Y(R)$ of the ring $R.$ 
\end{defi}

\begin{lem}\label{ThickGP8.5} Let $(\X,\Y)$  be a pair of classes of objects  in an abelian category $\A,$ with enough projectives, such that 
$\pd_\Y(\X)=0.$ Then, for $\omega:=\X\cap\Y$ and $\mathcal{Z}\in \{\omega, \Y, \omega^\wedge,\Y^\wedge\},$ the following statements hold true.
\begin{itemize}
\item[(a)] If ${}^{\perp}\mathcal{Z}\subseteq\GP_{(\X,\Y)},$ then $\GP_{(\X,\Y)}={}^{\perp}\mathcal{Z}$ and $\Gpd_{(\X,\Y)}(M)\leq \pd_\mathcal{Z}(M)$ for any $M\in\A.$
\item[(b)] If ${}^{\perp}\mathcal{Z}\subseteq W\GP_{(\X,\Y)},$  then $W\GP_{(\X,\Y)}={}^{\perp}\mathcal{Z}$ and $\WGpd_{(\X,\Y)}(M)\leq \pd_\mathcal{Z}(M)$ for any $M\in\A.$
\end{itemize}
\end{lem}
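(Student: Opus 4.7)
The plan is to reduce both (a) and (b) to the single assertion that the hypothesis forces an equality of classes ${}^{\perp}\mathcal{Z}=\GP_{(\X,\Y)}$ (respectively ${}^{\perp}\mathcal{Z}=W\GP_{(\X,\Y)}$), and then read off the dimension inequality from Lemma \ref{Ldebildimfin}.

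First I would establish the opposite inclusion, $\GP_{(\X,\Y)}\subseteq{}^{\perp}\mathcal{Z}$ (and $W\GP_{(\X,\Y)}\subseteq{}^{\perp}\mathcal{Z}$), for every $\mathcal{Z}\in\{\omega,\Y,\omega^{\wedge},\Y^{\wedge}\}$. Since $\pd_{\Y}(\X)=0$, Corollary \ref{GP3}(b) gives $\GP_{(\X,\Y)}\subseteq{}^{\perp}\Y\cap{}^{\perp}(\Y^{\wedge})$ and $W\GP_{(\X,\Y)}\subseteq{}^{\perp}\Y\cap{}^{\perp}(\Y^{\wedge})$. The inclusion $\omega\subseteq\Y$ yields ${}^{\perp}\Y\subseteq{}^{\perp}\omega$, while the dual of Lemma \ref{AB1} gives ${}^{\perp}\omega={}^{\perp}(\omega^{\wedge})$. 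Assembling these, $\GP_{(\X,\Y)}$ and $W\GP_{(\X,\Y)}$ are contained in ${}^{\perp}\mathcal{Z}$ for each of the four choices of $\mathcal{Z}$.

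Next, combining this with the hypothesis ${}^{\perp}\mathcal{Z}\subseteq\GP_{(\X,\Y)}$ (resp.\ ${}^{\perp}\mathcal{Z}\subseteq W\GP_{(\X,\Y)}$) yields the first conclusion, namely the equality of classes $\GP_{(\X,\Y)}={}^{\perp}\mathcal{Z}$ (resp.\ $W\GP_{(\X,\Y)}={}^{\perp}\mathcal{Z}$).

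Finally, since $\A$ has enough projectives, Lemma \ref{Ldebildimfin} gives $\pd_{\mathcal{Z}}(M)=\resdim_{{}^{\perp}\mathcal{Z}}(M)$ for every $M\in\A$. Using the equality of classes just obtained, the right-hand side equals $\resdim_{\GP_{(\X,\Y)}}(M)=\Gpd_{(\X,\Y)}(M)$ in case (a) and $\resdim_{W\GP_{(\X,\Y)}}(M)=\WGpd_{(\X,\Y)}(M)$ in case (b), which is stronger than the claimed inequalities.

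There is no real obstacle here: once one spots that the dual of Lemma \ref{AB1} collapses the four left Ext-orthogonals ${}^{\perp}\omega$, ${}^{\perp}(\omega^{\wedge})$, ${}^{\perp}\Y$, ${}^{\perp}(\Y^{\wedge})$ into just two that are nested via $\omega\subseteq\Y$, the statement is essentially a formal consequence of Corollary \ref{GP3}(b) together with Lemma \ref{Ldebildimfin}. The only point to watch is that Lemma \ref{Ldebildimfin} requires enough projectives, which is precisely the standing hypothesis of the lemma.
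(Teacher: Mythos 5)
Your proposal is correct and follows essentially the same route as the paper: the reverse inclusion comes from Corollary \ref{GP3}(b) together with ${}^{\perp}\Y\subseteq{}^{\perp}\omega$ and the dual of Lemma \ref{AB1}, and the dimension bound comes from comparing $\resdim$ over ${}^{\perp}\mathcal{Z}$ with $\pd_{\mathcal{Z}}$. The only (harmless) difference is that you invoke Lemma \ref{Ldebildimfin} in full, so under the hypothesis you actually get the equality $\Gpd_{(\X,\Y)}(M)=\pd_{\mathcal{Z}}(M)$, whereas the paper only uses the inequality from the dual of Proposition \ref{AB7}(c).
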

\begin{dem} (a) Assume that ${}^{\perp}\Y\subseteq\GP_{(\X,\Y)}.$ Let $M\in\A.$ Then, by the dual of Proposition \ref{AB7} (c), we have 
\[\Gpd_{(\X,\Y)}(M)=\resdim_{\GP_{(\X,\Y)}}(M)\leq \resdim_{{}^{\perp}\Y}(M)\leq \pd_\Y(M).\]
For the other elections of  $\mathcal{Z},$ the same arguments used in the previous one  work well, since by Corollary \ref{GP3} (b), we know that 
$\GP_{(\X,\Y)}\subseteq {}^{\perp}\Y\cap{}^{\perp}(\Y^\wedge)\subseteq {}^{\perp}\omega\cap{}^{\perp}(\omega^\wedge).$
\

(b) It can be proven in the same way as we did in (a).
\end{dem}

\begin{teo}\label{WThickGP10}  For  a WGP-admissible pair $(\omega,\Y)$   in the abelian category $\A,$  the following statements hold true.
\begin{itemize}
\item[(a)] Let  $\Y$ be closed under direct summands in $\A,$ and $\mathcal{Z}\in \{\Y, \Y^\wedge\}.$ If $\Q_{\mathcal{Z}}^{<\infty}\subseteq W\GP_{(\omega,\Y)}^\wedge,$ then 
\[\WFGPD_{(\omega,\Y)}(\A)=\FPD_{\mathcal{Z}}(\A)\quad\text{and}\quad\Q_{\mathcal{Z}}^{<\infty}=W\GP_{(\omega,\Y)}^\wedge.\]
\item[(b)] Let  $\omega$ be closed under direct summands in $\A,$ and $\mathcal{Z}\in \{\omega, \omega^\wedge\}.$ If $\Q_{\mathcal{Z}}^{<\infty}\subseteq W\GP_{(\omega,\Y)}^\wedge,$ then 
\[\WFGPD_{(\omega,\Y)}(\A)=\FPD_{\mathcal{Z}}(\A)\quad\text{and}\quad\Q_{\mathcal{Z}}^{<\infty}=W\GP_{(\omega,\Y)}^\wedge.\]
\end{itemize}
\end{teo}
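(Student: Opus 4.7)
The strategy is to reduce both parts to a direct application of Proposition \ref{WThickGP9}, which supplies the key identification between $\WGpd_{(\omega,\Y)}$ and a relative projective dimension on the class $W\GP_{(\omega,\Y)}^\wedge$.

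For part (a), assume $\Y$ is closed under direct summands and fix $\mathcal{Z}\in\{\Y,\Y^\wedge\}$. First I would establish the inclusion $W\GP_{(\omega,\Y)}^\wedge\subseteq \Q_\mathcal{Z}^{<\infty}$: if $M\in W\GP_{(\omega,\Y)}^\wedge$ then $\WGpd_{(\omega,\Y)}(M)<\infty$, and by Proposition \ref{WThickGP9}(a1) we have $\pd_\mathcal{Z}(M)=\WGpd_{(\omega,\Y)}(M)<\infty$, so $M\in\Q_\mathcal{Z}^{<\infty}$. Combined with the standing hypothesis $\Q_\mathcal{Z}^{<\infty}\subseteq W\GP_{(\omega,\Y)}^\wedge$, this yields the equality of classes $\Q_\mathcal{Z}^{<\infty}=W\GP_{(\omega,\Y)}^\wedge$.

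With the two classes identified, the dimension equality is then essentially a rewriting:
\begin{align*}
\WFGPD_{(\omega,\Y)}(\A) &= \sup\{\WGpd_{(\omega,\Y)}(M)\,:\,M\in W\GP_{(\omega,\Y)}^\wedge\}\\
&= \sup\{\pd_\mathcal{Z}(M)\,:\,M\in W\GP_{(\omega,\Y)}^\wedge\}\\
&= \sup\{\pd_\mathcal{Z}(M)\,:\,M\in \Q_\mathcal{Z}^{<\infty}\} = \FPD_\mathcal{Z}(\A),
\end{align*}
where the second equality uses Proposition \ref{WThickGP9}(a1) pointwise on $W\GP_{(\omega,\Y)}^\wedge$, and the third uses the equality of classes just established.

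Part (b) proceeds in exactly the same manner but invokes Proposition \ref{WThickGP9}(b1) instead of (a1), using the hypothesis that $\omega$ (rather than $\Y$) is closed under direct summands, together with the choice $\mathcal{Z}\in\{\omega,\omega^\wedge\}$. I do not expect any real obstacle here; the entire content of the theorem is the observation that once Proposition \ref{WThickGP9} supplies the pointwise equality of the two dimensions on $W\GP_{(\omega,\Y)}^\wedge$, the hypothesis $\Q_\mathcal{Z}^{<\infty}\subseteq W\GP_{(\omega,\Y)}^\wedge$ forces the reverse inclusion automatically, and the suprema then agree.
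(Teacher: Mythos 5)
Your proposal is correct and follows essentially the same route as the paper: both proofs use Proposition \ref{WThickGP9} (a1), respectively (b1), to get the pointwise equality of dimensions on $W\GP_{(\omega,\Y)}^\wedge$, deduce the equality of classes $\Q_{\mathcal{Z}}^{<\infty}=W\GP_{(\omega,\Y)}^\wedge$, and then identify the two suprema. The only difference is that you spell out the reverse inclusion explicitly, which the paper leaves implicit.
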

\begin{dem} (a) Assume that  $\Q_{\mathcal{Z}}^{<\infty}\subseteq W\GP_{(\omega,\Y)}^\wedge.$   Then, by  Proposition \ref{WThickGP9} (a1) we have $\Q_\mathcal{Z}^{<\infty}=W\GP_{(\omega,\Y)}^\wedge.$ By using that $\Q_\mathcal{Z}^{<\infty}=W\GP_{(\omega,\Y)}^\wedge$  and Proposition \ref{WThickGP9} (a1), we obtain 
\[\WFGPD_{(\omega,\Y)}(\A)=\WFGPD_{(\omega,\Y)}(W\GP_{(\omega,\Y)}^\wedge)=\pd_\mathcal{Z}(\Q_\mathcal{Z}^{<\infty})=\FPD_\mathcal{Z}(\A). \]
(b) It can be proven as in (a).
\end{dem}

\begin{cor}\label{ThickGP10} For a GP-admissible pair  $(\X,\Y)$  in the abelian category $\A,$  and $\omega:=\X\cap\Y,$  
the following statements hold true.
\begin{itemize}
\item[(a)] Let  $\Y$ be closed under direct summands in $\A,$ and $\mathcal{Z}\in \{\Y, \Y^\wedge\}.$ If $\Q_{\mathcal{Z}}^{<\infty}\subseteq \GP_{(\X,\Y)}^\wedge,$ then 
\[\FGPD_{(\X,\Y)}(\A)=\FPD_{\mathcal{Z}}(\A)=\WFGPD_{(\omega,\Y)}(\A)\quad\text{and}\quad\Q_{\mathcal{Z}}^{<\infty}=\GP_{(\X,\Y)}^\wedge.\]
\item[(b)] Let  $\omega$ be closed under direct summands in $\A,$ and $\mathcal{Z}\in \{\omega, \omega^\wedge\}.$ If $\Q_{\mathcal{Z}}^{<\infty}\subseteq \GP_{(\X,\Y)}^\wedge,$ then 
\[\FGPD_{(\X,\Y)}(\A)=\FPD_{\mathcal{Z}}(\A)=\WFGPD_{(\omega,\Y)}(\A)\quad\text{and}\quad\Q_{\mathcal{Z}}^{<\infty}=\GP_{(\X,\Y)}^\wedge.\]
\item[(c)] Let  $\X$ be closed under direct summands in $\A,$ and $\mathcal{Z}\in \{\X, \X^\vee\}.$ If $\I_{\mathcal{Z}}^{<\infty}\subseteq W\GI_{(\X,\omega)}^\vee,$ then 
\[\WFGID_{(\X,\omega)}(\A)=\FID_{\mathcal{Z}}(\A)\quad\text{and}\quad \I_{\mathcal{Z}}^{<\infty}=W\GI_{(\X,\omega)}^\vee.\]
\end{itemize}
\end{cor}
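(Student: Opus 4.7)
The plan is to reduce everything to Theorem \ref{WThickGP10} by passing from the GP-admissible pair $(\X,\Y)$ to its naturally associated WGP-admissible pair $(\omega,\Y)$. First I would invoke Remark \ref{debiladmisiblegrueso} to observe that $(\omega,\Y)$ is WGP-admissible whenever $(\X,\Y)$ is GP-admissible, and then apply Theorem \ref{iguales} to get the crucial identification $\GP_{(\X,\Y)}=W\GP_{(\omega,\Y)}$. This identification immediately yields $\GP_{(\X,\Y)}^{\wedge}=W\GP_{(\omega,\Y)}^{\wedge}$ and $\FGPD_{(\X,\Y)}(\A)=\WFGPD_{(\omega,\Y)}(\A)$, which supplies one of the three equalities in both (a) and (b).

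With these identifications in hand, the hypothesis $\Q_\mathcal{Z}^{<\infty}\subseteq \GP_{(\X,\Y)}^{\wedge}$ translates verbatim into the hypothesis $\Q_\mathcal{Z}^{<\infty}\subseteq W\GP_{(\omega,\Y)}^{\wedge}$ needed by Theorem \ref{WThickGP10}. For part (a), where $\Y$ is closed under direct summands and $\mathcal{Z}\in\{\Y,\Y^{\wedge}\}$, I would apply Theorem \ref{WThickGP10}(a) to obtain $\WFGPD_{(\omega,\Y)}(\A)=\FPD_\mathcal{Z}(\A)$ together with $\Q_\mathcal{Z}^{<\infty}=W\GP_{(\omega,\Y)}^{\wedge}$. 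Reinterpreting the right-hand sides through $W\GP_{(\omega,\Y)}=\GP_{(\X,\Y)}$ gives the full chain of equalities in (a). Part (b) runs along exactly the same lines, this time applying Theorem \ref{WThickGP10}(b) under the hypotheses that $\omega$ is closed under direct summands and $\mathcal{Z}\in\{\omega,\omega^{\wedge}\}$.

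For part (c) I would dualize: by Remark \ref{debiladmisiblegrueso}, when $(\X,\Y)$ is GP-admissible, the pair $(\X,\omega)$ is WGI-admissible. The dual of Theorem \ref{WThickGP10}(a), applied to $(\X,\omega)$ with $\X$ closed under direct summands and $\mathcal{Z}\in\{\X,\X^{\vee}\}$, converts the hypothesis $\I_\mathcal{Z}^{<\infty}\subseteq W\GI_{(\X,\omega)}^{\vee}$ into the conclusions $\WFGID_{(\X,\omega)}(\A)=\FID_\mathcal{Z}(\A)$ and $\I_\mathcal{Z}^{<\infty}=W\GI_{(\X,\omega)}^{\vee}$. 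No further bookkeeping is required here, since unlike in (a) and (b) there is no $\GI$-class to identify with $W\GI$.

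There is no genuinely hard step: the heavy lifting has already been done in Theorem \ref{iguales} (to identify $\GP_{(\X,\Y)}$ with $W\GP_{(\omega,\Y)}$) and in Theorem \ref{WThickGP10} (which computes the weak finitistic dimension). The only point requiring care is verifying that the closure-under-direct-summands hypotheses on $\Y$, $\omega$ and $\X$ match precisely the hypotheses of Theorem \ref{WThickGP10} and its dual, which is immediate from the way the corollary is phrased.
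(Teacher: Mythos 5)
Your proposal is correct and follows exactly the paper's argument: invoke Remark \ref{debiladmisiblegrueso} and Theorem \ref{iguales} to replace $\GP_{(\X,\Y)}$ by $W\GP_{(\omega,\Y)}$ and then quote Theorem \ref{WThickGP10} (respectively, its dual applied to the WGI-admissible pair $(\X,\omega)$ for part (c)). The only difference is that you spell out the bookkeeping $\GP_{(\X,\Y)}^\wedge=W\GP_{(\omega,\Y)}^\wedge$ and $\FGPD_{(\X,\Y)}(\A)=\WFGPD_{(\omega,\Y)}(\A)$, which the paper leaves implicit.
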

\begin{dem} (a) and (b): From Remark \ref{debiladmisiblegrueso} and Theorem \ref{iguales}, we get that the pair $(\omega, \Y) $ is  WGP-admissible and 
$W\GP _{(\omega, \Y)}=\GP_{(\X,\Y)}.$  Thus, the result follows from Theorem \ref{WThickGP10}.
\

(c) From Remark \ref{debiladmisiblegrueso}, we get that the pair $(\X, \omega) $ is  WGI-admissible. Then, (c) follows from the dual of Theorem \ref{WThickGP10} (a).
\end{dem}

\begin{cor}\label{ThickGP11}  Let $(\X,\Y)$  be a GP-admissible pair in the abelian category $\A,$ with enough projectives, such that 
$\Y$ is closed under direct summands in $\A,$ and let $\omega:=\X\cap\Y=\Proj(\A).$ Then, the following statements hold true.
\begin{itemize}
\item[(a)] Let $\mathcal{Z}\in \{\omega, \Y, \omega^\wedge, \Y^\wedge\}.$ If 
$\Q_{\mathcal{Z}}^{<\infty}\subseteq\GP_{(\X,\Y)}^\wedge$ then 
\begin{center}$\FGPD_{(\X,\Y)}(\A)=\FPD(\A)=\FGPD(\A)=\FPD_{\mathcal{Z}}(\A) =\FGPD_{(\omega,\Y)}(\A)=$\end{center} 
$=W\FGPD_{(\omega,\Y)}(\A)=\pd_{\omega}(\omega^\wedge)=\pd_{\omega^\wedge}(\omega^\wedge).$
\item[(b)] $\omega^\vee=\omega$ and $\I_{\omega}^{<\infty}=\I_{\omega^\vee}^{<\infty}=\A.$
\item[(c)]  $W\GI_{(\X,\omega)}^\vee=W\GI_{(\X,\omega)}.$
\end{itemize}
\end{cor}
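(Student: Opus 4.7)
The plan is to assemble part (a) by concatenating the equalities produced by several previously established results. From the hypothesis that $\A$ has enough projectives, Corollary \ref{ThickGP5.55}(d) gives $\FPD(\A) = \FGPD(\A)$. Since $(\X,\Y)$ is GP-admissible with $\omega = \X\cap\Y = \Proj(\A)$, Corollary \ref{ThickGP7} supplies
\[\FGPD_{(\X,\Y)}(\A) = \FPD(\A) = \WFGPD_{(\omega,\Y)}(\A) = \pd_\omega(\omega^\wedge) = \pd_{\omega^\wedge}(\omega^\wedge).\]
Because $\A$ has enough projectives, $\Y$ is closed under direct summands, and $\Proj(\A) = \omega \subseteq \Y$, Theorem \ref{WThickGP5}(e3) applied to the pair $(\omega,\Y)$ contributes $\FGPD_{(\omega,\Y)}(\A) = \FPD(\A) = \WFGPD_{(\omega,\Y)}(\A)$, splicing the missing term $\FGPD_{(\omega,\Y)}(\A)$ into the chain. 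Finally, since both $\omega = \Proj(\A)$ and $\Y$ are closed under direct summands in $\A$, the hypothesis $\Q_{\mathcal{Z}}^{<\infty} \subseteq \GP_{(\X,\Y)}^\wedge$ triggers Corollary \ref{ThickGP10}(a) when $\mathcal{Z}\in\{\Y,\Y^\wedge\}$ and Corollary \ref{ThickGP10}(b) when $\mathcal{Z}\in\{\omega,\omega^\wedge\}$, each yielding $\FGPD_{(\X,\Y)}(\A) = \FPD_{\mathcal{Z}}(\A)$ and completing (a).

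For (b), the nontrivial content is $\omega^\vee \subseteq \omega$ with $\omega = \Proj(\A)$, since the reverse inclusion is automatic. Given $M \in \omega^\vee$, fix a coresolution $0 \to M \to P_0 \to P_1 \to \cdots \to P_n \to 0$ with each $P_i \in \Proj(\A)$ and decompose it into short exact sequences $0 \to K_i \to P_i \to K_{i+1} \to 0$ with $K_0 = M$. Running the standard splitting-from-the-right induction, $K_{n-1}$ is a direct summand of the projective $P_{n-1}$ (because $0\to K_{n-1}\to P_{n-1}\to P_n\to 0$ splits) and is itself projective; inductively every $K_i$ is projective, and in particular $M = K_0 \in \omega$. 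Once $\omega^\vee = \omega$ is established, the identity $\Ext^j_\A(P,-) = 0$ for $j\geq 1$ and $P \in \Proj(\A)$ gives $\id_\omega(M)=0$ for every $M \in \A$, so $\I_\omega^{<\infty} = \I_{\omega^\vee}^{<\infty} = \A$.

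Part (c) is then a short deduction from (b). By Remark \ref{debiladmisiblegrueso} the pair $(\X,\omega)$ is WGI-admissible, and $\omega = \Proj(\A)$ is closed under direct summands. The dual of Theorem \ref{WThickGP6} therefore yields $\WFGID_{(\X,\omega)}(\A) = \coresdim_\omega(\omega^\vee) = \coresdim_\omega(\omega) = 0$, where the middle equality uses (b). Hence every object of $W\GI_{(\X,\omega)}^\vee$ has weak $(\X,\omega)$-Gorenstein injective dimension $0$ and so lies in $W\GI_{(\X,\omega)}$; the reverse inclusion is trivial. I do not foresee any genuine obstacle: the whole corollary is essentially a packaging exercise, and the only mildly active step is the splitting argument for $\omega^\vee = \omega$ in (b), which depends crucially on $\omega$ being the class of projectives rather than a more general self-orthogonal class.
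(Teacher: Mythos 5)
Your proposal is correct, and part (a) follows essentially the same packaging route as the paper: the paper quotes Corollary \ref{ThickGP6} applied to $(\Proj(\A),\Proj(\A))$ together with Corollary \ref{ThickGP7}, Corollary \ref{ThickGP10} and Theorem \ref{WThickGP5}\,(d), whereas you quote Corollary \ref{ThickGP5.55}\,(d), Corollary \ref{ThickGP7}, Theorem \ref{WThickGP5}\,(e3) and Corollary \ref{ThickGP10}\,(a),(b); the hypotheses you invoke ($\Proj(\A)=\omega\subseteq\Y$, $\Y$ and $\omega$ closed under direct summands, enough projectives, $\Q_{\mathcal{Z}}^{<\infty}\subseteq\GP_{(\X,\Y)}^\wedge$) are all available, so every link in the chain is justified. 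Where you genuinely diverge is in (b) and (c). For (b) the paper argues through $W\GI_{(\omega,\omega)}=\A$ and an application of Corollary \ref{ThickGP10} to the GP-admissible pair $(\omega,\Y)$, while you prove $\omega^\vee=\omega$ by the elementary downward splitting of a finite coresolution $0\to M\to P_0\to\cdots\to P_n\to 0$ by projectives, and get $\I_\omega^{<\infty}=\I_{\omega^\vee}^{<\infty}=\A$ from $\id_\omega(M)=0$ for all $M$; this is more self-contained and, frankly, more transparent than the paper's citation, and it works precisely because $\omega=\Proj(\A)$, as you note. For (c) the paper uses the dual of Proposition \ref{WThickGP9}\,(b3), namely $W\GI_{(\X,\omega)}^\vee\cap\omega^\perp=W\GI_{(\X,\omega)}$, together with $\omega^\perp=\A$; you instead apply the dual of Theorem \ref{WThickGP6} to the WGI-admissible pair $(\X,\omega)$ to get $\WFGID_{(\X,\omega)}(\A)=\coresdim_\omega(\omega^\vee)=0$ via (b), and then read off $W\GI_{(\X,\omega)}^\vee=W\GI_{(\X,\omega)}$. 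Both routes are valid; the paper's version of (c) is independent of (b), while yours makes (c) a corollary of (b) at the cost of invoking the heavier Theorem \ref{WThickGP6}. No gaps.
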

\begin{dem} (a) By applying Corollary \ref{ThickGP6} to 
the pair $(\Proj(\A),\Proj(\A)),$ we obtain $\FPD(\A)=\FGPD(\A).$ Thus, the result follows by Corollary \ref{ThickGP7},  Corollary \ref{ThickGP10} and Theorem \ref{WThickGP5} (d).
\

(b) Since $(\X,\Y)$ is a GP-admissible pair, we have that $(\omega,\Y)$ is GP-admissible. Using that $\omega^\perp=\A$ and $\A$ has 
enough projectives, we get that $W\GI_{(\omega,\omega)}=\A.$ Then, by applying Corollary \ref{ThickGP10} to the pair 
$(\omega,\Y),$ the item (b) is true.
\

(c)  By Remark \ref{debiladmisiblegrueso}, we know that the pair $(\X,\omega)$ is WGI-admissible. Then, by the dual of Proposition \ref{WThickGP9} (b3), we have $W\GI_{(\X,\omega)}^\vee\cap \omega^\perp=W\GI_{(\X,\omega)}.$ Therefore, the equality in 
(c) is true,  since $\omega^\perp=\A.$
\end{dem}

\begin{cor}\label{ThickGP12}  For any ring $R$ such that $\DP(R)^\wedge=\Modu\,R,$ and  $\omega:=\Proj\,(R),$ the following statements hold true.
\begin{itemize}
\item[(a)] $\DP(R)={}^\perp\mathcal{Z}$ for any $\mathcal{Z}\in\{\omega, \Flat(R), \omega^\wedge,  \Flat(R)^\wedge\}.$
\item[(b)] $\Q_{\mathcal{Z}}^{<\infty}=\Modu\,(R)$ for any $\mathcal{Z}\in\{\omega, \Flat(R), \omega^\wedge,  \Flat(R)^\wedge\}.$
\item[(c)] For any $\mathcal{Z}\in\{\omega, \Flat(R), \omega^\wedge,  \Flat(R)^\wedge\},$ we have that 
\begin{center}$\WFGPD_{(\omega,\Flat(R))}(R)=\FDPD(R)=\FPD(R)=\FGPD(R)=$\end{center} $=\FPD_{\mathcal{Z}}(R)=
\pd_{\omega}(\omega^\wedge)=\pd_{\omega^\wedge}(\omega^\wedge).$
\end{itemize}
\end{cor}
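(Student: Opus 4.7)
My plan is to derive this corollary as a direct application of the machinery already developed in this section, specifically for the GP-admissible pair $(\X,\Y) := (\Proj(R), \Flat(R))$ in $\A := \Modu\,R$. For this pair, $\X \cap \Y = \Proj(R) = \omega$ (so $\omega$ is automatically closed under direct summands), the class $\Y = \Flat(R)$ is closed under direct summands in $\Modu\,R$ (as recorded in the proof of Corollary \ref{ThickGP8} via \cite[Proposition 8.4.19]{EJ}), and the associated relative Gorenstein projective class is $\GP_{(\X,\Y)} = \DP(R)$. The standing hypothesis $\DP(R)^\wedge = \Modu\,R$ then reads $\GP_{(\X,\Y)}^\wedge = \A$, which makes every hypothesis of the form ``$\Q_\mathcal{Z}^{<\infty} \subseteq \GP_{(\X,\Y)}^\wedge$'' automatic.

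For item (a), I would invoke items (e) and (f) of Corollary \ref{ThickGP8}, which give
\[\DP(R)^\wedge \cap {}^\perp\omega = \DP(R) = \DP(R)^\wedge \cap {}^\perp(\omega^\wedge)\]
and
\[\DP(R)^\wedge \cap {}^\perp\Flat(R) = \DP(R) = \DP(R)^\wedge \cap {}^\perp(\Flat(R)^\wedge).\]
Since $\DP(R)^\wedge = \Modu\,R$, every left-hand intersection collapses to ${}^\perp\mathcal{Z}$, yielding $\DP(R) = {}^\perp\mathcal{Z}$ for each of the four choices of $\mathcal{Z}$.

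For item (b), the plan is to apply items (a1) and (b1) of Corollary \ref{ThickGP9}. Since both $\omega$ and $\Y$ are closed under direct summands and every module lies in $\GP_{(\X,\Y)}^\wedge$, for each $M \in \Modu\,R$ one obtains
\[\pd_{\omega}(M) = \pd_{\omega^\wedge}(M) = \Gpd_{(\X,\Y)}(M) = \pd_{\Flat(R)}(M) = \pd_{\Flat(R)^\wedge}(M),\]
and all of these are finite because $M \in \DP(R)^\wedge$. Hence $\Q_\mathcal{Z}^{<\infty} = \Modu\,R$ for each $\mathcal{Z}$ in the list.

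Finally, for item (c), since $\Q_\mathcal{Z}^{<\infty} \subseteq \Modu\,R = \GP_{(\X,\Y)}^\wedge$ trivially, Corollary \ref{ThickGP11}(a) applies and yields the full chain of equalities stated in (c). The argument is essentially bookkeeping; the only point that requires attention, addressed in the setup, is verifying that $(\Proj(R),\Flat(R))$ fits the hypotheses of the earlier corollaries --- in particular, that $\Flat(R)$ is closed under direct summands in $\Modu\,R$ and that $\X\cap\Y=\omega=\Proj(\A)$. No genuinely new argument is required beyond invoking these previously established results.
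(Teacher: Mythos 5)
Your proposal is correct and follows essentially the same route as the paper: part (a) from Corollary \ref{ThickGP8} (e),(f) combined with $\DP(R)^\wedge=\Modu\,R$, and part (c) from Corollary \ref{ThickGP11}(a) applied to the GP-admissible pair $(\Proj(R),\Flat(R))$. The only cosmetic difference is that for (b) you invoke Corollary \ref{ThickGP9} (a1),(b1) directly (noting $\Gpd_{(\X,\Y)}(M)<\infty$ for all $M$), whereas the paper routes this through Corollary \ref{ThickGP10}; these rest on the same underlying facts, so the arguments coincide in substance.
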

\begin{dem} The item (a) follows from Corollary \ref{ThickGP8} (d), (e).   It is clear that $\Q_{\mathcal{Z}}^{<\infty}\subseteq \DP(R)^\wedge=\Modu\,(R),$ for any  $\mathcal{Z}\in\{\omega, \Flat(R), \omega^\wedge,  \Flat(R)^\wedge\}.$ Then, by Corollary \ref{ThickGP10} and Corollary \ref{ThickGP11}, we get (b) and (c), by considering the pair $(\Proj(R),\Flat(R)).$ 
\end{dem} 

Let $R$ be any ring. Denote by $\FP(R)$ the class of all finitely presented left $R$-modules. The {\bf FP-injective dimension} of $M\in\Modu\,R$ is 
$\FP\id(M):=\id_{\FP(R)}(M).$ We recall, see \cite{Gi}, that $R$ is a Ding-Cheng ring if $R$ is both left and right coherent and 
$\FP\id({}_RR)=\FP\id(R_R)$ is finite.

\begin{cor}\label{ThickGP13} Let $R$ be a Ding-Chen ring and $\omega:=\Proj\,(R).$ Then 
\begin{itemize}
\item[(a)] ${}^\perp(\Flat(R)^\wedge)=\DP(R)={}^\perp\Flat(R).$
\item[(b)] $\Q_{\Flat(R)^\wedge}^{<\infty}=\DP(R)^\wedge=\Q_{\Flat(R)}^{<\infty}.$ 
\item[(c)] For any $\mathcal{Z}\in\{\Flat(R), \Flat(R)^\wedge\},$   we have that 
\begin{center}$\WFGPD_{(\omega,\Flat(R))}(R)=\FDPD(R)=\FPD(R)=\FGPD(R)=$\end{center} $=\FPD_{\mathcal{Z}}(R)=
\pd_{\omega}(\omega^\wedge)=\pd_{\omega^\wedge}(\omega^\wedge).$
\item[(d)] $W\GI_{(\DP(R),\omega)}=\Flat(R)^\wedge.$  
\end{itemize}
\end{cor}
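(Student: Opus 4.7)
My plan proceeds in three stages: first establishing a ring-theoretic input coming from the Ding-Chen structure, then reducing parts (a)--(c) to Corollary \ref{ThickGP12}, and finally deriving (d) by an orthogonality argument.

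The essential external input I would invoke is the classical theorem (see \cite{Gi}) for a Ding-Chen ring $R$ with $n:=\FP\id({}_RR)=\FP\id(R_R)<\infty$: every left $R$-module has Ding-projective dimension at most $n,$ and $(\DP(R),\Flat(R)^\wedge)$ is a complete hereditary cotorsion pair in $\Modu\,R.$ The first assertion gives $\DP(R)^\wedge=\Modu\,R,$ and the second yields $\DP(R)^\perp=\Flat(R)^\wedge.$ Neither of these statements is recoverable from the purely categorical machinery developed in the preceding sections, since they encode genuinely ring-theoretic features of Ding-Chen rings.

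With $\DP(R)^\wedge=\Modu\,R$ in hand, the hypothesis of Corollary \ref{ThickGP12} is met and parts (a)--(c) follow by direct specialization: (a) is the restriction of \ref{ThickGP12}(a) to $\mathcal{Z}\in\{\Flat(R),\Flat(R)^\wedge\};$ (b) combines \ref{ThickGP12}(b), which says $\Q_{\mathcal{Z}}^{<\infty}=\Modu\,R$ for each listed $\mathcal{Z},$ with the identification $\DP(R)^\wedge=\Modu\,R;$ and (c) is the corresponding restriction of \ref{ThickGP12}(c). For (d), I prove the two inclusions of $W\GI_{(\DP(R),\omega)}=\Flat(R)^\wedge$ separately. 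If $M\in\Flat(R)^\wedge,$ then part (a) yields $M\in\DP(R)^\perp$ (since $\DP(R)={}^\perp(\Flat(R)^\wedge)$ forces $\Flat(R)^\wedge\subseteq\DP(R)^\perp$); moreover, as $\Flat(R)^\wedge$ is thick and contains $\Proj(R),$ every syzygy in a projective resolution of $M$ remains in $\Flat(R)^\wedge\subseteq\DP(R)^\perp,$ so $M\in W\GI_{(\DP(R),\omega)}$ by definition. Conversely, any $M\in W\GI_{(\DP(R),\omega)}$ satisfies $M\in\DP(R)^\perp,$ and the Ding-Chen cotorsion pair identifies this as $\Flat(R)^\wedge.$

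The main obstacle is precisely the reliance on the two Ding-Chen structural inputs ($\DP(R)^\wedge=\Modu\,R$ and the cotorsion pair identification $\DP(R)^\perp=\Flat(R)^\wedge$); once these are imported, the deduction through Corollary \ref{ThickGP12} and elementary orthogonality is routine.
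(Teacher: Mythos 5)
Your argument for (d), and your use of Gillespie's cotorsion pair $(\DP(R),\Flat(R)^\wedge)$ from \cite{Gi}, match the paper's proof. The problem is your first ``external input'': the claim that over a Ding-Chen ring every left module has Ding-projective dimension at most $\FP\id({}_RR)$, hence $\DP(R)^\wedge=\Modu\,(R)$. This is not a theorem of \cite{Gi} and is not available here. What is known (and what this paper itself records in the remark immediately after this corollary, citing \cite[Theorem 3.1]{JW}) is only the bound $\glDPD(R)\leq \FP\id({}_RR)+\pd(\Flat(R))$, so finiteness of $\glDPD(R)$ is guaranteed only under the extra hypothesis $\pd(\Flat(R))<\infty$; moreover Remark \ref{1Gdebildimfin}\,(2) shows that for a Ding-Chen ring $\DP(R)^\wedge=\Modu\,(R)$ holds if and only if $\pd_{\Flat(R)}(M)<\infty$ for every $M$. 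Consequently your reduction of (a)--(c) to Corollary \ref{ThickGP12} fails at its hypothesis, and a symptom of the over-strong assumption is that your version of (b) asserts $\Q_{\mathcal{Z}}^{<\infty}=\Modu\,(R)$, which is strictly stronger than the stated equality $\Q_{\Flat(R)^\wedge}^{<\infty}=\DP(R)^\wedge=\Q_{\Flat(R)}^{<\infty}$ and is not claimed by the corollary.

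The statement can be reached using only the cotorsion pair, which is how the paper proceeds. From $(\DP(R),\Flat(R)^\wedge)$ hereditary and complete, Corollary \ref{TGP}\,(b) gives $\DP(R)={}^\perp(\Flat(R)^\wedge)$, and the dual of Lemma \ref{AB1} gives $\pd_{\Flat(R)^\wedge}(M)=\pd_{\Flat(R)}(M)$ for all $M$, hence ${}^\perp(\Flat(R)^\wedge)={}^\perp\Flat(R)$; this is (a) with no finiteness assumption. Then, for $\mathcal{Z}\in\{\Flat(R),\Flat(R)^\wedge\}$, the equality $\DP(R)={}^\perp\mathcal{Z}$ together with Lemma \ref{ThickGP8.5}\,(a) (applied to the pair $(\Proj(R),\Flat(R))$) yields $\Dpd(M)\leq\pd_{\mathcal{Z}}(M)$, so $\Q_{\mathcal{Z}}^{<\infty}\subseteq\DP(R)^\wedge$; this inclusion, not the equality with $\Modu\,(R)$, is exactly the hypothesis needed to invoke Corollary \ref{ThickGP10}\,(a) and Corollary \ref{ThickGP11}, which deliver (b) and (c). Your proof of (d) needs no change, since it uses only $\DP(R)^\perp=\Flat(R)^\wedge$ and the fact that $\Flat(R)^\wedge$ is resolving.
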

\begin{dem} We assert that  ${}^\perp(\Flat(R)^\wedge)={}^\perp\Flat(R).$  Indeed, let $M\in\Modu\,(R).$ Then, by the dual of Lemma \ref{AB1}, we have 
$$\pd_{\Flat(R)^\wedge}(M)=\id_M(\Flat(R)^\wedge)=\id_M(\Flat(R))=\pd_{\Flat(R)}(M),$$ 
 proving the assertion. On the other hand, by \cite[Theorem 4.7]{Gi} it follows that $(\DP(R),\Flat(R)^\wedge)$ is a hereditary complete cotorsion pair, and therefore  by Corollary \ref{TGP} (b)
 $\DP(R)={}^\perp(\Flat(R)^\wedge)={}^\perp\Flat(R).$
 \
 
 Consider the pair  $(\Proj(R),\Flat(R)).$ Let $\mathcal{Z}\in\{ \Flat(R),\Flat(R)^\wedge\}.$ Since the equality $\DP(R)={}^\perp\mathcal{Z}$ holds true,  it follows from Lemma \ref{ThickGP8.5} that $\Q_{\mathcal{Z}}^{<\infty}\subseteq \DP(R)^\wedge.$ 
  Then, the items (a), (b) and (c) follow from  Corollary \ref{ThickGP10} (a) and Corollary  \ref{ThickGP11}.
  \
  
  Since $(\DP(R),\Flat(R)^\wedge)$ is a hereditary complete cotorsion pair, we have that $\DP(R)^\perp=\Flat(R)^\wedge.$ Thus, the 
  class  $W\GI_{(\DP(R),\omega)}$ consists of all $M\in \Flat(R)^\wedge$ which admits a projective resolution $P_M$ of $M$ 
such that $\Omega_{P_M}^i(M)\in \Flat(R)^\wedge,$ for any $i\geq 1.$ Since $\Modu(R)$ has enough projectives and $\Flat(R)^\wedge$ 
is resolving, we get (d).
\end{dem} 

\begin{rk}  For a Ding-Cheng ring $R,$ it is proven in \cite[Theorem 3.1]{JW}  that 
$$\glDPD(R)\leq \FP\id({}_RR)+\pd\,(\Flat(R)).$$ 
Thus, for any Ding-Cheng ring $R$ with $\pd\,(\Flat(R))$ finite, it follows that $\DP(R)^\wedge=\Modu\,R.$ 
\end{rk}

\section{Cotorsion pairs and relative Gorenstein projective objects}

 In this section, as before, $\A$ stands for 
an abelian category. We also use, freely, the notation introduced in \cite{BMPS}.
\

We introduce the notion of relative tilting and cotilting pairs in $\A,$ and  show the strongly connection they have with relative cotorsion pairs. We recall from 
\cite{BMPS}, that a $\mathcal{Z}$-cotorsion pair, in the abelian category $\A,$ consists of the following data:  a thick subclass 
$\mathcal{Z}$ of 
$\A$ and a pair of clases of objects $(\F,\G)$ in $\mathcal{Z}$ such that $\F={}^{\perp_1}\G\cap\mathcal{Z},$ $\G=\F^{\perp_1}\cap\mathcal{Z}$
and for any $Z\in\mathcal{Z}$ there are exact sequences $0\to G\to F\to Z\to 0$ and $0\to Z\to G'\to F'\to 0$ with $F,F'\in\F$ and $G,G'\in\G.$

\begin{pro} \label{cotorpair} 
Let $(\omega, \Y)$ be a WGP-admissible pair in $\A$, with $\omega$ closed under direct summands. Then, the following statements hold true.
\begin{itemize}
\item[(a)]  $(W\GP _{(\omega, \Y)}, \omega ^{\wedge})$ is a  $W\GP _{(\omega, \Y)} ^{\wedge}$-cotorsion pair in $\A.$
\item[(b)]  $\omega ^{\wedge} = W\GP _{(\omega, \Y)}^{\perp} \cap W\GP _{(\omega, \Y)}^{\wedge}\;$ and $\;W\GP _{(\omega, \Y)}  = W\GP _{(\omega, \Y)}^{\wedge} \cap {}^{\perp} (\omega ^{\wedge}).$
\item[(c)] If $\glWGPD_{(\omega, \Y)}(\A)$ is finite, then $(W\GP _{(\omega, \Y)}, \omega ^{\wedge})$ is a hereditary complete 
cotorsion pair in $\A.$
\end{itemize}
\end{pro}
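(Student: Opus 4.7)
The overall approach is to recognize $(W\GP_{(\omega,\Y)},\omega)$ as a left Frobenius pair in $\A$ and then invoke the general machinery of \cite[Proposition 2.14 and Theorem 3.6]{BMPS}, mimicking the proof of Corollary \ref{CThickGP1}. Once that Frobenius structure is in place, (a) and (b) are read off from the cited \cite{BMPS} results, while (c) amounts to specializing to the case where the resolution class exhausts $\A$.

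First I would verify the left Frobenius axioms for $(W\GP_{(\omega,\Y)},\omega)$. By Theorem \ref{thickcat}, $W\GP_{(\omega,\Y)}$ is left thick; by hypothesis, $\omega$ is closed under direct summands; by Remark \ref{cogenera3}(a), $\omega$ is $W\GP_{(\omega,\Y)}$-injective. The upgrade from relative quasi-cogenerator to relative cogenerator in $W\GP_{(\omega,\Y)}$ needs $\omega\subseteq W\GP_{(\omega,\Y)}$; by Remark \ref{cogenera3}(b) this is guaranteed by $\pd_\Y(\omega)=0$ (immediate from WGP-admissibility) together with $0\in\omega$ (immediate since $\omega$ is closed under finite coproducts). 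With the Frobenius pair in hand, (a) is a direct application of \cite[Theorem 3.6]{BMPS}, and the two equalities in (b) come out of the same theorem; alternatively, the second equality in (b) is exactly Proposition \ref{WThickGP9}(b3).

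For (c), assume $\glWGPD_{(\omega,\Y)}(\A)<\infty$. Then $W\GP_{(\omega,\Y)}^{\wedge}=\A$, so the $W\GP_{(\omega,\Y)}^{\wedge}$-cotorsion pair of (a) collapses to an honest cotorsion pair $(W\GP_{(\omega,\Y)},\omega^{\wedge})$ in $\A$. Completeness is read off from the two exact sequences produced in Theorem \ref{WGPAB4}(a),(b), which under our hypothesis apply to every object of $\A$. For the hereditary condition I would apply Proposition \ref{AB2}(a) to the Frobenius pair $(W\GP_{(\omega,\Y)},\omega)$: the $W\GP_{(\omega,\Y)}$-injectivity of $\omega$ propagates to $\omega^{\wedge}$, yielding $\id_{W\GP_{(\omega,\Y)}}(\omega^{\wedge})=0$.

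There is no genuinely hard step here; the main obstacle is bookkeeping---correctly identifying and citing the earlier results that conspire to give the Frobenius structure. The only subtlety worth flagging is to ensure $0\in\omega$ so that Remark \ref{cogenera3}(b) is applicable; everything else is a straightforward invocation of Theorems \ref{thickcat} and \ref{WGPAB4}, Remark \ref{cogenera3}, Proposition \ref{AB2}, Proposition \ref{WThickGP9}, and the Frobenius-to-cotorsion-pair principle of \cite{BMPS}.
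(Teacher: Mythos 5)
Your proposal is correct and follows essentially the same route as the paper: establish that $(W\GP_{(\omega,\Y)},\omega)$ is a left Frobenius pair (the paper packages this as Corollary \ref{WThickGP2}, via Theorem \ref{thickcat} and Remark \ref{cogenera3}, exactly the ingredients you list) and then read (a) and (b) off \cite[Proposition 2.14 and Theorem 3.6]{BMPS}, with (c) obtained by noting $\glWGPD_{(\omega,\Y)}(\A)<\infty$ forces $W\GP_{(\omega,\Y)}^{\wedge}=\A$. Your slightly more explicit handling of completeness (via Theorem \ref{WGPAB4}) and hereditariness (via Proposition \ref{AB2}(a)) in (c) is just a finer bookkeeping of what the paper compresses into ``(c) follows from (a) and (b).''
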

\begin{dem} By Corollary \ref{WThickGP2} we have that $(W\GP _{(\omega, \Y)}, \omega)$ is left Frobenius. Then,  (a) and (b)  follow from  \cite[Teorema 3.6 and Proposition 2.14]{BMPS}. Assume now that $\glWGPD_{(\omega, \Y)}(\A)$ is finite. Then 
$W\GP _{(\omega, \Y)}^\wedge=\A$ and thus (c) follows from (a) and (b).
\end{dem}

\begin{cor} \label{Gcotorpair} 
Let $(\X, \Y)$ be a GP-admissible pair in $\A$, with $\omega:=\X\cap\Y$ closed under direct summands. Then, the following statements hold true.
\begin{itemize}
\item[(a)]  $(\GP _{(\X, \Y)}, \omega ^{\wedge})$ is a  $\GP _{(\X, \Y)} ^{\wedge}$-cotorsion pair in $\A.$
\item[(b)]  $\omega ^{\wedge} = \GP _{(\X, \Y)}^{\perp} \cap \GP _{(\X, \Y)}^{\wedge}\;$ and $\;\GP _{(\X, \Y)}  = \GP _{(\X, \Y)}^{\wedge} \cap {}^{\perp} (\omega ^{\wedge}).$
\item[(c)] If $\glGPD_{(\X, \Y)}(\A)$ is finite, then $(\GP _{(\X, \Y)}, \omega ^{\wedge})$ is a hereditary complete 
cotorsion pair in $\A.$
\item[(d)]  $(\omega ^\vee, W\GI _{(\X, \omega)})$ is a  $W\GI _{(\X, \omega)} ^\vee$-cotorsion pair in $\A.$
\item[(e)]  $\omega ^\vee = {}^{\perp}W\GI _{(\X, \omega)} \cap W\GI_{(\X, \omega)}^{\vee}\;$ and $\;W\GI _{(\X, \omega)}  = W\GI _{(\X, \omega)}^{\vee} \cap (\omega ^{\vee})^{\perp} .$
\item[(f)] If $\glWGID_{(\X,\omega)}(\A)$ is finite, then $(W\GI _{(\X,\omega)}, \omega ^{\vee})$ is a hereditary complete 
cotorsion pair in $\A.$
\end{itemize}
\end{cor}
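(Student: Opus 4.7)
The plan is to reduce the entire corollary to Proposition \ref{cotorpair} (and its dual) combined with Theorem \ref{iguales} and Remark \ref{debiladmisiblegrueso}, since the corollary is essentially a direct transcription of those results under the change of class.

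For items (a), (b), (c): By Remark \ref{debiladmisiblegrueso}, since $(\X,\Y)$ is GP-admissible, the pair $(\omega, \Y)$ is WGP-admissible. The hypothesis that $\omega$ is closed under direct summands is exactly what is needed to apply Proposition \ref{cotorpair} to $(\omega, \Y)$, yielding that $(W\GP_{(\omega,\Y)}, \omega^\wedge)$ is a $W\GP_{(\omega,\Y)}^\wedge$-cotorsion pair, together with the two identifications in Proposition \ref{cotorpair}(b). Now invoke Theorem \ref{iguales}, which gives the crucial equality $W\GP_{(\omega,\Y)} = \GP_{(\X,\Y)}$. Substituting this equality wherever $W\GP_{(\omega,\Y)}$ appears immediately yields (a) and (b). For (c), note that the same class equality gives $\glGPD_{(\X,\Y)}(\A) = \glWGPD_{(\omega,\Y)}(\A)$, so the finiteness hypothesis transfers, and Proposition \ref{cotorpair}(c) applied to $(\omega,\Y)$ produces the desired hereditary complete cotorsion pair.

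For items (d), (e), (f): By Remark \ref{debiladmisiblegrueso}, the pair $(\X, \omega)$ is WGI-admissible. Since $\omega$ is closed under direct summands, we may apply the dual of Proposition \ref{cotorpair} to $(\X,\omega)$; the dual freedom was explicitly granted at the start of Section 2. This dual statement directly reads off as (d), (e), and (f) without any further identification of classes, because in the WGI-setting the relevant objects are already $W\GI_{(\X,\omega)}$ and $\omega^\vee$.

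The main conceptual point (and the only potential obstacle) is ensuring that the substitution $W\GP_{(\omega,\Y)} \leftrightarrow \GP_{(\X,\Y)}$ from Theorem \ref{iguales} preserves all the cotorsion-pair data. This is automatic: equal classes have identical orthogonal complements, identical resolution dimensions, and identical cokernels/kernels in approximation sequences, so every condition in the definition of a $\mathcal{Z}$-cotorsion pair is preserved verbatim. The rest is bookkeeping.
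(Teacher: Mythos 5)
Your proposal is correct and is essentially identical to the paper's own proof: both use Remark \ref{debiladmisiblegrueso} to get that $(\omega,\Y)$ is WGP-admissible and $(\X,\omega)$ is WGI-admissible, Theorem \ref{iguales} to identify $W\GP_{(\omega,\Y)}=\GP_{(\X,\Y)}$, and then Proposition \ref{cotorpair} and its dual. The only addition is your explicit remark that the class equality transfers the finiteness hypothesis in (c), which the paper leaves implicit.
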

\begin{dem} By Remark \ref{debiladmisiblegrueso}, we have that $(\omega,\Y)$ is WGP-admissible and $(\X,\omega)$ is  WGI-admissible. Moreover, Theorem \ref{iguales} says us that 
$\GP _{(\X, \Y)}=W\GP _{(\omega, \Y)}.$ Thus, the result follows from Proposition \ref{cotorpair} and its dual.
\end{dem}

\begin{defi}\label{parcotilting} 
A  pair $(\omega,\Y) $ of classes of objects in $\A$ is {\bf $\W$-cotilting} if the following three conditions hold true.
  \begin{itemize}
  \item[\normalfont{(a)}]  $\W \subseteq \omega ^{\wedge}$ and $(\omega,\Y) $ is WGP-admissible.
  \item[\normalfont{(b)}] For any $C\in{^{\perp}\Y}$ there is an exact sequence $0\to C \to W \to C' \to 0$ in $\A,$ with $W \in \W.$
  \item[\normalfont{(c)}]  For any $C \in{^{\perp} \Y}$ there is a $\Y$-preenvelope $C \to C_{\Y}$ of $C,$ with  $C_{\Y} \in \omega.$
\end{itemize}
\end{defi}

\begin{ex} Let $\A$ be an abelian category with enough injectives and let $(\X,\Y)$ be a hereditary pair in $\A,$ which is left cotorsion and right complete and $\Y$ is closed under finite coproducts in $\A.$ Then, for $\omega:=\X\cap\Y,$  we have that $(\omega,\Y)$ is an $\omega$-cotilting pair. Indeed, since $\A$ has enough injectives and $\Y$ is coresolving, it follows that 
$\X={}^{\perp_1}\Y={}^\perp\Y;$ and using that, the  needed conditions can be checked easily.
\end{ex}

\begin{defi}\label{partilting} 
A  pair $(\X,\nu) $ of classes of objects in $\A$ is {\bf $\W$-tilting} if the following three conditions hold true.
  \begin{itemize}
  \item[\normalfont{(a)}]  $\W \subseteq \nu^{\vee}$ and $(\X,\nu) $ is WGI-admissible.
  \item[\normalfont{(b)}] For any $C\in\X^{\perp}$ there is an exact sequence $0\to C' \to V\to C \to 0$ in $\A,$ with $V \in \W.$
  \item[\normalfont{(c)}]  For any $C \in\X^{\perp}$ there is a $\X$-preecover $C_\X \to C$ of $C,$ with  $C_{\X} \in \nu.$
\end{itemize}
\end{defi}

 Note that a pair $(\X,\nu) $ of classes of objects in $\A$  is $\W$-tilting if, and only if, the pair $(\nu^{op},\X^{op})$ is $\W^{op}$-cotilting in the opposite category $\A^{op}.$ Thus, any obtained result for $\W$-cotilting pairs can be translated in terms of $\W$-tilting pairs.

\begin{teo}\label{debildimfin} For a $\W$-cotilting pair $(\omega, \Y)$ in an abelian category $\A,$ the following statements hold true. 
\begin{itemize}
  \item[\normalfont{(a)}]  $W\GP_{(\omega, \Y)} ={}^{\perp}\Y.$
  \item[\normalfont{(b)}] If $\A$ has enough projectives,   then $\WGpd_{(\omega, \Y)}(M)=\pd_\Y(M)$ for any $M\in\A.$ Moreover 
  $\glWGPD _{(\omega, \Y)} (\A) =\id\, (\Y) .$
\end{itemize}
\end{teo}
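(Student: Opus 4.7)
The overall plan is to prove (a) first and then deduce (b) immediately from (a) together with Lemma~\ref{Ldebildimfin}. For part (a), the inclusion $W\GP_{(\omega,\Y)}\subseteq{}^{\perp}\Y$ is built into Definition~\ref{debildefi2}, so the real content is the reverse inclusion. Fix $C\in{}^{\perp}\Y$. A coresolution witnessing $C\in W\GP_{(\omega,\Y)}$ is built recursively, so it suffices to produce one short exact sequence $0\to C\to V\to C^{1}\to 0$ with $V\in\omega$ and $C^{1}\in{}^{\perp}\Y$, and then iterate on $C^{1},C^{2},\ldots$ to splice the infinite coresolution. A preliminary dimension-shifting argument using $\omega\subseteq\Y$ and $\pd_{\Y}(\omega)=0$ gives $\omega^{\wedge}\subseteq{}^{\perp}\Y$, and in particular $\W\subseteq{}^{\perp}\Y$; this will be used repeatedly below.

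To construct the short exact sequence, I combine (b) and (c). By (b), take a mono $\iota\colon C\hookrightarrow W$ with $W\in\W\subseteq\omega^{\wedge}$ and $C':=W/C$; the long exact Ext sequence together with $C,W\in{}^{\perp}\Y$ forces $C'\in{}^{\perp}\Y$. By (c), take a $\Y$-preenvelope $\phi\colon C\to C_{\Y}$ with $C_{\Y}\in\omega$. Since $\Ext^{1}_{\A}(C',C_{\Y})=0$ (using $C'\in{}^{\perp}\Y$ and $C_{\Y}\in\omega\subseteq\Y$), the map $\phi$ lifts along $\iota$ as $\phi=\tau\iota$ for some $\tau\colon W\to C_{\Y}$. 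The main obstacle is to promote $\phi$ (or some variant of it) to an honest monomorphism $C\hookrightarrow V$ with $V\in\omega$. I expect this to require induction on $\resdim_{\omega}(W)$: in the base case $W\in\omega$, the map $\iota$ itself is already a $\Y$-preenvelope of $C$ by the Ext vanishing above, is mono by construction, and has cokernel $C'\in{}^{\perp}\Y$; in the inductive step, pulling back an $\omega$-resolution $0\to K\to W^{0}\to W\to 0$ along $\iota$, together with condition (c) applied to the intermediate objects, should transfer the embedding property down from the $\omega^{\wedge}$-level to the $\omega$-level. Once the mono $C\hookrightarrow V$ with $V\in\omega$ is in hand, setting $C^{1}:=V/C$, the preenvelope property of the resulting map forces $\Ext^{1}_{\A}(C^{1},Y)=0$ for every $Y\in\Y$, while higher Ext of $C^{1}$ vanishes by dimension shifting through $0\to C\to V\to C^{1}\to 0$, since both $C\in{}^{\perp}\Y$ and $V\in\omega\subseteq{}^{\perp}\Y$.

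Part (b) follows immediately from part (a). Since $\A$ has enough projectives, Lemma~\ref{Ldebildimfin} applied with $\B=\Y$ gives $\resdim_{{}^{\perp}\Y}(M)=\pd_{\Y}(M)$ for all $M\in\A$. Combining with $W\GP_{(\omega,\Y)}={}^{\perp}\Y$ from (a), we obtain
$$\WGpd_{(\omega,\Y)}(M)=\resdim_{W\GP_{(\omega,\Y)}}(M)=\resdim_{{}^{\perp}\Y}(M)=\pd_{\Y}(M).$$
Taking the supremum over $M\in\A$ and using $\pd_{\Y}(\A)=\id(\Y)$ then yields $\glWGPD_{(\omega,\Y)}(\A)=\id(\Y)$, finishing the proof.
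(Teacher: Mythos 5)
Your part (b) is exactly the paper's argument (Lemma \ref{Ldebildimfin} plus $W\GP_{(\omega,\Y)}={}^{\perp}\Y$) and is fine, but part (a) has a genuine gap at the very first step of your construction. From an exact sequence $0\to C\xrightarrow{\iota}W\to C'\to 0$ with $C,W\in{}^{\perp}\Y$, the long exact sequence only yields $\Ext^{j}_{\A}(C',Y)=0$ for $j\geq 2$; in degree one it gives $\Ext^{1}_{\A}(C',Y)\simeq\mathrm{Coker}\bigl(\Hom_{\A}(W,Y)\to\Hom_{\A}(C,Y)\bigr)$, which vanishes precisely when every morphism $C\to Y$ extends along $\iota$, i.e. when $\iota$ is already a $\Y$-preenvelope. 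Definition \ref{parcotilting} (b) provides a monomorphism with no preenvelope property whatsoever, so ``$C'\in{}^{\perp}\Y$'' is not forced; the purported deduction already fails for the inclusion $2\mathbb{Z}\subseteq\mathbb{Z}$ in $\Modu(\mathbb{Z})$, where both terms lie in ${}^{\perp}\Proj(\mathbb{Z})$ but the cokernel $\mathbb{Z}/2$ does not. This error then propagates: the lifting of $\phi$ along $\iota$ needs $\Ext^{1}_{\A}(C',C_{\Y})=0$, and the base case of your intended induction (``$\iota$ is already a $\Y$-preenvelope by the Ext vanishing above'') is circular, since that Ext vanishing in degree one is equivalent to the preenvelope property you are trying to produce. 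The inductive step is only announced (``should transfer the embedding property'') and is never carried out, so the key production of a monomorphism into an object of $\omega$ whose cokernel stays in ${}^{\perp}\Y$ is missing.

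What the paper actually does, and what your outline lacks, is to keep the two roles separate and then recombine them by a direct-sum trick. Since $W\in\W\subseteq\omega^{\wedge}$, one takes a finite $\omega$-resolution $0\to W_{n}\to\cdots\to W_{0}\xrightarrow{f_{0}}W\to 0$, sets $L:=\Ker(f_{0})$, and shows by dimension shifting that $\Ext^{i}_{\A}(X,L)=0$ for all $i\geq 1$ and all $X\in{}^{\perp}\Y\subseteq{}^{\perp}\omega$; hence the pullback of $0\to L\to W_{0}\to W\to 0$ along $\iota$ splits (because $C\in{}^{\perp}\Y$), yielding a monomorphism $C\to W_{0}$ with $W_{0}\in\omega$. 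One then forms $h:C\to W_{0}\oplus W_{0}'$ as the direct sum of this monomorphism with the $\Y$-preenvelope $C\to W_{0}'$ from Definition \ref{parcotilting} (c); $h$ is simultaneously monic and a $\Y$-preenvelope, and only because of the preenvelope property does $\Ext^{1}_{\A}(\Coker(h),\Y)$ vanish, the higher $\Ext$'s dying by $C\in{}^{\perp}\Y$ and $\pd_{\Y}(\omega)=0$; iterating gives the coresolution. Without this splitting-plus-direct-sum mechanism your argument does not close, so as written the proposal does not prove part (a).
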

\begin{dem} (a) It is clear that $W \GP _{(\omega, \Y)} \subseteq {}^{\perp} \Y .$ Let $C \in{}^{\perp} \Y.$ By Definition \ref{parcotilting} (b),  
 there is an exact sequence  
$ 0\to C \xrightarrow{h_0} I \to C_0 \to 0,$ with $I \in \W.$ Since $\W \subseteq \omega ^{\wedge},$ there is an exact sequence  
$$0 \to W_n \stackrel{f_n}{\longrightarrow}W_{n-1}\to \cdots \to W_1\stackrel{f_1}{\longrightarrow} W_0 \stackrel{f_0}{\longrightarrow} I \to 0,$$ with 
$W_i \in \omega \subseteq W \GP _{(\omega,\Y)}$  for any $i \in [0, n].$ From the exact sequence $\eta : 0 \to L \to W_0 \xrightarrow{f_0}  I \to 0,$ by doing a pull-back construction and Snake's Lemma, we get the following commutative and exact diagram 
$$\xymatrix{&  & 0\ar[d] &  0\ar[d]&\\
  \eta ' : 0\ar[r]  & L \ar[r]  \ar@{=}[d]  & E  \ar[d]^{h_0 '} \ar[r]^{f_0 '} & C \ar[r] \ar[d]^{h_0} & 0\\
\eta : 0\ar[r] & L  \ar[r] & W_0 \ar[r]^{f_0} \ar[d] & I \ar[r] \ar[d] & 0\\
&  & C_0 \ar[d] \ar@{=}[r]&  C_0 \ar[d] &\\
&  & 0 & \; 0.& }$$
Let $X \in{}^{\perp} \Y \subseteq {}^{\perp} \omega.$ By applying the functor $\Hom_{\A}(X, -) $ to the exact sequence $0 \to W_n \to W_{n-1} \to \Ima (f_{n-1}) \to 0,$ we get the exact sequence  
$$ \Ext ^i _{\A} (X, W_{n-1}) \to \Ext^i _{\A} (X, \Ima (f_{n-1})) \to \Ext _{\A} ^{i+1} (X, W_n).$$ Hence $\Ext_{\A} ^{i} (X, \Ima (f_{n-1})) = 0$ 
for any $i\geq 1.$ By repeating this procedure, it follows that $0 = \Ext ^{i} _{\A} (X, \Ima (f_1)) = \Ext ^i _{\A} (X, L),$ for any $i\geq 1 $ and 
$X \in{}^{\perp} \Y.$ Therefore  $\eta '$ splits, since $C \in{}^{\perp} \Y;$ and thus, there is  $f_0 '' : C \to E$ such that $1_C = f_0 ' f_0 ''.$ Note that 
$h_0 ' f_0 '':C\to W_0$ is a monomorphism. Moreover, from Definition \ref{parcotilting} (c), there is a $\Y$-preenvelope $h_0 '': C \to W_0 ',$ with 
 $W_0' \in  \omega,$ and so the morphism
\[h:= \left(\begin{array}{c} h_0 ' f_0 '' \\ h_0''  \end{array}\right)  :C \to W_0 \oplus W_0 ',  
 \]
 is also a $\Y$-preenvelope of $C.$ Note that  $h$ is a monomorphism, since  $h_0 ' f_0 ''$ is a monomorphism.  Now, we consider the exact sequence
 $$\zeta:\; 0 \to C \to W_0 \oplus W_0 ' \to \Coker (h) \to 0 .$$
We assert that $\Coker (h) \in {}^{\perp} \Y.$ Indeed, let $Y \in \Y.$ By applying the functor $\Hom_{\A} (-,Y) $ to $\zeta$  and using that $\id _{\omega} (\Y)=0,$ we obtain the following exact sequences: \\
 
 (i) $\Hom_{\A} (W_0 \oplus W_0 ',Y)\stackrel{h^{*}}{\longrightarrow} \Hom_{\A} (C,Y) \rightarrow \Ext ^1 _{\A} (\Coker (h) , Y) \to 0,$\\ 
 
  (ii) $\Ext^i _{\A} (C,Y)\longrightarrow \Ext^{i+1} _{\A}  (\Coker (h) , Y) \to 0,$ for any $i \geq 1.$\\
  
  Since $h:C \to W_0 \oplus W_0 '$ is a $\Y$-preenvelope and $C\in{}^\perp\Y,$ we get from  (i)  and (ii) that  $\Ext _{\A} ^{i} (\Coker (h) , Y) =0,$ for any 
  $i\geq 1.$ Therefore $\Coker (h) \in{}^{\perp} \Y.$ By repeating this procedure with  $\Coker (h)$ and the exact sequence $\zeta,$ we can proof that 
  $C \in W\GP _{(\omega,\Y)}.$
  \
  
  (b) Let $M\in\A.$ Since ${}^\perp\Y=W\GP_{(\omega,\Y)},$ we get by Lemma \ref{Ldebildimfin}  
  $$\pd_\Y(M)= \resdim_{{}^\perp\Y}(M)=\WGpd_{(\omega,\Y)}(M).$$  Therefore 
  $\glWGPD_{(\omega,\Y)}(\A)=\pd_\Y(\A)=\id_\A(\Y)=\id(\Y).$
\end{dem}

\begin{cor}\label{GdebilCotil} For a WGP-admissible pair  $(\omega,\Y),$  in an abelian category $\A,$ the following statements are equivalent. 
\begin{itemize}
\item[(a)] $W\GP_{(\omega,\Y)}={}^\perp\Y.$
\item[(b)] The pair $(\omega,\Y)$ is $\W$-cotilting, for some class $\W\subseteq\A.$
\item[(c)] The pair $(\omega,\Y)$ is $\omega$-cotilting.
\end{itemize}
\end{cor}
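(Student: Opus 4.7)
The plan is a cyclic argument: (c) $\Rightarrow$ (b) $\Rightarrow$ (a) $\Rightarrow$ (c). The implication (c) $\Rightarrow$ (b) is immediate by taking $\W:=\omega,$ which sits inside $\omega^\wedge$ trivially. For (b) $\Rightarrow$ (a), I would simply cite Theorem \ref{debildimfin}(a), which says that any $\W$-cotilting pair $(\omega,\Y)$ satisfies $W\GP_{(\omega,\Y)}={}^{\perp}\Y.$

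The substantive part is (a) $\Rightarrow$ (c). Assume $W\GP_{(\omega,\Y)}={}^{\perp}\Y.$ Since $(\omega,\Y)$ is WGP-admissible, we have $\omega\subseteq\Y,$ so in particular $\omega\subseteq\omega^\wedge,$ verifying condition (a) of Definition \ref{parcotilting} with $\W=\omega.$ For condition (b) of the same definition, take any $C\in{}^{\perp}\Y=W\GP_{(\omega,\Y)}.$ By the very definition of $W\GP_{(\omega,\Y)},$ there exists an exact sequence $0\to C\to W^0\to C'\to 0$ with $W^0\in\omega$ and $C'\in W\GP_{(\omega,\Y)}\subseteq{}^\perp\Y,$ which is exactly what we need (with $W:=W^0\in\omega\subseteq\omega^\wedge$).

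For condition (c) of Definition \ref{parcotilting}, I would show that the very morphism $h:C\to W^0$ just produced is a $\Y$-preenvelope with codomain in $\omega.$ Apply $\Hom_\A(-,Y)$ to $0\to C\xrightarrow{h} W^0\to C'\to 0$ for any $Y\in\Y;$ this yields the exact sequence
$$\Hom_\A(W^0,Y)\xrightarrow{h^{\ast}}\Hom_\A(C,Y)\to\Ext^1_\A(C',Y),$$
and the rightmost term vanishes because $C'\in{}^\perp\Y.$ Thus $h^{\ast}$ is surjective, i.e.\ $h:C\to W^0$ is a $\Y$-preenvelope with $W^0\in\omega,$ verifying condition (c).

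I do not anticipate a serious obstacle: the argument rests solely on unwinding the definition of $W\GP_{(\omega,\Y)}$ and a one-step long-exact-sequence computation using the orthogonality $C'\in{}^\perp\Y.$ The only point to be careful about is to make sure the cokernel term $C'$ in the defining sequence of a $WGP$-object indeed lies in ${}^\perp\Y,$ but this is literally built into Definition \ref{debildefi2}.
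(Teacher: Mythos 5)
Your proposal is correct and is essentially the paper's own argument: the paper also takes $\W=\omega$, extracts the short exact sequence $0\to C\to W\to C'\to 0$ with $W\in\omega$ and $C'\in{}^\perp\Y$ from the defining coresolution of a $WGP_{(\omega,\Y)}$-object, and uses the vanishing of $\Ext^1_\A(C',Y)$ to see that $C\to W$ is a $\Y$-preenvelope, while the reverse direction is Theorem \ref{debildimfin}. The only difference is the ordering of the implication cycle ((a)$\Rightarrow$(c) in your write-up versus (a)$\Rightarrow$(b) and (b)$\Rightarrow$(c) in the paper), which is immaterial.
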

\begin{dem} (a) $\Rightarrow$ (b) Let $W\GP_{(\omega,\Y)}={}^\perp\Y.$ We show that we can choose $\W:=\omega.$ Since 
$W\GP_{(\omega,\Y)}={}^\perp\Y,$ the conditions (a) and (b) in Definition \ref{parcotilting} follow easily. 
\

Let $C\in  {}^\perp\Y=W\GP_{(\omega,\Y)}.$ Then there is an exact sequence $\eta:\;0\to C\xrightarrow{\varphi}W\to Z\to 0,$ with 
$W\in\omega$ and $Z\in{}^\perp\Y.$ By applying the functor $\Hom_\A(-,Y)$ to $\eta,$ with $Y\in\Y,$ we get the exact sequence 
$$\Hom_\A(W,Y)\xrightarrow{(\varphi,Y)}\Hom_\A(C,Y)\to \Ext^1_\A(Z,Y).$$
Since $\Ext^1_\A(Z,Y)=0,$ it follows that $\varphi:C\to W$ is a $\Y$-preenvelope of $C.$
\

(b) $\Rightarrow$ (c) It follows from Theorem \ref{debildimfin} that $W\GP_{(\omega,\Y)}={}^\perp\Y.$ But as we have seen in the previous implication, in this case the pair $(\omega,\Y)$ is $\omega$-cotilting.
\

(c) $\Rightarrow$ (a) It follows from Theorem \ref{debildimfin}.
\end{dem}

\begin{rk}\label{RKGdebilCotil}  Let $\A$ be an abelian category and $\omega\subseteq\A$ be such that $\add\,(\omega)=\omega$ and 
$\id_\omega(\omega)=0.$ In this case, we have that $(\omega,\omega)$ is WGP-admissible. Thus, by Corollary \ref{GdebilCotil}, we get 
that $(\omega,\omega)$ is $\W$-cotilting if and only if any $C\in{}^\perp\omega$ admits a monic $\omega$-preenvelope $C\to W.$
\end{rk}

\begin{pro}\label{PGdebilCotil} Let $(\omega,\Y)$ be a WGP-admissible pair in an abelian category $\A,$ with enough projectives, such that $\Y$ is closed under direct summands in $\A.$ Then, the following statements hold true.
\begin{itemize}
\item[(a)] $W\GP_{(\omega,\Y)}^\wedge=\A$ if and only if $(\omega,\Y)$ is $\W$-cotilting and $\pd_\Y(M)<\infty$ for any $M\in\A.$
\item[(b)] $\glWGPD_{(\omega,\Y)}(\A)<\infty$ if and only if $(\omega,\Y)$ is $\W$-cotilting and $\id(\Y)<\infty.$
\end{itemize}
\end{pro}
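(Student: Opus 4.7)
The proof is essentially a packaging of earlier results (Theorem~\ref{debildimfin}, Proposition~\ref{WThickGP9}, and Corollary~\ref{GdebilCotil}). The key identification to keep track of is that $\pd_\Y(\A)=\id(\Y)$, which lets us shuttle between a uniform bound on relative projective dimensions (used in (a)) and the finiteness of $\id(\Y)$ (used in (b)).

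\medskip

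\textbf{Plan for (a).} For ($\Rightarrow$), assume $W\GP_{(\omega,\Y)}^\wedge=\A$. Since $\Y$ is closed under direct summands, Proposition~\ref{WThickGP9}(a1) gives $\WGpd_{(\omega,\Y)}(M)=\pd_\Y(M)$ for every $M\in\A$, and by hypothesis the left-hand side is finite; this yields $\pd_\Y(M)<\infty$ for every $M$. To produce the cotilting structure I invoke Corollary~\ref{GdebilCotil}: it suffices to show $W\GP_{(\omega,\Y)}={}^\perp\Y$. The inclusion $\subseteq$ is automatic, and if $C\in{}^\perp\Y$ then $\pd_\Y(C)=0$, so by the same identity $\WGpd_{(\omega,\Y)}(C)=0$, i.e., $C\in W\GP_{(\omega,\Y)}$. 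For ($\Leftarrow$), the pair is $\W$-cotilting and $\A$ has enough projectives, so Theorem~\ref{debildimfin}(b) gives $\WGpd_{(\omega,\Y)}(M)=\pd_\Y(M)<\infty$ for every $M\in\A$, that is, $W\GP_{(\omega,\Y)}^\wedge=\A$.

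\medskip

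\textbf{Plan for (b).} For ($\Rightarrow$), the hypothesis $\glWGPD_{(\omega,\Y)}(\A)<\infty$ means in particular that $W\GP_{(\omega,\Y)}^\wedge=\A$, so by part (a) the pair $(\omega,\Y)$ is $\W$-cotilting. Proposition~\ref{WThickGP9}(a1) then gives $\pd_\Y(M)=\WGpd_{(\omega,\Y)}(M)\le \glWGPD_{(\omega,\Y)}(\A)$ for every $M\in\A$. Taking the supremum over $M$ and using the general equality $\pd_\Y(\A)=\id(\Y)$ (which follows from $\pd_\X(\Y)=\id_\Y(\X)$ stated in the excerpt) I conclude $\id(\Y)\le \glWGPD_{(\omega,\Y)}(\A)<\infty$. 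For ($\Leftarrow$), Theorem~\ref{debildimfin}(b) directly supplies $\glWGPD_{(\omega,\Y)}(\A)=\id(\Y)<\infty$.

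\medskip

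\textbf{Anticipated obstacle.} There is essentially no hard step: both implications reduce to applying Theorem~\ref{debildimfin} in one direction and Proposition~\ref{WThickGP9}(a1) together with Corollary~\ref{GdebilCotil} in the other. The only point that needs care is justifying $\pd_\Y(\A)=\id(\Y)$, which is immediate from the duality $\pd_\X(\Y)=\id_\Y(\X)$ recalled in the preliminaries but worth stating explicitly so that the passage from ``$\pd_\Y(M)$ bounded uniformly in $M$'' to ``$\id(\Y)<\infty$'' in part~(b) is transparent.
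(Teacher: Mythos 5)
Your proposal is correct and follows essentially the same route as the paper's proof: Proposition \ref{WThickGP9} (the paper cites item (a2), you simply re-derive its content from (a1)), Corollary \ref{GdebilCotil} to get the $\W$-cotilting property, and Theorem \ref{debildimfin} (b) for the remaining implications. The only cosmetic difference is that in (b) you recover $\id(\Y)\leq\glWGPD_{(\omega,\Y)}(\A)$ by hand via $\pd_\Y(\A)=\id(\Y)$ instead of quoting the ``moreover'' clause of Theorem \ref{debildimfin} (b) directly.
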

\begin{dem} (a)  Let $W\GP_{(\omega,\Y)}^\wedge=\A.$ Then, by Proposition \ref{WThickGP9} (a2) $W\GP_{(\omega,\Y)}={}^\perp\Y,$ and 
thus by Corollary \ref{GdebilCotil} it follows that $(\omega,\Y)$ is $\W$-cotilting. Therefore, by Theorem \ref{debildimfin} (b) 
$ \pd_\Y(M)=\WGpd_{(\omega,\Y)}(M)<\infty,$ for any $M\in\A.$
\

Assume now that $(\omega,\Y)$ is $\W$-cotilting and $\pd_\Y(M)<\infty$ for any $M\in\A.$ Then, by Theorem \ref{debildimfin} (b) 
$\WGpd_{(\omega,\Y)}(M)= \pd_\Y(M)<\infty,$ for any $M\in\A;$ proving that $W\GP_{(\omega,\Y)}^\wedge=\A.$
\

(b) Let $\glWGPD_{(\omega,\Y)}(\A)<\infty.$ Then, by (a) we get that $(\omega,\Y)$ is $\W$-cotilting. Therefore from Theorem \ref{debildimfin} (b), $\id(\Y)=\glWGPD_{(\omega,\Y)}(\A)<\infty.$
\

Assume now that $(\omega,\Y)$ is $\W$-cotilting and $\id(\Y)<\infty.$ Then, by Theorem \ref{debildimfin} (b), $\WGpd_{(\omega,\Y)}(\A)=\id(\Y)<\infty.$
\end{dem}

\begin{cor} \label{APGdebil} Let $\A$ be an abelian category, with enough projectives, and let $\omega\subseteq\A$ be such that $\add\,(\omega)=\omega$ and 
$\id_\omega(\omega)=0.$ Then, the following conditions are equivalent.
\begin{itemize}
\item[(a)] $W\GP_\omega^\wedge=\A$ (respectively, $\glWGPD_{(\omega,\omega)}(\A)<\infty).$
\item[(b)] $(\omega,\omega)$ is $\W$-cotilting and $\pd_\omega(M)<\infty$ for any $M\in\A$ (respectively, $\id\,(\omega)<\infty$).
\item[(c)] Any $C\in{}^\perp\omega$ admits a monic $\omega$-preenvelope and $\pd_\omega(M)<\infty$ for any $M\in\A$ (respectively, $\id\,(\omega)<\infty$).
\end{itemize}
If one of the above equivalent conditions holds, then $\glWGPD_\omega(\A)=\id\,(\omega).$
\end{cor}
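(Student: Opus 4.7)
The plan is to derive the corollary directly from Proposition \ref{PGdebilCotil}, Remark \ref{RKGdebilCotil} and Theorem \ref{debildimfin}, by specializing $\Y=\omega$.

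First, I would verify that the pair $(\omega,\omega)$ satisfies the standing hypotheses of Proposition \ref{PGdebilCotil}. Since $\add(\omega)=\omega$, the class $\omega$ is closed under finite coproducts and under direct summands in $\A$. The equality $\id_\omega(\omega)=0$ says that $\pd_\omega(\omega)=0$, so $(\omega,\omega)$ is WGP-admissible. Moreover $\A$ has enough projectives by hypothesis, and $\Y=\omega$ is closed under direct summands. So the hypothesis of Proposition \ref{PGdebilCotil} is in force with $\Y=\omega$.

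Next, I would apply Proposition \ref{PGdebilCotil}(a) with $\Y=\omega$ to obtain
\[
W\GP_\omega^\wedge=\A\ \Longleftrightarrow\ (\omega,\omega)\text{ is }\W\text{-cotilting and }\pd_\omega(M)<\infty\ \forall\,M\in\A,
\]
and Proposition \ref{PGdebilCotil}(b) with $\Y=\omega$ to obtain
\[
\glWGPD_{(\omega,\omega)}(\A)<\infty\ \Longleftrightarrow\ (\omega,\omega)\text{ is }\W\text{-cotilting and }\id(\omega)<\infty.
\]
This gives (a)$\Leftrightarrow$(b) in both forms of the statement. Then, Remark \ref{RKGdebilCotil} says that, under the assumption $\add(\omega)=\omega$ and $\id_\omega(\omega)=0$, the pair $(\omega,\omega)$ is $\W$-cotilting if and only if every $C\in{}^\perp\omega$ admits a monic $\omega$-preenvelope $C\to W$. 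Plugging this equivalence into (b) yields (b)$\Leftrightarrow$(c).

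Finally, assuming any of the equivalent conditions holds, the pair $(\omega,\omega)$ is $\W$-cotilting. Then Theorem \ref{debildimfin}(b), applied to $\Y=\omega$, gives $\WGpd_{(\omega,\omega)}(M)=\pd_\omega(M)$ for every $M\in\A$, and in particular
\[
\glWGPD_\omega(\A)=\pd_\omega(\A)=\id_\A(\omega)=\id(\omega),
\]
which is the promised equality. The only mildly delicate point is to keep straight that the two ``respectively'' variants correspond to the two parts of Proposition \ref{PGdebilCotil}; no additional argument beyond this dictionary is needed.
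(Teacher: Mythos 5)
Your proposal is correct and follows exactly the route the paper itself takes: the paper's proof simply cites Proposition \ref{PGdebilCotil}, Remark \ref{RKGdebilCotil} and Theorem \ref{debildimfin} (b), and your specialization to $\Y=\omega$ (after checking WGP-admissibility and closure under direct summands from $\add(\omega)=\omega$ and $\id_\omega(\omega)=0$) fills in precisely the intended details. No gaps.
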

\begin{dem} It follows from Proposition \ref{PGdebilCotil},  Remark \ref{RKGdebilCotil} and Theorem \ref{debildimfin} (b).
\end{dem}

\begin{cor}\label{1APGdebil} Let $R$ be a ring such that $\GP(R)^\wedge=\Modu(R)=\GI(R)^\vee.$ Then $\glGPD(R)=\id(\Proj(R))$ and 
$\glGID(R)=\pd(\Inj(R)).$
\end{cor}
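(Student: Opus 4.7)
The plan is to apply Corollary \ref{APGdebil} to the abelian category $\A:=\Modu(R)$ with the choice $\omega:=\Proj(R),$ and then its dual to $\omega:=\Inj(R).$

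First I would verify the hypotheses of Corollary \ref{APGdebil}. The category $\Modu(R)$ has enough projectives, the class $\Proj(R)$ is closed under finite direct sums and summands so $\add(\Proj(R))=\Proj(R),$ and $\id_{\Proj(R)}(\Proj(R))=0$ because $\Ext^i_R(P',P)=0$ for any projectives $P,P'$ and $i\geq 1.$ In particular, the pair $(\Proj(R),\Proj(R))$ is GP-admissible, so Proposition \ref{GP2} applies and gives the key identification
\[
W\GP_{\Proj(R)}\;=\;\GP_{\Proj(R)}\;=\;\GP(R).
\]
Consequently $W\GP_{\Proj(R)}^{\wedge}=\GP(R)^{\wedge},$ and the hypothesis $\GP(R)^{\wedge}=\Modu(R)$ is precisely condition (a) of Corollary \ref{APGdebil} for $\omega=\Proj(R).$

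Next I would read off the conclusion of Corollary \ref{APGdebil}, namely $\glWGPD_{\Proj(R)}(\Modu(R))=\id(\Proj(R)).$ Since $W\GP_{\Proj(R)}=\GP(R),$ the left-hand side equals $\glGPD(R),$ giving the first identity $\glGPD(R)=\id(\Proj(R)).$

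For the second identity, I would apply the same argument in $\A^{op}.$ The opposite of $\Modu(R)$ has enough projectives, namely the injectives of $\Modu(R);$ the class $\omega^{op}:=\Inj(R)$ satisfies $\add(\Inj(R))=\Inj(R)$ and $\id_{\Inj(R)}(\Inj(R))=0$ (dually, $\Ext^i_R(I,I')=0$ for injectives and $i\geq 1$). The pair $(\Inj(R),\Inj(R))$ is GI-admissible, so the dual of Proposition \ref{GP2} gives $W\GI_{\Inj(R)}=\GI_{\Inj(R)}=\GI(R),$ hence $W\GI_{\Inj(R)}^{\vee}=\GI(R)^{\vee}=\Modu(R).$ The dual of Corollary \ref{APGdebil} then yields $\glWGID_{\Inj(R)}(\Modu(R))=\pd(\Inj(R)),$ and translating back this is $\glGID(R)=\pd(\Inj(R)).$

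No serious obstacle is anticipated: the argument is essentially a dictionary translation from Corollary \ref{APGdebil} to the concrete setting $\A=\Modu(R).$ The only point that requires a moment of care is the identification $W\GP_{\Proj(R)}=\GP(R)$ (and its injective dual), which is immediate from Proposition \ref{GP2} once one observes that $(\Proj(R),\Proj(R))$ is GP-admissible.
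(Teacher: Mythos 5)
Your proposal is correct and follows exactly the paper's route: apply Corollary \ref{APGdebil} with $\omega=\Proj(R)$, identify $W\GP_{\Proj(R)}=\GP(R)$ (which the paper asserts and you justify via Proposition \ref{GP2}, using that $(\Proj(R),\Proj(R))$ is (weak) GP-admissible), and then invoke the dual statement with $\Inj(R)$ for the second equality. The extra verifications you include (enough projectives, $\add(\omega)=\omega$, $\id_\omega(\omega)=0$) are exactly the hypotheses the paper takes for granted, so there is no gap.
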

\begin{dem} We can apply Corollary \ref{APGdebil} to the class $\omega:=\Proj\,(R)$. Note that $W\GP_\omega$ coincide with the class 
$\GP(R)$ of the Gorenstein-projective $R$-modules and
thus $\glWGPD_\omega(\A)$ is just the global Gorenstein projective dimension $\glGPD(R)$ of the ring $R.$ In order to get the equality 
$\glGID(R)=\pd(\Inj(R)),$ we apply the dual of Corollary \ref{APGdebil} to the class $\nu:=\Inj(R).$
\end{dem}

\begin{cor}\label{Gdebildimfin} For a GP-admissible pair $(\X,\Y)$ in an abelian category $\A$ and $\omega:=\X\cap \Y,$ the 
following statements hold true.
\begin{itemize}
\item[(a)] Let $(\omega, \Y)$ be $\W$-cotilting. Then
 \begin{itemize}
  \item[\normalfont{(a1)}]  $\GP_{(\X, \Y)} ={}^{\perp}\Y.$
  \item[\normalfont{(a2)}] If $\A$ has enough projectives,  then $\Gpd_{(\X, \Y)}(M)=\pd_\Y(M)$ for any $M\in\A.$ Moreover 
  $\glGPD _{(\omega, \Y)} (\A) =\id\, (\Y) .$
\end{itemize}
\item[(b)] $\GP_{(\X,\Y)}={}^\perp\Y$ if and only if the pair $(\omega,\Y)$ is $\W$-cotilting.
\item[(c)] Let $\Y$ be closed under direct summands in $\A,$ with enough projectives. Then
 \begin{itemize}
\item[(c1)] $\GP_{(\X,\Y)}^\wedge=\A$ if and only if $(\omega,\Y)$ is $\W$-cotilting and $\pd_\Y(M)<\infty$ for any $M\in\A.$
\item[(c2)] $\glGPD_{(\X,\Y)}(\A)<\infty$ if and only if $(\omega,\Y)$ is $\W$-cotilting and $\id(\Y)$ is finite.
\end{itemize}
\end{itemize}
\end{cor}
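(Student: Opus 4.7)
The plan is to reduce the entire corollary to the already-proven analogues for weakly Gorenstein-projective objects. The essential observation is that since $(\X,\Y)$ is GP-admissible, Remark \ref{debiladmisiblegrueso} tells us that $(\omega,\Y)$ is WGP-admissible; and by Theorem \ref{iguales} we have the identification
$$\GP_{(\X,\Y)} \;=\; W\GP_{(\omega,\Y)}.$$
This identification is the hinge of the proof: it immediately yields $\Gpd_{(\X,\Y)}(M)=\WGpd_{(\omega,\Y)}(M)$ for all $M\in\A,$ and hence equality of the corresponding global (Gorenstein) projective dimensions. So every claim below can be transferred directly from the weak setting.

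For part (a), I would apply Theorem \ref{debildimfin} to the $\W$-cotilting WGP-admissible pair $(\omega,\Y).$ Part (a1) follows from $W\GP_{(\omega,\Y)}={}^\perp\Y,$ combined with the identification above. Part (a2) then follows from Theorem \ref{debildimfin}(b), which gives $\WGpd_{(\omega,\Y)}(M)=\pd_\Y(M)$ in the presence of enough projectives, and $\glWGPD_{(\omega,\Y)}(\A)=\id(\Y);$ again the identification translates these into the statements for $\Gpd_{(\X,\Y)}.$

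For part (b), I invoke Corollary \ref{GdebilCotil}, which asserts (for the WGP-admissible pair $(\omega,\Y)$) that $W\GP_{(\omega,\Y)}={}^\perp\Y$ holds if and only if $(\omega,\Y)$ is $\W$-cotilting for some class $\W.$ The identification $\GP_{(\X,\Y)}=W\GP_{(\omega,\Y)}$ converts this into the desired equivalence.

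For part (c), I plan to apply Proposition \ref{PGdebilCotil} to $(\omega,\Y).$ Since $\Y$ is closed under direct summands and $\A$ has enough projectives, its hypotheses are met; and because $\GP_{(\X,\Y)}^\wedge=W\GP_{(\omega,\Y)}^\wedge$ (both being the class of objects of finite weak/strong Gorenstein projective dimension with respect to the coinciding classes), the equivalences (c1) and (c2) follow at once from parts (a) and (b) of Proposition \ref{PGdebilCotil} respectively. I do not anticipate any genuine obstacle: the whole proof is a bookkeeping exercise, and the main subtlety is simply to cite the identification $\GP_{(\X,\Y)}=W\GP_{(\omega,\Y)}$ from Theorem \ref{iguales} at the outset, so that each of Theorem \ref{debildimfin}, Corollary \ref{GdebilCotil}, and Proposition \ref{PGdebilCotil} can be invoked as a black box.
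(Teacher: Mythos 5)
Your proposal is correct and follows exactly the paper's own argument: the paper likewise observes via Remark \ref{debiladmisiblegrueso} that $(\omega,\Y)$ is WGP-admissible, identifies $\GP_{(\X,\Y)}=W\GP_{(\omega,\Y)}$ by Theorem \ref{iguales}, and then cites Theorem \ref{debildimfin}, Corollary \ref{GdebilCotil} and Proposition \ref{PGdebilCotil} for parts (a), (b) and (c) respectively. Your write-up just spells out the same reduction in more detail.
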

\begin{dem} By Remark \ref{debiladmisiblegrueso}, we have that $(\omega,\Y)$ is WGP-admissible. Moreover, by Theorem \ref{iguales} we get 
$\GP _{(\X, \Y)}=W\GP _{(\omega, \Y)}.$ Thus, the result follows from Theorem \ref{debildimfin}, Corollary \ref{GdebilCotil} and Proposition \ref{PGdebilCotil}.
\end{dem}

\begin{rk}\label{1Gdebildimfin} (1) Let $R$ be a ring,  $\X:=\Proj (R)$ and $\Y:=\Flat (R).$ We apply this situation to Corollary \ref{Gdebildimfin} (c1). Note that in this case $\GP_{(\X,\Y)}$ is the class of the Ding-projetive $R$-modules $\DP(R).$ Thus, we have 
that $\DP(R)^\wedge=\Modu\,(R)$ if and only if $(\Proj(R),\Flat(R))$ is $\W$-cotilting and $\pd_{\Flat(R)}(M)<\infty$ for any $R$-module $M.$
\

(2) Let $R$ be a Ding-Cheng ring. Then, by Corollary \ref{GdebilCotil} and Corollary \ref{ThickGP13} (a), we get that the pair 
$(\Proj(R),\Flat(R))$ is $\W$-cotilting. Then, by (1), it follows that $\DP(R)^\wedge=\Modu\,(R)$ if and only if  
$\pd_{\Flat(R)}(M)<\infty$ for any $R$-module $M.$
\end{rk}

\begin{teo}\label{tiltcotor} 
Let $\A$ be an abelian category with enough projectives, and let $(\omega, \Y)$ be a $\W$-cotilting pair in $\A$, with $\omega$ closed under direct summands in $\A,$  and  
$\id\, (\Y) < \infty.$ Then,  the following statements hold true.
\begin{itemize}
  \item[(a)]  $(W\GP _{(\omega, \Y)}, \omega ^{\wedge})$ is a hereditary complete cotorsion pair in $\A.$
  \item[(b)] $\omega ={}^{\perp} \Y \cap \omega ^{\wedge}, \;\;\; W\GP_{(\omega, \Y)} ={}^{\perp} \omega = {}^{\perp} (\omega ^{\wedge}) = {}^{\perp} \Y \mbox{\;\;\;and \;\; } W\GP _{(\omega, \Y)} ^{\perp}  = \omega ^{\wedge}.$
   \item[(c)] $\glWGPD_{(\omega,\Y)}(\A)=\FPD_\omega(\A)=\FPD_{\omega^\wedge}(\A)=\resdim_\omega(\omega^\wedge)=\pd_\omega(\omega^\wedge)=$ $=\pd_{\omega^\wedge}(\omega^\wedge)=\id\,(\Y)< \infty.$
   \item[(d)] $\Q_{\omega}^{<\infty}=\A=W\GP _{(\omega , \Y)} ^{\wedge} =\Q_{\omega^\wedge}^{<\infty}.$
   \item[(e)] Let $\Y$ be closed under direct summands in $\A.$ Then, $\Q_{\Y}^{<\infty}=\A=\Q_{\Y^\wedge}^{<\infty}$ and 
   $\glWGPD_{(\omega,\Y)}(\A)=\FPD_\Y(\A)=\FPD_{\Y^\wedge}(\A).$
\end{itemize}
\end{teo}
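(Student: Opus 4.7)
The plan is to feed the $\W$-cotilting hypothesis into Theorem \ref{debildimfin} and then combine the conclusions with the general cotorsion-pair machinery of Proposition \ref{cotorpair} and the finitistic-dimension results of Theorem \ref{WThickGP6} and Theorem \ref{WThickGP10}. Theorem \ref{debildimfin}(a) immediately yields $W\GP_{(\omega,\Y)}={}^{\perp}\Y$, and Theorem \ref{debildimfin}(b), together with the assumption $\id\,(\Y)<\infty$, produces the numerical input $\glWGPD_{(\omega,\Y)}(\A)=\id\,(\Y)<\infty$. This finiteness forces $W\GP_{(\omega,\Y)}^{\wedge}=\A$, which is the single point on which the whole argument pivots.

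For (a), apply Proposition \ref{cotorpair}(c) directly to the finiteness of $\glWGPD_{(\omega,\Y)}(\A)$ to obtain the hereditary complete cotorsion pair $(W\GP_{(\omega,\Y)},\omega^{\wedge})$. For (b), substituting $W\GP_{(\omega,\Y)}^{\wedge}=\A$ into Proposition \ref{cotorpair}(b) gives $W\GP_{(\omega,\Y)}={}^{\perp}(\omega^{\wedge})$ and $W\GP_{(\omega,\Y)}^{\perp}=\omega^{\wedge}$. The identity ${}^{\perp}(\omega^{\wedge})={}^{\perp}\omega$ is a direct consequence of the dual of Lemma \ref{AB1}, while $W\GP_{(\omega,\Y)}={}^{\perp}\Y$ was already noted. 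The remaining equality $\omega={}^{\perp}\Y\cap\omega^{\wedge}$ becomes, after the identification $W\GP_{(\omega,\Y)}={}^{\perp}\Y$, exactly the first equality in Proposition \ref{WThickGP9}(b3), whose hypothesis $\omega$ closed under direct summands is part of the standing assumption.

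For (c), the chain $\resdim_\omega(\omega^{\wedge})=\pd_\omega(\omega^{\wedge})=\pd_{\omega^{\wedge}}(\omega^{\wedge})=\glWGPD_{(\omega,\Y)}(\A)$ is Theorem \ref{WThickGP6}, using that $\WFGPD_{(\omega,\Y)}(\A)=\glWGPD_{(\omega,\Y)}(\A)$ because $W\GP_{(\omega,\Y)}^{\wedge}=\A$; the identification with $\FPD_{\omega}(\A)$ and $\FPD_{\omega^{\wedge}}(\A)$ comes from Theorem \ref{WThickGP10}(b), whose hypothesis $\Q_{\mathcal{Z}}^{<\infty}\subseteq W\GP_{(\omega,\Y)}^{\wedge}$ is trivially satisfied since the right-hand side equals $\A$. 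Statement (d) then records the resulting equalities $\Q_{\omega}^{<\infty}=\A=\Q_{\omega^{\wedge}}^{<\infty}$ and $W\GP_{(\omega,\Y)}^{\wedge}=\A$. Finally, (e) is the parallel application of Theorem \ref{WThickGP10}(a) with $\mathcal{Z}\in\{\Y,\Y^{\wedge}\}$, which is legitimate precisely because $\Y$ is now assumed closed under direct summands.

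In truth there is no serious obstacle once the finiteness $\glWGPD_{(\omega,\Y)}(\A)=\id\,(\Y)<\infty$ is secured: the bulk of the theorem is pure bookkeeping on top of Theorem \ref{debildimfin}. The only slightly delicate points are to keep straight the distinction between $\WFGPD_{(\omega,\Y)}(\A)$ and $\glWGPD_{(\omega,\Y)}(\A)$ (which collapse only because $W\GP_{(\omega,\Y)}^{\wedge}=\A$) and to check, case by case, that the ``closed under direct summands'' hypotheses of Theorems \ref{WThickGP6}, \ref{WThickGP10}, and Proposition \ref{WThickGP9} are in force for the class being invoked, $\omega$ for parts (a)--(d) and $\Y$ for (e).
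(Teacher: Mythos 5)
Your proposal is correct and follows essentially the same route as the paper: Theorem \ref{debildimfin} to get $W\GP_{(\omega,\Y)}={}^{\perp}\Y$ and $\glWGPD_{(\omega,\Y)}(\A)=\id(\Y)<\infty$ (hence $W\GP_{(\omega,\Y)}^{\wedge}=\A$), then Proposition \ref{cotorpair}, Proposition \ref{WThickGP9}(b3), Theorem \ref{WThickGP6} and Theorem \ref{WThickGP10} for the remaining parts. The only cosmetic difference is in (b), where you extract ${}^{\perp}(\omega^{\wedge})$ and $W\GP_{(\omega,\Y)}^{\perp}=\omega^{\wedge}$ from Proposition \ref{cotorpair}(b) and then invoke the dual of Lemma \ref{AB1}, whereas the paper reads the left-orthogonal equalities off Proposition \ref{WThickGP9}(b3); both derivations are valid and interchangeable.
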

\begin{dem}  (a) By Theorem \ref{debildimfin} (b) and $\id (\Y )< \infty,$  we get that $\glWGPD _{(\omega , \Y)} (\A)$ is finite.  Then, from  Proposition  \ref{cotorpair} (c), we conclude (a).
\

(b) By Theorem \ref{debildimfin} (a), we have $W\GP _{(\omega, \Y)} ={}^{\perp} \Y.$  Then, Proposition \ref{WThickGP9} (b3) implies that  
$$\omega = W\GP _{(\omega, \Y)} \cap \omega ^{\wedge} = {}^{\perp} \Y \cap \omega ^{\wedge}.$$ 
Moreover, since $W\GP _{(\omega, \Y)} ^{\wedge} = \A,$ we get from Proposition \ref{WThickGP9} (b3)  the equalities 
$$W\GP _{(\omega, \Y)} ={}^{\perp} \omega ={}^{\perp} (\omega ^{\wedge}) ={}^{\perp} \Y .$$
On the other hand, from Proposition \ref{cotorpair} (b) it follows  $W\GP _{(\omega, \Y)} ^{\perp} = \omega ^{\wedge}.$
\

(c) and (d):  It follows from Theorem \ref{WThickGP6}, Theorem \ref{debildimfin} (b) and Theorem \ref{WThickGP10} (b), since $W\GP _{(\omega , \Y)} ^{\wedge} = \A$ and $\glWGPD_{(\omega,\Y)}(\A)=\id(\Y)<\infty.$
\

(e) It follows from Theorem \ref{WThickGP10} (a), since $W\GP _{(\omega , \Y)} ^{\wedge} = \A.$
\end{dem} 

\begin{cor}\label{CotGorP} Let $(\X,\Y)$ be a GP-admissible pair in an abelian category $\A,$ with enough projectives, and 
such that $(\omega, \Y)$ is a $\W$-cotilting pair in $\A,$ where $\omega := \X \cap \Y$ is closed under direct 
summands and  $\id\, (\Y) < \infty.$  Then,   the following statements hold true.
\begin{itemize}
  \item[(a)]  $(\GP _{(\X, \Y)}, \omega ^{\wedge})$ is a hereditary complete cotorsion pair in $\A.$
  \item[(b)] $\omega ={}^{\perp} \Y \cap \omega ^{\wedge}, \;\;\; \GP_{(\X, \Y)} ={}^{\perp} \omega = {}^{\perp} (\omega ^{\wedge}) = {}^{\perp} \Y \mbox{\;\;\;and \;\; } \GP _{(\X, \Y)} ^{\perp}  = \omega ^{\wedge}.$
   \item[(c)] $\glGPD_{(\X,\Y)}(\A)=\FPD_\omega(\A)=\FPD_{\omega^\wedge}(\A)=\resdim_\omega(\omega^\wedge)=\pd_\omega(\omega^\wedge)=$ $=\pd_{\omega^\wedge}(\omega^\wedge)=\id\,(\Y)< \infty.$
   \item[(d)] $\Q_{\omega}^{<\infty}=\A=\GP _{(\X , \Y)} ^{\wedge} =\Q_{\omega^\wedge}^{<\infty}.$
   \item[(e)] Let $\Y$ be closed under direct summands in $\A.$ Then, $\Q_{\Y}^{<\infty}=\A=\Q_{\Y^\wedge}^{<\infty}$ and 
   $\glGPD_{(\X,\Y)}(\A)=\FPD_\Y(\A)=\FPD_{\Y^\wedge}(\A).$
\end{itemize}
\end{cor}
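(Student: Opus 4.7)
The plan is to reduce the corollary directly to Theorem \ref{tiltcotor} by passing from the GP-admissible pair $(\X,\Y)$ to the WGP-admissible pair $(\omega,\Y)$, where $\omega=\X\cap\Y$. The crucial dictionary between the two settings is that $\GP_{(\X,\Y)}$ coincides with $W\GP_{(\omega,\Y)}$, so every conclusion of Theorem \ref{tiltcotor} about $W\GP_{(\omega,\Y)}$ transfers verbatim to $\GP_{(\X,\Y)}$.

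First, I would verify that the hypotheses of Theorem \ref{tiltcotor} are met by the pair $(\omega,\Y)$. Since $(\X,\Y)$ is GP-admissible, Remark \ref{debiladmisiblegrueso} gives that $(\omega,\Y)$ is WGP-admissible. By assumption, $\omega$ is closed under direct summands in $\A$, the pair $(\omega,\Y)$ is $\W$-cotilting, $\A$ has enough projectives, and $\id(\Y)<\infty$. Hence all the hypotheses of Theorem \ref{tiltcotor} hold for $(\omega,\Y)$.

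Next, I would invoke Theorem \ref{iguales} to obtain the key identification
\[
\GP_{(\X,\Y)} \;=\; W\GP_{(\omega,\Y)},
\]
which in turn gives $\GP_{(\X,\Y)}^{\wedge}=W\GP_{(\omega,\Y)}^{\wedge}$, $\GP_{(\X,\Y)}^{\perp}=W\GP_{(\omega,\Y)}^{\perp}$, $\glGPD_{(\X,\Y)}(\A)=\glWGPD_{(\omega,\Y)}(\A)$, and similar identifications for the Gorenstein projective dimensions of individual objects.

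Finally, I would read off items (a)--(e) one by one from the corresponding items of Theorem \ref{tiltcotor} applied to $(\omega,\Y)$: part (a) follows from Theorem \ref{tiltcotor}(a) together with the identification $\GP_{(\X,\Y)}=W\GP_{(\omega,\Y)}$; part (b) is Theorem \ref{tiltcotor}(b) after the same substitution; parts (c) and (d) follow from Theorem \ref{tiltcotor}(c) and (d) respectively, again using the identifications above and $\glGPD_{(\X,\Y)}(\A)=\glWGPD_{(\omega,\Y)}(\A)$; and part (e) follows from Theorem \ref{tiltcotor}(e) under the additional hypothesis that $\Y$ is closed under direct summands. There is no genuine obstacle here: the whole content lies in checking that the hypotheses of Theorem \ref{tiltcotor} hold and that the classes $\GP_{(\X,\Y)}$ and $W\GP_{(\omega,\Y)}$ coincide, both of which are already recorded earlier in the paper.
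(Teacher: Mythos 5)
Your proposal is correct and follows exactly the paper's own argument: invoke Remark \ref{debiladmisiblegrueso} to see that $(\omega,\Y)$ is WGP-admissible, use Theorem \ref{iguales} to identify $\GP_{(\X,\Y)}$ with $W\GP_{(\omega,\Y)}$, and then transfer each item from Theorem \ref{tiltcotor}. Nothing is missing.
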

\begin{dem} By Remark \ref{debiladmisiblegrueso}, we have that $(\omega,\Y)$ is WGP-admissible. Moreover, Theorem \ref{iguales} says us that 
$\GP _{(\X, \Y)}=W\GP _{(\omega, \Y)}.$ Thus, the result follows from Theorem \ref{tiltcotor}.
\end{dem}

\begin{teo} \label{Wtiltcotor} Let $\A$ be an abelian category with enough projectives and injectives, and let $(\omega, \Y)$ be a WGP-admissible pair 
in $\A$, with both $\omega$  and $\Y$ closed under direct summands in $\A.$ Then, the following statements are equivalent.
\begin{itemize}
\item[(a)] The pair $(\omega, \Y)$ is $\W$-cotilting and $\id(\Y)<\infty.$ 
\item[(b)] $(W\GP_{(\omega,\Y)},\omega^\wedge)$ is an hereditary complete cotorsion pair in $\A$ such that $\id(\omega)<\infty.$
\item[(c)] $W\GP_{(\omega,\Y)}={}^\perp\omega$ and $\id(\omega)<\infty.$
\end{itemize}
If one of the above equivalent conditions holds, then $\pd_\Y(M)=\WGpd_{(\omega,\Y)}(M)=\pd_\omega(M),$ for any $M\in\A.$ Moreover $\glWGPD_{(\omega,\Y)}(\A)=\id(\Y)=\id(\omega)<\infty.$
\end{teo}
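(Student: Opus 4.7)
The plan is to prove the cyclic chain (a)$\Rightarrow$(b)$\Rightarrow$(c)$\Rightarrow$(a), and then read off the final dimension formulas from the common consequence $W\GP_{(\omega,\Y)}={}^\perp\Y={}^\perp\omega$ obtained along the way.

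For (a)$\Rightarrow$(b) I would simply invoke Theorem \ref{tiltcotor}(a), whose hypotheses are precisely (a) together with $\omega$ closed under direct summands; its conclusion is that $(W\GP_{(\omega,\Y)},\omega^\wedge)$ is a hereditary complete cotorsion pair. The extra requirement $\id(\omega)<\infty$ is free, since WGP-admissibility forces $\omega\subseteq\Y$, hence $\id(\omega)\leq\id(\Y)<\infty$.

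For (b)$\Rightarrow$(c), the cotorsion pair equality gives $W\GP_{(\omega,\Y)}={}^{\perp_1}(\omega^\wedge)$, while the hereditary condition $\id_{W\GP_{(\omega,\Y)}}(\omega^\wedge)=0$ upgrades this to $W\GP_{(\omega,\Y)}={}^\perp(\omega^\wedge)$. A standard dimension-shifting argument along finite $\omega$-resolutions of objects in $\omega^\wedge$ then yields ${}^\perp(\omega^\wedge)={}^\perp\omega$, so $W\GP_{(\omega,\Y)}={}^\perp\omega$; the bound $\id(\omega)<\infty$ is part of the hypothesis.

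The main obstacle, and the one step that uses the full strength of the hypotheses, is (c)$\Rightarrow$(a). First I would note the chain $W\GP_{(\omega,\Y)}\subseteq{}^\perp\Y\subseteq{}^\perp\omega=W\GP_{(\omega,\Y)}$: the leftmost inclusion is built into Definition \ref{debildefi2}, and the middle one follows from $\omega\subseteq\Y$. Hence $W\GP_{(\omega,\Y)}={}^\perp\Y$, and Corollary \ref{GdebilCotil} tells me that $(\omega,\Y)$ is $\omega$-cotilting. To pass from $\id(\omega)<\infty$ to $\id(\Y)<\infty$, I would combine two expressions for $\WGpd_{(\omega,\Y)}$: on the one hand, Theorem \ref{debildimfin}(b) (applicable because $\A$ has enough projectives and the pair is now cotilting) gives $\WGpd_{(\omega,\Y)}(M)=\pd_\Y(M)$ for every $M$; on the other hand, Lemma \ref{Ldebildimfin} applied with $\B:=\omega$, together with the identification $W\GP_{(\omega,\Y)}={}^\perp\omega$ from (c), gives
\[
\WGpd_{(\omega,\Y)}(M)=\resdim_{{}^\perp\omega}(M)=\pd_\omega(M)\leq\id(\omega).
\]
Since this holds for every $M\in\A$, I conclude $\id(\Y)=\pd_\Y(\A)\leq\id(\omega)<\infty$, which completes the implication.

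Finally, the two identifications $\pd_\Y(M)=\WGpd_{(\omega,\Y)}(M)=\pd_\omega(M)$ just established hold whenever any of (a), (b), (c) is in force. Taking the supremum over $M\in\A$ and using $\id(\Y)=\pd_\Y(\A)$ and $\id(\omega)=\pd_\omega(\A)$ delivers $\glWGPD_{(\omega,\Y)}(\A)=\id(\Y)=\id(\omega)$, which is finite by hypothesis. I expect the only subtle point to be confirming that Theorem \ref{debildimfin}(b) is legitimately available partway through the (c)$\Rightarrow$(a) argument, once the cotilting property has been established; everything else is a matter of juggling orthogonality with resolution dimensions.
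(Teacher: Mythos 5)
Your proposal is correct and follows essentially the same route as the paper: (a)$\Rightarrow$(b) via Theorem \ref{tiltcotor}, (b)$\Rightarrow$(c) from the cotorsion-pair equality upgraded by heredity together with ${}^\perp(\omega^\wedge)={}^\perp\omega$, and (c)$\Rightarrow$(a) resting on Corollary \ref{GdebilCotil}, Lemma \ref{Ldebildimfin} and Theorem \ref{debildimfin}, exactly the ingredients the paper uses. The only deviations are cosmetic: you get $W\GP_{(\omega,\Y)}={}^\perp\Y$ by the sandwich $W\GP_{(\omega,\Y)}\subseteq{}^\perp\Y\subseteq{}^\perp\omega$ and bound $\id(\Y)\leq\id(\omega)$ directly, whereas the paper first deduces $\glWGPD_{(\omega,\Y)}(\A)=\id(\omega)<\infty$ and then invokes Proposition \ref{PGdebilCotil}(b), which packages the same argument.
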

\begin{dem} (a) $\Rightarrow$ (b) By Theorem \ref{tiltcotor} (a), we get that $(W\GP_{(\omega,\Y)},\omega^\wedge)$ is an hereditary complete cotorsion pair in $\A$ and $W\GP_{(\omega,\Y)}={}^\perp\omega.$  Then, by Lemma \ref{Ldebildimfin},  Theorem \ref{debildimfin} (b) and Proposition \ref{WThickGP9} (b1) $\pd_\Y(M)=\WGpd_{(\omega,\Y)}(M)=\pd_\omega(M),$ for any 
$M\in\A=W\GP_{(\omega,\Y)}^\wedge.$ In particular 
$\id(\omega)=\pd_\omega(\A)=\pd_\Y(\A)=\id(\Y)$ and thus $\id(\omega)<\infty.$
\

(b) $\Rightarrow$ (c) Since $(W\GP_{(\omega,\Y)},\omega^\wedge)$ is an hereditary cotorsion pair, we have that $\omega^\wedge$ is coresolving 
and $W\GP_{(\omega,\Y)}={}^{\perp_1}(\omega^\wedge).$ Using now that $\A$ has enough injectives and $\omega^\wedge$ is coresolving , 
it follows that ${}^{\perp_1}(\omega^\wedge)={}^{\perp}(\omega^\wedge)$ and thus $W\GP_{(\omega,\Y)}={}^{\perp}(\omega^\wedge).$ Finally, by the dual of Lemma \ref{AB1}, we have ${}^{\perp}(\omega^\wedge)={}^{\perp}\omega.$
\

(c) $\Rightarrow$ (a) Since $W\GP_{(\omega,\Y)}={}^\perp\omega,$ we get from Lemma \ref{Ldebildimfin} that $\WGpd_{(\omega,\Y)}(M)=\pd_\omega(M),$ for any $M\in\A.$ Therefore $\glWGPD_{(\omega,\Y)}(\A)=\pd_\omega(\A)=\id(\omega)<\infty.$ Then, by 
Proposition \ref{PGdebilCotil} (b), we conclude (a).
\end{dem}

\begin{cor} \label{CWtiltcotor} Let $(\X,\Y)$ be a GP-admissible pair in an abelian category $\A,$ with enough projectives and injectives, and let $(\omega, \Y)$ be a WGP-admissible pair 
in $\A$, with both $\omega:=\X\cap\Y$  and $\Y$ closed under direct summands in $\A.$ Then, the following statements are equivalent.
\begin{itemize}
\item[(a)] The pair $(\omega, \Y)$ is $\W$-cotilting and $\id(\Y)<\infty.$ 
\item[(b)] $(\GP_{(\X,\Y)},\omega^\wedge)$ is an hereditary complete cotorsion pair in $\A$ such that $\id(\omega)$ is finite.
\item[(c)] $\GP_{(\X,\Y)}={}^\perp\omega$ and $\id(\omega)<\infty.$
\end{itemize}
If one of the above equivalent conditions holds, then $\pd_\Y(M)=\Gpd_{(\X,\Y)}(M)=\pd_\omega(M),$ for any $M\in\A.$ Moreover 
$\glGPD_{(\X,\Y)}(\A)=\id(\Y)=\id(\omega)<\infty.$
\end{cor}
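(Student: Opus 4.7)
The plan is to reduce this corollary directly to Theorem \ref{Wtiltcotor} by exploiting the identification $\GP_{(\X,\Y)} = W\GP_{(\omega,\Y)}$ already available from Section 3.

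First, I would observe that all the relevant hypotheses transfer: $\A$ has enough projectives and injectives, $\omega := \X \cap \Y$ and $\Y$ are closed under direct summands in $\A$, and by Remark \ref{debiladmisiblegrueso}, the GP-admissibility of $(\X,\Y)$ forces $(\omega,\Y)$ to be WGP-admissible. So the WGP-admissible pair $(\omega,\Y)$ meets exactly the hypotheses of Theorem \ref{Wtiltcotor}.

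The key structural step is to invoke Theorem \ref{iguales}, which gives the equality
\[
\GP_{(\X,\Y)} \;=\; W\GP_{(\omega,\Y)}.
\]
Once this equality is in hand, every statement of the present corollary becomes a literal translation of the corresponding statement in Theorem \ref{Wtiltcotor}. More precisely, (a) is identical in both results; condition (b) of the corollary coincides with condition (b) of Theorem \ref{Wtiltcotor} after replacing $W\GP_{(\omega,\Y)}$ with $\GP_{(\X,\Y)}$; and (c) of the corollary is likewise the translation of (c) in Theorem \ref{Wtiltcotor}. Hence the equivalences (a) $\Leftrightarrow$ (b) $\Leftrightarrow$ (c) follow.

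For the final assertion, applying the last part of Theorem \ref{Wtiltcotor} yields $\pd_\Y(M) = \WGpd_{(\omega,\Y)}(M) = \pd_\omega(M)$ for every $M \in \A$, and $\glWGPD_{(\omega,\Y)}(\A) = \id(\Y) = \id(\omega) < \infty$. Using once more the equality $\GP_{(\X,\Y)} = W\GP_{(\omega,\Y)}$, we get $\Gpd_{(\X,\Y)}(M) = \WGpd_{(\omega,\Y)}(M)$ for every $M$ and $\glGPD_{(\X,\Y)}(\A) = \glWGPD_{(\omega,\Y)}(\A)$, which delivers the stated identities. There is no real obstacle here; the only point requiring care is to check that the WGP-admissibility of $(\omega,\Y)$ (needed to apply Theorem \ref{iguales} and Theorem \ref{Wtiltcotor}) is automatic from the GP-admissibility of $(\X,\Y)$, which is precisely the content of Remark \ref{debiladmisiblegrueso}.
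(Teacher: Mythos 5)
Your proposal is correct and matches the paper's own proof: the paper likewise invokes Remark \ref{debiladmisiblegrueso} for the WGP-admissibility of $(\omega,\Y)$, Theorem \ref{iguales} for the identification $\GP_{(\X,\Y)}=W\GP_{(\omega,\Y)}$, and then translates Theorem \ref{Wtiltcotor} verbatim. Nothing further is needed.
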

\begin{dem} By Remark \ref{debiladmisiblegrueso}, we have that $(\omega,\Y)$ is WGP-admissible. Moreover, Theorem \ref{iguales} says us that 
$\GP _{(\X, \Y)}=W\GP _{(\omega, \Y)}.$ Thus, the result follows from Theorem \ref{Wtiltcotor}.
\end{dem}
\vspace{0.2cm}

We can give the following characterization of the finiteness of  the global Ding-projective dimension of a ring $R.$ 

\begin{cor}\label{1CWtiltcotor} For any ring $R,$ the following statements are equivalent.
\begin{itemize}
\item[(a)] The pair $(\Proj(R),\Flat(R))$ is $\W$-cotilting and $\id(\Flat(R))<\infty.$
\item[(b)]  $(\DP(R),\Proj(R)^\wedge)$ is a hereditary complete cotorsion pair in $\Modu\,(R)$ and $\id(\Proj(R))<\infty.$
\item[(c)]  $\DP(R)={}^\perp\Proj(R)$ and $\id(\Proj(R))<\infty.$
\item[(d)] $\glDPD(R)<\infty.$
\end{itemize}
Moreover, if one of the equivalent conditions holds, then 
$$\glDPD(R)=\id(\Proj\,(R))=\id(\Flat(R)).$$
\end{cor}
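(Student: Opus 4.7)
The plan is to recognize this as a direct specialization of the general machinery developed in the preceding sections, applied to the canonical pair $(\X,\Y) := (\Proj(R),\Flat(R))$ in the abelian category $\A := \Modu(R).$ First I would verify that this pair meets all the hypotheses needed to invoke Corollary \ref{CWtiltcotor}: since every projective module is flat, $\omega := \X\cap\Y = \Proj(R);$ the category $\Modu(R)$ has enough projectives and injectives; $\Proj(R)$ is $\Proj(R)$-epic, closed under extensions, finite coproducts and direct summands, with $\pd_{\Flat(R)}(\Proj(R))=0,$ and $\omega=\Proj(R)$ is trivially a relative cogenerator in $\Proj(R);$ hence $(\X,\Y)$ is GP-admissible. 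Also $\Y=\Flat(R)$ is closed under direct summands. Note further that $\GP_{(\Proj(R),\Flat(R))} = \DP(R)$ by definition.

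With these observations in hand, the equivalences (a)$\Leftrightarrow$(b)$\Leftrightarrow$(c) follow immediately from Corollary \ref{CWtiltcotor}, once we translate: $\omega=\Proj(R),$ $\omega^\wedge=\Proj(R)^\wedge,$ $\GP_{(\X,\Y)}=\DP(R),$ and $\id(\omega)=\id(\Proj(R));$ the condition $\id(\Y)<\infty$ becomes $\id(\Flat(R))<\infty.$ The ``moreover'' part of Corollary \ref{CWtiltcotor} delivers the formula
\[
\glGPD_{(\Proj(R),\Flat(R))}(\Modu R)=\id(\Flat(R))=\id(\Proj(R)),
\]
and since the left-hand side is precisely $\glDPD(R),$ this simultaneously yields the final identity and shows that (a) (equivalently (b) or (c)) implies (d).

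The remaining implication (d)$\Rightarrow$(a) is the one subtle step: here the assumption is only the finiteness of $\glDPD(R),$ and we need to conclude the $\W$-cotilting property together with $\id(\Flat(R))<\infty.$ For this I would appeal to Corollary \ref{Gdebildimfin}(c2) applied to the same pair $(\Proj(R),\Flat(R));$ the hypotheses of that corollary (that $\Y$ is closed under direct summands, $\A$ has enough projectives, and the pair is GP-admissible) were already checked above, so $\glGPD_{(\X,\Y)}(\A)=\glDPD(R)<\infty$ forces both that $(\Proj(R),\Flat(R))$ is $\W$-cotilting and that $\id(\Flat(R))<\infty,$ which is exactly (a). I expect no serious obstacle — the main thing to be careful about is matching notations between the general theorems and the specific Ding-projective setting, in particular verifying that the intersection $\Proj(R)\cap\Flat(R)$ really is $\Proj(R)$ (so that $\omega^\wedge=\Proj(R)^\wedge$ as displayed in (b)) and that the right specialization of $\id(\Y)=\id(\omega)$ recovers the stated common value of $\glDPD(R).$
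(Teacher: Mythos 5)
Your proposal is correct and follows essentially the same route as the paper: the equivalences and the final dimension formula come from Corollary \ref{CWtiltcotor}, and the equivalence with the finiteness of $\glDPD(R)$ comes from Corollary \ref{Gdebildimfin} (c2), both applied with $\X=\Proj(R),$ $\Y=\Flat(R),$ $\omega=\X\cap\Y=\Proj(R).$ Your extra verification of GP-admissibility and of the identifications $\GP_{(\Proj(R),\Flat(R))}=\DP(R),$ $\omega^\wedge=\Proj(R)^\wedge$ is exactly the routine checking the paper leaves implicit.
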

\begin{dem} It follows from Corollary \ref{CWtiltcotor} and Corollary \ref{Gdebildimfin} (c2), by taking $\X=\Proj(R)$ and $\Y=\Flat(R).$
\end{dem}
\vspace{0.2cm}

We can give the following characterization of the finiteness of  the global Gorenstein-projective dimension of a ring $R.$ 

\begin{cor}\label{2CWtiltcotor} For any ring $R,$ the following statements are equivalent.
\begin{itemize}
\item[(a)] The pair $(\Proj(R),\Proj(R))$ is $\W$-cotilting and $\id(\Proj(R))<\infty.$
\item[(b)]  $(\GP(R),\Proj(R)^\wedge)$ is a hereditary complete cotorsion pair in $\Modu\,(R)$ and $\id(\Proj(R))<\infty.$
\item[(c)]  $\GP(R)={}^\perp\Proj(R)$ and $\id(\Proj(R))<\infty.$
\item[(d)] $\glGPD(R)<\infty.$
\end{itemize}
Moreover, if one of the equivalent conditions holds, then $\glGPD(R)=\id(\Proj\,(R)).$
\end{cor}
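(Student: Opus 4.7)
The plan is to apply Corollary \ref{CWtiltcotor} and Corollary \ref{Gdebildimfin}(c2) directly to the pair $(\X,\Y) := (\Proj(R),\Proj(R))$ in the abelian category $\A := \Modu\,(R),$ which has enough projectives and enough injectives. For this pair we have $\omega := \X \cap \Y = \Proj(R),$ $\GP_{(\X,\Y)} = \GP(R)$ and $\Gpd_{(\X,\Y)} = \Gpd.$

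First I would verify that $(\X,\Y) = (\Proj(R),\Proj(R))$ is GP-admissible: one has $\pd_\Y(\X) = 0$ trivially; $\Proj(R)$ is $\Proj(R)$-epic in $\A$ since $\A$ has enough projectives; $\Proj(R)$ is closed under finite coproducts and under extensions (any extension between projectives splits); and $\omega = \Proj(R)$ is a relative cogenerator in $\X = \Proj(R)$ via the trivial sequences $0 \to X \to X \to 0 \to 0.$ Moreover $\omega$ and $\Y$ coincide with $\Proj(R),$ which is closed under direct summands. All hypotheses of Corollary \ref{CWtiltcotor} are thus fulfilled.

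Applying Corollary \ref{CWtiltcotor} to this pair yields the equivalence of (a), (b) and (c) verbatim (after translating $\GP_{(\X,\Y)} = \GP(R),$ $\omega^\wedge = \Proj(R)^\wedge$ and $\id(\Y) = \id(\omega) = \id(\Proj(R))$), as well as the concluding identity $\glGPD(R) = \id(\Proj(R))$ once any of these conditions holds. The equivalence of (a) with (d) follows immediately from Corollary \ref{Gdebildimfin}(c2): with $\Y = \Proj(R)$ closed under direct summands in $\A$ and $\A$ having enough projectives, $\glGPD_{(\X,\Y)}(\A) = \glGPD(R) < \infty$ if and only if $(\omega,\Y) = (\Proj(R),\Proj(R))$ is $\W$-cotilting and $\id(\Y) = \id(\Proj(R)) < \infty.$

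There is no serious obstacle here; the only thing to check carefully is that the specialization of the general GP-admissibility and WGP-admissibility hypotheses to the constant pair $(\Proj(R),\Proj(R))$ is legitimate, in particular the (routine) closure of $\Proj(R)$ under extensions and the choice of $\omega = \Proj(R)$ as its own relative cogenerator. After that, the corollary is a direct substitution into the two previously established results.
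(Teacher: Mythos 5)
Your proposal is correct and is essentially the paper's own proof: the paper also deduces the corollary by applying Corollary \ref{CWtiltcotor} and Corollary \ref{Gdebildimfin}(c2) with $\X=\Proj(R)=\Y$, and your explicit verification of the GP-/WGP-admissibility hypotheses for the constant pair $(\Proj(R),\Proj(R))$ is the routine check left implicit there.
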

\begin{dem} It follows from Corollary \ref{CWtiltcotor} and Corollary \ref{Gdebildimfin} (c2), by taking $\X=\Proj(R)=\Y.$
\end{dem}
\vspace{0.2cm}

Finally, we give the following characterization of the finiteness of  the global Gorenstein-injective dimension of a ring $R.$ 

\begin{cor}\label{3CWtiltcotor} For any ring $R,$ the following statements are equivalent.
\begin{itemize}
\item[(a)] The pair $(\Inj(R),\Inj(R))$ is $\W$-tilting and $\pd(\Inj(R))<\infty.$
\item[(b)]  $(\Inj(R)^\vee,\GI(R),)$ is a hereditary complete cotorsion pair in $\Modu\,(R)$ and $\pd(\Inj(R))<\infty.$
\item[(c)]  $\GI(R)=\Inj(R)^\perp$ and $\pd(\Inj(R))<\infty.$
\item[(d)] $\glGID(R)<\infty.$
\end{itemize}
Moreover, if one of the equivalent conditions holds, then $\glGID(R)=\pd(\Inj\,(R)).$
\end{cor}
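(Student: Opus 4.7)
The plan is to obtain this corollary as the injective-dual of Corollary \ref{2CWtiltcotor}, whose proof simply cites Corollary \ref{CWtiltcotor} and Corollary \ref{Gdebildimfin}(c2) with $\X=\Proj\,(R)=\Y.$ I will first formulate and invoke the GI-admissible analogs of those two results, and then specialize to $\X=\nu=\Inj(R)$ in $\A:=\Modu\,(R).$

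The first step is to record the GI-analog of Corollary \ref{CWtiltcotor}. The entire framework developed in the preceding sections is self-dual under passage to $\A^{op}:$ GI-admissible pairs $(\X,\nu)$ play the role of GP-admissible pairs $(\X,\Y),$ $\W$-tilting replaces $\W$-cotilting, $\GI_{(\X,\nu)}$ plays the role of $\GP_{(\X,\Y)},$ and $\omega^\vee$ together with $\omega^\perp$ replace $\omega^\wedge$ and ${}^\perp\omega.$ The GI-analog of Corollary \ref{CWtiltcotor} then reads: for a GI-admissible pair $(\X,\nu)$ in an abelian category $\A$ with enough projectives and injectives, and with $\omega:=\X\cap\nu$ and $\X$ closed under direct summands, the following are equivalent: the pair $(\X,\omega)$ is $\W$-tilting with $\pd(\X)<\infty;$ the pair $(\omega^\vee,\GI_{(\X,\nu)})$ is a hereditary complete cotorsion pair with $\pd(\omega)<\infty;$ and $\GI_{(\X,\nu)}=\omega^\perp$ with $\pd(\omega)<\infty;$ moreover, in that case $\glGID_{(\X,\nu)}(\A)=\pd(\X)=\pd(\omega).$ Analogously, the dual of Corollary \ref{Gdebildimfin}(c2) asserts that $\glGID_{(\X,\nu)}(\A)<\infty$ if and only if $(\X,\omega)$ is $\W$-tilting with $\pd(\X)<\infty.$

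I will then specialize to $\A=\Modu\,(R),$ $\X=\nu=\Inj(R),$ so that $\omega=\Inj(R);$ the pair $(\Inj(R),\Inj(R))$ is GI-admissible since $\Modu\,(R)$ has enough injectives and $\Inj(R)$ is closed under direct summands, finite coproducts, and extensions. Under this specialization $\GI_{(\Inj(R),\Inj(R))}=\GI(R)$ and $\glGID_{(\Inj(R),\Inj(R))}(\Modu\,(R))=\glGID(R),$ so the two dual results above yield immediately the equivalences $(a)\Leftrightarrow(b)\Leftrightarrow(c)\Leftrightarrow(d)$ together with the identity $\glGID(R)=\pd(\Inj(R)).$ I do not anticipate any substantive obstacle; the only care needed is bookkeeping to verify that the duality between projectives and injectives, and between resolutions and coresolutions, has been applied in the correct direction when transporting Corollary \ref{CWtiltcotor} and Corollary \ref{Gdebildimfin}(c2) to their GI-counterparts.
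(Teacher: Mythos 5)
Your proposal is correct and matches the paper's own proof, which likewise derives the corollary from the duals of Corollary \ref{CWtiltcotor} and Corollary \ref{Gdebildimfin}(c2) by taking $\X=\Inj(R)=\Y$ (equivalently, the injective-dual specialization you describe). The dualization bookkeeping you carry out — $(\X,\omega)$ $\W$-tilting with $\pd(\X)<\infty$, $(\omega^\vee,\GI_{(\X,\nu)})$ as the cotorsion pair, and $\glGID=\pd(\omega)$ — is exactly what the paper leaves implicit.
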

\begin{dem} It follows from the duals of Corollary \ref{CWtiltcotor} and Corollary \ref{Gdebildimfin} (c2), by taking $\X=\Inj(R)=\Y.$
\end{dem}
\section{Cotilting objects and $\W$-cotilting pairs}

Tilting and cotilting objects were introduced in the eighties, by S. Brenner and M. Butler \cite{BB}  and by D. Happel and C. M. 
Ringel \cite{HR}, in the context of the abelian category $\modu(\Lambda)$ of the finitely generated left $\Lambda$-modules for 
some Artin algebra $\Lambda.$ A generalization of tilting and cotilting, in  $\modu(\Lambda),$ was given by Y. Miyashita in \cite{M}. In 
 the case of the abelian category $\Modu(R),$ for an arbitrary ring $R,$ a generalization of tilting and cotilting were given by L. Angeleri 
  H\"ugel and F. U. Coelho \cite{AC}. This generalization of cotilting (respectively, tilting) is suitable to be extended to abelian categories 
  and will be used throughout this section to be compared with the notion of $\W$-cotilting (respectively, $\W$-tilting) pair.

\begin{defi}\cite{AC} \label{CotilD} Let $\A$ be an abelian category with injective cogenerators. An object $M\in\A$ is {\bf cotilting} if the following conditions hold 
true.
\begin{itemize}
\item[(C1)] $\id(M)<\infty.$
\item[(C2)] $M$ is $\Pi$-orthogonal, that is,   $\Ext_\A^i(M^I,M)=0$ $\forall\,i\geq 1$ and any set $I.$
\item[(C3)] There is an injective cogenerator $Q$ in $\A$ such that $\resdim_{\Prod(M)}(Q)<\infty.$
\end{itemize}
\end{defi}

By dualizing the above definition, we get the notion of tilting object. For completeness, we write down this notion. 

\begin{defi}\cite{AC} \label{TilD} Let $\A$ be an abelian category with projective generators. An object $M\in\A$ is {\bf tilting} if the following conditions hold 
true.
\begin{itemize}
\item[(T1)] $\pd(M)<\infty.$
\item[(T2)] $M$ is $\Sigma$-orthogonal, that is,  $\Ext_\A^i(M,M^{(I)})=0$ $\forall\,i\geq 1$ and any set $I.$
\item[(T3)] There is a projective generator $P$ in $\A$ such that $\coresdim_{\Add(M)}(P)<\infty.$
\end{itemize}
\end{defi}

\begin{pro}\label{CotilDWCotil} Let $\A$ be an AB4*-abelian category with injective cogenerators. Then, for any $M\in\A$ satisfying 
conditions (C2) and (C3)  in Definition \ref{CotilD}, the pair $(\Prod(M),\Prod(M))$ is $\W$-cotilting.
\end{pro}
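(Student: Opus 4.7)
The plan is to take $\W:=\{Q^I : I \text{ a set}\}$, where $Q$ is the injective cogenerator supplied by condition (C3), and verify the three items of Definition \ref{parcotilting} for the pair $(\Prod(M),\Prod(M))$. So here $\omega=\Y=\Prod(M)$.

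\textbf{Step 1 (WGP-admissibility and the inclusion $\W\subseteq\omega^\wedge$).} First, $\omega\subseteq\Y$ holds trivially and $\Prod(M)$ is closed under arbitrary products, hence under finite coproducts (finite coproducts agree with finite products in any abelian category). By (C2) and Remark \ref{B1}, $\Ext^i_\A(\Prod(M),\Prod(M))=0$ for every $i\geq 1$, so $\pd_\Y(\omega)=0$; this gives WGP-admissibility. Next, (C3) supplies an exact sequence $0\to Q\to W_0\to\cdots\to W_n\to 0$ with $W_j\in\Prod(M)$. Since $\A$ is AB4*, taking $I$-th products is exact \cite[Proposition 8.3]{P}, producing $0\to Q^I\to W_0^I\to\cdots\to W_n^I\to 0$; each $W_j^I\in\Prod(M)$ because $\Prod(M)$ is closed under products, so $Q^I\in\Prod(M)^\wedge$ and thus $\W\subseteq\omega^\wedge$.

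\textbf{Step 2 (condition (b)).} Let $C\in{}^\perp\Y$. Set $I:=\Hom_\A(C,Q)$ and let $\iota:C\to Q^I$ be the canonical evaluation morphism induced by the family of all $C\to Q$. Because $Q$ is an injective cogenerator, $\iota$ is a monomorphism, giving the required exact sequence $0\to C\to Q^I\to\Coker(\iota)\to 0$ with $Q^I\in\W$.

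\textbf{Step 3 (condition (c)).} For $C\in{}^\perp\Y$, set $J:=\Hom_\A(C,M)$ and let $f:C\to M^J$ be the canonical evaluation. I claim $f$ is a $\Prod(M)$-preenvelope (with $C_\Y:=M^J\in\Prod(M)=\omega$). Indeed, for any set $K$ and any $g:C\to M^K$ corresponding to a family $(g_k)_{k\in K}$ with $g_k:C\to M$, each $g_k$ equals $\pi_{g_k}\circ f$ where $\pi_\varphi:M^J\to M$ is the projection onto the $\varphi$-th coordinate; the induced morphism $M^J\to M^K$, $(x_\varphi)\mapsto(x_{g_k})_k$, then factors $g$ through $f$. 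For an arbitrary $N\in\Prod(M)$, write $N$ as a direct summand of some $M^K$ via $p:M^K\to N$ and $s:N\to M^K$ with $ps=1_N$; any $g:C\to N$ yields $sg:C\to M^K$, which factors as $hf$ for some $h$, and hence $g=p s g=(ph)f$ factors through $f$. This gives (c).

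The only delicate point is verifying the preenvelope property in Step 3 once one passes from products $M^K$ to arbitrary direct summands, but the splitting argument just given handles it uniformly. All other verifications are routine given Remark \ref{B1} and the AB4* exactness of products.
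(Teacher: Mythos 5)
Your proof is correct, but it takes a genuinely different route from the paper's. The paper first notes (as you do in Step 3, following \cite[Proposition 1.1]{AC}) that $\Prod(M)$ is a preenveloping class, and then reduces everything, via Remark \ref{RKGdebilCotil} (hence Corollary \ref{GdebilCotil} and Theorem \ref{debildimfin}), to showing that every $C\in{}^\perp\Prod(M)$ admits a \emph{monic} $\Prod(M)$-preenvelope; that monomorphism is produced by turning the finite $\Prod(M)$-resolution of $Q$ from (C3) into a ${}^\perp\Prod(M)$-precover $M_0\to Q$, taking its $I$-th power $M_0^I\to Q^I$, and lifting the cogenerator embedding $C\to Q^I$ through it. You instead verify Definition \ref{parcotilting} directly with the explicit class $\W:=\{Q^I\}$, which decouples the two requirements: condition (b) only needs a monomorphism into an object of $\W$ (free from the cogenerator property of $Q$), and condition (c) only needs a $\Y$-preenvelope lying in $\omega$ (the canonical evaluation $C\to M^{\Hom_\A(C,M)}$, no monicity required), so no precover-and-lifting argument is needed. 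What the paper's route buys is the sharper conclusion that the pair is $\omega$-cotilting, i.e.\ one may take $\W=\Prod(M)$ itself (a posteriori equivalent by Corollary \ref{GdebilCotil}); your route is shorter and uses only the definitions plus Remark \ref{B1} and AB4*, and it suffices for all later applications, which only require the pair to be $\W$-cotilting for some $\W.$ One slip to correct in Step 1: condition (C3) bounds $\resdim_{\Prod(M)}(Q),$ so it provides a resolution $0\to W_n\to\cdots\to W_0\to Q\to 0,$ not the coresolution $0\to Q\to W_0\to\cdots\to W_n\to 0$ you wrote; taking $I$-th powers (exact since $\A$ is AB4*) of the correctly oriented sequence gives $0\to W_n^I\to\cdots\to W_0^I\to Q^I\to 0,$ which is exactly what shows $Q^I\in\Prod(M)^\wedge,$ so your argument goes through verbatim once the arrows are reversed.
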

\begin{dem} Let $M\in\A$ and $\omega:=\Prod(M).$ By following the proof in \cite[Proposition 1.1]{AC}, we get that $\omega$ is a preenveloping class. 
\

Suppose that $M$ satisfies conditions (C2) and (C3)  in Definition \ref{CotilD}.  By (C2) and Remark \ref{B1}, we have that $\id_\omega(\omega)=0$ and $\omega\subseteq \X^\perp,$ for $\X:={}^\perp\omega.$ On the 
other hand, by (C3), there is an exact sequence 
$(*):\;0\to M_n\to M_{n-1}\to \cdots \to M_0\xrightarrow{f} Q\to 0,$ where $M_i\in\omega$ $\forall\,i.$ Since $\X^\perp$ is 
coresolving and $\omega\subseteq \X^\perp,$ from the exact sequence $(*)$ we get that $\Ker(f)\in\X^\perp.$ Therefore, 
$f:M_0\to Q$  is an $\X$-precover of $Q.$
\

We assert that any object in $\X={}^\perp\omega$ admits a monic $\omega$-preenvelope. Indeed, since $\omega$ is a preenveloping class, it is enough to show that any object in $\X$ can be embedded, through a monomorphism, into some object of $\omega.$
\

Let $Z\in\X.$ Since $Q$ is an injective cogenerator in $\A,$ there is a monomorphism $g:Z\to Q^I,$ for some set $I.$ On the other hand, we have the $\X$-precover 
$f:M_0\to Q,$ and thus we get the $\X$-precover $f^I:M_0^I\to Q^I,$ since  $M_0^I\in\X.$ In particular, there is a  morphism 
$g':Z\to M_0^I$ such that $g=f^I\,g',$ and hence $g'$ is a monomorphism; proving our assertion. Finally, from Remark \ref{RKGdebilCotil}, 
we conclude that the pair $(\Prod(M),\Prod(M))$ is $\W$-cotilting.
\end{dem}

\begin{rk}  Let $\A$ be an AB4*-abelian category with injective cogenerators and enough projectives and $M\in\A.$ By using the ideas of the  proof of Proposition \ref{CotilDWCotil}, we can show that the following statements hold true.
\begin{itemize}
\item[(a)] If $M$ satisfies condition (C2) in Definition \ref{CotilD} and $W\GI_{({}^\perp M,\Prod(M))}$ has an injective cogenerator in 
$\A,$ then the pair $(\Prod(M),\Prod(M))$ is $\W$-cotilting.
\item[(b)] If $M$ satisfies conditions (C2) and (C3) in Definition \ref{CotilD}, then the class $W\GI_{({}^\perp M,\Prod(M))}$ has an 
injective cogenerator in $\A.$ 
\end{itemize}
\end{rk}

\begin{cor}\label{Cotil1} Let $\A$ be an AB4*-abelian category with injective cogenerators and enough projectives. Then, for any 
cotilting object $M\in\A$ and $\omega:=\Prod(M),$ the following statements hold true.
\begin{itemize}
  \item[(a)]  $(W\GP _{\omega}, \omega ^{\wedge})$ is a hereditary complete cotorsion pair in $\A.$
  \item[(b)] $\omega ={}^{\perp} M \cap \omega ^{\wedge}, \;\;\; W\GP_{\omega} ={}^{\perp} M= {}^{\perp} (\omega ^{\wedge}) \mbox{\;\;\;and \;\; } W\GP _{\omega} ^{\perp}  = \omega ^{\wedge}.$
   \item[(c)] $\glWGPD_{(\omega,\omega)}(\A)=\FPD_\omega(\A)=\FPD_{\omega^\wedge}(\A)=\resdim_\omega(\omega^\wedge)=\pd_\omega(\omega^\wedge)=$ $=\pd_{\omega^\wedge}(\omega^\wedge)=\id\,(M)< \infty.$
   \item[(d)] $\Q_{\omega}^{<\infty}=\A=W\GP _{\omega} ^{\wedge} =\Q_{\omega^\wedge}^{<\infty}.$
\end{itemize}
\end{cor}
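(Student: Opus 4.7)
\medskip
\noindent\emph{Proof proposal.} The strategy is to reduce everything to a direct application of Theorem \ref{tiltcotor} with $\Y := \omega = \Prod(M)$, so the bulk of the work is verifying its hypotheses. First I would exploit that $M$ is cotilting together with the AB4* assumption: condition (C2) says $M$ is $\Pi$-orthogonal, so by Remark \ref{B1} we obtain $\Ext_\A^i(\Prod(M),\Prod(M))=0$ for all $i\geq 1$, which yields both $\pd_\omega(\omega)=0$ and $\id_\omega(\omega)=0$, and moreover ${}^\perp\omega = {}^\perp\Prod(M) = {}^\perp M$ and $\id(\omega)=\id(M)$. Combined with (C1), this gives $\id(\omega)<\infty$.

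Next I would check that $(\omega,\omega)$ is a WGP-admissible pair with $\omega$ closed under direct summands. The inclusion $\omega\subseteq\omega$ is trivial; $\pd_\omega(\omega)=0$ was just established; $\omega=\Prod(M)$ is closed under direct summands by definition; and $\omega$ is closed under finite coproducts because finite coproducts coincide with finite products in any abelian category and $\Prod(M)$ is closed under arbitrary products. To upgrade this WGP-admissibility to $\W$-cotiltingness I would invoke Proposition \ref{CotilDWCotil}: since $M$ satisfies (C2) and (C3), the pair $(\Prod(M),\Prod(M))=(\omega,\omega)$ is $\W$-cotilting.

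With $(\omega,\omega)$ $\W$-cotilting, $\omega$ closed under direct summands, $\id(\omega)<\infty$, and $\A$ having enough projectives, all the hypotheses of Theorem \ref{tiltcotor} hold (with $\Y=\omega$). Then part (a) of the corollary is precisely Theorem \ref{tiltcotor}(a). Part (b) follows from Theorem \ref{tiltcotor}(b) after translating ${}^\perp\omega$ into ${}^\perp M$ via Remark \ref{B1}: the latter identifies ${}^\perp\Prod(M)$ with ${}^\perp M$, converting the equalities $\omega={}^\perp\omega\cap\omega^\wedge$, $W\GP_\omega={}^\perp\omega={}^\perp(\omega^\wedge)$ and $W\GP_\omega^\perp=\omega^\wedge$ into the statements of (b). Part (c) is Theorem \ref{tiltcotor}(c) together with the identity $\id(\omega)=\id(M)$, and part (d) is Theorem \ref{tiltcotor}(d) verbatim.

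I do not expect a serious obstacle: the content of the result is the verification that a cotilting object in the sense of Angeleri H\"ugel--Coelho produces a $\W$-cotilting pair of the form $(\Prod(M),\Prod(M))$, and that task has already been isolated in Proposition \ref{CotilDWCotil}. The only place where care is needed is to remember that in an AB4* abelian category one must use Remark \ref{B1} at every juncture where one passes between $M$ and $\Prod(M)$ (for injective dimension, for Ext vanishing, and for the left orthogonal class); after those identifications the argument is purely an instantiation of Theorem \ref{tiltcotor}.
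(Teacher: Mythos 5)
Your proposal is correct and follows essentially the same route as the paper: use Remark \ref{B1} to identify ${}^\perp M$ with ${}^\perp\omega$ and $\id(M)$ with $\id(\omega)$, obtain the $\W$-cotilting pair $(\Prod(M),\Prod(M))$ from Proposition \ref{CotilDWCotil}, and then read off all four items from Theorem \ref{tiltcotor} with $\Y=\omega$. Your extra verifications (WGP-admissibility, closure under finite coproducts and direct summands) are already contained in the cited proposition and definitions, so they are harmless elaborations of the paper's two-line argument.
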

\begin{dem} Let $M\in\A$ be a cotilting object and $\omega:=\Prod(M).$  By Remark \ref{B1}, we know that 
${}^\perp M={}^\perp\omega$ and $\id(\omega)=\id(M).$ Then, the corollary follows from Proposition \ref{CotilDWCotil} and Theorem 
\ref{tiltcotor}.
\end{dem}
\vspace{0.2cm}

The following generalization of the shifting Lemma is very useful. The proof is straightforward and it is left to the reader.

\begin{lem}\label{ShifL} Let $(\X,\Y)$ be a pair of clases of objects in an abelian category $\A$ such that $\id_\X(\Y)=0.$ Then, for 
any exact sequence $0\to M\to Y_0\to Y_1\to \cdots\to Y_{n-1}\to Z_n\to 0,$ where $Y_i\in \Y$ for any $i\in[0,n-1],$ 
$$\Ext_\A^k(-,Z_n)|_{\X}\simeq \Ext_\A^{k+n}(-,M)|_{\X}\quad \text{for any}\; k>0.$$
\end{lem}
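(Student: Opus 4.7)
The plan is to proceed by induction on $n$, which is the natural way to attack a shifting lemma of this form.

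For the base case $n=1$, I would consider the short exact sequence $0\to M\to Y_0\to Z_1\to 0$ with $Y_0\in\Y$. Applying $\Hom_\A(X,-)$ for an arbitrary $X\in\X$ yields the standard long exact sequence in $\Ext$. Since $\id_\X(\Y)=0$, we have $\Ext^i_\A(X,Y_0)=0$ for every $i\geq 1$, so the connecting morphism induces an isomorphism $\Ext^k_\A(X,Z_1)\simeq \Ext^{k+1}_\A(X,M)$ for all $k>0$. This establishes the statement for $n=1$.

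For the inductive step, assume the result for $n-1$. Given the exact sequence
$$0\to M\to Y_0\to Y_1\to \cdots\to Y_{n-1}\to Z_n\to 0,$$
I would split it through the image $Z_1:=\Ima(Y_0\to Y_1)$ into the short exact sequence $0\to M\to Y_0\to Z_1\to 0$ and the exact sequence $0\to Z_1\to Y_1\to\cdots\to Y_{n-1}\to Z_n\to 0$ of length $n-1$. The inductive hypothesis applied to the latter gives $\Ext^k_\A(-,Z_n)|_\X\simeq \Ext^{k+n-1}_\A(-,Z_1)|_\X$ for $k>0$, while the base case applied to the former (with $k+n-1$ in place of $k$) gives $\Ext^{k+n-1}_\A(-,Z_1)|_\X\simeq \Ext^{k+n}_\A(-,M)|_\X$. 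Composing these two isomorphisms yields the desired result.

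Since the argument is purely diagrammatic (naturality of connecting morphisms in $\Ext$) and rests only on the vanishing $\Ext^i_\A(X,Y)=0$ for $X\in\X$, $Y\in\Y$, $i\geq 1$, there is no genuine obstacle: the only point requiring minor care is checking that the isomorphisms are natural in the first variable, so that they assemble into an isomorphism of functors restricted to $\X$, but this is automatic from the naturality of the long exact $\Ext$ sequence. As the statement itself indicates, the proof is routine and the author leaves it to the reader.
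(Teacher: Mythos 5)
Your proof is correct, and since the paper itself omits the argument (``the proof is straightforward and it is left to the reader''), your induction on $n$ via the splicing at $Z_1=\Ima(Y_0\to Y_1)$ together with the long exact $\Ext$ sequence and the vanishing $\Ext^i_\A(X,Y_j)=0$ is exactly the standard dimension-shifting argument the authors intend. Nothing further is needed; the naturality point you mention is indeed automatic from the naturality of the connecting morphisms.
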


\begin{teo}\label{Cotil2} Let $\A$ be an AB4*-abelian category with injective cogenerators and enough projectives, and let $(\X,\Y)$ be 
a hereditary complete cotorsion pair in $\A$ such that $\id(\Y)<\infty$ and $\omega:=\X\cap\Y$ be closed under products. Then, there is 
some cotilting object $M\in\A$ such that $\omega=\Prod(M),$ $\id(M)=\id(\Y),$ $\Y=\omega^\wedge$ and $W\GP_\omega=\X={}^\perp M.$
\end{teo}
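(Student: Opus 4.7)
The plan is to prove the structural identities $\Y=\omega^{\wedge}$ and $W\GP_{\omega}=\X$ using the machinery already developed in this paper, and then to produce the cotilting object $M$ by invoking the standard theory of cotilting classes.

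First, I would show $\Y=\omega^{\wedge}$. The inclusion $\omega^{\wedge}\subseteq\Y$ is immediate: since $\omega\subseteq\Y$ and $(\X,\Y)$ is hereditary, $\Y$ is coresolving, hence closed under extensions and cokernels of monomorphisms, so any object with a finite $\omega$-resolution lies in $\Y$. For the converse, given $Y\in\Y$, I iterate the left completeness of $(\X,\Y)$ to obtain short exact sequences $0\to Y_{i+1}\to X_{i}\to Y_{i}\to 0$ with $X_{i}\in\X$ and $Y_{i+1}\in\Y$; since $\Y$ is closed under extensions, each $X_{i}\in\X\cap\Y=\omega$. At step $n:=\id(\Y)$, Lemma \ref{ShifL} combined with $\id(Y)\leq n$ yields $\Ext^{1}_{\A}(Y_{n},Z)\simeq\Ext^{n+1}_{\A}(Y,Z)=0$ for every $Z\in\Y$, so $Y_{n}\in{}^{\perp_{1}}\Y\cap\Y=\omega$, giving a finite $\omega$-resolution of $Y$.

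Next, I would verify that the WGP-admissible pair $(\omega,\Y)$ is $\omega$-cotilting in the sense of Definition \ref{parcotilting}. Since $\A$ is AB4* with an injective cogenerator $Q$, products $Q^{I}$ are injective and every object embeds into one, so $\A$ has enough injectives; together with $\Y$ coresolving, this forces ${}^{\perp}\Y={}^{\perp_{1}}\Y=\X$. By Corollary \ref{TGP}, $\omega$ is a relative cogenerator in $\X$, so every $C\in\X$ admits a sequence $0\to C\to W\to C'\to 0$ with $W\in\omega$ and $C'\in\X$; applying $\Hom_{\A}(-,Y)$ for $Y\in\Y$ and using $\Ext^{1}_{\A}(C',Y)=0$ shows $C\to W$ is a $\Y$-preenvelope. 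Corollary \ref{GdebilCotil} then gives $W\GP_{(\omega,\Y)}={}^{\perp}\Y=\X$, and via Remark \ref{WGorro} together with $\Y=\omega^{\wedge}$, $W\GP_{\omega}=W\GP_{(\omega,\omega^{\wedge})}=W\GP_{(\omega,\Y)}=\X$.

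Everything now reduces to producing a cotilting object $M\in\omega$ with $\Prod(M)=\omega$. Granting such $M$, the remaining conclusions are routine: Remark \ref{B1} gives $\id(M)=\id(\Prod(M))=\id(\omega)$, while the standard dimension estimate along a finite $\omega$-resolution yields $\id(\omega^{\wedge})\leq\id(\omega)$, hence $\id(M)=\id(\Y)$; Remark \ref{B1} also gives ${}^{\perp}M={}^{\perp}\Prod(M)={}^{\perp}\omega$, and a dimension-shift along the $\omega$-resolution of each $Y\in\omega^{\wedge}=\Y$ forces ${}^{\perp}\omega=\X$. The cotilting conditions of Definition \ref{CotilD} then follow: (C1) because $\id(M)=\id(\Y)<\infty$; (C2) because $M^{I}\in\Prod(M)=\omega\subseteq\X$ and $M\in\Y$ with $\Ext^{\geq 1}_{\A}(\X,\Y)=0$ by hereditariness; and (C3) trivially, since the injective cogenerator $Q$ lies in $\omega=\Prod(M)$ (its left $\X$-approximation splits, so $Q$ is a direct summand of an object of $\X$, hence of $\omega$).

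The hard part will be the existence of $M$. My approach is to invoke the standard cotilting-class machinery for abelian categories: since $\omega$ is closed under products, contains an injective cogenerator, and $\Y=\omega^{\wedge}$ has finite injective dimension, a Bazzoni-type cardinal-bounding argument (in the spirit of \cite{AC} and its extensions to Grothendieck and AB4* abelian settings) produces a set $\omega_{0}\subseteq\omega$ such that every object of $\omega$ is a direct summand of a product of objects from $\omega_{0}$; AB4* ensures that the product $M:=\prod_{W\in\omega_{0}}W$ lies in $\omega$ and satisfies $\Prod(M)=\omega$. This set-theoretic step, establishing the existence of such a bounding cardinal, is where the argument is most delicate and where I would lean on the cited literature or a parallel result developed elsewhere in the paper.
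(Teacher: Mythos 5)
Your preparatory steps are fine and close in spirit to what the paper does: the identification $\Y=\omega^\wedge$ (which the paper simply quotes from \cite[Theorem 3.12]{BMPS}), the verification that $(\omega,\Y)$ is $\omega$-cotilting via Definition \ref{parcotilting}, and the resulting equality $W\GP_{(\omega,\Y)}={}^\perp\Y=\X$ are all correct. But the theorem's actual content is the production of $M$ with $\Prod(M)=\omega$, and there your proposal has a genuine gap: you defer to a ``Bazzoni-type cardinal-bounding argument'' which is neither developed in this paper nor available off the shelf for an arbitrary AB4*-abelian category with an injective cogenerator and enough projectives (no generator, no Grothendieck hypothesis, no purity theory), so the central step is unproved. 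The paper needs no set-theoretic input at all: since $\Inj(\A)\subseteq\Y=\omega^\wedge$, a fixed injective cogenerator $Q$ has a finite $\omega$-resolution $0\to W_n\to\cdots\to W_0\to Q\to 0$ with $n\leq\id(\Y)$, and one sets $M:=\bigoplus_{i=0}^n W_i$, a \emph{finite} direct sum. Then $\Prod(M)\subseteq\omega$ by product- and summand-closure of $\omega$, conditions (C2) and (C3) hold for $M$ by construction, and Proposition \ref{CotilDWCotil} together with Theorem \ref{debildimfin} gives $W\GP_{\Prod(M)}={}^\perp\Prod(M)\supseteq{}^\perp\omega\supseteq\omega$. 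The real work is the reverse inclusion $\omega\subseteq\Prod(M)$: for $W\in\omega$ one takes a coresolution $0\to W\to M_0\to\cdots\to M_m\to K_{m+1}\to 0$ with $M_i\in\Prod(M)$, cokernels in ${}^\perp\Prod(M)$ and $m:=\id(\omega)<\infty$, and Lemma \ref{ShifL} gives $\Ext^1_\A(K_{m+1},K_m)\simeq\Ext^{m+1}_\A(K_{m+1},W)=0$; the resulting splittings propagate down the coresolution until $0\to W\to M_0\to K_1\to 0$ splits, so $W\in\Prod(M)$. This finite-injective-dimension splitting argument is the missing idea, and without it (or a genuinely worked-out substitute) your proof is incomplete exactly where the theorem lives.

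Separately, your justification of (C3) is incorrect as stated: the special sequence $0\to Y_0\to X\to Q\to 0$ with $X\in\X$ and $Y_0\in\Y$ splits only if $\Ext^1_\A(Q,Y_0)=0$, i.e.\ essentially only if $Q$ is already in $\X$; injectivity of $Q$ yields the vanishing of $\Ext^1_\A(-,Q)$, which points the wrong way. In general $Q\notin\omega$: for a non-self-injective Iwanaga--Gorenstein algebra $\Lambda$, the pair $(\GP(\Lambda),\Proj(\Lambda)^\wedge)$ satisfies all hypotheses of the theorem with $\omega=\Proj(\Lambda)$ closed under products, yet no injective cogenerator of $\Modu(\Lambda)$ is projective. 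The correct source of (C3) is $Q\in\Y=\omega^\wedge$, i.e.\ $\resdim_\omega(Q)<\infty$, which is precisely what the paper's explicit choice of $M$ is built from.
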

\begin{dem} By \cite[Theorem 3.12]{BMPS}, we have $\Inj(\A)\subseteq \Y=\omega^\wedge.$ In particular, by the dual of Lemma \ref{AB1}, $\id(\omega)=\id(\Y).$ Since $\omega\subseteq\Y,$ $\pd_\Y(\omega)=0$ and $\omega$ is closed under direct summands in $\A,$ by the dual of Proposition \ref{AB7} (b), we get that $\pd_\Y(\omega^\wedge)=\resdim_\omega(\omega^\wedge).$ In particular, we have that 
$$n:=\resdim_\omega(\Inj(\A))\leq \pd_\Y(\omega^\wedge)=\id_{\omega^\wedge}(\Y)\leq \id(\Y)<\infty.$$
Choose some injective cogenerator $Q$ in $\A.$ Then $\resdim_\omega(Q)\leq \resdim_\omega(\Inj(A))=n$  and thus there is an exact sequence
$0\to W_n\to \cdots\to W_1\to W_0\to Q\to 0,$ with $W_i\in\omega$ for any $i\in[0,n].$ 
\

Consider $M:=\bigoplus_{i=0}^n\,W_i.$ Since $\omega$ is closed under products, it follows that $\Prod(M)\subseteq \omega.$ In particular $\id_{\Prod(M)}(M)\leq\id_\omega(\omega)=0$ and thus the conditions (C2) and (C3) in Definition \ref{CotilD} hold for $M.$
Hence, by Proposition \ref{CotilDWCotil} and Theorem \ref{debildimfin} (a),  it 
follows that $W\GP_{\Prod(M)}={}^\perp\Prod(M).$ Furthermore, the inclusion  
$\Prod(M)\subseteq \omega$ implies that $\omega\subseteq {}^\perp\omega\subseteq  {}^\perp\Prod(M)=W\GP_{\Prod(M)}$ and 
so $\omega\subseteq W\GP_{\Prod(M)}.$ 
\

Now, by using the inclusion $\omega\subseteq W\GP_{\Prod(M)},$ we show that $\omega\subseteq \Prod(M).$ Indeed, let $m:=\id(\omega)=\id(\Y)<\infty.$ Consider $W\in\omega\subseteq W\GP_{\Prod(M)}.$ Then, there is an exact sequence 
$$(*):\quad 0\to W\xrightarrow{f_0}M_0\xrightarrow{f_1}M_1\to\cdots\to M_{m-1}\xrightarrow{f_m}M_m\to K_{m+1}\to 0,$$
where $M_i\in\Prod(M)$ and $K_{i+1}:=\Coker(f_i)\in {}^\perp\Prod(M),$ for all $i\in[0,m].$ Since $\id_{{}^\perp\Prod(M)}(\Prod(M))=0,$ we get from Lemma \ref{ShifL} $\Ext^1_\A(K_{m+1},K_m)\simeq \Ext^{m+1}_\A(K_{m+1},W)=0$ and thus 
the exact sequence $0\to K_m\to M_m\to K_{m+1}\to 0$ splits; proving that $K_m\in\Prod(M)\subseteq\omega\subseteq{}^\perp\omega.$ Then, by using the fact that ${}^\perp\omega$ is resolving, we get from $(*)$ that $K_1\in{}^\perp\omega.$ But now, $\id_{\Prod(M)}(\omega)\leq\id_\omega(\omega)=0$ and hence the exact sequence $0\to W\to M_0\to K_1\to 0$ splits; proving that $W\in \Prod(M).$ Then $\omega=\Prod(M)$ and $\id(M)=\id(\Prod(M))=\id(\omega)=\id(\Y).$ 
\

Finally, by the dual of Lemma \ref{AB1}, we get that ${}^\perp\omega={}^\perp(\omega^\wedge).$ Therefore $\X={}^\perp\Y={}^\perp\omega={}^\perp M=W\GP_\omega.$
\end{dem}

\begin{cor} \label{Cotil3} Let $\A$ be an AB4*-abelian category with injective cogenerators and enough projectives and 
$\omega\subseteq\A.$ Then, the following conditions are equivalent.
\begin{itemize}
\item[(a)] The pair $(\omega,\omega)$ is $\W$-cotilting, $\id(\omega)<\infty$ and $\omega=\Prod(\omega).$ 
\item[(b)] There is a cotilting object $M\in\A$ such that $\omega=\Prod(M).$
\end{itemize}
\end{cor}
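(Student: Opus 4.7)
The plan is to prove the two implications separately, relying heavily on Proposition~\ref{CotilDWCotil}, Theorem~\ref{tiltcotor}, and Theorem~\ref{Cotil2}.

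For the direction (b) $\Rightarrow$ (a), I would start from a cotilting object $M\in\A$ with $\omega=\Prod(M)$. The equality $\omega=\Prod(\omega)$ is immediate since $\Prod(\Prod(M))=\Prod(M)$. For the finiteness of $\id(\omega)$, I would invoke Remark~\ref{B1}, which gives $\id(\Prod(M))=\id(M)<\infty$ (using condition (C1) in Definition~\ref{CotilD}). Finally, to see that the pair $(\omega,\omega)=(\Prod(M),\Prod(M))$ is $\W$-cotilting, I would simply apply Proposition~\ref{CotilDWCotil}, whose hypotheses (conditions (C2) and (C3)) are part of $M$ being cotilting.

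For the direction (a) $\Rightarrow$ (b), the strategy is to manufacture the cotilting object via Theorem~\ref{Cotil2} applied to the hereditary complete cotorsion pair $(W\GP_\omega,\omega^\wedge)$. First I would verify the hypotheses of Theorem~\ref{tiltcotor}: the pair $(\omega,\omega)$ is WGP-admissible (since $\id_\omega(\omega)=0$ follows from $\W$-cotiltingness via $\omega\subseteq{}^\perp\omega$), $\omega$ is closed under direct summands (being closed under products), and $\id(\omega)<\infty$ is assumed. Theorem~\ref{tiltcotor}(a),(b) then gives that $(W\GP_\omega,\omega^\wedge)$ is a hereditary complete cotorsion pair with $W\GP_\omega={}^\perp\omega$ and $\omega={}^\perp\omega\cap\omega^\wedge=W\GP_\omega\cap\omega^\wedge$.

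Next I would check the hypotheses of Theorem~\ref{Cotil2} for this cotorsion pair: the intersection $W\GP_\omega\cap\omega^\wedge=\omega$ is closed under products by assumption, and I need $\id(\omega^\wedge)<\infty$. This last point is the only calculation worth noting: for $Y\in\omega^\wedge$ with $\resdim_\omega(Y)\le n$, a standard dimension-shifting argument along the finite $\omega$-resolution of $Y$ together with Proposition~\ref{AB7}(a) yields $\id(Y)\le \id(\omega)+n<\infty$, and since $\omega^\wedge$ has bounded $\omega$-resolution dimension (by Theorem~\ref{tiltcotor}(c), $\resdim_\omega(\omega^\wedge)=\id(\omega)<\infty$), we conclude $\id(\omega^\wedge)<\infty$. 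Applying Theorem~\ref{Cotil2} then produces a cotilting object $M\in\A$ with $\Prod(M)=\omega$, which is exactly (b).

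The only mildly non-routine step is the bound $\id(\omega^\wedge)<\infty$; everything else is a direct invocation of the earlier machinery. I do not anticipate any real obstacle since all required black-box results are already in place.
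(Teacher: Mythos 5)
Your proposal is correct and follows essentially the same route as the paper's own proof: (b)$\Rightarrow$(a) is exactly Proposition \ref{CotilDWCotil} (with Remark \ref{B1} giving $\id(\omega)=\id(M)$), and (a)$\Rightarrow$(b) is Theorem \ref{tiltcotor}(a) followed by Theorem \ref{Cotil2} applied to the hereditary complete cotorsion pair $(W\GP_\omega,\omega^\wedge)$, whose core is $\omega$ and is closed under products. The only cosmetic remark is that the needed bound $\id(\omega^\wedge)<\infty$ follows at once from the dual of Lemma \ref{AB1} (which gives $\id(\omega^\wedge)=\id(\omega)$), so your dimension-shifting step, while valid, is stronger than necessary, and Proposition \ref{AB7}(a) as cited concerns coresolutions rather than resolutions.
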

\begin{dem} (a) $\Rightarrow$ (b) It follows from Theorem  \ref{tiltcotor} (a) and Theorem \ref{Cotil2}.
\

(b) $\Rightarrow$ (a)  It follows from Proposition \ref{CotilDWCotil}.
\end{dem}

\begin{lem}\label{AuxCotil4} Let $R$ be a left perfect, left noetherian and right coherent ring, and let $Q$ be an injective cogenerator in 
$\Modu(R).$ Then 
\begin{itemize}
\item[(a)] $\Proj(R)=\Prod({}_RR),$  ${}^\perp\Proj(R)={}^\perp R$   and $\id(\Proj(R))=\id({}_RR);$
\item[(b)] $\Inj(R)=\Add(Q),$ $\Inj(R)^\perp=Q^\perp$ and $\pd(\Inj(R))=\pd(Q).$
\item[(c)] ${}_RR$ is $\Pi$-orthogonal and $Q$ is $\Sigma$-orthogonal. 
\item[(d)] $\resdim_{\Prod({}_RR)}(M)=\pd(M)$ and $\coresdim_{\Add(Q)}(M)=\id(M)$ $\forall\,M\in\Modu(R).$
\end{itemize}
\end{lem}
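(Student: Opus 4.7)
Part (a) has three assertions. For $\Proj(R)=\Prod({}_RR)$, the inclusion $\Prod({}_RR)\subseteq\Proj(R)$ is the classical Chase--Bass observation: $R$ right coherent gives $\Flat(R)$ closed under products, and $R$ left perfect gives $\Flat(R)=\Proj(R)$, so $R^I$ is projective for any set $I$ and direct summands remain so. The reverse inclusion is the subtle part; my plan is to use that $R$ is semiperfect (implied by left perfect), so $R=Re_1\oplus\cdots\oplus Re_n$ with primitive orthogonal idempotents $e_i$, and every projective left $R$-module decomposes as $P\simeq\bigoplus_\alpha Re_{i(\alpha)}$ by Krull--Schmidt--Azumaya. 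Since $R^J$ is itself projective, its Krull--Schmidt decomposition has the form $R^J\simeq\bigoplus_{i=1}^n(Re_i)^{(D_i(J))}$ for some sets $D_i(J)$, and for $|J|$ large enough the cardinalities $|D_i(J)|$ dominate the multiplicities appearing in $P$, making $P$ a direct summand of $R^J$. Once $\Proj(R)=\Prod({}_RR)$ is established, both ${}^\perp\Proj(R)={}^\perp R$ and $\id(\Proj(R))=\id({}_RR)$ follow from the natural isomorphism $\Ext_R^i(-,R^I)\simeq\prod_I\Ext_R^i(-,R)$ together with $R\in\Proj(R)$ for the reverse inequalities.

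For (b), the inclusion $\Add(Q)\subseteq\Inj(R)$ uses only that $R$ is left noetherian, so coproducts (and summands) of injectives are injective. For the reverse inclusion I would combine Matlis' theorem (over a left noetherian ring every injective is a direct sum of indecomposable injectives) with the left-perfect hypothesis: since $R/J(R)$ is semisimple and $J(R)$ is left $T$-nilpotent, a Nakayama-type argument shows that every non-zero $R$-module has non-zero socle, so each indecomposable injective $E$ is forced to be the injective hull $E(S)$ of a simple submodule $S$. The cogenerator property of $Q$ gives an embedding $S\hookrightarrow Q$; injectivity of $Q$ extends this to $E(S)\hookrightarrow Q$, and then $E$ splits off as a summand since $E$ itself is injective. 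Combining Matlis with this ``every indecomposable injective is a summand of $Q$'' fact yields $\Inj(R)\subseteq\Add(Q)$. The equalities $\Inj(R)^\perp=Q^\perp$ and $\pd(\Inj(R))=\pd(Q)$ then follow from the isomorphism $\Ext_R^i(Q^{(I)},-)\simeq\prod_I\Ext_R^i(Q,-)$, passage to summands, and $Q\in\Inj(R)$.

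Parts (c) and (d) are then essentially corollaries. For (c), $R^I$ is projective by (a), whence $\Ext_R^i(R^I,R)=0$ for $i\geq 1$, so ${}_RR$ is $\Pi$-orthogonal; dually $Q^{(I)}$ is injective by the left-noetherian hypothesis, so $\Ext_R^i(Q,Q^{(I)})=0$ and $Q$ is $\Sigma$-orthogonal. Part (d) follows because $\Prod({}_RR)=\Proj(R)$ and $\Add(Q)=\Inj(R)$, while the equalities $\resdim_{\Proj(R)}(M)=\pd(M)$ and $\coresdim_{\Inj(R)}(M)=\id(M)$ are standard (Remark \ref{resdimPD} and its dual, since $\Modu(R)$ has both enough projectives and enough injectives). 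The main obstacle in the whole argument is the reverse inclusion of (a): even granted left-perfect plus right-coherent, realising an arbitrary projective $P$ as a summand of a \emph{product} of copies of $R$ (rather than of a coproduct) requires the Krull--Schmidt book-keeping sketched above, since $R^{(J)}$ is not in general a summand of $R^J$.
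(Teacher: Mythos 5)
Your proposal is correct, and its skeleton is the same as the paper's: Chase's theorem to make every $R^I$ projective, closure of injectives under coproducts over a left noetherian ring, the $\Ext$--product isomorphisms of Remark \ref{B1} and its dual for the orthogonality and dimension statements, and (c), (d) as formal consequences (the paper proves (d) exactly as you do, via $\Proj(R)=\Prod({}_RR)$, $\Inj(R)=\Add(Q)$ and Remark \ref{resdimPD} and its dual). Where you genuinely add something is in the two reverse inclusions $\Proj(R)\subseteq\Prod({}_RR)$ and $\Inj(R)\subseteq\Add(Q)$, which the paper dispatches with a bare ``thus'' after citing Chase and Anderson--Fuller: your Krull--Schmidt--Azumaya bookkeeping over the semiperfect ring $R$ for (a), and Matlis plus ``every indecomposable injective is the hull of a simple, hence a summand of $Q$'' for (b), are precisely the missing content, and both arguments work. (For (a) there is also a shortcut avoiding the cardinal bookkeeping: $R^{(J)}$ is a pure submodule of the projective module $R^J$, so the quotient is flat, hence projective since $R$ is left perfect, and the inclusion splits; thus every projective is a direct summand of some $R^J$.)

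One justification in (b) needs repair. Left $T$-nilpotency of $J(R)$ gives non-zero socles for \emph{right} $R$-modules (this is the content of Bass's Theorem P), not for left ones, so your ``Nakayama-type argument'' is stated on the wrong side; a left perfect ring need not be left semiartinian. The claim you need is nonetheless true under the hypotheses of the lemma: $J(R)$ is nil and $R$ is left noetherian, so $J(R)$ is nilpotent (Levitzki) and $R$ is semiprimary, indeed left artinian; then for $0\neq M$ the last non-zero power $J(R)^kM$ is a semisimple submodule, so every non-zero left module has non-zero socle, every indecomposable injective is $E(S)$ for some simple $S$, and your embedding-and-splitting argument into $Q$ goes through.
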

\begin{dem} (a) By \cite[Theorem 3.3]{C}, it follows that $\Proj(R)$ is closed under products in $\A$ and thus $\Proj(R)=\Prod({}_RR).$ 
Then, by Remark \ref{B1} we get (a).
\

(b) By \cite[Proposition 18.13]{AF}, it follows that $\Inj(R)$ is closed under coproducts in $\A$ and thus $\Inj(R)=\Add(Q).$ 
Then, by the dual of Remark \ref{B1} we get (b).
\

(c) Let $I$ be a set. Then, by (a), we have that $R^I$ is projective and thus $\Ext^i_R(R^I,R)=0$ for any $i\geq 1.$ On the other hand, by 
(b), we have that $Q^{(I)}$ is injective and thus $\Ext^i_R(Q,Q^{(I)})=0$  for any $i\geq 1.$
\

(d) From (a) and (b), we know that $\Proj(R)=\Prod({}_RR)$ and $\Inj(R)=\Add(Q).$ Therefore, (d) holds true.
\end{dem}

\begin{teo}\label{Cotil4} Let $R$ be a left perfect, left noetherian and right coherent ring, and let $Q$ be an injective cogenerator in 
$\Modu(R).$ Then, the following statements are equivalent.
\begin{itemize}
\item[(a)] ${}_RR$ is cotilting in $\Modu(R).$
\item[(b)] $\id({}_RR)<\infty$ and $\pd(Q)<\infty.$
\item[(c)] The pair $(\Proj(R),\Proj(R))$ is $\W$-cotilting and $\id({}_RR)<\infty.$
\item[(d)] $(\GP(R),\Proj(R)^\wedge)$ is a hereditary complete cotorsion pair in $\Modu\,(R)$ and $\id({}_RR)<\infty.$
\item[(e)]  $\GP(R)={}^\perp ({}_RR)$ and $\id({}_RR)<\infty.$
\item[(f)] $\glGPD(R)<\infty.$
\item[(g)] $Q$ is tilting in $\Modu(R).$
\item[(h)] The pair $(\Inj(R),\Inj(R))$ is $\W$-tilting and $\pd(Q)<\infty.$
\item[(i)] $(\Inj(R)^\vee,\GI(R))$ is a hereditary complete cotorsion pair in $\Modu\,(R)$ and $\pd(Q)<\infty.$
\item[(j)]  $\GI(R)=Q^\perp$ and $\pd(Q)<\infty.$
\item[(k)] $\glGID(R)<\infty.$
\end{itemize}
If one of the above equivalent conditions holds, then $\Proj(R)^\wedge=\Inj(R)^\vee$ and 
$$\FPD(R)=\glGPD(R)=\id({}_RR)=\pd(Q)=\glGID(R)=\FID(R)<\infty.$$
\end{teo}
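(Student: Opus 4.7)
The plan is to organize the proof around condition (b) as a common hub that connects the projective-side chain (a)--(f) with the injective-side chain (g)--(k). Lemma~\ref{AuxCotil4} supplies the indispensable identifications $\Proj(R)=\Prod({}_RR),$ $\Inj(R)=\Add(Q),$ the $\Pi$-orthogonality of ${}_RR,$ the $\Sigma$-orthogonality of $Q,$ and the equalities $\resdim_{\Prod({}_RR)}(M)=\pd(M)$ and $\coresdim_{\Add(Q)}(M)=\id(M).$ First I would verify (a)$\Leftrightarrow$(b) by unpacking Definition~\ref{CotilD} for $M={}_RR$: (C1) is $\id({}_RR)<\infty,$ (C2) is automatic from Lemma~\ref{AuxCotil4}(c), and (C3) becomes $\pd(Q')<\infty$ for some injective cogenerator $Q',$ which equals $\pd(Q)$ because $\Add(Q)=\Inj(R)=\Add(Q')$ together with the AB4-stability of projective dimension in $\Modu(R).$ The dual argument, using Definition~\ref{TilD} and the AB4* structure, gives (b)$\Leftrightarrow$(g).

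Next, Corollary~\ref{2CWtiltcotor} delivers (c)$\Leftrightarrow$(d)$\Leftrightarrow$(e)$\Leftrightarrow$(f) directly, and Corollary~\ref{3CWtiltcotor} gives (h)$\Leftrightarrow$(i)$\Leftrightarrow$(j)$\Leftrightarrow$(k) dually. To bridge (a) and (c) I would invoke Corollary~\ref{Cotil3} with $\omega:=\Proj(R)$: the class $\omega$ is closed under products by Lemma~\ref{AuxCotil4}(a) and $\id(\omega)=\id({}_RR),$ so (c) translates into the existence of a cotilting $M$ with $\Prod(M)=\Proj(R).$ Since ${}_RR\in\Prod(M),$ and conditions (C1)--(C3) of Definition~\ref{CotilD} depend, respectively, only on $\id$ (which passes to summands of products in the AB4* category $\Modu(R)$), on $\Pi$-orthogonality (automatic by Lemma~\ref{AuxCotil4}(c)), and on the ambient $\Prod$-class, the module ${}_RR$ itself satisfies the cotilting conditions; dually (g)$\Leftrightarrow$(h).

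For the last assertion, Corollary~\ref{2CWtiltcotor} gives $\glGPD(R)=\id(\Proj(R))=\id({}_RR)$ and dually $\glGID(R)=\pd(Q).$ From $\glGPD(R)<\infty$ one obtains $\GP(R)^\wedge=\Modu(R),$ hence $\FGPD(R)=\glGPD(R),$ and combined with Corollary~\ref{ThickGP5.55}(d) this yields $\FPD(R)=\glGPD(R);$ the injective dual gives $\FID(R)=\glGID(R).$ The equality $\id({}_RR)=\pd(Q)$ then follows from the \emph{heredity} of the cotorsion pair (d): every injective $I$ satisfies $\Ext^{\geq 1}(G,I)=0$ for $G\in\GP(R),$ so $\Inj(R)\subseteq\GP(R)^{\perp}=\Proj(R)^\wedge,$ whence $\pd(Q)\leq\FPD(R)=\id({}_RR);$ the dual argument via (i) gives $\id({}_RR)\leq\pd(Q).$ Finally $\Proj(R)^\wedge=\Inj(R)^\vee$ follows by dimension shifting along projective resolutions and injective coresolutions, both bounded using $\id({}_RR)=\pd(Q)<\infty.$ The main obstacle I anticipate is this last equality $\id({}_RR)=\pd(Q),$ which is not a formal cotilting/tilting consequence but depends genuinely on the heredity of both cotorsion pairs (d) and (i); without it one would only extract $\Ext^1$-perpendicularity. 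A secondary subtlety is the transfer of cotilting status from the abstract $M$ produced by Corollary~\ref{Cotil3} to ${}_RR,$ which hinges on $\Prod(M)=\Prod({}_RR)$ and the automatic $\Pi$-orthogonality.
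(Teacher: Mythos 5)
Your proposal is correct, and for the eleven equivalences it follows the paper's route: Lemma \ref{AuxCotil4} gives (a)$\Leftrightarrow$(b) and (b)$\Leftrightarrow$(g), Corollary \ref{Cotil3} and its dual bridge (a)$\Leftrightarrow$(c) and (g)$\Leftrightarrow$(h), and Corollaries \ref{2CWtiltcotor} and \ref{3CWtiltcotor} handle the two four-term chains; your explicit transfer of the cotilting property from the object $M$ produced by Corollary \ref{Cotil3} to ${}_RR$ via $\Prod(M)=\Proj(R)$ is precisely the point the paper leaves implicit, and it is handled correctly. Where you genuinely diverge is the closing chain of equalities. The paper first proves $\Proj(R)^\wedge=\Inj(R)^\vee$ directly from (b) (for $X\in\Proj(R)^\wedge$ one has $\id(X)\leq\id(\Proj(R)^\wedge)=\id({}_RR)<\infty$ by Lemma \ref{AuxCotil4} and the dual of Lemma \ref{AB1}, and symmetrically for $\Inj(R)^\vee$), and then obtains the whole string in one stroke from Theorem \ref{tiltcotor} and its dual, the two halves being glued by the just-proved equality $\Proj(R)^\wedge=\Inj(R)^\vee$ together with $\pd_{\mathcal{Z}}(\mathcal{Z})=\id_{\mathcal{Z}}(\mathcal{Z})$. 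You instead extract $\glGPD(R)=\id({}_RR)$ and $\glGID(R)=\pd(Q)$ from Corollaries \ref{2CWtiltcotor} and \ref{3CWtiltcotor}, identify $\FPD(R)=\glGPD(R)$ and $\FID(R)=\glGID(R)$ via finiteness and Corollary \ref{ThickGP5.55} (and its dual), and then link the two sides through $Q\in\Inj(R)\subseteq\GP(R)^{\perp}=\Proj(R)^\wedge$ and dually ${}_RR\in{}^{\perp}\GI(R)=\Inj(R)^\vee$; this is a valid alternative bridge. Two small remarks: the inclusion $\Inj(R)\subseteq\GP(R)^{\perp_1}=\Proj(R)^\wedge$ is automatic because injectives are $\Ext^{1}$-orthogonal to everything, so heredity of (d) and (i) is needed only if you insist on the full orthogonal classes $\GP(R)^{\perp}$ and ${}^{\perp}\GI(R)$, and is less essential than you suggest; and your final dimension-shifting proof of $\Proj(R)^\wedge=\Inj(R)^\vee$ uses only the finiteness of $\id({}_RR)$ and $\pd(Q)$, not their equality, so there is no circularity. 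The paper's route is slightly more uniform, while yours makes the source of the numerical equality $\id({}_RR)=\pd(Q)$ more transparent.
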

\begin{dem} The equivalence between  (a) and (b) follows from Lemma \ref{AuxCotil4}. By using Lemma \ref{AuxCotil4} and Corollary 
\ref{Cotil3}, it can be shown that  (a) and (c) are equivalent.  The equivalences between (c), (d), (e) and (f) follow from Lemma \ref{AuxCotil4} and Corollary \ref{2CWtiltcotor}.
\

The equivalence between (g) and (b) follows from Lemma \ref{AuxCotil4}. By using Lemma \ref{AuxCotil4} and the dual of Corollary 
\ref{Cotil3}, it can be shown that  (g) and (h) are equivalent.  The equivalences between (h), (i), (j) and (k) follow from Lemma \ref{AuxCotil4} and Corollary \ref{3CWtiltcotor}.
\

Let $\id({}_RR)<\infty$ and $\pd(Q)<\infty.$ We prove that  $\Proj(R)^\wedge=\Inj(R)^\vee.$ Indeed, let $X\in \Proj(R)^\wedge.$ 
 Then, by Lemma \ref{AuxCotil4} (a) and the dual of Lemma \ref{AB1}, we have $\id(X)\leq \id (\Proj(R)^\wedge)=\id({}_RR)<\infty$ 
 and thus $X\in \Inj(R)^\vee.$ Consider $Y\in \Inj(R)^\vee.$ Then, by Lemma \ref{AuxCotil4} (b) and  Lemma \ref{AB1}, we have 
 $\pd(Y)\leq \pd(\Inj(R)^\vee)=\pd(Q)<\infty$ and hence $Y\in\Proj(R)^\wedge;$ proving that $\Proj(R)^\wedge=\Inj(R)^\vee.$ 
 \
 
 Let $\omega:=\Proj(R)$ and $\nu:=\Inj(R).$ In particular, by Proposition \ref{GP2} and its dual,  we have that 
 $W\GP_(\omega,\omega)=\GP(R)$ and $W\GI_(\nu,\nu)=\GI(R).$ Assume that one of the above items hold true. Then by Theorem \ref{tiltcotor} and its dual, it follows that $\FPD(R)=\glGPD(R)=\pd_{\omega^\wedge}(\omega^\wedge)=\id_{\nu^\vee}(\nu^\vee)=\glGID(R)=\FID(R).$ Finally, from Corollary \ref{2CWtiltcotor} and Corollary \ref{3CWtiltcotor}, we get $\glGPD(R)=\id({}_RR)$ and 
 $\glGID(R)=\pd(Q).$
\end{dem}

\begin{cor}\label{Cotil5}   Let $\Lambda$ be an Artin $R$-algebra and $D:=\Hom_R(-,k):\modu\,(\Lambda)\to \modu\,(\Lambda^{op})$ be the 
usual duality, where $k$ is the injective envelope of $R/\mathrm{rad}(R).$ If $\id({}_{\Lambda}\Lambda)<\infty$ and 
$\id(\Lambda_{\Lambda})<\infty,$ then the following statements hold true.
\begin{itemize}
\item[(a)] ${}_{\Lambda}\Lambda$ is cotilting in $\Modu(\Lambda)$ and $D(\Lambda_{\Lambda})$ is tilting in $\Modu(\Lambda).$
\item[(b)] The pair $(\Proj(\Lambda),\Proj(\Lambda))$ is $\W$-cotilting.
\item[(c)] $(\GP(\Lambda),\Proj(\Lambda)^\wedge)$ is a hereditary complete cotorsion pair in $\Modu\,(\Lambda).$ 
\item[(d)]  $\GP(\Lambda)={}^\perp ({}_{\Lambda}\Lambda)$ and $\GI(\Lambda)=(D(\Lambda_{\Lambda}))^\perp.$
\item[(e)] The pair $(\Inj(\Lambda),\Inj(\Lambda))$ is $\W$-tilting.
\item[(f)] $(\Inj(\Lambda)^\vee,\GI(\Lambda))$ is a hereditary complete cotorsion pair in $\Modu\,(\Lambda).$ 
\item[(g)] $\Proj(\Lambda)^\wedge=\Inj(\Lambda)^\vee.$
\item[(h)] $\FPD(\Lambda)=\glGPD(\Lambda)=\id({}_\Lambda\Lambda)=\id(\Lambda_{\Lambda})=\glGID(\Lambda)=\FID(\Lambda)<\infty.$
\end{itemize}
\end{cor}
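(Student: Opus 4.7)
My plan is to derive Corollary \ref{Cotil5} as a direct application of Theorem \ref{Cotil4} to the category $\Modu(\Lambda)$. The first thing to check is that the hypotheses of that theorem are met: an Artin $R$-algebra is both left and right Artinian, hence left perfect, left noetherian and right coherent, so Lemma \ref{AuxCotil4} and Theorem \ref{Cotil4} apply to $\Lambda$.

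Next, the key point is choosing the right injective cogenerator $Q$ in $\Modu(\Lambda)$ and computing $\pd(Q)$. The natural choice is $Q := D(\Lambda_\Lambda)$, which is the standard minimal injective cogenerator in $\Modu(\Lambda)$ (every simple left $\Lambda$-module embeds into $Q$ and $Q$ is injective since $D$ exchanges projectives and injectives). Using that $D\colon \modu(\Lambda)\to \modu(\Lambda^{op})$ is an exact duality that sends projectives to injectives and vice versa, one obtains $\pd_\Lambda(D(\Lambda_\Lambda))=\id_{\Lambda^{op}}(\Lambda_\Lambda)=\id(\Lambda_\Lambda)$. Combined with the hypothesis $\id({}_\Lambda\Lambda)<\infty$, we see that condition (b) of Theorem \ref{Cotil4} holds for the injective cogenerator $Q=D(\Lambda_\Lambda)$, so every one of the equivalent statements (a)--(k) of Theorem \ref{Cotil4} holds true in $\Modu(\Lambda)$.

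From here, the items (a)--(h) of the corollary are essentially read off from Theorem \ref{Cotil4}: statement (a) combines Theorem \ref{Cotil4}(a) and (g) (with our identification $Q=D(\Lambda_\Lambda)$); (b) is Theorem \ref{Cotil4}(c); (c) is (d); the two equalities in (d) are (e) and (j); (e) is (h); (f) is (i); and (g) together with the chain of equalities in (h) is the concluding statement of Theorem \ref{Cotil4}, which gives $\Proj(\Lambda)^\wedge=\Inj(\Lambda)^\vee$ and
\[
\FPD(\Lambda)=\glGPD(\Lambda)=\id({}_\Lambda\Lambda)=\pd(Q)=\glGID(\Lambda)=\FID(\Lambda)<\infty.
\]
The only remaining substitution is $\pd(Q)=\id(\Lambda_\Lambda)$, which we already established via the duality $D$.

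Thus the whole proof is a verification step (hypotheses of Theorem \ref{Cotil4} via the duality $D$) followed by translation. I do not expect any genuine obstacle; the only minor subtlety is carefully invoking the duality $D$ to identify $\pd_\Lambda(D(\Lambda_\Lambda))$ with $\id(\Lambda_\Lambda)$, so that the finite right self-injective dimension of $\Lambda$ indeed translates into finite projective dimension of the chosen cogenerator $Q$ in $\Modu(\Lambda)$.
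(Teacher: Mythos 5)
Your proposal is correct and takes essentially the same route as the paper: both apply Theorem \ref{Cotil4} to $\Modu(\Lambda)$ with the injective cogenerator $Q:=D(\Lambda_{\Lambda})$ and reduce everything to the identity $\pd(D(\Lambda_{\Lambda}))=\id(\Lambda_{\Lambda}),$ after which items (a)--(h) are read off from the equivalent conditions of that theorem. The only step the paper spells out and you merely gesture at is that, since the duality $D$ is defined only on $\modu(\Lambda),$ this identity requires comparing dimensions in $\Modu(\Lambda)$ with those in $\modu(\Lambda)$ (using that projective covers and injective envelopes of finitely generated modules over an Artin algebra are finitely generated); this is routine and does not affect correctness.
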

\begin{dem} Since $\Lambda$ an Artin $R$-algebra, it follows in particular that $\Lambda$ is an artinian ring and thus it is also left 
perfect, left noetherian and right coherent. Moreover, by \cite[Lemma 3.2.2]{A}, we know that $D(\Lambda_{\Lambda})$ is an injective 
cogenerator in $\Modu(\Lambda).$  Therefore, in order to prove the result,  by  Theorem \ref{Cotil4}, it is enough to show that 
$\pd(D(\Lambda_{\Lambda}))=\id(\Lambda_{\Lambda}).$
\

Note, firstly, that $\modu(\Lambda)$ is an abelian category with enough projectives and injectives,  $\proj(\Lambda):=\Proj(\modu(\Lambda))=\Proj(\Lambda)\cap\modu(\Lambda),$  $\inj(\Lambda):=\Inj(\modu(\Lambda))=\Inj(\Lambda)\cap\modu(\Lambda)$ and $\Lambda^{op}$ is an Artin $R$-algebra. Since $\Modu(\Lambda)$ has projective covers and the projective cover of a finitely generated left $\Lambda$-module is 
finitely generated, we have that 
\begin{center} $\pd(D(\Lambda_{\Lambda}))=\resdim_{\Proj(\Lambda)}(D(\Lambda_{\Lambda}))=\resdim_{\proj(\Lambda)}(D(\Lambda_{\Lambda})).$\end{center}
 Moreover, by using the duality $D:\modu\,(\Lambda)\to \modu\,(\Lambda^{op}),$ 
it follows that 
\begin{center}$\resdim_{\proj(\Lambda)}(D(\Lambda_{\Lambda}))=\coresdim_{\inj(\Lambda^{op})}(\Lambda_{\Lambda}).$\end{center} 
Since 
$\modu(\Lambda^{op})$ has enough injectives and $\Modu(\Lambda^{op})$ has injective envelopes, it follows that the injective 
 envelope of a finitely generated right $\Lambda$-module is finitely generated. Therefore 
 $\coresdim_{\inj(\Lambda^{op})}(\Lambda_{\Lambda})=\coresdim_{\Inj(\Lambda^{op})}(\Lambda_{\Lambda})=\id(\Lambda_{\Lambda});$ proving that $\pd(D(\Lambda_{\Lambda}))=\id(\Lambda_{\Lambda}).$
\end{dem} 

\begin{pro}\label{Cotil6} Let $\A$ be an AB4*-abelian category with injective cogenerators and enough projectives, and let $(\X,\Y)$ be 
a hereditary complete cotorsion pair in $\A.$ Let $Q$ be an injective cogenerator in $\A$ and $0\to Y_0\to M\to Q\to 0$ be an exact 
sequence with $Y_0\in\Y$ and $M\in\omega:=\X\cap\Y.$ If ${}^\perp M\subseteq\X$ and $\omega$ is closed under products, then  
 $\omega=\Prod(M),$ $\id(M)=\id(\Y),$ $\Y=\omega^\wedge$ and $W\GP_\omega=\X={}^\perp M.$
\end{pro}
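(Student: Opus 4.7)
The strategy is to adapt the proof of Proposition \ref{CotilDWCotil} and then leverage the hereditary cotorsion pair structure. First I would pin down the orthogonality identity $\X={}^\perp M$, then verify that $(\Prod(M),\Prod(M))$ is $\W$-cotilting by producing monic $\Prod(M)$-preenvelopes, and finally deduce $\omega=\Prod(M)$ via a splitting argument inside the hereditary cotorsion pair.

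\textbf{Step 1.} Since $(\X,\Y)$ is a hereditary cotorsion pair, $\X={}^{\perp_1}\Y={}^\perp\Y$ and $\Y=\X^\perp$. Because $M\in\Y$, ${}^\perp\Y\subseteq{}^\perp M$, and combined with the hypothesis ${}^\perp M\subseteq\X$, we get $\X={}^\perp M$.

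\textbf{Step 2.} Since $\omega$ is closed under products and $M\in\omega$, we have $M^I\in\omega\subseteq\X={}^\perp M$ for every set $I$, so $M$ is $\Pi$-orthogonal. Moreover, $Y_0\in\Y=\X^\perp$, so applying $\Hom_\A(X,-)$ to the given exact sequence shows that $\pi\colon M\to Q$ is an $\X$-precover of $Q$.

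\textbf{Step 3.} Next I would mimic the proof of Proposition \ref{CotilDWCotil}. Let $Z\in\X={}^\perp M$. The injective cogenerator $Q$ gives a monomorphism $\iota\colon Z\hookrightarrow Q^I$ for some set $I$. By AB4*, the induced sequence $0\to Y_0^I\to M^I\xrightarrow{\pi^I} Q^I\to 0$ is exact with $Y_0^I\in\Y$ (as $\Y$ is closed under products), so $\pi^I$ is an $\X$-precover of $Q^I$, and $\iota$ lifts to a monomorphism $\tilde\iota\colon Z\to M^I$. Combining $\tilde\iota$ with a $\Prod(M)$-preenvelope of $Z$ (which exists in AB4* with injective cogenerators, following \cite[Proposition 1.1]{AC}) yields a monic $\Prod(M)$-preenvelope of $Z$. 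Since $(\Prod(M),\Prod(M))$ is WGP-admissible by the $\Pi$-orthogonality of $M$, and every $Z\in{}^\perp\Prod(M)={}^\perp M=\X$ admits such a preenvelope, Remark \ref{RKGdebilCotil} shows that $(\Prod(M),\Prod(M))$ is $\W$-cotilting.

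\textbf{Step 4.} By Theorem \ref{debildimfin}(a), $W\GP_{\Prod(M)}={}^\perp\Prod(M)={}^\perp M=\X$. Take $W\in\omega\subseteq\X=W\GP_{\Prod(M)}$; then there is an exact sequence $0\to W\to P\to K\to 0$ with $P\in\Prod(M)$ and $K\in W\GP_{\Prod(M)}\subseteq\X$. Since $W\in\omega\subseteq\Y$ and $K\in\X={}^{\perp_1}\Y$, we obtain $\Ext^1_\A(K,W)=0$, so the sequence splits and $W$ is a direct summand of $P\in\Prod(M)$. The reverse inclusion $\Prod(M)\subseteq\omega$ is immediate from $M\in\omega$ and the closure of $\omega$ under products and direct summands. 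Hence $\omega=\Prod(M)$ and $W\GP_\omega=\X={}^\perp M$.

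\textbf{Step 5.} The inclusion $\omega^\wedge\subseteq\Y$ is straightforward by induction on the length of $\omega$-resolutions, using that $\Y$ is closed under cokernels of monomorphisms between its objects. For the converse $\Y\subseteq\omega^\wedge$, I would invoke \cite[Theorem 3.12]{BMPS} applied to the complete hereditary cotorsion pair $(\X,\Y)$ (noting that $\omega$ is an $\X$-injective relative cogenerator in $\X$ by Corollary \ref{TGP}(a)). Finally, $\id(M)=\id(\Prod(M))=\id(\omega)=\id(\omega^\wedge)=\id(\Y)$, using Remark \ref{B1} for the first equality, $\omega=\Prod(M)$ for the second, the elementary bound $\id(X)\leq\id(\omega)$ for $X$ with a finite $\omega$-resolution for the third, and $\Y=\omega^\wedge$ for the last.

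\textbf{Main obstacle.} The crux is Step 3: assembling a monic $\Prod(M)$-preenvelope for every $Z\in\X$ using the given exact sequence as a replacement for the finite $\Prod(M)$-coresolution of $Q$ required in hypothesis (C3) of Definition \ref{CotilD}. This must be done carefully, combining the fact that $\pi^I$ inherits the $\X$-precover property from $\pi$ in AB4* with the existence of $\Prod(M)$-preenvelopes in locally small AB4* abelian categories with injective cogenerators. A secondary delicate point is verifying that the invocation of \cite[Theorem 3.12]{BMPS} in Step 5 does not require the finiteness of $\id(\Y)$ that was used in Theorem \ref{Cotil2}.
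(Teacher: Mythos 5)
Your proposal is correct and follows essentially the same route as the paper: establish $\X={}^\perp M$, use the product of the given sequence $0\to Y_0^I\to M^I\to Q^I\to 0$ (via Remark \ref{B1}) to embed each $Z\in{}^\perp M$ into $M^I$ and conclude via Remark \ref{RKGdebilCotil} that $(\Prod(M),\Prod(M))$ is $\W$-cotilting, then apply Theorem \ref{debildimfin}(a), the splitting argument $\Ext^1_\A(\X,\omega)=0$ to get $\omega=\Prod(M)$, and \cite[Theorem 3.12]{BMPS} (which the paper also invokes here without any finiteness of $\id(\Y)$) together with the dual of Lemma \ref{AB1} for $\Y=\omega^\wedge$ and $\id(M)=\id(\Y)$. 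The only cosmetic difference is that you phrase the embedding step as lifting a monomorphism along the $\X$-precover $\pi^I$, whereas the paper splits the top row of the corresponding pull-back diagram — the same computation.
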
 
\begin{dem} Let ${}^\perp M\subseteq\X$ and $\omega:=\X\cap\Y$ be closed under products. By \cite[Theorem 3.12]{BMPS}, we 
have $\Y=\omega^\wedge.$ In particular, by the dual of Lemma \ref{AB1}, $\id(\omega)=\id(\Y).$ Furthermore, by the dual of Lemma \ref{AB1}, we get that ${}^\perp\omega={}^\perp(\omega^\wedge).$ Therefore $\X={}^\perp\Y={}^\perp\omega.$ 
\

Note that $\Prod(M)$ is always a preenveloping class. Moreover, we assert that the pair $(\Prod(M),\Prod(M))$ is $\W$-cotilting. Indeed, 
 by Remark \ref{RKGdebilCotil}, it is enough to show that any object in ${}^\perp M={}^\perp\Prod(M)$ can be embedded, through a 
 monomorphism, into some  object of $\Prod(M).$ Let $Z\in {}^\perp M.$ Then, there is a monomorphism $\alpha:Z\to Q^I,$ for some set $I.$ By Remark \ref{B1}, we get the exact sequence $0\to Y_0^I\to M^I\to Q^I\to 0.$ Note that $Y_0^I\in\Y,$ since $\Y$ is closed 
 under products. Consider the following pull-back diagram 
\[\xymatrix{0\ar[r] & Y_0^I\ar[r]\ar@{=}[d] & E\ar[r] \ar[d] & Z\ar[r] \ar[d]^{\alpha}& 0\\ 
0\ar[r] & Y_0^I\ar[r] & M^I\ar[r] & Q^I\ar[r] & 0.}\] 
Since   ${}^\perp M\subseteq\X,$ we have $\Ext^i_\A({}^\perp M,\Y)=0$ for all $i\geq 1,$ and thus the first row in the above diagram splits. Hence $\alpha$ factorizes through $M^I\to Q^I$ and so we get a monomorphism $Z\to M^I;$ proving our assertion. 
\

Since $(\Prod(M),\Prod(M))$ is a $\W$-cotilting pair, it follows from Theorem \ref{debildimfin} (a) that $W\GP_{\Prod(M)}={}^\perp M.$ 
 Using now that $\omega$ is closed under products, we have $\Prod(M)\subseteq \omega\subseteq\Y$ and hence $\omega\subseteq {}^\perp\omega\subseteq {}^\perp M=W\GP_{\Prod(M)}$ and $\X={}^\perp\Y\subseteq {}^\perp M\subseteq \X.$ Therefore 
 $\omega\subseteq W\GP_{\Prod(M)}$ and $\X={}^\perp M.$ 
 \
 
 Now, we prove that $\omega\subseteq \Prod(M).$ Let $W\in\omega\subseteq W\GP_{\Prod(M)}.$ Then, there is an exact sequence 
 $0\to W\to M_0\to K_0\to 0$ with $M_0\in\Prod(M)$ and $K_0\in {}^\perp M.$ Since 
 $\Ext^1_\A({}^\perp M,\omega)=\Ext^1_\A(\X,\omega)=0,$ the above exact sequence splits and hence $W\in\Prod(M);$ proving that 
 $\omega=\Prod(M).$ Therefore $\id(M)=\id(\Prod(M))=\id(\omega)=\id(\Y).$
\end{dem}

\bigskip

{\bf Acknowledgments} We thank professor Marco A. P\'erez by some suggestions and nice discussions about this paper.

\footnotesize

\vskip3mm \noindent Victor Becerril\\ Instituto de Matem\'aticas\\ Universidad Nacional Aut\'onoma de M\'exico.\\ 
Circuito Exterior, Ciudad Universitaria\\
C.P. 04510, M\'exico, D.F. MEXICO.\\ {\tt mathvick06@gmail.com}

\vskip3mm \noindent Octavio Mendoza Hern\'andez\\ Instituto de Matem\'aticas\\ Universidad Nacional Aut\'onoma de M\'exico.\\ 
Circuito Exterior, Ciudad Universitaria\\
C.P. 04510, M\'exico, D.F. MEXICO.\\ {\tt omendoza@matem.unam.mx}

\vskip3mm \noindent Valente Santiago\\
Departamento de Matem\'aticas, Facultad de Ciencias,\\
Universidad Nacional Aut\'onoma de M\'exico,\\
Circuito Exterior, Ciudad Universitaria,\\
M\'exico D.F. 04510, M\'EXICO.\\{\tt valente.santiago.v@gmail.com}

\end{document}